\documentclass[11pt,letterpaper]{amsart}
\usepackage[english]{babel}
\usepackage{graphicx}
\usepackage{amsmath,amsfonts,amssymb,amsthm,relsize,setspace,nicefrac,yhmath,amscd,eucal}
\usepackage{amsrefs, mathrsfs}
\usepackage{mathtools,tkz-euclide,tikz-3dplot,tkz-tab}
\usepackage{xcolor}
\usepackage{tikz}
\usepackage{enumerate,float} 
\usetikzlibrary{calc}
\usepackage{comment}
\usepackage{anyfontsize}
\usepackage{lmodern}
\usepackage[T1]{fontenc}
\usepackage{subcaption}
\usepackage[colorlinks, linkcolor=red, citecolor=blue, urlcolor=blue, hypertexnames=true]{hyperref} 

\usepackage{hyperref} 

\usepackage[active]{srcltx} 
\usepackage{color,soul} 
\newtheorem{theorem}{Theorem}
\newtheorem{corollary}[theorem]{Corollary}
\newtheorem{lemma}[theorem]{Lemma}
\newtheorem{proposition}[theorem]{Proposition}

\usepackage[toc,page]{appendix}

\newtheorem{remark}{Remark}
\newtheorem{example}{Example}

\theoremstyle{definition}
\newtheorem{definition}[theorem]{Definition}

\newcommand{\be}{\begin{equation}}
\newcommand{\bel}[1]{\begin{equation}\label{#1}}
\newcommand{\ee}{\end{equation}}

\newcommand{\barr}{\begin{eqnarray}}
\newcommand{\earr}{\end{eqnarray}}
\newcommand{\bars}{\begin{eqnarray*}}
\newcommand{\ears}{\end{eqnarray*}}

\newtheorem{subn}{\name}

\newcommand{\bsn}[1]{\def\name{#1}\begin{subn}}
\newcommand{\esn}{\end{subn}}

\newtheorem{sub}{\name}[section]

\newcommand{\bs}{\begin{sub}}
\newcommand{\es}{\end{sub}}

\newcommand{\bth}[1]{\def\name{Theorem}
\begin{sub}\label{t:#1}}
\newcommand{\blemma}[1]{\def\name{Lemma}
\begin{sub}\label{l:#1}}
\newcommand{\bcor}[1]{\def\name{Corollary}
\begin{sub}\label{c:#1}}
\newcommand{\bdef}[1]{\def\name{Definition}
\begin{sub}\label{d:#1}}
\newcommand{\bprop}[1]{\def\name{Proposition}
\begin{sub}\label{p:#1}}

\newcommand{\BA}{\begin{array}}
\newcommand{\EA}{\end{array}}
\newcommand{\BAN}{\renewcommand{\arraystretch}{1.2}
\setlength{\arraycolsep}{2pt}\begin{array}}
\newcommand{\BAV}[2]{\renewcommand{\arraystretch}{#1}
\setlength{\arraycolsep}{#2}\begin{array}}
\newcommand{\BSA}{\begin{subarray}}
\newcommand{\ESA}{\end{subarray}}

\newcommand{\BAL}{\begin{aligned}}
\newcommand{\EAL}{\end{aligned}}
\newcommand{\BALG}{\begin{alignat}}
\newcommand{\EALG}{\end{alignat}}
\newcommand{\BALGN}{\begin{alignat*}}
\newcommand{\EALGN}{\end{alignat*}}

\newcommand{\abs}[1]{\left |#1\right |}

\def\angb<#1>{\langle #1 \rangle}

\definecolor{inkdark}{RGB}{14,54,140}
\definecolor{inklight}{RGB}{198,220,255}

\let\.=\cdot
\let\0=\emptyset

\def\diam{\text{\rm diam}}

\numberwithin{equation}{section}

\theoremstyle{definition}

\let\.=\cdot
\let\0=\emptyset

\def\diam{\text{\rm diam}}

\newenvironment{formula}[1]{\begin{equation}\label{eq:#1}}
{\end{equation}\noindent}

\def\Fi#1{\begin{formula}{#1}}
\def\Ff{\end{formula}\noindent}

\setstcolor{red}

\title[]{Harnack Theory and Rigidity for Singular and Degenerate Fully Nonlinear Elliptic Equations with Hamiltonians}
\makeatletter
\def\l@section{\@tocline{1}{0pt}{0pt}{2.5em}{}}
\def\l@subsection{\@tocline{2}{0pt}{2.5em}{3.5em}{}}
\def\l@subsubsection{\@tocline{3}{0pt}{5em}{4em}{}}
\makeatother

\makeatletter
\renewcommand{\@secnumfont}{\bfseries}
\makeatother

\date{\today}
\subjclass[2020]{35B65, 35B45, 35J60, 35J70, 35D40.}

\keywords{Singular/degenerate fully nonlinear equations $\cdot$ Global regularity $\cdot$ Comparison principle $\cdot$ Harnack inequality $\cdot$ Principal eigenvalues $\cdot$ Liouville–type results.}

\author{Trung-Hieu Huynh}
\address{Department of Mathematics, Ho Chi Minh University of Education,
	280 An Duong Vuong St., Cho Quan Ward, Ho Chi Minh City, Vietnam}
\email{huynhhieu2004@gmail.com}

\author{Tan-Dat Khuu}
\address{Faculty of Mathematics and Statistics, Ton Duc Thang University,
	19 Nguyen Huu Tho St., Tan Hung Ward, Ho Chi Minh City, Vietnam}
\email{tandatkhuu2k3@gmail.com}

\author{Hoang-Hung Vo}
\address{Department of Mathematics, International University,
	Quarter 6, Linh Trung Ward, Thu Duc City, Ho Chi Minh City, Vietnam}
\address{Vietnam National University, Ho Chi Minh City, Vietnam}
\email{vhhung@hcmiu.edu.vn}

\begin{document}
%
%


\begin{abstract}
In this paper, we study three fundamental problems for viscosity solutions to a general class of singular or degenerate fully nonlinear elliptic equations of the form
\begin{align*}
	\Phi(x,|\nabla u|)F(D^2u)-H(x,\nabla u)+c(x)|u|^{i(\Phi)}u =h(x) \text{ in } \Omega,
\end{align*}
where $\Omega$ is a bounded domain, $\Phi$ captures the gradient degeneracy or singularity, and $H$ is a Hamiltonian. Firstly, we obtain global $C^{1,\gamma}$-regularity for viscosity solutions to the Dirichlet problem in the absence of lower-order terms. The proof relies on a step-by-step regularity upgrade, sequentially establishing global boundedness via the Alexandroff--Bakelman--Pucci estimate, a global Lipschitz estimate via boundary barriers, and an iterative approximation lemma driven by a compactness argument. Secondly, we establish an additive Harnack-type inequality for nonnegative viscosity solutions by sliding from below a cusp function of the form $\varphi(x) = -|x|^{1/2}$. This approach is expected to be optimal for our setting. Under an additional homogeneity assumption on the operator, we recover the standard Harnack inequality, which in turn yields Liouville-type theorems in $\mathbb{R}^n$. Under additional homogeneity and comparison assumptions, we develop a generalized Dirichlet principal-eigenvalue theory for the nonlinear operator
\begin{align*}
	\Phi(x,|\nabla u|)F(D^2u)-H(x,\nabla u)+c(x)|u|^{i(\Phi)}u
\end{align*}
in smooth bounded domains. In particular, we establish the existence of principal eigenfunctions and relate the corresponding eigenvalues to the validity of maximum and minimum principles, thereby extending the theory of \cite{BD2006} to operators with a substantially broader class of gradient-dependent ellipticity factors and Hamiltonian terms. Our results provide a unified treatment of global regularity and Harnack estimates for singular and degenerate fully nonlinear equations, encompassing and extending the frameworks of \cite{BBLL2024,BBLL2024_1,BD2016,IS2016}, and yield new Liouville-type rigidity theorems for entire solutions.
\end{abstract}
\maketitle
\tableofcontents




\section{\bf Introduction}
\label{section:1}
In this paper, we establish global regularity results for viscosity solutions to a broad class of degenerate/singular fully nonlinear elliptic equations of the form
\begin{align}\label{1.2}
\left\{\begin{array}{cll}
	\Phi(x,|\nabla u|)F(D^2u)-H(x,\nabla u) &=h(x) &\text{in } \Omega,\\
	u(x) &=g(x) &\text{on }\partial\Omega,
\end{array}\right.
\end{align}
and prove Harnack inequalities for
\begin{align}\label{eq:main}
\Phi(x,|\nabla u|)F(D^2u)-H(x,\nabla u)+c(x)|u|^{i(\Phi)}u =h(x) \text{ in } \Omega,
\end{align}
where $\Omega\subset\mathbb{R}^n$ is a bounded domain, $F:\mathbb S^{n\times n}\to\mathbb{R}$ is a uniformly $(\lambda,\Lambda)$-elliptic operator in the sense of \hyperref[A1]{(A1)}, $\Phi:\Omega\times[0,\infty)\to[0,\infty)$ describes the gradient degeneracy or singularity and satisfies \hyperref[A2]{(A2)}, $H:\overline{\Omega}\times\mathbb{R}^n\to\mathbb{R}$ is a Hamiltonian term satisfying \hyperref[A3]{(A3)}, $c(\cdot), h(\cdot)$ and $g(\cdot)$ are suitable regular functions as in \hyperref[A4]{(A4)}. We also develop a generalized principal eigenvalue theory for the operator associated with \eqref{eq:main}. We recall that, as a consequence of Krylov-Safonov theory \cite{KS1979,KS1981}, viscosity solutions to the homogeneous equation
\begin{align*}
F(D^2u) = 0 \mbox{ in }B_1, \mbox{ where } F \mbox{ is uniformly} \left(\lambda,\Lambda\right)-\text{elliptic},
\end{align*}
belong to $C^{1,\alpha_0}_{\mathrm{loc}}(B_1)$ for a universal constant $\alpha_0 \equiv \alpha_0(n,\lambda,\Lambda) \in (0,1)$.

Over the past few decades, this class of equations has received considerable attention with several specific forms of $\Phi$ and $H$. Models whose ellipticity depends on the gradient are motivated in part by the classical $p$-Laplacian, which is closely connected with the study of local minimizers of the $p$-Dirichlet integral and numerous real-life models. From the perspective of variational analysis, fully nonlinear equations can be seen as the non-divergence counterparts of models arising in variational problems; see \cite{HO2022} for a detailed discussion. The study of degenerate/singular fully nonlinear equations was initiated by the pioneering works of Birindelli and Demengel in \cite{BD2004,BD2007}. They established Liouville-type results and comparison principles in the singular setting in \cite{BD2004} and later proved regularity and uniqueness results for generalized principal eigenfunctions in \cite{BD2007}. 
An important special case of \eqref{1.2}, obtained by setting $H\equiv 0$ and $\Phi(x,t)=t^\alpha$ with $\alpha>-1$, is given by
\begin{align}\label{IS}
|\nabla u|^{\alpha}F(D^2u)=h\text{ in }\Omega.
\end{align}
Imbert and Silvestre in \cite{IS2013} and Araújo, Ricarte, and Teixeira in \cite{ART2015} proved local $C^{1,\gamma}$ regularity for viscosity solutions to \eqref{IS} in the degenerate case.  Subsequent works by Filippis \cite{F2021}, Silva and Ricarte \cite{SR2020}, and Yang, Rădulescu, and Zhang \cite{FDZ2021} extended sharp interior $C^{1,\gamma}$ estimates to degenerate versions of \eqref{1.2} involving the function $\Phi$ of double-phase growth and variable exponents, i.e., $\Phi(x,t)=t^p+a(x)t^q$ with $0\le p\le q$ and $\Phi(x,t)=t^{p(x)}+a(x)t^{q(x)}$ with $0\le p(\cdot)\le q(\cdot)$. Notably, Andrade, Pellegrino, Pimentel, and Teixeira in \cite{PDEE2022} studied the local $C^1$-differentiability properties for solutions to degenerate diffusion equations of the form
\begin{align*}
\Phi(|\nabla u|)F(D^2u)=f\text{ in }B_1,
\end{align*}
assuming that $\Phi$ is a modulus of continuity and its inverse $\Phi^{-1}$ satisfies the Dini condition. In general, we note that this equation does not belong to the class of fully nonlinear equations considered here. 
Recently, Baasandorj, Byun, Lee, and Lee in \cite{BBLL2024_1} established local $C^{1,\gamma}$ regularity for a unified class of degenerate/singular fully nonlinear equations covering the previously mentioned equations with the function $\Phi$ satisfying the assumption \hyperref[A2]{(A2)}. The corresponding theory was subsequently extended to the boundary. In particular, by using suitable compactness arguments, \cite{AB2023} obtained global $C^{1,\gamma}$ estimates for the case $\Phi(x,t)=t^p$ with $p\geq 0$, \cite{JSRR2023} treated the case $\Phi(x,t)=t^{p(x)}+a(x)t^{q(x)}$ with $0\le p(\cdot)\le q(\cdot)$, and \cite{BBLL2024} considered a general class of functions $\Phi$ in the sense of assumption \hyperref[A2]{(A2)}. In addition to these elliptic equations, several extensions to Monge--Ampère equations, degenerate/singular fully nonlinear parabolic equations, degenerate/singular elliptic equations involving the normalized $p$-Laplace operators, and nonlocal equations have been obtained. We refer to Le and Savin \cite{LS2017}, Cao, Hirsch, and Inauen \cite{CHI2025}, Lee, Lee, and Yun \cite{LLY2024}, Lee and Yun \cite{LY2025}, Alcantara, Santos and Urbano \cite{ASU2026}, and Fang, Rădulescu, and Zhang \cite{FDZ2025} for results in these directions.

During the last decade, there has also been growing interest in fully nonlinear equations with gradient terms. The first regularity estimates for this class of equations were obtained by Birindelli and Demengel in \cite{BD2010} for the degenerate fully nonlinear equation
\begin{align*}
\left\{\begin{array}{cll}
	F(D^2u)+b(x)|\nabla u|^{\beta} &=h(x) &\text{in } \Omega,\\
	u(x) &=g(x) &\text{on }\partial\Omega,
\end{array}\right.
\end{align*}
where $\beta\in (0,1)$, proving that its viscosity solutions are globally of class $C^{1,\gamma}$. Subsequently, the authors of \cite{DLP2010} proved global H\"older continuity for the degenerate fully nonlinear elliptic equations of the form
\begin{align*}
-\mathrm{tr}(A(x)D^2u)+\lambda u+|\nabla u|^p=f(x),\quad x\in\Omega,
\end{align*}
where $\Omega$ is an open subset of $\mathbb{R}^n$ ($n\geq 2$) and $p>1$. The next well-known result was proposed by Birindelli and Demengel in \cite{BD2014}. In particular, these authors established boundary $C^{1,\gamma}$ estimates for
\begin{align*}
\left\{\begin{array}{cll}
	|\nabla u|^{\alpha}F(D^2u)+h(x)\cdot\nabla u|\nabla u|^{\alpha} &=f(x) &\text{in } \Omega,\\
	u(x) &=g(x) &\text{on }\partial\Omega.
\end{array}\right.
\end{align*}
They subsequently obtained analogous results in \cite{BD2016, BDL2019} for equations of the form 
\begin{align*}
\left\{\begin{array}{cll}
	|\nabla u|^{\alpha}F(D^2u)+b(x)|\nabla u|^{\beta} &=h(x) &\text{in } \Omega,\\
	u(x) &=g(x) &\text{on }\partial\Omega,
\end{array}\right.
\end{align*}
where $\alpha>-1$ and $\beta\in (0,2+\alpha]$. Additionally, Fang, Kinnunen, and Zhang in \cite{FKZ2025} also proved local $C^{1,\gamma}$-regularity for the nonlocal equations involving Hamiltonian terms. In the present work, we prove global $C^{1,\gamma}$-regularity for viscosity solution of \eqref{1.2} which includes a Hamiltonian term but no zeroth-order term. This extends the regularity theories developed in \cite{BBLL2024_1,BBLL2024,BD2016}. Moreover, this result can be compared with the profound work of Le-Savin \cite{LS2017} on the optimal regularity for eigenfunctions of Monge--Ampère operators. More precisely, the authors in \cite{LS2017} obtained $C^{2,\gamma}$ estimates up to the boundary for solutions to a class of degenerate Monge--Ampère equations. In particular, we first prove the global Lipschitz estimate (Theorem \ref{Lipschitz_estimate}) by the doubling-variable method with a suitable coupling function. Next, we adapt the compactness arguments in \cite{BBLL2024,BD2016} to prove an approximation lemma (Lemma \ref{approx1}), which is the key ingredient in the proof of global $C^{1,\gamma}$ regularity (Theorem \ref{Theo:Global_regularity}). The first difficulty is the simultaneous presence of gradient degeneracy or singularity through $\Phi$ and the Hamiltonian term $H$. To handle both regimes in both degenerate and singular cases, we focus on establishing global $C^{1,\gamma}$ estimates in the degenerate case $(i(\Phi)\geq 0)$, and then transfer it to the singular case $(-1<i(\Phi)<0)$. Additionally, the second difficulty arises in our general setting of Hamiltonian terms $H$, creating a considerable challenge in building the flatness lemma. We combine approximation methods in \cite{BBLL2024,BD2016} with a modified scaling argument leading to an improved approximation lemma. 

The Harnack inequality was first formulated and proved by Harnack for positive harmonic functions in the two-dimensional case; see \cite[paragraph 19, p. 62]{H1887}. In the 1950s, the parabolic Harnack estimate for non-negative caloric functions, that is, non-negative solutions $u$ to the heat equation
\begin{align*}
u_t - \Delta u = 0,
\end{align*} 
was first established by Hadamard in \cite{Ha1954} and independently by Pini in \cite{P1954}. The modern theory of elliptic Harnack inequalities began with Moser's work \cite{Mo1961,Mo1964}, who proved such estimates for non-negative weak solutions of second-order linear equations in divergence form with bounded measurable, uniformly elliptic coefficients. Moser's method also yielded H\"older continuity and was later extended to the parabolic setting. Independently, De Giorgi \cite{G1957} and Nash \cite{NA1958} obtained Hölder regularity by different methods that did not rely directly on Harnack inequalities. These foundational contributions are now collectively known as the De Giorgi--Nash--Moser theory. The theory was subsequently extended to elliptic and parabolic equations in divergence form, as discussed in the seminal papers \cite{LSU1967, S1964,PM2021}.

The De Giorgi--Nash--Moser method does not directly apply to equations in non-divergence form with bounded measurable and uniformly elliptic coefficients. The breakthrough was due to Krylov and Safonov who first proved Harnack inequalities for this class of equations in \cite{KS1979,KS1981}. Their method became central to the regularity theory of linear, nonlinear, and fully nonlinear second-order elliptic and parabolic equations. Caffarelli \cite{C1989} established Krylov and Safonov estimates for fully nonlinear elliptic equations of the form $F(x, D^2u) = 0$ (see also \cite{GT1983,T1988}). A fundamental tool is the Alexandroff--Bakelman--Pucci (ABP) estimate; see \cite{CC1995}. For further results and developments, we refer the interested reader to \cite{K1998,CLV2005,AB2008,KS2004,CFP2026} and the references therein.

Additional difficulties arise when ellipticity depends on the gradient, since the equation may become degenerate or singular at critical points. The classical uniformly elliptic theory is then no longer directly applicable. An early Krylov--Safonov-type result for degenerate equations in non-divergence form was obtained by Delarue \cite{D2010}. Dávila, Felmer, and Quaas \cite{DFQ2010} proved a Harnack inequality for positive solutions of a class of singular elliptic equations of the form
\begin{align*}
F(\nabla u,D^2u) + b(x) \cdot \nabla u|\nabla u|^\alpha + c(x)u|u|^\alpha = f(x) \mbox{ in }\Omega,
\end{align*}
where $-1<\alpha<0$ and $\Omega$ is a domain in $\mathbb R^n$.  This result was extended by Birindelli and Demengel \cite{BD2010-1} to the broader range $\alpha>-1$ in the two-dimensional case. In 2011, Imbert \cite{I2011} established both an Alexandroff--Bakelman--Pucci (ABP) estimate and a Harnack inequality for a class of fully nonlinear elliptic equations in non-divergence form,
\begin{align*}
F(x,u,\nabla u,D^2u) = 0 \text{ in } \Omega,
\end{align*}
which can be either degenerate or singular. Five years later, Imbert and Silvestre \cite{IS2016} advanced this theory by establishing a Harnack inequality under the weaker condition that the equation holds only at points where the gradient is sufficiently large. Their proof uses a sliding-cusp construction to obtain the key $L^\varepsilon$ estimate. $L^\varepsilon$ estimate's proof builds on ideas appearing in the work of Cabré \cite{C1997} and Savin \cite{S2007}. Savin slid paraboloids from below to estimate the measure of the corresponding contact set without invoking the standard ABP argument. Armstrong and Smart \cite{A2014} used translations of singular radial functions in a related non-uniformly elliptic setting. Mooney \cite{M2015} proved large-gradient Harnack estimates for equations with unbounded drift by sliding paraboloids at multiple scales. Le \cite{L2018} later obtained Harnack estimates for linear degenerate or singular equations with unbounded lower-order terms, including linearized Monge--Ampère equations, using generalized sliding paraboloids.

The classical sliding-paraboloid argument is not directly suited to our setting. Ellipticity is controlled by the factor $\Phi(x,|\nabla u|)$, which may vanish or become singular as $|\nabla u|\to0$. Since the gradient of a paraboloid vanishes at its vertex and may be small near it, its contact points need not remain separated from the zero-gradient regime. We therefore adapt the sliding-cusp method of \cite{IS2016}, using translations of the cusp $\varphi(x)=-|x|^{1/2}$. The cusp is singular only at its vertex. The contact construction rules out contact at that point, hence, it is smooth at every contact point. Moreover, $|\nabla\varphi(y-x)|=\frac{1}{2}|y-x|^{-1/2}$, and the contact pairs $(x,y)$ remain in fixed bounded sets. The gradient at each contact point is therefore bounded below by a positive constant. This permits a quantitative application of the viscosity inequality on the contact set without entering the zero-gradient regime.

Our aim is to establish Harnack inequalities for equation \eqref{eq:main}. Although the proof uses contact points with nonzero gradient, it does not require an a priori decomposition of the domain into large- and small-gradient regions. Furthermore, since viscosity solutions need not be differentiable, the contact arguments are formulated entirely in the viscosity framework. We first prove a measure estimate for non-negative supersolutions. We then combine it with a doubling property, obtained from an explicit radial barrier, and the growing ink-spots lemma to derive an $L^\varepsilon$ estimate. Finally, the $L^\varepsilon$ estimate and a local Lipschitz estimate yield the Harnack inequalities in Section \ref{section:4}. The first estimate contains an additive constant $(+1)$, which reflects the lack of invariance of the general equation under amplitude rescaling; see Theorem \ref{Theo:Harnack}. Under the additional homogeneity assumption {\upshape\hyperref[A6]{(A6)}}, this constant can be removed and the standard Harnack inequality is recovered; see Theorem \ref{Theo:HarnackII}.

As direct consequences of the scaled Harnack inequality established under the structural assumption {\upshape\hyperref[A6]{(A6)}}, we derive Liouville-type results in Section \ref{section: 6}. Such results are available for general classes of fully nonlinear elliptic equations \cite{CL2000,BD2004,BD2010-1}. Results for equations containing both first- and zeroth-order terms have largely been obtained in more specific settings, typically as Pucci-type equations with gradient and reaction terms \cite{CF2013}, Hamilton--Jacobi--Bellman equations with drift and potential terms \cite{BC2016}, and infinity-Laplacian equations involving KPP type or Hardy--Hénon nonlinearities \cite{AH2023,KHH2026}. 

We also develop generalized principal eigentheory for the degenerate/\\singular fully nonlinear operator
\begin{align*}
\Phi(x,|\nabla u|)F(D^2u)-H(x,\nabla u)+c(x)|u|^{i(\Phi)}u,
\end{align*}
with Dirichlet boundary condition in bounded domains and use it to characterize the validity of the maximum principle. 

The notion of generalized principal eigenvalue comes from the well-known work of Berestycki, Nirenberg, and Varadhan \cite{BNV1994}, where these authors explored a deep connection between the maximum principle and principal eigenvalues of linear uniformly elliptic operators. Since then, this line of research has attracted considerable attention due to its importance in the study of Liouville-type results, maximum principle, and the nonexistence or existence of positive solutions to different nonlinear equations. 
It also plays a crucial role in probability theory, especially in the theory of large deviations. Based on the ideas in \cite{BNV1994}, Birindelli-Demengel in \cite{BD2007,BD2006} developed an eigenvalue theory to the Dirichlet problem for degenerate/singular fully nonlinear operators with smooth domains.  A Dirichlet generalized principal eigenvalue theory for the same class of operators in non-smooth and unbounded domains has been investigated in \cite{BD2009} and \cite{BD2010-1}, respectively. Moreover, these authors also established a sharp global regularity result and the uniqueness of the first eigenfunction for a class of singular fully nonlinear operators in \cite{BD2010}. Additionally, we mention \cite{P2008} as a counterpart of \cite{BD2007,BD2006} in the case of Neumann boundary condition.

In addition, Quaas-Sirakov in \cite{AB2006,AB2008} studied principal eigenvalues of a general fully nonlinear operator $F$ which is uniformly elliptic, positively homogeneous, and convex. They proved that the operator $F$ has two generalized principal eigenvalues $\lambda_1^+(F,\Omega)$ and $\lambda_1^-(F,\Omega)$, where $\Omega$ is a bounded $C^1$ domain. The former and the latter correspond to a positive eigenfunction $\phi_1^+>0$ and a negative eigenfunction $\phi_1^-<0$, respectively. Moreover, they also asserted that the corresponding Dirichlet problem admits a unique viscosity solution, provided that these principal eigenvalues are positive. Subsequently, Armstrong in \cite{A2009} extended these results to a class of fully nonlinear operators which are not necessarily convex. Notably, Berestycki-Dolcetta-Porretta-Rossi \cite{HIAL2015} proposed the novel notion of generalized principal eigenvalue and used it to characterize the validity of the maximum principle in bounded domains for fully nonlinear degenerate elliptic operators, which covers many known results in \cite{AB2008,BD2007,J2007}. For more on generalized principal eigentheory of nonlinear elliptic operators, we refer to \cite{L2018-1,LS2017} for Monge--Ampère operators and \cite{AH2022,P2011,AH2023} for infinity-Laplace operators. 

The existence of two principal eigenvalues for nonlinear operators was first observed by Pucci \cite{P1966} and Berestycki \cite{B1977}. In particular, these authors proved this property for a Pucci-type operator and Sturm-Liouville equations and showed that maximum and minimum principles can be characterized by the positivity of these two principal eigenvalues. It is worth remarking that when the operator is not odd with respect to the Hessian, the two principal eigenvalues can be distinct. 


Our result generalizes the eigentheory studied in \cite{BD2006} to a broader class of Hamiltonian terms. Our strategy is to adapt the methods in \cite{BD2006} to the present setting. To the best of our knowledge, in the study of generalized principal eigenvalues, two typical structural properties of the
operators are homogeneity and ellipticity. Therefore, we need to impose assumption {\upshape\hyperref[A6]{(A6)}}, enabling us to obtain the positive homogeneity of our operator. Furthermore, one of the main difficulties in our problem is a lack of suitable comparison principles, which arises from the generality of the function $\Phi$ and the Hamiltonian term $H$. To overcome this challenge, we introduce assumption {\upshape\hyperref[A5]{(A5)}}, which allows us to establish a comparison principle and apply Perron's method to prove the existence of principal eigenfunctions.  

The paper is organized as follows. In Section \ref{section:2}, we introduce the notation and structural assumptions and collect the preliminary results used throughout the paper. Section \ref{section:3} focuses on the global regularity theory and the comparison principle for the Dirichlet problem. More precisely, we first establish an ABP estimate and boundary Lipschitz regularity, then prove global $C^{1,\gamma}$-regularity, and finally derive the local estimates and comparison principle needed later. Section \ref{section:4} is devoted to the Harnack inequality after establishing the required technical tools and auxiliary estimates. Section \ref{section: 5} treats the Dirichlet principal eigenvalue problem. Section \ref{section: 6} applies the Harnack inequality to obtain Liouville-type results.
\section{\bf Preliminaries and structural assumptions} \label{section:2}
In this section, for the reader's convenience, we first review some fundamental notation, preliminary definitions, and several technical tools that will be frequently utilized throughout the paper.

Let $\Omega$ be a domain in $\mathbb{R}^n$ ($n \geq 2$) and denote its boundary by $\partial\Omega$. For $x_0 \in \mathbb{R}^n$ and $r > 0$, we denote by $B_r(x_0) = \left\{y \in \mathbb{R}^n : |y - x_0| < r \right\}$ the open ball centered at $x_0$ with radius $r$; when $x_0=0$, this ball is simply denoted by $B_r$. Let $\mathbb{S}^{n\times n}$ signify the set of $n\times n$ real symmetric matrices. 

To avoid cluttering the exposition, we shall use generic symbols such as $C, D,$ etc., to represent various positive constants that depend only on certain prescribed parameters. To specify their exact origin, these dependencies will be explicitly indicated within parentheses; for instance, writing $C \equiv C(n, i(\Phi), \nu_0)$ means that the constant $C$ depends exclusively on $n, i(\Phi),$ and $\nu_0$. Given two positive functions $f_1$ and $f_2$, we will write $f_1 \lesssim f_2$ if there exists a positive universal constant $C$ such that $f_1 \leq Cf_2$.

In addition, the notation $u \prec_z \varphi$ signifies that $\varphi$ touches $u$ from above strictly at the point $z$; that is, there exists an open ball $B_r(z)$ around $z$ such that $u(x) < \varphi(x)$ for all $x \in B_r(z) \backslash \{z\}$ and $u(z) = \varphi(z)$. Similarly, $u \succ_z \varphi$ denotes that $\varphi$ touches $u$ from below strictly at $z$. 

To define the notions of viscosity subsolutions and supersolutions, we introduce the following standard function spaces:
\begin{align*}
	&\mathrm{USC}(\Omega) = \left\{u:\Omega \to \mathbb{R};~u \text{ is upper semicontinuous}\right\},\\
	&\mathrm{LSC}(\Omega) = \left\{u:\Omega \to \mathbb{R};~ u \text{ is lower semicontinuous}\right\}.
\end{align*}
For a measurable map $f : \Omega \to \mathbb{R}^n$ and a given parameter $\gamma \in (0, 1]$, the H\"older semi-norm of $f$ is defined by
\begin{align*}
	[f]_{C^{0,\gamma}(\Omega)} := \sup_{\substack{x, y \in \Omega \\ x \neq y}} \frac{|f(x) - f(y)|}{|x - y|^\gamma}.
\end{align*}
Finally, given a boundary datum $g$, we denote by $\mathrm{Lip}_g(\partial \Omega)$ the optimal Lipschitz constant of $g$ restricted to the boundary $\partial \Omega$.

In this article, we shall focus on the fully nonlinear operator defined by
\begin{align}
	\label{mathcal_G}
	\mathcal{G}(x,u,\nabla u,D^2 u) := \Phi(x, |\nabla u|)F(D^2u) - H(x, \nabla u) + c(x)|u|^{i(\Phi)}u.
\end{align} 
We now state the main assumptions that form the core of our analysis throughout this paper.\\
\noindent\phantomsection\label{A1}\textbf{Assumption A1.} The fully nonlinear operator $F:\mathbb{S}^{n\times n}\to\mathbb{R}$ is continuous and uniformly $(\lambda,\Lambda)$-elliptic in the sense that
\begin{align*}
	\lambda \mathrm{tr}(N)\le F(M+N)-F(M)\le \Lambda\mathrm{tr}(N)
\end{align*}
holds for some constants $0 < \lambda \le \Lambda$ with $F(0)=0$, whenever $M,N\in\mathbb{S}^{n\times n}$ with $N\geq 0$.\\	
\noindent\phantomsection\label{A2}\textbf{Assumption A2.} The function $\Phi:\Omega\times[0,\infty)\to[0,\infty)$ is continuous and satisfies the following properties:
\begin{enumerate}
	\item [(i)] There exist constants $s(\Phi)\geq i(\Phi)>-1$ and $L \geq 1$ such that the map $t\mapsto \frac{\Phi(x,t)}{t^{i(\Phi)}}$ is almost non-decreasing in $(0,\infty)$ in the sense that
	\begin{align*}
		\frac{\Phi(x,t)}{t^{i(\Phi)}} \leq L \frac{\Phi(x,\tau)}{\tau^{i(\Phi)}} \mbox{ whenever }0<t \leq \tau<\infty \mbox{ and }x \in \Omega,
	\end{align*}
	and the map $t\mapsto \frac{\Phi(x,t)}{t^{s(\Phi)}}$ is almost non-increasing in the sense that
	\begin{align*}
		L\frac{\Phi(x,t)}{t^{s(\Phi)}} \geq \frac{\Phi(x,\tau)}{\tau^{s(\Phi)}} \mbox{ whenever }0<t \leq \tau<\infty \mbox{ and }x \in \Omega.
	\end{align*}
	\item [(ii)] There exist constants $0<\nu_0\le \nu_1$ such that $\nu_0\le \Phi(x,1)\le \nu_1$ for all $x\in\Omega$.
\end{enumerate}
\noindent\phantomsection\label{A3}\textbf{Assumption A3.} The Hamiltonian term $H: \overline{\Omega}\times \mathbb{R}^n \to \mathbb{R}$ is continuous, and there exists $0 < m \leq i(\Phi) + 1$ such that for all $(x,p) \in \overline{\Omega}\times\mathbb{R}^n$, the following properties hold:
\begin{itemize}
	\item [(i)] $H$ is positively $m$-homogeneous in $p$, that is, $H(x,tp) = t^m H(x,p)$ for any $t > 0$.
	\item [(ii)] $|H(x,p)| \leq \mathcal{C}|p|^m$ for some constant $\mathcal{C} > 0$.
\end{itemize}
\noindent\phantomsection\label{A4}\textbf{Assumption A4.} 
$c,h\in C(\Omega) \cap L^\infty(\Omega)$, and 
$g \in C^{1,\alpha_g}(\partial \Omega)$ for some $\alpha_g \in (0,1)$.

Before we proceed, we provide several remarks on the structural assumptions. To begin with, the Pucci extremal operators $\mathcal{P}^{\pm}_{\lambda,\Lambda}:\mathbb{S}^{n\times n}\to \mathbb{R}$ are defined by
\begin{align}
	\label{Pucci_operator}
	\mathcal{P}^{+}_{\lambda,\Lambda}(M): = \Lambda \displaystyle\sum_{e_k>0}e_k + \lambda \displaystyle\sum_{e_k<0}e_k, \mbox{ and }\mathcal{P}^{-}_{\lambda,\Lambda}(M) : = \lambda \displaystyle\sum_{e_k>0}e_k + \Lambda \displaystyle\sum_{e_k<0}e_k,
\end{align}
where $\{e_k\}_{k=1}^n$ are the eigenvalues of the matrix $M$. In terms of these extremal operators, the uniform $(\lambda,\Lambda)$-ellipticity of $F$ specified in assumption {\upshape\hyperref[A1]{(A1)}} can be equivalently formulated as
\begin{align}
	\label{Pucci_ope.}
	\mathcal{P}^{-}_{\lambda,\Lambda}(N) \leq F(M+N) - F(M) \leq \mathcal{P}^{+}_{\lambda,\Lambda}(N),
\end{align}
for all $M,N \in \mathbb{S}^{n\times n}$.

To illustrate the structural assumptions discussed above, we provide some concrete examples of $\Phi$ and the Hamiltonian $H$, respectively.
\begin{example}
	\label{Example-Phi}
	The following list provides several typical models for $\Phi(x,t)$ satisfying assumption \hyperref[A2]{\upshape(A2)}, along with admissible choices for the corresponding exponents $i(\Phi)$ and $s(\Phi)$:
	\begin{enumerate}[1.]
		\item Constant exponent power-type: $\Phi(x,\xi) = |\xi|^p$ for $p > -1$: $i(\Phi) = s(\Phi) = p$.
		\item Double phase type growth: $\Phi(x, \xi) = |\xi|^p + a(x)|\xi|^q$ for $-1 < p < q < \infty$ and $0 \leq a \in C(\Omega)$: $i(\Phi) = p$ and $s(\Phi) = q$.
		\item Variable exponent double phase type growth: $\Phi(x, \xi) = |\xi|^{p(x)} + a(x)|\xi|^{q(x)} \\\mbox{ for } p, q \in C(\Omega)$, with $-1<p^-:=\displaystyle\inf_\Omega p(x)\le q^+:=\displaystyle\sup_\Omega q(x)<\infty$, and $0 \leq a \in C(\Omega)$: $i(\Phi) = p^-$ and $s(\Phi) = q^+$.
		\item Logarithmically perturbed variable exponent growth: $\\\Phi(x, \xi) = |\xi|^{p(x)}\left(\log(e+|\xi|)\right)^{\alpha}$, where $p\in C(\Omega)$ satisfies $-1<p^-:=\displaystyle\inf_\Omega p(x) \leq p^+:=\displaystyle\sup_\Omega p(x)<\infty$, and $\alpha\in\mathbb{R}$. In this case, one can take
		\begin{align*}
			(i(\Phi),s(\Phi))=\left\{
			\begin{array}{ll}
				(p^-,p^+ +\varepsilon), &\alpha>0,\\[5pt]
				(p^-,p^+), &\alpha=0,\\[5pt]
				(p^- -\varepsilon,p^+), &\alpha<0,
			\end{array}
			\right.
		\end{align*}
		where $\varepsilon>0$ is fixed; in the case $\alpha<0$, we choose it so that $p^- -\varepsilon>-1$.
		\item Borderline variable exponent double phase type growth: $\Phi(x, \xi) = |\xi|^{p(x)} + a(x)|\xi|^{p(x)}\log(e+|\xi|)$, where $p\in C(\Omega)$ satisfies $-1<p^-:=\displaystyle\inf_\Omega p(x)\leq p^+:=\displaystyle\sup_\Omega p(x)<\infty$, $0\leq a\in C(\Omega)$. In this case, one can take
		\begin{align*}
			i(\Phi) = p^-, \mbox{ and }s(\Phi)=
			\begin{cases}
				p^+ + \varepsilon, & \mathfrak a\not\equiv 0\quad\text{ for any fixed } \varepsilon>0,\\
				p^+, & \mathfrak a\equiv 0.
			\end{cases}
		\end{align*}
		\item Multi-phase growth: $\Phi(x,\xi)=\displaystyle\sum_{k=1}^{M}a_k(x)\,|\xi|^{p_k(x)}$ for $M\in\mathbb{N}$, $a_k\in C(\Omega)$, with $a_k\geq 0$, and $p_k\in C(\overline{\Omega})$ with $-1<\inf\limits_{\Omega}p_k(x)\le \sup\limits_{\Omega}p_k(x)<\infty$. Assume further that there exist constants $0<A_0\le A_1<\infty$ such that
		\begin{align*}
			A_0 \le \sum_{k=1}^{M}a_k(x)\le A_1, \text{ for all }x\in\Omega.
		\end{align*}
		Then
		\begin{align*}
			i(\Phi)=\min_{1\le k\le M}\inf_{\Omega}p_k(x), \mbox{ and } s(\Phi)=\max_{1\le k\le M}\sup_{\Omega}p_k(x).
		\end{align*}
		\item Orlicz double phase type growth: $\Phi(x,\xi)=\psi(|\xi|)+a(x)\chi(|\xi|)$, where $0\le a\in C(\Omega)$ and $\psi,\chi:[0,\infty)\to[0,\infty)$ are continuous functions with $\psi(0)=\chi(0)=0$, $\psi(t),\chi(t)>0$ for $t>0$, and there exist exponents $-1<p_\psi\le q_\psi<\infty$, $-1<p_\chi\le q_\chi<\infty$ such that $t\mapsto \psi(t)/t^{p_\psi}$ and $t\mapsto \chi(t)/t^{p_\chi}$ are almost non-decreasing, while $t\mapsto \psi(t)/t^{q_\psi}$ and $t\mapsto \chi(t)/t^{q_\chi}$ are almost non-increasing:
		\begin{align*}
			i(\Phi)=\min\{p_\psi,p_\chi\}, \mbox{ and } s(\Phi)=\max\{q_\psi,q_\chi\}.
		\end{align*}
	\end{enumerate}
\end{example}
\begin{example}
	\label{Example-Hamiltonian}
	Some typical examples of Hamiltonians $H:\overline{\Omega} \times \mathbb{R}^n \to \mathbb{R}$ that satisfy assumption {\upshape\hyperref[A3]{(A3)}} are the following functions:
	\begin{enumerate}[1.]
		\item $H(x,p)=|p|^m$, where $0<m\le i(\Phi)+1$.
		\item $H(x,p)=|\langle b(x),p\rangle|^m$, where  $b\in C(\overline{\Omega})\cap L^\infty(\overline{\Omega})$ and $0<m\le i(\Phi)+1$.
		\item $H(x,p)=\langle b(x),p\rangle\,|p|^{m-1}$, where $b\in C(\overline{\Omega})\cap L^\infty(\overline{\Omega})$ and $0<m\le i(\Phi)+1$.
	\end{enumerate}
\end{example}
To establish our global regularity results, we assume that $\Omega \subset \mathbb{R}^n$ is a bounded $C^2$-domain, following the 
frameworks in \cite{BD2014,BBLL2024}. More precisely, up to a local coordinate system, we can suppose that $0 \in \partial\Omega$ and there exist a ball $B = B_R(0)$ and $\phi\in C^2(\mathbb{R}^{n-1})$ satisfying $\phi(0)=0,\nabla\phi(0)=0$, and
\begin{align*}
	\Omega\cap B\subset \{y\in B:y_n>\phi(y')\},\mbox{ and } \partial\Omega\cap B= \{y\in B:y_n=\phi(y')\}.
\end{align*}
We now recall the standard notion of viscosity solutions adapted from the framework of Birindelli and Demengel \cite{BD2004,BD2007} as follows: 
\begin{definition}[\bf Viscosity solution]\label{Def:Viscosity_solution}
	An upper semicontinuous function $v$ is called a viscosity subsolution of \eqref{eq:main} if for any $x_0 \in \Omega$
	\begin{itemize}
		\item either there exists $\delta >0$ such that $v$ is constant in $B_\delta(x_0)$ and $ h(x) \le c(x)|v(x_0)|^{i(\Phi)}v(x_0)$, for all $x \in B_\delta(x_0)$.
		\item or for each $\varphi \in C^2(\Omega)$ such that $v \prec_{x_0} \varphi$ and $\nabla\varphi(x_0) \neq 0$, one has
		\begin{align*}
			\Phi(x_0,|\nabla\varphi(x_0)|)F(D^2\varphi(x_0)) \hspace{-0.1cm}-\hspace{-0.1cm}H(x_0,\nabla \varphi(x_0)) \hspace{-0.1cm}+ c(x_0)|v(x_0)|^{i(\Phi)}v(x_0) \hspace{-0.1cm}\geq h(x_0).
		\end{align*}
	\end{itemize}
	Analogously, a lower semicontinuous function $w$ is called a viscosity supersolution of \eqref{eq:main} if for all $x_0 \in \Omega$
	\begin{itemize}
		\item either there exists $\delta >0$ such that $w$ is constant in $B_\delta(x_0)$ and $h(x) \geq c(x)|w(x_0)|^{i(\Phi)}w(x_0)$, for every $x \in B_\delta(x_0)$.
		\item or for each $\varphi \in C^2(\Omega)$ such that $w \succ_{x_0} \varphi$ and $\nabla\varphi(x_0) \neq 0$, one has
		\begin{align*}
			\Phi(x_0,|\nabla\varphi(x_0)|)F(D^2\varphi(x_0))\hspace{-0.1cm}-\hspace{-0.1cm}H(x_0,\nabla \varphi(x_0)) \hspace{-0.11cm}+ \hspace{-0.1cm}c(x_0)|w(x_0)|^{i(\Phi)}w(x_0) \hspace{-0.1cm}\leq h(x_0).
		\end{align*}
	\end{itemize}
	We say that $u \in C(\Omega)$ is called a viscosity solution if it is both a viscosity supersolution and a subsolution to \eqref{eq:main}.
\end{definition}
\begin{remark}
	It is noteworthy that the alternative conditions involving constant test states in Definition \ref{Def:Viscosity_solution} are crucial primarily in the singular regime $-1 < i(\Phi) < 0$, where the governing operator may suffer from degeneracy or a lack of definition when the gradient vanishes {\upshape(}$\nabla u = 0${\upshape)}. In the degenerate regime $i(\Phi) \geq 0$, this framework coincides with the standard classical definition of viscosity solutions {\upshape(}cf. \cite{CC1995,CIL1992}{\upshape)}. Furthermore, for any fixed modulation vector $\xi \in \mathbb{R}^n$, any viscosity solution $v$ of the perturbed equation
	\begin{align*}
		\Phi(x, |\xi + \nabla v|)F(D^2v) - H(x,\xi + \nabla v) = h(x) \text{ in }\Omega,
	\end{align*}
	directly induces a viscosity solution of \eqref{eq:main} with $c=0$ via the relation $u(x) = v(x) + \langle\xi,x\rangle$.
\end{remark}
	%
Following the arguments in \cite[Proposition 1.2]{BD2015} and \cite[Proposition 2.1]{BBLL2024}, we remark that the singular regime $-1 < i(\Phi) < 0$ can be transformed into a degenerate one as follows.
\begin{lemma}\label{transition}
	Assume that $-1<i(\Phi)<0$ and assumptions {\upshape\hyperref[A1]{(A1)}--\hyperref[A4]{(A4)}} hold. Let $u$ be a viscosity solution of \eqref{eq:main} in the sense of Definition \ref{Def:Viscosity_solution}. Then $u$ is a classical viscosity solution of 
	\begin{align*}
		&|\nabla u|^{-i(\Phi)}\Phi(x,|\nabla u|)F(D^2u)\hspace{-0.1cm}-\hspace{-0.1cm}|\nabla u|^{-i(\Phi)}H(x,\nabla u)\hspace{-0.1cm}+\hspace{-0.1cm}|\nabla u|^{-i(\Phi)}c(x)|u|^{i(\Phi)}u\\
		=&|\nabla u|^{-i(\Phi)}h(x) \text{ in }\Omega.
	\end{align*}
\end{lemma}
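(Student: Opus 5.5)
The plan is to check the classical (Crandall--Ishii--Lions) viscosity definition directly for the modified operator
\[
\widetilde{\mathcal G}(x,u,p,M):=|p|^{-i(\Phi)}\bigl(\Phi(x,|p|)F(M)-H(x,p)+c(x)|u|^{i(\Phi)}u-h(x)\bigr).
\]
First I will show that $\widetilde{\mathcal G}$ extends continuously to $p=0$, so that the classical definition makes sense with test functions whose gradient may vanish. Since $-i(\Phi)>0$, the factor $|p|^{-i(\Phi)}$ tends to $0$ as $p\to0$, which handles the terms $c|u|^{i(\Phi)}u$ and $h$. For the Hamiltonian, (A3)(ii) gives $|p|^{-i(\Phi)}|H(x,p)|\le\mathcal C|p|^{m-i(\Phi)}\to0$, because $m>0>i(\Phi)$. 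For the leading term, (A2)(i) with $\tau=1$ and (A2)(ii) give, for $|p|\le1$,
\[
|p|^{-i(\Phi)}\Phi(x,|p|)\le L\,\Phi(x,1)\le L\nu_1 .
\]
Continuity at $p=0$ is less clear: (A2)(i) with $t=|p|$ and $\tau=1$ only bounds this quantity from above, so it need not tend to $0$. This is the place where I expect the real difficulty, and I will try the following. In the classical definition, at points where the test gradient vanishes, the condition to verify is an inequality for the lower (respectively upper) semicontinuous envelope of $\widetilde{\mathcal G}$ at $p=0$. Using only boundedness, one can take the envelope values to be $\pm L\nu_1\,\mathcal P^{\pm}(M)$-type quantities. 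Following \cite[Proposition 1.2]{BD2015} and \cite[Proposition 2.1]{BBLL2024}, I will aim to prove the stronger statement with the value $0$ at $p=0$, as in the pure power case.

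Next, consider a subsolution $u$, a point $x_0$, and a test function $\varphi$ touching $u$ from above at $x_0$. If $\nabla\varphi(x_0)\neq0$, Definition~\ref{Def:Viscosity_solution} gives $\mathcal G\ge h$ at $x_0$. Multiplying by $|\nabla\varphi(x_0)|^{-i(\Phi)}>0$ gives the classical inequality. We may reduce to strict touching by the usual trick of adding $|x-x_0|^4$, which changes neither $\nabla\varphi(x_0)$ nor $D^2\varphi(x_0)$. If $u$ is locally constant near $x_0$, the first alternative of Definition~\ref{Def:Viscosity_solution} applies. Every $\varphi$ touching $u$ at $x_0$ then has zero gradient there, so the required inequality reduces to $0\ge0$ at $p=0$.

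The remaining case is $\nabla\varphi(x_0)=0$ with $u$ not locally constant. Here I follow the Ishii--Lions/Juutinen--Lindqvist--Manfredi argument used in \cite{BD2015}, aiming to show $F(D^2\varphi(x_0))\ge0$ up to the envelope. Suppose instead that $F(D^2\varphi(x_0))<0$, so that $\mathcal P^+(D^2\varphi(x_0))<0$ as well. Perturb $\varphi$ to
\[
\varphi_\delta(x)=\varphi(x)+\delta|x-x_0|^4 ,
\]
so that touching is strict, and slide it by small translations $x\mapsto\varphi_\delta(x-y)+\text{const}$. These yield touching points $x_y\to x_0$. If for some small $y$ the gradient at $x_y$ is nonzero, the subsolution inequality gives
\[
\Phi\,F(D^2\varphi_\delta(x_y-y))\ge H(x_y,\cdot)-c|u|^{i(\Phi)}u+h .
\]
After multiplying by $|p|^{-i(\Phi)}$ and letting $y\to0$, the right side tends to $0$ by the first step, while the left side stays $\le$ a negative quantity. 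Here it is important that the lower bound of $|p|^{-i(\Phi)}\Phi(x,|p|)$ be avoided: we only use the sign of $F$. This is a contradiction. If all the touching gradients vanish, a standard argument shows that $u$ agrees with a translate of $\varphi_\delta$ at a critical point for every small $y$. With the quartic term, this forces $u$ to be constant near $x_0$, contradicting the case assumption.

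Supersolutions are handled symmetrically. The main obstacle is this zero-gradient case, and specifically making the limit of the nonlinear factor $|p|^{-i(\Phi)}\Phi$ harmless. I would first try to argue purely with signs, as above, so that only the upper bound from (A2) is needed.
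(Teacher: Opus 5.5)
There is a genuine gap in the zero-gradient, non-locally-constant case, at exactly the step where you say only the sign of $F$ is used. Put $\Psi(x,t):=t^{-i(\Phi)}\Phi(x,t)$. At contact points $x_k\to x_0$ with $p_k\neq0$, $p_k\to0$ and $X_k\to X:=D^2\varphi(x_0)$, what you actually obtain is
\[
\Psi(x_k,|p_k|)\,F(X_k)\ \ge\ |p_k|^{-i(\Phi)}\bigl(H(x_k,p_k)-c(x_k)|u(x_k)|^{i(\Phi)}u(x_k)+h(x_k)\bigr).
\]
The right side tends to $0$ but can be negative, for instance when $h<0$. The left side is negative, but it also tends to $0$ whenever $\Psi(x_k,|p_k|)\to0$, and (A2) allows this. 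So you only get ``negative $\ge o(1)$'', which is no contradiction. What closes the argument is a positive lower bound for $\Psi$ near $(x_0,0)$, which is precisely what you propose to avoid.

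In fact your intermediate goal $F(D^2\varphi(x_0))\ge0$ is false under the hypotheses of the lemma. Take $\Phi(x,t)=t^{1/2}$ with the admissible choice $i(\Phi)=-1/2$, $s(\Phi)=1/2$, together with $F=\mathrm{tr}$, $H\equiv0$, $c\equiv0$, $h\equiv-1$. Then $u(x)=-c_0|x_1|^{5/3}$, for a suitable $c_0>0$, is a solution in the sense of Definition~\ref{Def:Viscosity_solution}: on $\{x_1=0\}$ every test function has zero gradient and $u$ is not locally constant there. Yet $-\epsilon x_1^2+|x|^4$ touches $u$ strictly from above at $0$, with zero gradient and a Hessian of trace $-2\epsilon<0$.

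Two related points. In the pure power case $\Psi\equiv1$, so the transformed operator equals $F(X)$ at $p=0$, not $0$. Also, replacing the envelope at $p=0$ by a $\pm L\nu_1\mathcal P^{\pm}$ expression changes the equation being solved.

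The repair is to state precisely what the classical definition requires at $p=0$. Since $m>0>i(\Phi)$ and $\Psi\le L\nu_1$ for $t\le1$, the $H$-, $c$- and $h$-terms of the transformed operator vanish as $p\to0$. Hence, for a subsolution at a zero-gradient test point, the requirement is $\limsup\Psi(y,|q|)F(Y)\ge0$ as $(y,q,Y)\to(x_0,0,X)$ with $q\neq0$.
\begin{itemize}
\item If $F(X)\ge0$, this holds automatically.
\item If $F(X)<0$, the left side equals $F(X)\,\ell$, where $\ell:=\liminf_{(y,q)\to(x_0,0)}\Psi(y,|q|)$. When $\ell=0$ the requirement is trivial; this is always the case if $\Phi$ is continuous up to $t=0$, as (A2) literally states.
\item Only when $\ell>0$ is your sliding argument needed. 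It then closes, because $\Psi(x_k,|p_k|)F(X_k)\le\tfrac{\ell}{4}F(X)<0$ for large $k$.
\end{itemize}

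In the branch where all contact gradients vanish, the quartic term is not what forces constancy. Do the following instead:
\begin{itemize}
\item Replace $\varphi$ by the quadratic with Hessian $X+\eta I$; it still has $F<0$ and still touches strictly.
\item Slide it by linear terms $\langle q,x-x_0\rangle$, and use $q\notin\mathrm{Range}(X+\eta I)$ to exclude a singular Hessian.
\item Then every $z$ near $x_0$ is a zero-gradient contact point. This gives $|u(x)-u(z)|\le C|x-z|^2$, so $u$ is locally constant, contradicting the case assumption.
\end{itemize}
With domain translations of a general $\varphi_\delta$, the critical set need not be isolated, so this step does not follow as you wrote it.
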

Note that the map $\Psi: \Omega \times [0,\infty) \to [0,\infty)$ given by $\Psi(x,t)\hspace{-0.11cm}:=t^{-i(\Phi)}\Phi(x,t)$, one can readily verify that $\Psi$ satisfies assumption {\upshape\hyperref[A2]{(A2)}} with $i(\Psi)=0$ and $s(\Psi)=s(\Phi)-i(\Phi)$. Moreover, after multiplying the equation by $|\nabla u|^{-i(\Phi)}$, the Hamiltonian term remains controlled; more precisely, by {\upshape\hyperref[A3]{(A3)}}, we have
\begin{align*}
	|p|^{-i(\Phi)}|H(x,p)| \leq \mathcal{C}|p|^{m-i(\Phi)},
\end{align*}
where $0 < m - i(\Phi) \leq 1$. 

We also recall the standard notions of second-order superjets and subjets introduced in \cite[Section 2]{CIL1992}.
\begin{definition}[\bf Superjet and subjet]
	Let $v: \Omega \to \mathbb{R}$ be an upper semicontinuous function and $w: \Omega \to \mathbb{R}$ be a lower semicontinuous function. For any $x_0 \in \Omega$, the \textit{second-order superjet} of $v$ at $x_0$ is defined by
	\begin{align*}
		J_{\Omega}^{2,+}v(x_0) := \bigg\{ &(p,M) \in \mathbb{R}^n \times \mathbb{S}^{n\times n} : v(x) \leq v(x_0) + \langle p, x - x_0 \rangle \\ 
		&+ \frac{1}{2}\langle M(x - x_0), x - x_0 \rangle + o(|x - x_0|^2) \text{ as } x \to x_0 \bigg\},
	\end{align*}
	and the \textit{second-order subjet} of $w$ at $x_0$ is defined by
	\begin{align*}
		J_\Omega^{2,-}w(x_0) := \bigg\{ &(p,M) \in \mathbb{R}^n \times \mathbb{S}^{n\times n} : w(x) \geq w(x_0) + \langle p, x - x_0 \rangle \\ 
		&+\frac{1}{2} \langle M(x - x_0), x - x_0 \rangle + o(|x - x_0|^2) \text{ as } x \to x_0 \bigg\}.
	\end{align*}
	The closure of the superjet (the set of limiting superjets) is given by
	\begin{align*}
		\overline{J}_{\Omega}^{2,+}v(x_0)=\{(p,X)\in\mathbb{R}^n\times \mathbb{S}^{n\times n}:\exists (p_n,X_n)\in J_{\Omega}^{2,+}v(x_n)\text{ such that }\\(x_n,v(x_n),p_n,X_n)\to (x_0,v(x_0),p,X)\}.
	\end{align*}
	Similarly, we define the closure of the subjet (the set of limiting subjets), denoted by $\overline{J}^{2,-}_\Omega w(x_0)$.
\end{definition}

We now construct two barrier functions to be utilized throughout this paper.
\begin{lemma}[\bf Construction of barrier functions]
	\label{Lem:barrier-construction}
	Suppose assumptions {\upshape\hyperref[A1]{(A1)}--\hyperref[A4]{(A4)}} are in force and $c \le  0$ in $\overline{\Omega}$. Consider the equation
	\begin{align}\label{eq:barrier-construction-main}
		\Phi(x,|\nabla u|)F(D^2u)-H(x,\nabla u)+(c(x)+\beta)|u|^{i(\Phi)}u =h(x),
	\end{align} 
	where the domain is specified in each case below. Then the following barrier functions can be constructed.
	\begin{enumerate}[\upshape(B1)]
		\item \phantomsection\label{barrier_harnack}{\upshape (Interior barrier)}. When $\beta=0$, there exist constants $p>0$ and $K_\ast>0$ such that for every $K_1 \geq K_\ast$ the function
		\begin{align}
			\label{eq:barrier_harnack}
			\chi_{\mathrm{int}}(x):=K_1\left(|x|^{-p}-2^{-p}\right)
		\end{align}
		is a viscosity subsolution to \eqref{eq:barrier-construction-main} in $B_2\setminus \overline{B_{1/4}}$.
		\item \phantomsection\label{barrier_eigenvalue}{\upshape (Exterior-sphere barrier)}. Let $r_0>0$ be the radius of the uniform exterior sphere condition for $\Omega$. Fix $z\in\partial\Omega$ and $r_1\in\left(0,r_0\wedge 1\right)$. Then there exists $y_z\in\mathbb{R}^n\setminus\overline{\Omega}$ such that $\overline{B_{r_1}(y_{z})}\cap\overline{\Omega}=\{z\}$. Moreover, there exist constants $\alpha<0$ and $K_2\geq 1$ such that the function
		\begin{align}
			\label{eq:barrier_eigenvalue}
			\chi^z_{\mathrm{ext}}(x):=K_2\left(r_1^\alpha-|x-y_{z}|^\alpha\right)
		\end{align}
		is a viscosity supersolution to \eqref{eq:barrier-construction-main} with $\beta = 0$ in $\Omega$. Furthermore, for each fixed $k>1$, choosing $K_2$ larger if necessary, there exists $\delta>0$ such that the same type of barrier satisfies the sharper estimate corresponding to $\beta=-k\|c\|_{L^\infty(\Omega)}$ in the boundary layer \begin{align*}
			\left\{x\in\Omega:r_1<|x-y_{z}|<r_1+\delta\right\}.   
		\end{align*}.
	\end{enumerate}
\end{lemma}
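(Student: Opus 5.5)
Both barriers are radial and smooth away from their centre, with gradients that never vanish in the relevant region. So the viscosity inequalities reduce to pointwise classical inequalities: at any point where a smooth function with nonzero gradient touches $\chi$, the test function's gradient equals $\nabla\chi$ and its Hessian is bounded by $D^2\chi$ in the appropriate direction. Ellipticity then lets us evaluate the operator on $\chi$ itself. The plan is to compute $\nabla\chi$ and $D^2\chi$ explicitly, bound $F(D^2\chi)$ by Pucci operators via \eqref{Pucci_ope.} with $F(0)=0$, and then show that the second-order term dominates $H$, $c|\chi|^{i(\Phi)}\chi$ and $h$ for a suitable choice of exponent and amplitude.

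For \hyperref[barrier_harnack]{(B1)}, write $r=|x|\in(1/4,2)$. The Hessian of $|x|^{-p}$ has radial eigenvalue $p(p+1)r^{-p-2}$ and tangential eigenvalues $-pr^{-p-2}$ (multiplicity $n-1$). Hence
\[
F(D^2\chi_{\mathrm{int}})\ge \mathcal P^-(D^2\chi_{\mathrm{int}})=K_1pr^{-p-2}\big(\lambda(p+1)-\Lambda(n-1)\big)\ge K_1 r^{-p-2}
\]
once $p\ge p_0(n,\lambda,\Lambda)$. The gradient has size $|\nabla\chi_{\mathrm{int}}|=K_1pr^{-p-1}$, which is bounded below on the annulus by $c_pK_1$. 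So for $K_1\ge1$ we have $t:=|\nabla\chi|\ge c_p$, and (A2) gives $\Phi(x,t)\ge L^{-1}\nu_0\min\{t^{i(\Phi)},t^{s(\Phi)}\}$; for $K_1$ large this is $\gtrsim t^{i(\Phi)}$. The principal term is therefore at least $c\,K_1^{1+i(\Phi)}$. On the other side, (A3) gives $|H|\le \mathcal C t^m\lesssim K_1^{m}$ with $m\le 1+i(\Phi)$. The zeroth-order term has a sign: $c\le0$ and $\chi_{\mathrm{int}}\ge0$ on the annulus, so $c|\chi|^{i(\Phi)}\chi\ge -\|c\|_\infty(CK_1)^{1+i(\Phi)}$, which has the wrong sign.

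This last term is the main obstacle, since it scales exactly like the principal term, $K_1^{1+i(\Phi)}$. It cannot be absorbed by enlarging $K_1$. Instead I would use $p$: the principal term carries the factor $p^{1+i(\Phi)}\cdot p$ (one factor from $\Phi$, one from the Hessian), while $|\chi|\le K_1 4^{p}$. If that ratio is not favourable, I would replace the bound by working where $\chi$ is small relative to its gradient. The fallback is to require $p$ large enough that $p^{2+i(\Phi)}(4/2)^{\cdots}$ dominates $\|c\|_\infty 4^{p(1+i(\Phi))}$ times the comparable power of $r$. Concretely, both sides involve $r^{-p(1+i(\Phi))}$ up to a fixed factor $r^{-1-i(\Phi)}$ or $r^{-2-i(\Phi)}$, and $|\chi|\le K_1r^{-p}$. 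The comparison therefore reduces to $p^{2+i(\Phi)}\gtrsim\|c\|_\infty$, which holds for $p$ large. In the case $m=1+i(\Phi)$, the $H$ term is handled the same way: it contributes $\mathcal C p^{m}$ against $p^{2+i(\Phi)}$. Finally, $h$ is bounded, so it is absorbed by taking $K_1\ge K_\ast$. This choice fixes $p$ first, then $K_\ast$.

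For \hyperref[barrier_eigenvalue]{(B2)}, the existence of $y_z$ is just the uniform exterior sphere condition (shrink the radius to $r_1$ along the same normal). Set $\rho=|x-y_z|\in(r_1,\operatorname{diam}\Omega+r_1)$. Then $\chi^z_{\mathrm{ext}}$ is positive, increasing in $\rho$, with radial second derivative $-K_2|\alpha|(|\alpha|+1)\rho^{\alpha-2}<0$ and tangential eigenvalues $K_2|\alpha|\rho^{\alpha-2}$. Thus
\[
\mathcal P^+(D^2\chi)=K_2|\alpha|\rho^{\alpha-2}\big(\Lambda(n-1)-\lambda(|\alpha|+1)\big)\le -K_2\rho^{\alpha-2}
\]
for $|\alpha|$ large. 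Since $\chi\ge0$ and $c\le0$, the zeroth-order term is now $\le0$ and helps. The gradient is bounded below uniformly on $\Omega$, so the same $\Phi$ and $H$ comparison as above, with $K_2$ large, gives that the operator applied to $\chi$ is $\le -\|h\|_\infty\le h$, i.e. $\chi$ is a supersolution.

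For $\beta=-k\|c\|_\infty$ the extra term $-k\|c\|_\infty\chi^{1+i(\Phi)}$ is nonpositive, so it only helps the supersolution inequality. If the intended sign convention is instead $+k\|c\|_\infty$, note that on the layer $\rho<r_1+\delta$ we have $\chi\le K_2|\alpha|r_1^{\alpha-1}\delta$. This is small relative to $|\nabla\chi|$, so for $\delta$ small this term is absorbed by the principal part exactly as in (B1).
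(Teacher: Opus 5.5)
Your (B1) argument is the paper's. You correctly see that $c\,\chi_{\mathrm{int}}^{i(\Phi)+1}$ scales like $K_1^{i(\Phi)+1}$, exactly as the principal term does. So this term, and $H$ when $m=i(\Phi)+1$, must be beaten through the factor $p^{i(\Phi)+2}$. Only then is $K_\ast\ge 2^{p+1}/p$ chosen, large enough that $|\nabla\chi_{\mathrm{int}}|\ge1$ and $h$ is absorbed.

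In the global part of (B2), however, the bound $\mathcal{P}^+_{\lambda,\Lambda}(D^2\chi^z_{\mathrm{ext}})\le -K_2\rho^{\alpha-2}$ is too crude to finish with ``$K_2$ large''. With $t=K_2|\alpha|\rho^{\alpha-1}\ge1$ it only gives $\Phi F\le -\frac{\nu_0}{L}t^{i(\Phi)+1}/(|\alpha|\rho)$. For $m=i(\Phi)+1$, $H$ contributes up to $\mathcal{C}t^{i(\Phi)+1}$, which has the same $K_2$-scaling and wins as $|\alpha|$ grows. You must keep the full factor:
\[
\Phi F-H\le t^{i(\Phi)+1}\Bigl[\mathcal{C}-\frac{\nu_0}{L\rho}\bigl(\lambda(|\alpha|+1)-\Lambda(n-1)\bigr)\Bigr].
\]
Then choose $|\alpha|$ large in terms of $\mathcal{C}$, $L/\nu_0$ and $R:=\operatorname{diam}\Omega+r_1$, so the bracket is $\le-\kappa<0$ on $\Omega$. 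Only afterwards take $K_2$ large, with $K_2|\alpha|\ge R^{1-\alpha}$ so that $t\ge1$. This is the paper's order of choices.

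The genuine gap is the boundary-layer clause. Suppose you read it as ``supersolution with $\beta=-k\|c\|_{L^\infty(\Omega)}$ and the same right-hand side $h$''. Then, as you note, it is weaker than the $\beta=0$ statement, nothing is ``sharper'', and ``choosing $K_2$ larger'' has no purpose. The sign-flip reading is not intended either. What the paper proves, and what Lemma \ref{dist} uses, is
\[
\Phi(x,|\nabla\chi^z_{\mathrm{ext}}|)F(D^2\chi^z_{\mathrm{ext}})-H(x,\nabla\chi^z_{\mathrm{ext}})+\bigl(c(x)-k\|c\|_{L^\infty(\Omega)}\bigr)(\chi^z_{\mathrm{ext}})^{i(\Phi)+1}\le-\|h\|_{L^\infty(\Omega)}-k^{i(\Phi)+2}\|c\|_{L^\infty(\Omega)}.
\]
The right-hand side must drop by $k^{i(\Phi)+2}\|c\|_{L^\infty(\Omega)}$. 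This matches a solution $u$ with $\|u\|_{L^\infty(\Omega)}<k$: with coefficient $c-k\|c\|_{L^\infty(\Omega)}$, $u$ satisfies the inequality $\ge h-k^{i(\Phi)+2}\|c\|_{L^\infty(\Omega)}$, so Lemma \ref{Comp.Prin} can compare the two. Your corrected global estimate gives this at once. With $\alpha$ fixed, $t\ge K_2|\alpha|R^{\alpha-1}$ on $\Omega$, so $\Phi F-H\le-\kappa\,(K_2|\alpha|R^{\alpha-1})^{i(\Phi)+1}$, while the reaction term is $\le0$. Enlarging $K_2$ therefore reaches any prescribed negative constant. The layer and $\delta$ are in fact not needed for this inequality; in Lemma \ref{dist} they serve only to arrange $\chi^z_{\mathrm{ext}}\ge k$ on $\{|x-y_z|=r_1+\delta\}$.
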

\begin{figure}[h]
	\centering
	\begin{subfigure}{0.39\textwidth}
		\centering
		\tikzset{every picture/.style={line width=0.75pt}}
		\resizebox{0.92\linewidth}{!}{\begin{tikzpicture}[x=0.75pt,y=0.75pt,yscale=-1,xscale=1]
			\draw  [color={rgb, 255:red, 32; green, 130; blue, 242 }  ,draw opacity=1 ][fill={rgb, 255:red, 32; green, 130; blue, 242 }  ,fill opacity=0.29 ] (242,139.5) .. controls (242,72.95) and (295.95,19) .. (362.5,19) .. controls (429.05,19) and (483,72.95) .. (483,139.5) .. controls (483,206.05) and (429.05,260) .. (362.5,260) .. controls (295.95,260) and (242,206.05) .. (242,139.5) -- cycle ;
			\draw  [color={rgb, 255:red, 32; green, 130; blue, 242 }  ,draw opacity=1 ][dash pattern={on 4.5pt off 4.5pt}] (274.37,139.5) .. controls (274.37,90.83) and (313.83,51.37) .. (362.5,51.37) .. controls (411.17,51.37) and (450.63,90.83) .. (450.63,139.5) .. controls (450.63,188.17) and (411.17,227.63) .. (362.5,227.63) .. controls (313.83,227.63) and (274.37,188.17) .. (274.37,139.5) -- cycle; 
			\draw  [color={rgb, 255:red, 32; green, 130; blue, 242 }  ,draw opacity=1 ][dash pattern={on 4.5pt off 4.5pt}] (304.23,139.5) .. controls (304.23,107.32) and (330.32,81.23) .. (362.5,81.23) .. controls (394.68,81.23) and (420.77,107.32) .. (420.77,139.5) .. controls (420.77,171.68) and (394.68,197.77) .. (362.5,197.77) .. controls (330.32,197.77) and (304.23,171.68) .. (304.23,139.5) -- cycle ;
			\draw  [fill={rgb, 255:red, 255; green, 255; blue, 255 }  ,fill opacity=1 ] (330.04,139.5) .. controls (330.04,121.57) and (344.57,107.04) .. (362.5,107.04) .. controls (380.43,107.04) and (394.96,121.57) .. (394.96,139.5) .. controls (394.96,157.43) and (380.43,171.96) .. (362.5,171.96) .. controls (344.57,171.96) and (330.04,157.43) .. (330.04,139.5) -- cycle ;
			
			\draw (445,29.6) node [anchor=north west][inner sep=0.75pt]  [font=\scriptsize] [align=left] {$\displaystyle B_{2}$};
			\draw (356.06,132.03) node [anchor=north west][inner sep=0.75pt]  [font=\scriptsize] [align=left] {$\displaystyle B_{1/4}$};
			\draw (354.77,63.78) node [anchor=north west][inner sep=0.75pt]  [font=\scriptsize] [align=left] {$\displaystyle \chi _{int}$};
		\end{tikzpicture}}
		\caption{Interior barrier}
		\label{fig:sub1}
	\end{subfigure}
	\hfill
	\begin{subfigure}{0.6\textwidth}
		\centering
		\tikzset{every picture/.style={line width=0.75pt}}
		\resizebox{0.92\linewidth}{!}{\hspace*{3cm}\begin{tikzpicture}[x=0.75pt,y=0.75pt,yscale=-1,xscale=1]
			\draw [fill={rgb, 255:red, 245; green, 166; blue, 35 }  ,fill opacity=0.07 ][line width=0.75] [line join = round][line cap = round]   (280.09,22.04) .. controls (294.54,4.7) and (297.6,10.21) .. (319.66,19.81) .. controls (323.55,21.51) and (328,24.4) .. (345.5,34.5) .. controls (355.5,44) and (361.84,74.76) .. (363.5,87.5) .. controls (364.5,97.5) and (393.5,229) .. (324.55,247.92) .. controls (311.59,254.4) and (288.16,253.54) .. (275.64,253.7) .. controls (257.79,253.92) and (244.66,244.13) .. (233.4,231.91) .. controls (229.43,227.6) and (225.07,224.85) .. (222.73,219.46) .. controls (212.58,196.03) and (216.63,161.04) .. (225.84,137.64) .. controls (232.93,119.67) and (249.57,106.83) .. (256.97,89.18) .. controls (262.25,76.59) and (263.87,75.28) .. (263.19,64.72) .. controls (262.8,58.61) and (260.15,48.47) .. (263.64,42.05) .. controls (266.88,36.08) and (275.71,27.22) .. (280.09,22.04) -- cycle ;
			\draw  [color={rgb, 255:red, 32; green, 130; blue, 242 }  ,draw opacity=1 ][fill={rgb, 255:red, 32; green, 130; blue, 242 }  ,fill opacity=0.32 ] (418.75,102.43) .. controls (445.67,102.43) and (467.5,124.25) .. (467.5,151.18) .. controls (467.5,178.1) and (445.67,199.93) .. (418.75,199.93) .. controls (391.83,199.93) and (370,178.1) .. (370,151.18) .. controls (370,124.25) and (391.83,102.43) .. (418.75,102.43) -- cycle ;
			\draw [color={rgb, 255:red, 32; green, 130; blue, 242 }  ,draw opacity=1 ] [dash pattern={on 4.5pt off 4.5pt}]  (370,151.18) -- (418.75,151.18) ;
			\draw [shift={(418.75,151.18)}, rotate = 0] [color={rgb, 255:red, 32; green, 130; blue, 242 }  ,draw opacity=1 ][fill={rgb, 255:red, 32; green, 130; blue, 242 }  ,fill opacity=1 ][line width=0.75]      (0, 0) circle [x radius= 3.35, y radius= 3.35]   ;
			\draw [shift={(370,151.18)}, rotate = 0] [color={rgb, 255:red, 32; green, 130; blue, 242 }  ,draw opacity=1 ][fill={rgb, 255:red, 32; green, 130; blue, 242 }  ,fill opacity=1 ][line width=0.75]      (0, 0) circle [x radius= 3.35, y radius= 3.35]   ;
			\draw  [draw opacity=0][dash pattern={on 3.75pt off 3pt on 7.5pt off 3pt}] (366.57,186.77) .. controls (351.62,184.74) and (340,169.58) .. (340,151.18) .. controls (340,133.16) and (351.14,118.24) .. (365.65,115.73) -- (370,151.18) -- cycle ; \draw  [color={rgb, 255:red, 32; green, 130; blue, 242 }  ,draw opacity=1 ][dash pattern={on 3.75pt off 3pt on 7.5pt off 3pt}] (366.57,186.77) .. controls (351.62,184.74) and (340,169.58) .. (340,151.18) .. controls (340,133.16) and (351.14,118.24) .. (365.65,115.73) ;  
			\draw  [draw opacity=0][dash pattern={on 4.5pt off 4.5pt}] (361.95,196.31) .. controls (340.25,192.57) and (323.75,173.93) .. (323.75,151.5) .. controls (323.75,127.84) and (342.1,108.4) .. (365.57,106.21) -- (370,151.5) -- cycle ; \draw  [color={rgb, 255:red, 32; green, 130; blue, 242 }  ,draw opacity=1 ][dash pattern={on 4.5pt off 4.5pt}] (361.95,196.31) .. controls (340.25,192.57) and (323.75,173.93) .. (323.75,151.5) .. controls (323.75,127.84) and (342.1,108.4) .. (365.57,106.21) ;  
			\draw  [draw opacity=0][dash pattern={on 3.75pt off 3pt on 7.5pt off 3pt}] (361.86,205.36) .. controls (331.22,201.32) and (307.75,178.52) .. (307.75,151) .. controls (307.75,123.69) and (330.86,101.04) .. (361.16,96.74) -- (371.63,151) -- cycle ; \draw  [color={rgb, 255:red, 32; green, 130; blue, 242 }  ,draw opacity=1 ][dash pattern={on 3.75pt off 3pt on 7.5pt off 3pt}] (361.86,205.36) .. controls (331.22,201.32) and (307.75,178.52) .. (307.75,151) .. controls (307.75,123.69) and (330.86,101.04) .. (361.16,96.74) ;  
			\draw  [color={rgb, 255:red, 32; green, 130; blue, 242 }  ,draw opacity=1 ] (419.5,146) .. controls (419.5,141.33) and (417.17,139) .. (412.5,139) -- (405.5,139) .. controls (398.83,139) and (395.5,136.67) .. (395.5,132) .. controls (395.5,136.67) and (392.17,139) .. (385.5,139)(388.5,139) -- (378,139) .. controls (373.33,139) and (371,141.33) .. (371,146) ;
			
			\draw (293,52) node [anchor=north west][inner sep=0.75pt]  [font=\large] [align=left] {$\displaystyle \Omega $};
			\draw (405,153) node [anchor=north west][inner sep=0.75pt]  [font=\scriptsize] [align=left] {$\displaystyle y_{z}$};
			\draw (436,93) node [anchor=north west][inner sep=0.75pt]  [font=\scriptsize] [align=left] {$\displaystyle B_{r_{1}}( y_{z})$};
			\draw (351,140) node [anchor=north west][inner sep=0.75pt]  [font=\scriptsize] [align=left] {$\displaystyle z$};
			\draw (292.23,167.18) node [anchor=north west][inner sep=0.75pt]  [font=\scriptsize] [align=left] {$\displaystyle \chi _{ext}^{z}$};
			\draw (391,116) node [anchor=north west][inner sep=0.75pt]  [font=\scriptsize] [align=left] {$\displaystyle r_{1}$};		
		\end{tikzpicture}}
		\caption{Exterior-sphere barrier}
		\label{fig:sub2}
	\end{subfigure}
	\caption{Geometric illustration of the interior and exterior-sphere barrier constructions}
	\label{fig:main}
\end{figure}
\begin{proof}
	\textbf{}
	
	\textit{Step 1: Proof of \upshape\hyperref[barrier_harnack]{(B1)}.} Suppose $\beta =0$. Let us consider the barrier function \eqref{eq:barrier_harnack}. For $x \in B_2\setminus \overline{B_{1/4}}$, a direct calculation shows that
	\begin{align*}
		&\nabla \chi_{\mathrm{int}}(x) = -K_1p|x|^{-p-2}x,\\
		&D^2\chi_{\mathrm{int}}(x) = -K_1p|x|^{-p-2}I + K_1p(p+2)|x|^{-p-4}(x \otimes x).
	\end{align*}
	The eigenvalues of $D^2\chi_{\mathrm{int}}(x)$ are $K_1p(p+1)|x|^{-p-2}$ of multiplicity $1$ and $-K_1p|x|^{-p-2}$ of multiplicity $n-1$. Then 
	\begin{align*}
		&\mathrm{tr}((D^2(\chi_{\mathrm{int}}(x)))^+) = K_1p(p+1)|x|^{-p-2},\\
		&\mathrm{tr}((D^2(\chi_{\mathrm{int}}(x)))^-) = K_1(n-1)p|x|^{-p-2}.
	\end{align*}
	By the structural assumptions on $\Phi$ and $F$, we have
	\begin{align*}
		&\Phi(x,|\nabla \chi_{\mathrm{int}}(x)|)F(D^2\chi_{\mathrm{int}}(x))\\
		&\geq \frac{\nu_0}{L}
		\min\left\{
		\left(K_1p|x|^{-p-1}\right)^{i(\Phi)},
		\left(K_1p|x|^{-p-1}\right)^{s(\Phi)}
		\right\}\\
		&\qquad\times K_1p|x|^{-p-1}|x|^{-1}
		\left(\lambda(p+1)-\Lambda(n-1)\right).
	\end{align*}
	Here, if we let $A = K_1p|x|^{-p-1}$, then
	\begin{align*}
		A\min\left\{A^{i(\Phi)},A^{s(\Phi)}\right\} &= A\left[A^{-1}\min\left\{A^{i(\Phi)+1},A^{s(\Phi)+1}\right\}\right]\\
		&\geq\min\left\{\left(\frac{K_1p}{|x|^{p+1}}\right)^{i(\Phi)+1},\left(\frac{K_1p}{|x|^{p+1}}\right)^{s(\Phi)+1}\right\}.
	\end{align*}
	We need to select $K_1$ such that $K_1p\geq 2^{p+1}$. Since $0 < i(\Phi) +1 \leq s(\Phi)+1 $, we conclude that
	\begin{align*}
		A\min\left\{A^{i(\Phi)},A^{s(\Phi)}\right\} \geq \left(\frac{p}{|x|^{p+1}}\right)^{i(\Phi)+1}K_1^{i(\Phi)+1}.
	\end{align*}  
	Hence, we deduce that
	\begin{align*}
		&\Phi(x,|\nabla \chi_{\mathrm{int}}(x)|)F(D^2\chi_{\mathrm{int}}(x)) \\
		&\geq 
		\frac{\nu_0}{L}K_1^{i(\Phi)+1}p^{i(\Phi)+1}|x|^{-(p+1)(i(\Phi)+1) - 1}\left[\lambda(p+1) - \Lambda (n-1)\right],
	\end{align*}
	for all $x$ such that $1/4\leq |x|\leq 2$. Choose $p>0$ large enough such that $\lambda(p+1) - \Lambda (n-1) \geq \lambda p/2$. Then there exists $C_1 = C_1(\lambda,\nu_0,L) > 0$ so that
	\begin{align*}
		\Phi(x,|\nabla \chi_{\mathrm{int}}(x)|)F(D^2\chi_{\mathrm{int}}(x)) \geq C_1K_1^{i(\Phi)+1}p^{i(\Phi)+2}|x|^{-(p+1)\left(i(\Phi) + 1\right) -1}.
	\end{align*}
	Moreover, based on assumption {\upshape\hyperref[A3]{(A3)}} and using $c^-= \max\{-c,0\}$, we obtain 
	\begin{align*}
		H(x,\nabla \chi_{\mathrm{int}}(x)) - c(x)(\chi_{\mathrm{int}}(x))^{i(\Phi)+1}&\leq \mathcal{C}\left(K_1p|x|^{-p-1}\right)^{m} \\&\hspace{-0.4cm}+K_1^{i(\Phi)+1}\|c^-\|_{L^\infty(B_{2})}\hspace{-0.1cm}\left(|x|^{-p}-2^{-p}\right)^{i(\Phi)+1}.
	\end{align*}
	Since $K_1p\geq 2^{p+1}$, we have $K_1p|x|^{-p-1}\geq1$ in the annulus. Hence, as $m \leq i(\Phi)+1$, we deduce
	\begin{align*}
		&H(x,\nabla \chi_{\mathrm{int}}(x)) - c(x)(\chi_{\mathrm{int}}(x))^{i(\Phi)+1}\\
		&\leq K_1^{i(\Phi)+1}|x|^{-(p+1)(i(\Phi)+1)}\left[\mathcal{C}p^{i(\Phi)+1} + \|c^-\|_{L^\infty(B_{2})}|x|^{i(\Phi)+1}\right].
	\end{align*}
	Consequently, on the annulus $\{1/4\le|x|\le 2\}$, we derive the following estimate 
	\begin{align*}
		&\Phi(x,|\nabla \chi_{\mathrm{int}}(x)|)F(D^2\chi_{\mathrm{int}}(x)) -H(x,\nabla \chi_{\mathrm{int}}(x))
		+c(x)(\chi_{\mathrm{int}}(x))^{i(\Phi)+1}\\ 
		&\geq K_1^{i(\Phi)+1}|x|^{-(p+1)(i(\Phi)+1)}\left[C_1p^{i(\Phi)+2} - \mathcal{C}p^{i(\Phi)+1} - 2^{i(\Phi)+1}\|c^-\|_{L^\infty(B_{2})}\right].
	\end{align*}
	Therefore, we choose $p>0$ sufficiently large so that $C_1p^{i(\Phi)+2} - \mathcal{C}p^{i(\Phi)+1} - 2^{i(\Phi)+1}\|c^-\|_{L^\infty(B_{2})} >0$. We then choose $K_\ast>0$ sufficiently large, depending also on $\|h\|_{L^\infty(B_2)}$ so that, for every $K_1\geq K_\ast$, we obtain the following inequality in $B_2\setminus \overline{B_{1/4}}$
	\begin{align*}
		&\Phi(x,|\nabla \chi_{\mathrm{int}}(x)|)F(D^2\chi_{\mathrm{int}}(x)) -H(x,\nabla \chi_{\mathrm{int}}(x))
		+c(x)(\chi_{\mathrm{int}}(x))^{i(\Phi)+1}\\
		&\geq \|h\|_{L^\infty(B_{2})}+1,
	\end{align*}
	i.e., $\chi_{\mathrm{int}}$ is a viscosity subsolution to \eqref{eq:barrier-construction-main} in $B_2\setminus \overline{B_{1/4}}$.
	
	\textit{Step 2: Proof of \upshape\hyperref[barrier_eigenvalue]{(B2)}.} As the proof closely follows that of \hyperref[barrier_harnack]{\upshape(B1)}, we shall focus primarily on the necessary modifications.
	\begin{enumerate}[1.]
		\item Set $R:=r_0+\mathrm{diam}(\Omega)$. Let $\rho:=x-y_z$. For $\alpha<0$ and $K_2>0$ to be chosen later, consider the barrier function \eqref{eq:barrier_eigenvalue}. Since $\overline{B_{r_1}(y_z)}\cap\overline{\Omega}=\{z\}$, we have
		$\chi^z_{\mathrm{ext}}(z)=0$ and $\chi^z_{\mathrm{ext}}>0$ in
		$\Omega\cap\{|\rho|>r_1\}$. A direct calculation gives
		\begin{align*}
			&\nabla\chi^z_{\mathrm{ext}}(x)\hspace{-0.1cm}=\hspace{-0.1cm}-K_2\alpha|\rho|^{\alpha-2}\rho,\\
			&D^2\chi^z_{\mathrm{ext}}(x)=-K_2\alpha|\rho|^{\alpha-2}\left[(\alpha-1)\dfrac{\rho\otimes \rho}{|\rho|^2}+\left(I-\dfrac{\rho\otimes \rho}{|\rho|^2}\right)\right].
		\end{align*}
		It follows that $\mathcal{P}^+_{\lambda,\Lambda}(D^2\chi^z_{\mathrm{ext}}(x))= -K_2\alpha|\rho|^{\alpha-2}[\Lambda(n-1)+\lambda(\alpha-1)]$. Choose $\alpha<0$ such that $\Lambda(n-1)+\lambda(\alpha-1)<0$. Then $F(D^2\chi^z_{\mathrm{ext}}) \leq \mathcal{P}_{\lambda,\Lambda}^+(D^2\chi^z_{\mathrm{ext}}) < 0$.
		\item As in \textit{Step 1}, using the structural assumptions on $\Phi$. If we let $A=\dfrac{-K_2\alpha}{|\rho|^{1-\alpha}}$, we have
		\begin{align*}
			\min\{A^{i(\Phi)},A^{s(\Phi)}\}&=A^{-1}\min\{A^{i(\Phi)+1},A^{s(\Phi)+1}\}\\
			&\geq \left(\dfrac{-K_2\alpha}{|\rho|^{1-\alpha}}\right)^{-1}\min\left\{\left(\dfrac{-K_2\alpha}{|\rho|^{1-\alpha}}\right)^{i(\Phi)+1},\left(\dfrac{-K_2\alpha}{|\rho|^{1-\alpha}}\right)^{s(\Phi)+1}\right\}.
		\end{align*} 
		Note that we need to select $K_2$ so that $K_2(-\alpha)\geq R^{1-\alpha}$. Since $0 < i(\Phi) + 1 \leq s(\Phi) + 1$, we obtain
		\begin{align}
			\label{l4}
			\begin{split}
				&\hspace{-0.2cm}\Phi(x,|\nabla \chi^z_{\mathrm{ext}}(x)|)F(D^2\chi^z_{\mathrm{ext}}(x))-H(x,\nabla \chi^z_{\mathrm{ext}}(x))+(c(x)+\beta)(\chi^z_{\mathrm{ext}}(x))^{i(\Phi)+1}\\
				\le &~K_2^{i(\Phi)+1}(-\alpha)^{i(\Phi)+1}|\rho|^{(\alpha-1)(i(\Phi)+1)}\left[\dfrac{\nu_0}{L|\rho|}[\Lambda(n-1)+\lambda(\alpha-1)]+\mathcal{C}\right]\\ &\quad+(c(x)+\beta)K_2^{i(\Phi)+1}(r_1^{\alpha}-|\rho|^{\alpha})^{i(\Phi)+1}.
			\end{split}
		\end{align}
		\textit{Substep 2.1 (Global estimate).} Let $\beta = 0$. Since $r_1<|\rho|\le R$ for every $x\in\Omega$, the preceding estimate is uniform in $\Omega$. Then, choosing first $\alpha<0$ sufficiently negative and subsequently $K_2 \geq 1$ sufficiently large
		\begin{align*}
			\Phi(x,|\nabla\chi^z_{\mathrm{ext}}|)F(D^2\chi^z_{\mathrm{ext}})
			-H(x,\nabla\chi^z_{\mathrm{ext}}) +c(x)(\chi^z_{\mathrm{ext}})^{i(\Phi)+1} \le -\|h\|_{L^\infty(\Omega)}.
		\end{align*}
		Hence $\chi^z_{\mathrm{ext}}$ is a viscosity supersolution to \eqref{eq:barrier-construction-main} in $\Omega$.\\
		\textit{Substep 2.2 (Boundary-layer estimate).} Let $\beta = -k\|c\|_{L^{\infty}(\Omega)}$. We now restrict the above estimate to the boundary layer
		\begin{align*}
			\mathcal{A}_{z,\delta}:=\left\{x\in\Omega:r_1<|x-y_z|<r_1+\delta\right\}.
		\end{align*}
		Fix $r_1\in\left(0,r_0\wedge 1\right)$. Combining the fact that $(\alpha-1)i(\Phi)+\alpha-2=(\alpha-1)(i(\Phi)+1)-1<0$ and the estimate \eqref{l4}, we can choose $K_2$ sufficiently large, with $K_2(-\alpha)\geq R^{1-\alpha}$, and $\delta>0$ sufficiently small such that for all $x\in \mathcal{A}_{z,\delta}$ we have
		\begin{align*}
				&\Phi(x,|\nabla \chi^z_{\mathrm{ext}}(x)|)F(D^2\chi^z_{\mathrm{ext}}(x))
				-H(x,\nabla \chi^z_{\mathrm{ext}}(x))\\
				&\quad
				+\left(c(x)-k\|c\|_{L^{\infty}(\Omega)}\right)
				\chi^z_{\mathrm{ext}}(x)^{i(\Phi)+1}
				\leq -\|h\|_{L^\infty(\Omega)}
				-k^{i(\Phi)+2}\|c\|_{L^{\infty}(\Omega)}.
			\end{align*}
		Thus, $\chi^z_{\mathrm{ext}}$ is a viscosity supersolution to
		\eqref{eq:barrier-construction-main} in
		$\mathcal{A}_{z,\delta}$.
	\end{enumerate}
	This completes the proof.
\end{proof}
\section{\bf Global regularity and Comparison principle for the Dirichlet problem}
\label{section:3}
In this section, we pursue one of the primary objectives of this paper: global regularity. Our approach here is specifically adapted from the techniques developed in \cite{BBLL2024} for fully nonlinear elliptic equations. Furthermore, we also establish the foundational pillars of the underlying viscosity solution framework such as local Lipschitz estimates, the comparison principle, and the existence and uniqueness of viscosity solutions.
\subsection{\bf ABP estimate and boundary Lipschitz regularity}
As a prior step to obtaining the global $C^{0,1}$ and $C^{1,\gamma}$ regularity results, we first focus on the Alexandroff--Bakelman--Pucci (ABP) estimate (Theorem \ref{theo:ABP_estimate}). Essentially, this estimate provides a controlled upper bound for the supremum of a solution $u$ over the domain $\Omega$ in terms of its boundary data on $\partial\Omega$ and the $L^n$-norm of the inhomogeneous term $h$. In particular, we will establish an appropriate version of the ABP estimate for a viscosity subsolution of \eqref{1.2}. For comprehensive discussions on related ABP estimates in alternative structural settings, we strongly recommend the reader to \cite{BBLL2024,DFQ2009,JSRR2023,JSRR2023_1,I2011} and the references therein.
To prove the ABP estimate, we need to define the notion of upper contact set of a function $u$.
\begin{definition}
	For $u: \Omega \to \mathbb{R}$ and $R > 0$, the upper contact set is defined by
	\begin{align*}
		&\Gamma^+(u,\Omega)\hspace{-0.1cm}=\hspace{-0.1cm}\{x\in\Omega:\hspace{-0.1cm}\exists p\in\mathbb{R}^n~\text{such that}~u(y)\hspace{-0.1cm}\le \hspace{-0.1cm}u(x)\hspace{-0.1cm}+\hspace{-0.1cm}\langle p,y-x\rangle\text{ for all }y\in\Omega\},\\
		&\Gamma_R^+(u,\Omega)\hspace{-0.1cm}=\hspace{-0.1cm}\{x\in\Omega:\hspace{-0.1cm}\exists p\in\overline{B_R}\text{ such that }u(y)\hspace{-0.1cm}\le u(x)\hspace{-0.1cm}+\hspace{-0.1cm}\langle p,y-x\rangle\text{ for all }y\in\Omega\}.
	\end{align*}
\end{definition}
\begin{figure}[H]
	\centering
	\tikzset{every picture/.style={line width=0.75pt}}
	\begin{tikzpicture}[x=0.75pt,y=0.75pt,yscale=-1,xscale=1]
		\draw    (251.26,147.26) -- (427.45,147.26) ;
		\draw [shift={(427.45,147.26)}, rotate = 180] [color={rgb, 255:red, 0; green, 0; blue, 0 }  ][line width=0.75]    (0,5.59) -- (0,-5.59)   ;
		\draw [shift={(339.35,147.26)}, rotate = 180] [color={rgb, 255:red, 0; green, 0; blue, 0 }  ][line width=0.75]    (0,5.59) -- (0,-5.59)   ;
		\draw [shift={(251.26,147.26)}, rotate = 180] [color={rgb, 255:red, 0; green, 0; blue, 0 }  ][line width=0.75]    (0,5.59) -- (0,-5.59)   ;
		\draw [color={rgb, 255:red, 208; green, 2; blue, 27 }  ,draw opacity=1 ] [dash pattern={on 4.5pt off 4.5pt}]  (252.05,84.28) -- (251.26,147.26) ;
		\draw [color={rgb, 255:red, 208; green, 2; blue, 27 }  ,draw opacity=1 ] [dash pattern={on 4.5pt off 4.5pt}]  (427.45,61.16) -- (427.45,147.26) ;
		\draw [line width=0.75] [line join = round][line cap = round]   (251.26,139.29) .. controls (261.62,132.11) and (268.79,110.5) .. (272.32,95.97) .. controls (275.52,82.79) and (274.01,87) .. (277.57,73.91) .. controls (278.4,70.83) and (280.75,51.59) .. (285.42,47.73) .. controls (294.31,51.59) and (290.32,101.82) .. (296.7,104.21) .. controls (302.28,106.6) and (303.87,91.45) .. (307.06,93.05) .. controls (311.05,94.64) and (310.25,108.99) .. (316.63,100.22) .. controls (322.21,95.44) and (322.55,107.47) .. (325.79,102.59) .. controls (331.05,94.69) and (335.77,79.49) .. (341.05,78.55) .. controls (346.13,81.88) and (341.13,126.35) .. (350.12,133.31) .. controls (354.9,135.7) and (360.34,113.57) .. (364.32,107.99) .. controls (365.07,106.48) and (367.01,104.94) .. (368.64,105.37) .. controls (374.29,106.86) and (377.22,112.18) .. (380.41,120.15) .. controls (385.99,132.11) and (385.2,112.18) .. (389.98,108.19) .. controls (393.96,106.6) and (399.55,113.77) .. (405.13,116.17) .. controls (412.3,120.15) and (411.21,87.54) .. (416.29,85.87) .. controls (420.27,85.07) and (418.68,102.61) .. (427.45,101.82) ;
		\draw [color={rgb, 255:red, 208; green, 2; blue, 27 }  ,draw opacity=1 ][line width=1.5]    (252.05,84.28) -- (285.42,47.73) ;
		\draw [shift={(285.42,47.73)}, rotate = 312.39] [color={rgb, 255:red, 208; green, 2; blue, 27 }  ,draw opacity=1 ][fill={rgb, 255:red, 208; green, 2; blue, 27 }  ,fill opacity=1 ][line width=1.5]      (0, 0) circle [x radius= 1.74, y radius= 1.74];
		\draw [color={rgb, 255:red, 208; green, 2; blue, 27 }  ,draw opacity=1 ][line width=1.5]    (285.42,47.73) -- (341.05,78.55) ;
		\draw [shift={(341.05,78.55)}, rotate = 28.99] [color={rgb, 255:red, 208; green, 2; blue, 27 }  ,draw opacity=1 ][fill={rgb, 255:red, 208; green, 2; blue, 27 }  ,fill opacity=1 ][line width=1.5]      (0, 0) circle [x radius= 1.74, y radius= 1.74];
		\draw [color={rgb, 255:red, 208; green, 2; blue, 27 }  ,draw opacity=1 ][line width=1.5] (341.05,78.55) -- (368.64,105.37) ;
		\draw [shift={(368.64,105.37)}, rotate = 44.19] [color={rgb, 255:red, 208; green, 2; blue, 27 }  ,draw opacity=1 ][fill={rgb, 255:red, 208; green, 2; blue, 27 }  ,fill opacity=1 ][line width=1.5] (0, 0) circle [x radius= 1.74, y radius= 1.74];
		\draw [color={rgb, 255:red, 208; green, 2; blue, 27 }  ,draw opacity=1 ][line width=1.5]    (368.64,105.37) -- (427.45,61.16) ;
		\draw    (427.45,147.26) -- (440.6,147.61) ;
		\draw [shift={(442.6,147.66)}, rotate = 181.51] [color={rgb, 255:red, 0; green, 0; blue, 0 }  ][line width=0.75]    (10.93,-3.29) .. controls (6.95,-1.4) and (3.31,-0.3) .. (0,0) .. controls (3.31,0.3) and (6.95,1.4) .. (10.93,3.29)   ;
		\draw    (251.26,147.26) -- (238.5,147.26) ;
		
		\draw (241.36,155.91) node [anchor=north west][inner sep=0.75pt]   [align=left] {\mbox{-}1};
		\draw (422.56,155.91) node [anchor=north west][inner sep=0.75pt]   [align=left] {1};
		\draw (288.55,168.16) node [anchor=north west][inner sep=0.75pt]  [font=\small] [align=left] {$\displaystyle \Omega \ =\ ( -1,1)$};
		\draw (268.88,106.47) node [anchor=north west][inner sep=0.75pt]   [align=left] {u};
		\draw (281.07,33.98) node [anchor=north west][inner sep=0.75pt]  [font=\small,color={rgb, 255:red, 208; green, 2; blue, 27 }  ,opacity=1 ] [align=left] {$\displaystyle x_{1}$};
		\draw (339.27,65.48) node [anchor=north west][inner sep=0.75pt]  [font=\small,color={rgb, 255:red, 208; green, 2; blue, 27 }  ,opacity=1 ] [align=left] {$\displaystyle x_{2}$};
		\draw (362.39,84.19) node [anchor=north west][inner sep=0.75pt]  [font=\small,color={rgb, 255:red, 208; green, 2; blue, 27 }  ,opacity=1 ] [align=left] {$\displaystyle x_{3}$};
		\draw (334.45,155.91) node [anchor=north west][inner sep=0.75pt]   [align=left] {0};
		\draw (446.83,143.4) node [anchor=north west][inner sep=0.75pt]  [font=\small,color={rgb, 255:red, 0; green, 0; blue, 0 }  ,opacity=1 ] [align=left] {$\displaystyle x$};	
	\end{tikzpicture}
	\caption{Upper contact set}
\end{figure} 

We are now ready to prove the ABP estimate.
\begin{theorem}[\bf Alexandroff--Bakelman--Pucci estimate]
	\label{theo:ABP_estimate}
	Let $\Omega$ be a \\bounded domain. Suppose that $u\in C(\overline{\Omega})$ is a viscosity subsolution (resp. supersolution) of the following equation 
	\begin{align*}
		\left\{\begin{array}{cll}
			\Phi(x,|\nabla u|)F(D^2u)- H(x,\nabla u) + c(x)|u|^{i(\Phi)}u &= h(x) &\text{in } \Omega,\\
			u(x) &=g(x) &\text{on }\partial\Omega,
		\end{array}\right. 
	\end{align*}
	in $\{x\in \Omega:u(x)>0\}$ (resp. $\{x\in \Omega:u(x)<0\}$) under the assumptions {\upshape\hyperref[A1]{(A1)}--\hyperref[A4]{(A4)}}. Suppose that $h \in L^n(\Omega) \cap C(\Omega)$ and $c \leq 0$. Then there exists a constant $d\equiv d\left(m,n,\lambda,i(\Phi),s(\Phi),L,\nu_0,\mathcal{C},|\Omega|\right) >0$ such that
	\begin{align}
		\label{ABP_main_1}
		\sup_{\Omega}u^+ \leq \sup_{\partial\Omega}g^+ + d\, \diam(\Omega)\max\left\{1,\|h^-\|_{L^n(\Gamma^+(u^+))}^{\frac{1}{i(\Phi)+1}},\|h^-\|_{L^n(\Gamma^+(u^+))}^{\frac{1}{s(\Phi)+1}}\right\},
	\end{align}
	$\left(\text{resp. }\right.$
	\begin{align}
		\label{ABP_main_2}
		\left.\sup_{\Omega}u^- \leq \sup_{\partial\Omega}g^- + d\, \diam(\Omega)\max\left\{1,\|h^+\|_{L^n(\Gamma^+(u^-))}^{\frac{1}{i(\Phi)+1}},\|h^+\|_{L^n(\Gamma^+(u^-))}^{\frac{1}{s(\Phi)+1}}\right\}.\right)
	\end{align}
	Specifically, we have
	\begin{align}
		\label{ABP_main}
		\|u\|_{L^{\infty}(\Omega)}\le \|g\|_{L^{\infty}(\partial\Omega)}+d\,\mathrm{diam}(\Omega)\max\left\{1,\|h\|_{L^n(\Omega)}^{\frac{1}{i(\Phi)+1}},\|h\|_{L^n(\Omega)}^{\frac{1}{s(\Phi)+1}}\right\}. 
	\end{align}
\end{theorem}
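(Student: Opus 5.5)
The plan is to reduce the statement to the classical ABP estimate for the Pucci operator, applied on the part of the upper contact set where the gradient is large. I treat the subsolution case; the supersolution case follows by applying the same argument to $-u$, with $\tilde F(M)=-F(-M)$ and $\tilde H(x,p)=-H(x,-p)$, both of which still satisfy (A1) and (A3). Set $M_0:=\sup_{\partial\Omega}g^+$ and $w:=u-M_0$, and assume $\sup_\Omega w>0$, since otherwise there is nothing to prove.

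\emph{Step 1: contact set and gradient bound.} Following Caffarelli--Cabr\'e, let $\Gamma$ be the concave envelope of $w^+$ on a ball $B_{2d_\Omega}\supset\Omega$, where $d_\Omega:=\mathrm{diam}(\Omega)$. Put $M:=\sup w^+$ and $R:=M/(2d_\Omega)$. Every $p\in B_R$ is the slope of a supporting hyperplane that touches $w^+$ at a point $x$ of the contact set with $w(x)>0$. At such points $u>0$, so the subsolution inequality is available.

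The key observation is that I should not use all of $B_R$. I will use only the annulus $B_R\setminus B_{R/2}$, whose measure is still comparable to $R^n$. At the corresponding contact points $|\nabla\Gamma|\ge R/2$, so the gradient stays away from zero and the constant-state alternative in Definition \ref{Def:Viscosity_solution} cannot occur.

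\emph{Step 2: pointwise inequality at contact points.} At a contact point, test the equation with the concave paraboloid that touches from above. Since $c\le0$ and $u>0$, the zeroth-order term is nonpositive and can be dropped, which gives
\[
\Phi(x,|p|)\,\mathcal P^+_{\lambda,\Lambda}(D^2\Gamma)\ge h-\mathcal C|p|^m .
\]
Here $D^2\Gamma\le0$, so $\mathcal P^+(D^2\Gamma)=-\lambda\,\mathrm{tr}(-D^2\Gamma)$. By (A2), for $t=|p|$ one has $\Phi(x,t)\ge (\nu_0/L)\min\{t^{i(\Phi)},t^{s(\Phi)}\}$. It follows that
\[
\lambda\,\mathrm{tr}(-D^2\Gamma)\le \frac{L}{\nu_0}\,\frac{h^-+\mathcal C|p|^m}{\min\{|p|^{i},|p|^{s}\}},
\]
writing $i=i(\Phi)$ and $s=s(\Phi)$.

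The arithmetic--geometric mean inequality then bounds $\det(-D^2\Gamma)$ by $C\big(\frac{h^-+\mathcal C|p|^m}{\min\{|p|^i,|p|^s\}}\big)^n$. To make this rigorous, I would use the standard almost-everywhere twice-differentiability of $\Gamma$ on the contact set, which is $C^{1,1}$ there (as in \cite[Lemma 3.5]{CC1995}), or equivalently sup-convolution.

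\emph{Step 3: area formula.} Integrate over the preimage of the annulus:
\[
\int_{R/2<|p|<R}\Big(\frac{\min\{|p|^i,|p|^s\}}{h^-(x(p))+\mathcal C|p|^m}\Big)^n\,dp\le C\int_{\Gamma^+}(h^-)^n .
\]
This is where I expect the main obstacle, because the Hamiltonian term $|p|^m$ must be absorbed, and this uses $m\le i+1$. I would split into two cases.

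In the first case, $R\le R_*$, with $R_*$ a universal constant. Then $M\le 2R_*d_\Omega$, which gives the "$\max\{1,\dots\}$" term directly.

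In the second case, $R>R_*$, so $|p|\ge R/2$ is large. Two subcases remain. If $h^-\le\mathcal C|p|^m$ on a large portion of the annulus, then the integrand is bounded below by $c|p|^{n(i-m)}$. Integrated over the annulus this gives at least $cR^{n(i-m+1)}$, which, since $m\le i+1$, is either at least a fixed positive constant or grows with $R$. In the opposite regime, the integrand is at least $c\,|p|^{ni}\,(h^-)^{-n}$.

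Either way, with the standard distribution argument I expect to obtain
\[
\min\{R^{i+1},R^{s+1}\}^n\lesssim \|h^-\|_{L^n(\Gamma^+)}^n+(\text{a universal constant}).
\]
When $m=i+1$ exactly, the Hamiltonian is critical. The clean way through is to enlarge the threshold $R_*$, or to treat $|p|^m/\min\{|p|^i,|p|^s\}\le C|p|$ as a drift term. In that case I would run the ABP argument with the weight $g(p)=(|p|^{n/(n-1)}+\mu^{n/(n-1)})^{1-n}$ as in \cite{CC1995}/\cite{I2011}, choosing $\mu$ proportional to $\|h^-\|_{L^n}$. The price is a dependence of $d$ on $|\Omega|$ and $\mathcal C$, which the statement allows.

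Inverting gives $R\le C\max\{1,\|h^-\|^{1/(i+1)},\|h^-\|^{1/(s+1)}\}$. Since $M=2Rd_\Omega$, this yields \eqref{ABP_main_1}.

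\emph{Step 4: two-sided bound.} Combining \eqref{ABP_main_1} with \eqref{ABP_main_2} gives \eqref{ABP_main}.
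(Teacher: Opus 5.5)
Your idea of using only the slopes $R/2<|p|<R$, so that the gradient stays away from zero, is a genuinely different route from the paper's. It closes in the subcritical range $m<i(\Phi)+1$, but it breaks down in the critical case $m=i(\Phi)+1$, which (A3) explicitly allows.

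Stated correctly, the area formula on $E:=\{x\in\Gamma^+:\ R/2<|\nabla\Gamma(x)|<R\}$ with $R\ge 2$, together with your pointwise bound, gives
\[
c_nR^n\le\int_E\det(-D^2\Gamma)\,dx\le C\int_E\Big(\frac{h^-+\mathcal C|\nabla\Gamma|^m}{|\nabla\Gamma|^{i(\Phi)}}\Big)^n dx\le C R^{-i(\Phi)n}\|h^-\|_{L^n}^n+C\,\mathcal C^nR^{(m-i(\Phi))n}|E| .
\]
For $m<i(\Phi)+1$ this yields $R\lesssim\max\{1,\|h^-\|_{L^n}^{1/(i(\Phi)+1)},(\mathcal C^n|\Omega|)^{1/(n(i(\Phi)+1-m))}\}$, which suffices.

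For $m=i(\Phi)+1$, dividing by $R^n$ leaves $c_n\le CR^{-(i(\Phi)+1)n}\|h^-\|_{L^n}^n+C\mathcal C^n|E|$. The Hamiltonian term is scale invariant and is bounded only by $C\mathcal C^n|\Omega|$, which can exceed $c_n$, so nothing bounds $R$. Enlarging $R_*$ does not change this. The drift weight $(|p|^{n/(n-1)}+\mu^{n/(n-1)})^{1-n}$ has integral bounded independently of $R$ over a single dyadic annulus, so it cannot help either while you stay in $B_R\setminus B_{R/2}$.

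What is missing is logarithmic growth in $R$. Within your framework you can obtain it as follows:
\begin{itemize}
\item Apply the inequality on each shell $E_j:=\{x\in\Gamma^+:2^{j-1}<|\nabla\Gamma|<2^j\}$ with $\max\{2,\|h^-\|_{L^n}^{1/(i(\Phi)+1)}\}\le 2^{j-1}$ and $2^j\le R$, and sum over $j$.
\item The $E_j$ are disjoint because $\nabla\Gamma$ is single valued, so $\sum_j|E_j|\le|\Omega|$.
\item Hence the number of admissible $j$ is at most $C(1+\mathcal C^n|\Omega|)$, which gives $R\le e^{C(1+\mathcal C^n|\Omega|)}\max\{2,\|h^-\|_{L^n}^{1/(i(\Phi)+1)}\}$.
\end{itemize}

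The paper achieves the same effect in one stroke. It integrates the weight $g(z)=\min\{|z|^{i(\Phi)n},|z|^{s(\Phi)n}\}[k^{n/(n-1)}+|z|^{(i(\Phi)+1)n/(n-1)}]^{1-n}$ over all of $B_R$, with $k=\max\{1,\|h^-\|_{L^n}\}$. Then $\int_{B_R}g$ grows like $\log(R^{n(i(\Phi)+1)}/k^n)$, while the Hamiltonian contributes only $C\mathcal C^n|\Omega|$. This is also the source of the $|\Omega|$-dependence of $d$.

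If you prefer the plain drift weight on $R_*<|p|<R$, you must take $\mu\sim\|h^-\|_{L^n}^{1/(i(\Phi)+1)}$ rather than $\mu\sim\|h^-\|_{L^n}$. Otherwise the exponent is wrong for $i(\Phi)>0$, and the factor $|p|^{-i(\Phi)}h^-$ cannot be absorbed when $i(\Phi)<0$.

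Separately, the first display of your Step 3 is not a valid form of the area formula. The weight must be a function of $p$ alone, but $h^-(x(p))$ is undefined because $\nabla\Gamma$ need not be injective. The right-hand side also cannot be $C\int_{\Gamma^+}(h^-)^n$, since the Hamiltonian produces a term proportional to the measure of the preimage. For the same reason, your dichotomy according to whether $h^-\le\mathcal C|p|^m$ ``on a large portion of the annulus'' mixes the slope variable with the space variable. Replace that step by the unweighted estimate displayed above, or by a weight that depends only on $p$.
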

\begin{proof}
	In order to organize the proof, we highlight the main steps and specific cases in bold.\\
	The proof consists of two steps. In the first step, we prove the ABP estimate for the viscosity subsolution $u\in C^2(\Omega)\cap C(\overline{\Omega})$. In the second step, we consider $u\in C(\overline{\Omega})$ via approximation based on the sup convolutions.\\
	\textbf{Step 1.} Suppose that the viscosity subsolution $u\in C^2(\Omega)\cap C(\overline{\Omega})$. Let us define 
	\begin{align}
		\label{ABP: eq_0}
		R_0\equiv R_0(u):=\dfrac{1}{\mathrm{diam}(\Omega)}\left(\sup_{x\in \Omega}u^+-\sup_{x\in\partial\Omega}u^+\right).
	\end{align}
	The purpose is to obtain a certain estimate on $R_0$ in terms of $\|h^{-}\|_{L^n(\Gamma^+(u^+))}$ and {data}, from which the estimate \eqref{ABP_main_1} follows. Applying \cite[Theorem 3.1]{M2010}, for all $R<R_0$, we find
	\begin{align}
		\label{ABP: eq_1}
		\int_{B_R}{g(z)dz}\le \int_{\Gamma_R^+(u^+)}{g(\nabla u)|\mathrm{det}(D^2u)|dx},\text{ for all }g\in C(\mathbb{R}^n),~g\geq 0,
	\end{align}
	and
	\begin{align}
		\label{ABP: eq_2}
		D^2u\le 0\text{ on } \Gamma_R^+(u^+)\subset \{x\in\Omega:u(x)>0\}.
	\end{align}
	Let us now discuss the behavior of $\nabla u$ in the set $\Gamma^+_R(u^+)$. Let $x_0\in \Gamma^+_R(u^+)$ be any point. If $\nabla u(x_0)\neq 0$, then we are able to take $u$ as a test function in the definition of viscosity subsolution. In turn, we have
	\begin{align*}
		\Phi(x_0,|\nabla u(x_0)|)F(D^2u(x_0))-H(x_0,\nabla u(x_0))\geq -c(x_0)u^{i(\Phi)+1}(x_0) + h(x_0).
	\end{align*}
	Since $c(x) \leq 0$, we obtain
	\begin{align*}
		\Phi(x_0,|\nabla u(x_0)|)F(D^2u(x_0))-H(x_0,\nabla u(x_0))\geq h(x_0).
	\end{align*}
	By assumptions {\upshape\hyperref[A2]{(A2)}} and {\upshape\hyperref[A3]{(A3)}}, we deduce
	\begin{align*}
		-h^-(x_0)
		\leq \Phi(x_0,|\nabla u(x_0)|)\mathcal{P}^+_{\lambda,\Lambda}(D^2u(x_0))+\mathcal{C}|\nabla u(x_0)|^{m}.
	\end{align*}
	Recalling that $D^2u(x_0)\le 0$ by \eqref{ABP: eq_2}, we find $\mathcal{P}^+_{\lambda,\Lambda}(D^2u(x_0))=\lambda\mathrm{tr}(D^2u(x_0))$ and
	\begin{align}
		\label{ABP: eq_3}
		\left(\dfrac{-\mathrm{tr}(D^2u(x_0))}{n}\right)^n\leq \left(\dfrac{h^-(x_0)+\mathcal{C}|\nabla u(x_0)|^{m}}{n\lambda\Phi(x_0,|\nabla u(x_0)|)}\right)^n.
	\end{align}
	If $\nabla u(x_0)=0$, we distinguish two cases. If  $\det D^2u(x_0)=0$, then this point gives no contribution to the Jacobian term in the area formula. If $\det D^2u(x_0) \neq 0$ and since $D^2u(x_0)\leq 0$ on $\Gamma_R^+(u^+)$, we have  $D^2u(x_0)<0$. Hence $x_0$ is a non-degenerate critical point of $u$. The set of non-degenerate critical points is countable. Consequently, up to a set which is negligible for the determinant term, we may restrict the argument to the set where $\nabla u\neq0$. Let us recall also the following classical inequality
	%
	\begin{align*}
		\det(A)\det(B)\leq \left(\dfrac{\mathrm{tr}(AB)}{n}\right)^n\text{ for all }A,B\in\mathbb S^{n\times n}\text{ with }A,B\geq 0.
	\end{align*}
	As a result, we derive the following estimate
	\begin{align}
		\label{ABP: eq_4}
		|\det D^2u(x)|\leq \left(\dfrac{h^-(x)+\mathcal{C}|\nabla u(x)|^{m}}{n\lambda\Phi(x,|\nabla u(x)|)}\right)^n,
	\end{align}
	for all $x\in \Gamma_R^+(u^+)\setminus \mathcal{U}$, where $\mathcal{U}=\{x\in \Gamma_R^+(u^+):\nabla u(x)=0\}$. 
	We now consider two cases depending on the sign of $i(\Phi)$.\\
	\textbf{Case 1.} $i(\Phi)\geq 0$. In this case, we choose \begin{align*}
		g(z)=\left[k^{\frac{n}{n-1}}+|z|^{\frac{\left(i(\Phi)+1\right) n}{n-1}}\right]^{1-n}\min\{|z|^{i(\Phi)n},|z|^{s(\Phi)n}\}, \text{with}~ k\geq 1. 
	\end{align*} 
	In turn, recalling \eqref{ABP: eq_1}, we obtain
	\begin{align}
		\label{ABP: eq_5}
		\begin{split}
			I_1&:=\int_{B_R}{\dfrac{\min\{|z|^{i(\Phi)n},|z|^{s(\Phi)n}\}}{\left[k^{\frac{n}{n-1}}+|z|^{\frac{\left(i(\Phi)+1\right)n}{n-1}}\right]^{n-1}}dz}\\
			&\leq \int_{\Gamma_R^+(u^+)\setminus\mathcal{U}}{\dfrac{\min\{|\nabla u|^{i(\Phi)n},|\nabla u|^{s(\Phi)n}\}}{\left[k^{\frac{n}{n-1}}+|\nabla u|^{\frac{\left(i(\Phi)+1\right) n}{n-1}}\right]^{n-1}}|\det(D^2u)|dx}.
		\end{split}
	\end{align}
	Combining \eqref{ABP: eq_4} and \eqref{ABP: eq_5}, we deduce
	\begin{align}
		\label{ABP: eq_6}
		\begin{split}
			I_1&\leq \int_{\Gamma_R^+(u^+)\setminus\mathcal{U}}{\dfrac{\min\{|\nabla u|^{i(\Phi)n},|\nabla u|^{s(\Phi)n}\}}{\left[k^{\frac{n}{n-1}}+|\nabla u|^{\frac{\left(i(\Phi)+1\right)n}{n-1}}\right]^{n-1}}\left[\dfrac{h^-+\mathcal{C}|\nabla u|^{m}}{n\lambda\Phi(x,|\nabla u|)}\right]^ndx}\\
			&\leq \dfrac{2^{n-1}}{n^n\lambda^n}\int_{\Gamma_R^+(u^+)\setminus\mathcal{U}}{\dfrac{\min\{|\nabla u|^{i(\Phi)n},|\nabla u|^{s(\Phi)n}\}}{(\Phi(x,|\nabla u|))^n}\hspace{-0.1cm}\cdot\hspace{-0.1cm}\dfrac{(h^-)^n+\mathcal{C}^n|\nabla u|^{mn}}{\left[k^{\frac{n}{n-1}}+|\nabla u|^{\frac{\left(i(\Phi)+1\right)n}{n-1}}\right]^{n-1}}dx}\\
			&\leq \dfrac{2^{n-1}L^nD}{n^n\lambda^n\nu_0^n}\int_{\Gamma^+(u^+)}{\left[ (h^-)^nk^{-n}+1\right]dx},
		\end{split}
	\end{align}
	where we use {\upshape\hyperref[A2]{(A2)}}, $D \equiv D(m,n,i(\Phi),\mathcal{C}) > 0$ and $0<m\leq i(\Phi) + 1$. On the other hand, by co-area formula, we have
	\begin{align}
		\label{ABP: eq_7}
		\begin{split}
			I_1 &= \displaystyle\int_{0}^{R} \dfrac{\min\{t^{i(\Phi)n},t^{s(\Phi)n}\}}{\left[k^{\frac{n}{n-1}}+t^{\frac{\left(i(\Phi)+1\right)n}{n-1}}\right]^{n-1}}\int_{\partial B_t}dSdt\\
			&= n\omega_n\displaystyle\int_{0}^{R} \dfrac{\min\{t^{i(\Phi)n},t^{s(\Phi)n}\}}{\left[k^{\frac{n}{n-1}}+t^{\frac{\left(i(\Phi)+1\right)n}{n-1}}\right]^{n-1}}t^{n-1}dt.
		\end{split}
	\end{align}
	If $R< 1$, it follows from equation \eqref{ABP: eq_7} that
	\begin{align}
		\label{ABP: eq_8}
		\begin{split}
			I_1 &= n\omega_n\displaystyle\int_{0}^{R} \dfrac{t^{(s(\Phi)+1)n-1}}{\left[k^{\frac{n}{n-1}}+t^{\frac{\left(i(\Phi)+1\right)n}{n-1}}\right]^{n-1}}dt\\
			&\geq 2^{1-(n-1)\frac{s(\Phi)+1}{i(\Phi)+1}}n\omega_n\displaystyle\int_{0}^{R} \dfrac{t^{(s(\Phi)+1)n-1}}{\left[k^{\frac{n}{n-1}}+t^{\frac{\left(i(\Phi)+1\right)n}{n-1}}\right]^{(n-1)\frac{s(\Phi)+1}{i(\Phi)+1}}}dt\\
			&\geq 2^{1-(n-1)\frac{s(\Phi)+1}{i(\Phi)+1}}n\omega_n\int_{0}^R{\dfrac{t^{s(\Phi)n+n-1}}{k^{\frac{n}{i(\Phi)+1}(s(\Phi)+1)}+t^{n(s(\Phi)+1)}}dt}\\
			&\geq C_1\ln\left(1+\dfrac{R^{n(s(\Phi)+1)}}{k^{\frac{n}{i(\Phi)+1}(s(\Phi)+1)}}\right),
		\end{split}
	\end{align}
	where a positive constant $C_1 \equiv C_1(n,s(\Phi),i(\Phi))$. Otherwise, $R \geq 1$, then we obtain
	\begin{align}
		\label{ABP: eq_9}
		\begin{split}
			I_1 &= n\omega_n\left[\displaystyle\int_{0}^{1} \dfrac{t^{(s(\Phi)+1)n-1}}{\left[k^{\frac{n}{n-1}}+t^{\frac{\left(i(\Phi)+1\right)n}{n-1}}\right]^{n-1}}dt + \displaystyle\int_{1}^{R} \dfrac{t^{(i(\Phi)+1)n-1}}{\left[k^{\frac{n}{n-1}}+t^{\frac{\left(i(\Phi)+1\right)n}{n-1}}\right]^{n-1}}dt\right]\\
			&\geq n\omega_n \left[2^{1-(n-1)\frac{s(\Phi)+1}{i(\Phi)+1}}\hspace{-0.1cm}\int_{0}^{1}{\dfrac{t^{s(\Phi)n+n-1}}{k^{\frac{n(s(\Phi)+1)}{i(\Phi)+1}}+t^{n(s(\Phi)+1)}}dt}\right. \\
			&\hspace{1.5cm}\left.+ 2^{1-(n-1)}\int_{1}^R{\dfrac{t^{i(\Phi)n+n-1}}{k^{n}+t^{n(i(\Phi)+1)}}dt}\right]\\
			&\geq C_2 \left[\ln\left(1+k^{-\frac{n}{i(\Phi)+1}(s(\Phi)+1)}\right) + \ln\left(\dfrac{k^{n} + R^{n(i(\Phi)+1)}}{k^{n}+1}\right)\right]\\
			&\geq C_2\left[\ln\left(1 + \dfrac{R^{n(i(\Phi)+1)}}{k^n}\right)-\ln{2}\right],
		\end{split}
	\end{align}
	where $C_2>0$ is a constant, depending only $n,s(\Phi)$, and $i(\Phi)$. Consequently, up to changing the constant on the right-hand side, we obtain
	\begin{itemize}
		\item if $R<1$, then
		\begin{align*}
			C_1\ln\left(1+\dfrac{R^{n(s(\Phi)+1)}}{k^{\frac{n}{i(\Phi)+1}(s(\Phi)+1)}}\right)\leq \dfrac{2^{n-1}L^nD}{n^n\lambda^n\nu_0^n}\int_{\Gamma^+(u^+)}{\left[(h^-)^nk^{- n}+1\right]dx},
		\end{align*}
		\item if $R\geq 1$, then
		\begin{align*}
			C_2\ln\left(1 + \dfrac{R^{n(i(\Phi)+1)}}{k^{\frac{n}{i(\Phi)+1}(i(\Phi)+1)}}\right)\hspace{-0.1cm}\leq\hspace{-0.1cm} \dfrac{2^{n-1}L^nD}{n^n\lambda^n\nu_0^n}\int_{\Gamma^+(u^+)}{\left[(h^-)^nk^{- n}+1\right]dx}+C_2\ln{2}.
		\end{align*}
	\end{itemize}
	Assume $R_0>0$ (otherwise there is nothing to prove). Taking
	\begin{align*}
		k:=\max\left\{1,\|h^-\|_{L^n(\Gamma^+(u^+))}^{\frac{i(\Phi)+1}{s(\Phi)+1}},\|h^-\|_{L^n(\Gamma^+(u^+))}\right\},	
	\end{align*} 
	it follows
	\begin{align*}
		\left\{
		\begin{array}{c}
			\hspace{-0.3cm}\dfrac{R^{n(s(\Phi)+1)}}{k^{\frac{n}{i(\Phi)+1}(s(\Phi)+1)}}\hspace{-0.1cm}\leq\hspace{-0.1cm}\mathrm{exp}\left(\dfrac{2^{n-1}L^nD}{C_1n^n\lambda^n\nu_0^n}\int_{\Gamma^+(u^+)}{\left[(h^-)^n k^{-n}+1\right]dx}\right) -1, \text{ if }R<1,\\[15pt]
			\hspace{-0.3cm}\dfrac{R^{n(i(\Phi)+1)}}{k^{\frac{n}{i(\Phi)+1}(i(\Phi)+1)}}\hspace{-0.1cm}\leq\hspace{-0.1cm}2\mathrm{exp}\left(\dfrac{2^{n-1}L^nD}{C_2n^n\lambda^n\nu_0^n}\int_{\Gamma^+(u^+)}{\left[(h^-)^n k^{-n}+1\right]dx}\right) \hspace{-0.1cm}-\hspace{-0.1cm}1, \text{ if }R \geq 1,
		\end{array}
		\right.
	\end{align*}
	Moreover, this choice of $k$ also implies that 
	\begin{align*}
		\int_{\Gamma^+(u^+)} \left[(h^-)^n k^{-n}+1\right]dx \leq 1 + |\Gamma^+(u^+)| \leq 1 + |\Omega|.
	\end{align*}
	Consequently, the exponential factors in the preceding estimates can be bounded by a positive constant $d$, depending only on $m,n,\lambda,i(\Phi),s(\Phi),L,\nu_0,\\\mathcal{C}$ and $|\Omega|$, hence
	\begin{align*}
		R\leq d \max\left\{1,\|h^-\|_{L^n(\Gamma^+(u^+))}^{\frac{1}{i(\Phi)+1}},\|h^-\|_{L^n(\Gamma^+(u^+))}^{\frac{1}{s(\Phi)+1}}\right\}.
	\end{align*} 
	Since this estimate holds for every $R<R_0$, letting $R\uparrow R_0$ yields
	\begin{align}
		\label{ABP: eq_10}
		R_0 \leq d\max\left\{1,\|h^-\|_{L^n(\Gamma^+(u^+))}^{\frac{1}{i(\Phi)+1}},\|h^-\|_{L^n(\Gamma^+(u^+))}^{\frac{1}{s(\Phi)+1}}\right\}.
	\end{align}
	Substituting \eqref{ABP: eq_0} into \eqref{ABP: eq_10}, we get \eqref{ABP_main_1}.\\
	\textbf{Case 2: $-1<i(\Phi)<0$.} We select
	\begin{align*}
		g(z)=\left(\frac{|z|}{|z|+\varepsilon}\right)^{-i(\Phi)n}\dfrac{\min\{|z|^{i(\Phi)n},|z|^{s(\Phi)n}\}}{\left[k^{\frac{n}{n-1}}+|z|^{\frac{\left(i(\Phi)+1\right) n}{n-1}}\right]^{n-1}},
	\end{align*}
	for an arbitrary number $\varepsilon >0$ and $k\geq 1$ an arbitrary constant. Arguing similarly to \eqref{ABP: eq_6}, we also deduce
	\begin{align*}
		&I_2(\varepsilon):=\int_{B_R}{\left(\frac{|z|}{|z|+\varepsilon}\right)^{-i(\Phi)n}\dfrac{\min\{|z|^{i(\Phi)n},|z|^{s(\Phi)n}\}}{\left[k^{\frac{n}{n-1}}+|z|^{\frac{\left(i(\Phi)+1\right) n}{n-1}}\right]^{n-1}}dz}\\
		&\leq \hspace{-0.1cm}\int_{\Gamma_R^+(u^+)\setminus\mathcal{U}}{\hspace{-0.1cm}\left(\frac{|\nabla u|}{|\nabla u|+\varepsilon}\right)^{-i(\Phi)n}\hspace{-0.1cm}\dfrac{\min\{|\nabla u|^{i(\Phi)n},|\nabla u|^{s(\Phi)n}\}}{\left[k^{\frac{n}{n-1}}+|\nabla u|^{\frac{\left(i(\Phi)+1\right) n}{n-1}}\right]^{n-1}}\hspace{-0.1cm}\left[\dfrac{h^-+\mathcal{C}|\nabla u|^{m}}{n\lambda\Phi(x,|\nabla u|)}\right]^ndx}\\
		&\leq \dfrac{2^{n-1}L^nD}{n^n\lambda^n\nu_0^n}\int_{\Gamma^+(u^+)}{\left(\frac{|\nabla u|}{|\nabla u|+\varepsilon}\right)^{-i(\Phi)n}\left[(h^-)^nk^{- n}+1\right]dx},
	\end{align*}
	where we use again {\upshape\hyperref[A2]{(A2)}} and the fact that $i(\Phi) < 0$. Letting $\varepsilon \to 0^+$, we recover the same lower bound as in \textbf{Case 1}, i.e.
	\begin{align*}
		\lim\limits_{\varepsilon \to 0^+}I_2(\varepsilon) \leq \dfrac{2^{n-1}L^nD}{n^n\lambda^n\nu_0^n}\int_{\Gamma^+(u^+)}{\left[(h^-)^nk^{- n}+1\right]dx}.
	\end{align*}
	On the other hand, using co-area formula and recalling $-1 < i(\Phi) <0$, we get 
	\begin{align*}
		I_2(\varepsilon) = n\omega_n\displaystyle\int_{0}^{R}\left(\frac{t}{t+\varepsilon}\right)^{-i(\Phi)n}\dfrac{\min\{t^{i(\Phi)n},t^{s(\Phi)n}\}}{\left[k^{\frac{n}{n-1}}+t^{\frac{\left(i(\Phi)+1\right)n}{n-1}}\right]^{n-1}}t^{n-1}dt.
	\end{align*}
	Since $\left(\frac{t}{t+\varepsilon}\right)^{-i(\Phi)n} \leq 1$ and $-1 < i(\Phi) <0$ ensures integrability near $t=0$, we may apply Lebesgue's dominated convergence theorem, we obtain
	\begin{align*}
		\lim\limits_{\varepsilon \to 0^+}I_2(\varepsilon) = n\omega_n\int_{0}^{R} \dfrac{\min\{t^{i(\Phi)n},t^{s(\Phi)n}\}}{\left[k^{\frac{n}{n-1}}+t^{\frac{\left(i(\Phi)+1\right)n}{n-1}}\right]^{n-1}}t^{n-1}dt.
	\end{align*}
	Repeating the arguments used for $I_1$, we conclude
	\begin{align*}
		\lim\limits_{\varepsilon \to 0^+}I_2(\varepsilon) \geq \left\{
		\begin{array}{ll}
			C_1(n,s(\Phi),i(\Phi))\ln\left(1+\dfrac{R^{n(s(\Phi)+1)}}{k^{\frac{n}{i(\Phi)+1}(s(\Phi)+1)}}\right), &\text{ if }R < 1,\\[20pt]
			C_2(n,s(\Phi),i(\Phi))\ln\left(1 + \dfrac{R^{n(i(\Phi)+1)}}{k^n}\right),&\text{ if }R \geq 1.
		\end{array}
		\right.
	\end{align*}
	Therefore, combining the last two displays, we also get \eqref{ABP_main_1}.\\
	\textbf{Step 2.} 
	Let $u\in C(\overline{\Omega})$. The remaining part follows as in \cite[Theorem 1.1]{M2010} and \cite[Theorem 1]{DFQ2009} by using sup-convolutions. The only additional point is the stability of the lower-order term $c(x)|u|^{i(\Phi)}u$. This follows from the uniform convergence of the sup-convolutions, the continuity of $c$, and the continuity of $|s|^{i(\Phi)}s$ with respect to $s\in\mathbb{R}$. The estimate for supersolutions is obtained by applying the preceding argument to $-u$, with the transformed operator  $\widetilde F(M):=-F(-M)$ and the transformed Hamiltonian $\widetilde H(x,p):=-H(x,-p)$. Combining the two estimates gives \eqref{ABP_main}.
\end{proof}
Next, for the sake of convenience, for the ball $B_r(x_0)$, we denote
\begin{align*}
B'_r(x_0)=B_r(x_0)\cap\{y_n>\phi(y')\},\mbox{ and }B''_r(x_0)=B_r(x_0)\cap\{y_n=\phi(y')\},
\end{align*}
where $\phi$ is introduced in Section \ref{section:2}. In particular, if $x_0=0$, we will write $B'_r(x_0)=B'_r$ and $B''_r(x_0)=B''_r$.
By the ABP estimate (Theorem \ref{theo:ABP_estimate}), any viscosity solution of \eqref{1.2} is bounded in $L^{\infty}(\Omega)$ under the assumptions {\upshape\hyperref[A1]{(A1)}--\hyperref[A4]{(A4)}}. In this section, in order to establish further boundary Lipschitz estimates in the spirit of \cite{JSRR2023,BD2014,BBLL2024}, we focus on a bounded viscosity solution of
\begin{align}\label{eq-C^2}
	\left\{\begin{array}{rccl}
		&\hspace{-0.5cm}\Phi(y,|\nabla u|)F(D^2u)-H(y,\nabla u)+c(y)|u|^{i(\Phi)}u&\hspace{-0.2cm}=h(y) &\hspace{-0.2cm}\text{in } B',\\
		&\hspace{-0.5cm}u(y)&\hspace{-0.2cm}=g(y) &\hspace{-0.2cm}\text{on }B''.
	\end{array}\right.
\end{align}
\begin{remark}[\textbf{Smallness regime}] \label{Remark: smallness-regime}Here we verify that, for a bounded viscosity solution $u$ of
	\begin{align}\label{eq-C^2-smallness}
		\left\{\begin{array}{rccl}
			&\Phi(y,|\xi+\nabla u|)F(D^2u)-H(y,\xi+\nabla u)+c(y)|u|^{i(\Phi)}u&=h(y) &\text{in } B',\\
			&u(y)&=g(y) &\text{on }B'',
		\end{array}\right.
	\end{align}
	we are able to assume
	\begin{align}\label{6.10}
		\begin{split}
			&\|u\|_{L^{\infty}(B_1')}\le 1, \quad \|g\|_{C^{1,\alpha_g}(B_1'')}\le 1, \quad \max\left\{\|h\|_{L^{\infty}(B_1')}, \|c\|_{L^{\infty}(B_1')}, \mathcal{C}\right\} \leq \varepsilon_0,
		\end{split}
	\end{align}
	some positive constant $\varepsilon_0\in (0,1)$, and $\nu_0=\nu_1=1$ in the assumption {\upshape\hyperref[A2]{(A2)}}. In order to consider the problem in a smallness regime as in \eqref{6.10}, for a fixed ball $B_r(x_0) \subset B$ with $x_0=(x_{01},x_{02},\dots,x_{0n})\in\mathbb{R}^n$, we define $\hat{u}:~B_1\cap \{y_n>\hat{\phi}(y')\} \to \mathbb{R}$ by
	\begin{align}
		\label{small_regime_scaling}
		\hat{u}(y) = \frac{u(x_0+ry)}{K},
	\end{align}
	for a function $\hat{\phi}\in C^2(\mathbb{R}^{n-1})$ and positive constants $K \geq 1 \geq r$ which will be determined in a few lines. It can be seen that $\hat{u}$ is a viscosity solution of
	\begin{align}\label{regime-form}
		\left\{\begin{array}{rccl}
			&\hat{\Phi}(y,|\hat{\xi}+\nabla \hat{u}|)\hat{F}(D^2\hat{u})-\hat{H}(y,\hat{\xi}+\nabla \hat{u})+\hat{c}(y)|\hat{u}|^{i(\Phi)}\hat{u}&=\hat{h}(y) &\text{in } \hat{B'_1},\\
			&\hat{u}(y)&=\hat{g}(y) &\text{on }\hat{B''_1},
		\end{array}\right.
	\end{align}
	where
	\begin{align*}
		\begin{array}{ll}
			&\hat{\Phi}(y,t) :=\displaystyle\frac{\Phi\left(x_0+ry,\frac{K}{r}t\right)}{\Phi\left(x_0+ry,\frac{K}{r}\right)},~\hat{F}(M) := \displaystyle \frac{r^2}{K}F\left(\frac{K}{r^2}M\right),
			\\&\hat{H}(y,p) := \displaystyle\frac{r^2}{K\Phi\left(x_0+ry,\frac{K}{r}\right)}H\left(x_0+ry,\frac{K}{r}p\right),\\[15pt]
			& \hat{c}(y) := \displaystyle\frac{r^2K^{i(\Phi)}}{\Phi\left(x_0+ry,\frac{K}{r}\right)}c(x_0+ry),
			~\hat{h}(y) := \displaystyle\frac{r^2}{K\Phi\left(x_0+ry,\frac{K}{r}\right)}h(x_0+ry),\\[15pt]
			& \hat{\phi}(y):=\dfrac{\phi(ry'+x_0')-x_{0n}}{r},~\hat{g}(y) := \dfrac{g(x_0+ry)}{K}, \hat{\xi} := \dfrac{r}{K}\xi,\\[15pt]
			&\hat{B'_1}=B_1\cap\{y_n>\hat{\phi}(y')\}, \mbox{ and }\hat{B''_1}=B_1\cap\{y_n=\hat{\phi}(y')\}.
		\end{array}
	\end{align*}
	Note that $\hat{F}$ remains a uniformly $(\lambda,\Lambda)$-elliptic operator, the map $t \mapsto \displaystyle\frac{\hat{\Phi}(y,t)}{t^{i(\Phi)}}$ is almost non-decreasing, and the map $t \mapsto \displaystyle\frac{\hat{\Phi}(y,t)}{t^{s(\Phi)}}$ is almost non-increasing with the same constant $L \geq 1$ and $s(\Phi) \geq i(\Phi) > -1$ as in assumption {\upshape\hyperref[A2]{(A2)}}, and $\hat{\Phi}(y,1) = 1$ for all $y \in B_1$. 
	It follows directly from the choice of $r$ that 
	\begin{align*}
		\|D^2\hat{\phi}\|_{\infty} \leq \|D^2\phi\|_{\infty} \mbox{ and }\|\hat{g}\|_{C^{1,\alpha_g}(\hat{B''_1})} \leq \frac{1}{K}\|g\|_{C^{1,\alpha_g}(\partial \Omega)}.
	\end{align*}
	Furthermore, the assumptions {\upshape\hyperref[A2]{(A2)}} and {\upshape\hyperref[A3]{(A3)}} imply
	\begin{align*}
		\left\{\begin{array}{l}
			\displaystyle|\hat{H}(y,p)| \leq \dfrac{Lr^{2}}{\nu_0K}\max\left\{\left(\frac{r}{K}\right)^{i(\Phi)},\left(\frac{r}{K}\right)^{s(\Phi)}\right\}\mathcal{C}\left(\dfrac{K}{r}\right)^{m}|p|^m,\\[11.5pt]
			\displaystyle\|\hat{c}\|_{L^\infty(\hat{B'_1})} \leq \frac{L}{\nu_0}K^{i(\Phi)}r^2\max\left\{\left(\frac{r}{K}\right)^{i(\Phi)},\left(\frac{r}{K}\right)^{s(\Phi)}\right\}\|c\|_{L^\infty(\Omega)},\\[11.5pt]
			\displaystyle\|\hat{h}\|_{L^\infty(\hat{B'_1})} \leq \frac{L}{\nu_0}\frac{r^2}{K}\max\left\{\left(\frac{r}{K}\right)^{i(\Phi)},\left(\frac{r}{K}\right)^{s(\Phi)}\right\}\|h\|_{L^\infty(\Omega)}.
		\end{array}
		\right.
	\end{align*}
	Since $K \geq 1 \geq r$ and $-1<i(\Phi) \leq s(\Phi)$, we obtain
	\begin{align*}
		\left\{\begin{array}{l}
			|\hat{H}(y,p)| \leq \dfrac{\mathcal{C}L}{\nu_0}r^{2+i(\Phi)-m}|p|^m,\\[10pt]
			\displaystyle\|\hat{c}\|_{L^\infty(\hat{B'_1})} \leq \frac{L}{\nu_0}r^{2+i(\Phi)} \|c\|_{L^\infty(\Omega)},\\[10pt]
			\displaystyle\|\hat{h}\|_{L^\infty(\hat{B'_1})} \leq \frac{Lr^{2+i(\Phi)}}{\nu_0K^{1+i(\Phi)}}\|h\|_{L^\infty(\Omega)}.
		\end{array}
		\right.
	\end{align*} 
	Now, for given constant $\varepsilon_0 \in (0, 1)$, 
	recalling that $-1 < i(\Phi)$ and $0 < m \leq i(\Phi) + 1$, we select
	\begin{align*}
		K: = 2\left[1 + \|u\|_{L^\infty(\Omega)} + \|g\|_{C^{1,\alpha_g}(\partial \Omega)} +  \left(\frac{L}{\nu_0}\|h\|_{L^\infty(\Omega)}\right)^{\frac{1}{1+i(\Phi)}}\right],
	\end{align*}
	and
	\begin{align*}
		r:=\min\left\{1,\left(\frac{\varepsilon_0}{1 + \frac{L}{\nu_0}\|c\|_{L^\infty(\Omega)}}\right)^{\frac{1}{2+i(\Phi)}},\left(\dfrac{\nu_0\varepsilon_0}{L\mathcal{C}}\right)^{\frac{1}{2+i(\Phi)-m}}\right\}.
	\end{align*}
	It follows that $\hat{u}$ solves equation \eqref{regime-form} within the smallness regime defined by \eqref{6.10}.
\end{remark}
By imposing special conditions on a viscosity supersolution (or subsolution), we can apply the comparison principle without an extra structure condition like {\upshape\hyperref[A5]{(A5)}} in Subsection \ref{Subsection: local and comparison}. Further discussion regarding the comparison principle II is provided in Subsection \ref{Subsection: local and comparison}.
\begin{lemma}[\bf Comparison principle I]
	\label{Comp.Prin1}
	Let $\Omega\subset \mathbb{R}^n$ be a bounded domain and $c,h_1,h_2 \in C(\overline{\Omega})$. Suppose that $u\in C(\overline{\Omega})$ is a viscosity subsolution to
	\begin{align}
		\label{gene_main_com_I}
		\Phi(x,|\nabla u|) F(D^2 u) - H(x,\nabla u) + c(x)|u|^{i(\Phi)}u = h_1(x) \mbox{ in }\Omega,
	\end{align} 
	and $v\in C(\overline{\Omega})\cap C^2(\Omega)$ is a viscosity supersolution to \eqref{gene_main_com_I} with $h_1$ replaced by $h_2$. In the singular case $-1<i(\Phi)<0$, assume additionally that $|\nabla v|>0$ in $\Omega$. Assume further that $v \geq u$ on $\partial\Omega$ and that one of the following is satisfied:
	\begin{enumerate}[\upshape\quad(a)]
		\item\phantomsection\label{CompaI_1} $c<0$ in $\overline{\Omega}$ and $h_1 \geq h_2$ in $\overline{\Omega}$;
		\item\phantomsection\label{CompaI_2} $c \leq 0$ in $\overline{\Omega}$ and $h_1 > h_2$ in $\overline{\Omega}$.
	\end{enumerate}
	Then $v\geq u$ in $\overline{\Omega}$.
\end{lemma}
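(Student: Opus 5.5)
We argue by contradiction, exploiting that $v$ is $C^2$ and so can itself serve as a test function for the subsolution $u$. Suppose $\max_{\overline{\Omega}}(u-v)=:M>0$. Since $u-v\in C(\overline\Omega)$ and $u-v\le 0$ on $\partial\Omega$, the maximum is attained at some interior point $x_0\in\Omega$. To obtain strict touching, as required by Definition \ref{Def:Viscosity_solution}, we set
\begin{align*}
\varphi(x):=v(x)+M+|x-x_0|^4 .
\end{align*}
Then $u\prec_{x_0}\varphi$, with $\nabla\varphi(x_0)=\nabla v(x_0)$ and $D^2\varphi(x_0)=D^2v(x_0)$.

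\textbf{Step 1: reduce to a nonzero gradient.}
\begin{itemize}
\item In the singular case $-1<i(\Phi)<0$, the extra hypothesis $|\nabla v|>0$ directly gives $\nabla\varphi(x_0)\neq0$.
\item In the degenerate case $i(\Phi)\ge0$, the classical definition applies and $\Phi(x_0,0)=0$ is allowed; also $H(x_0,0)=0$ by (A3)(ii). So the test inequality still makes sense at $\nabla\varphi(x_0)=0$.
\item We must also exclude the first alternative in the definition, namely that $u$ is constant near $x_0$. In that case $h_1(x_0)\le c(x_0)|u(x_0)|^{i(\Phi)}u(x_0)$. We handle this by also using the supersolution property of $v$ at $x_0$ classically, since $v\in C^2$ and $v$ touches itself. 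This yields an inequality of the same type as in Step 2, with the $\Phi F-H$ term replaced by the value for $v$, and the argument below still runs.
\end{itemize}

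\textbf{Step 2: compare the two viscosity inequalities.} The subsolution property of $u$ tested with $\varphi$ at $x_0$ gives
\begin{align*}
\Phi(x_0,|\nabla v(x_0)|)F(D^2v(x_0))-H(x_0,\nabla v(x_0))+c(x_0)|u(x_0)|^{i(\Phi)}u(x_0)\ge h_1(x_0).
\end{align*}
Since $v$ is $C^2$ with nonvanishing gradient (or in the classical degenerate setting), $v$ is a pointwise supersolution, so
\begin{align*}
\Phi(x_0,|\nabla v(x_0)|)F(D^2v(x_0))-H(x_0,\nabla v(x_0))+c(x_0)|v(x_0)|^{i(\Phi)}v(x_0)\le h_2(x_0).
\end{align*}
The terms $\Phi F-H$ are identical in the two inequalities. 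Subtracting gives
\begin{align*}
c(x_0)\big(|u(x_0)|^{i(\Phi)}u(x_0)-|v(x_0)|^{i(\Phi)}v(x_0)\big)\ge h_1(x_0)-h_2(x_0).
\end{align*}
Because $s\mapsto|s|^{i(\Phi)}s$ is strictly increasing (as $i(\Phi)>-1$) and $u(x_0)=v(x_0)+M>v(x_0)$, the bracket is strictly positive.

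\textbf{Step 3: conclude in each case.}
\begin{itemize}
\item In case (a), $c(x_0)<0$, so the left side is strictly negative while the right side is $\ge0$. This is a contradiction.
\item In case (b), $c(x_0)\le0$, so the left side is $\le0$ while the right side is $>0$. This is again a contradiction.
\end{itemize}
Hence $M\le0$, that is, $v\ge u$ in $\overline\Omega$.

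The only delicate point is Step 1. This means justifying that $v$ satisfies the supersolution inequality pointwise at $x_0$, and excluding the alternative where $u$ is locally constant. In the singular case this is exactly what $|\nabla v|>0$ provides: the smooth function $v$ can test itself from below strictly, after subtracting $|x-x_0|^4$, at every point. In the degenerate case the standard definition makes both inequalities valid even at critical points.
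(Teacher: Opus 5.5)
Your proof is the paper's argument: at an interior maximum point $x_0$ of $u-v$ you test the subsolution $u$ with $v+M$, subtract the pointwise supersolution inequality for the $C^2$ function $v$ so that the $\Phi F-H$ terms cancel, and conclude from the strict monotonicity of $t\mapsto|t|^{i(\Phi)}t$ together with (a) or (b); your extra $|x-x_0|^4$ to make the touching strict is a correct refinement that the paper skips. The one loose spot is your constant-branch bullet, where ``the argument still runs'' needs one more observation: if $u$ were constant near $x_0$, then $x_0$ would be a local minimum of $v$, so $\nabla v(x_0)=0$, $D^2v(x_0)\ge 0$ and $\Phi(x_0,0)F(D^2v(x_0))-H(x_0,0)\ge 0$, which upgrades $h_1(x_0)\le c(x_0)|u(x_0)|^{i(\Phi)}u(x_0)$ to the Step 2 inequality (and in the singular case this branch cannot occur at all, since $\nabla v(x_0)\neq 0$).
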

\begin{proof}
	On the contrary, we assume that $M=\max\limits_{\overline{\Omega}}{(u-v)}>0$. Let $\overline{x}$ be a point for $u-v$ where the maximum is attained, implying $v+M$ touches $u$ from above at $\overline{x}$. In the singular case, the additional assumption ensures that $\nabla v(\overline{x})\neq0$, so that $v+M$ is an admissible test function for $u$ at 
	$\overline{x}$. Note that $v\in C^2(\Omega)$, thanks to the definition of viscosity subsolutions, we obtain that
	\begin{align}\label{com1}
		\Phi(\overline{x},|\nabla v(\overline{x})|)F(D^2v(\overline{x}))\hspace{-0.1cm}-\hspace{-0.1cm}H(\overline{x},\nabla v(\overline{x}))\hspace{-0.1cm}+\hspace{-0.1cm}c(\overline{x})|v(\overline{x})\hspace{-0.1cm}+\hspace{-0.1cm}M|^{i(\Phi)}(v(\overline{x})\hspace{-0.1cm}+\hspace{-0.1cm}M)\hspace{-0.1cm}\geq\hspace{-0.1cm} h_1(\overline{x}).
	\end{align}
	Moreover, since $v\in C^2(\Omega)$ is a viscosity supersolution then
	\begin{align}\label{com2}
		\Phi(\overline{x},|\nabla v(\overline{x})|)F(D^2v(\overline{x}))-H(\overline{x},\nabla v(\overline{x}))+c(\overline{x})|v(\overline{x})|^{i(\Phi)}v(\overline{x})\le h_2(\overline{x}).
	\end{align}
	Combine \eqref{com1} and \eqref{com2}, we deduce that
	\begin{align}\label{6.1.1}
		h_1(\overline{x})-h_2(\overline{x})\leq c(\overline{x})\left[|v(\overline{x})+M|^{i(\Phi)}(v(\overline{x})+M)-|v(\overline{x})|^{i(\Phi)}v(\overline{x})\right].
	\end{align}
	Since $i(\Phi)>-1$, the map $t\mapsto |t|^{i(\Phi)}t$ is strictly increasing. As $M>0$, the quantity in brackets is strictly positive. In case \hyperref[CompaI_1]{\upshape(a)}, the left-hand side is non-negative, whereas the right-hand side is strictly negative.  In case \hyperref[CompaI_2]{\upshape(b)}, the left-hand side is strictly positive, whereas the right-hand side is non-positive. Both cases yield a contradiction.
\end{proof}
\begin{remark}\label{23}
	The same conclusion remains valid if the roles are reversed; namely, if 
	$u\in C^2(\Omega)\cap C(\overline{\Omega})$ is a viscosity subsolution and $v\in C(\overline{\Omega})$ is a viscosity supersolution. In the singular case $-1<i(\Phi)<0$, assume additionally that $|\nabla u|>0$ in $\Omega$.
\end{remark}
The following lemma characterizes the boundary behavior of a viscosity solution $u$ via the distance function $d$. 
Our idea is inspired by the work in \cite{BD2014}.
\begin{lemma}\label{boundary-behavior}
	Assume that $-1 < i(\Phi)$ and $0 < m \leq i(\Phi)+1$. Let $g\in C^{1,\alpha_g}(\partial\Omega)$ and $\xi \in \mathbb{R}^n$.
	Let $d$ be the distance to the hypersurface $\{y_n = \phi(y')\}$. Then for every $0 < r < 1$ and $\zeta \in (0,1)$, if $u$ is a viscosity solution of the problem
	\begin{align}\label{6.11}
		\left\{\begin{array}{rccl}
			&\hspace{-0.4cm}\Phi(y,|\xi+b\nabla u|)F(D^2u)-H(y,\xi+b\nabla u)+c(y)|u|^{i(\Phi)}u&=h(y )
			&\text{in } B'_1,\\
			&\hspace{-0.4cm}u(y)&=g(y) &\text{on } B''_1,
		\end{array}\right.
	\end{align}
	under assumptions {\upshape\hyperref[A1]{(A1)}--\hyperref[A4]{(A4)}} with 
	\begin{align*}
		&\|u\|_{L^{\infty}(B'_1)} \leq 1, \mbox{ and }\max\left\{\|c\|_{L^{\infty}(B'_1)}, \|h\|_{L^{\infty}(B'_1)}, \mathcal{C}\right\} \leq \varepsilon_0.
	\end{align*}
	Then we have the following results:
	\begin{enumerate}[\upshape (i)]
		\item\phantomsection\label{boundary_behav_1} If $\xi = 0$, $c\le 0$ and $b=1$, there exists $\theta_0 > 0$, depending on $n,\lambda,\Lambda,\Omega, \zeta,i(\Phi),\varepsilon_0, r,L, m, \nu_0$, $\mathrm{Lip}_g(\partial \Omega)$  such that for every $0 < \theta<\theta_0$, then
		\begin{align}\label{6.13'}
			|u(y',y_n)-g(y')|\le \dfrac{6}{\theta}\dfrac{d(y)}{1+d(y)^{\zeta}}\text{ in }B'_r.
		\end{align}
		\item\phantomsection\label{boundary_behav_2} If $|\xi| = 1$ and $c = 0$, there exists $\theta > 0$, depending on $n,\lambda,\Lambda,\Omega, \zeta,i(\Phi)$, $s(\Phi),\varepsilon_0$, $r, L, m, \nu_0$, $\mathrm{Lip}_g(\partial \Omega)$ such that for every $0\leq b<\theta/6$, we also have the inequality \eqref{6.13'}. 
	\end{enumerate}
\end{lemma}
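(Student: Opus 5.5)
The plan is a barrier argument near the flat-ish boundary portion $B''_1$, following \cite{BD2014}. I will build explicit upper and lower barriers of the form
\begin{align*}
w^{\pm}(y)=g(y')\pm\frac{6}{\theta}\frac{d(y)}{1+d(y)^{\zeta}},
\end{align*}
where $g$ is extended to the strip by $g(y',y_n):=g(y')$. Since $\phi\in C^2$, the distance $d$ is $C^2$ in a neighborhood $\{d<\delta_0\}\cap B_1$ with $|\nabla d|=1$ and $|D^2d|\le C(\Omega)$. I will first show that $w^+$ is a strict classical supersolution and $w^-$ a strict classical subsolution of \eqref{6.11} in the region $\{d<\delta\}\cap B'_{r'}$ for suitable $\delta$ and some $r<r'<1$. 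I will then compare them with $u$ there and read off \eqref{6.13'} on $B'_r$.

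\emph{Step 1 (computation).} Set $\psi(t)=t/(1+t^\zeta)$. For small $t$,
\begin{align*}
\psi'(t)\in[\tfrac12,1],\qquad \psi''(t)\le -c_\zeta t^{\zeta-1}.
\end{align*}
Then
\begin{align*}
D^2w^+=D^2g+\frac6\theta\bigl(\psi''(d)\,\nabla d\otimes\nabla d+\psi'(d)D^2d\bigr),
\end{align*}
and the term $\psi''(d)\nabla d\otimes\nabla d$ has a very negative eigenvalue $\lesssim -\theta^{-1}d^{\zeta-1}$ in the normal direction. By \eqref{Pucci_ope.},
\begin{align*}
F(D^2w^+)\le \mathcal P^+(D^2w^+)\le -\frac{6\lambda c_\zeta}{\theta}d^{\zeta-1}+\frac{C}{\theta}+C\|D^2 g\|.
\end{align*}
For $g\in C^{1,\alpha_g}$ only, I will first mollify/extend $g$ so that $|D^2g|\lesssim d^{\alpha_g-1}$, and choose $\zeta<\alpha_g$ if needed; otherwise I use a $C^2$ approximation and absorb the error. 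Hence $F(D^2w^+)\le -c\,\theta^{-1}d^{\zeta-1}$ once $\delta$ is small and $\theta$ is small.

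In case \hyperref[boundary_behav_1]{(i)}, the gradient satisfies
\begin{align*}
|\nabla w^+|\in\bigl[\tfrac{3}{\theta}-\mathrm{Lip}_g,\ \tfrac{6}{\theta}+\mathrm{Lip}_g\bigr],
\end{align*}
so it is large and nonzero. Assumption (A2) then gives $\Phi(y,|\nabla w^+|)\ge \frac{\nu_0}{L}\min\{|\nabla w^+|^{i(\Phi)},|\nabla w^+|^{s(\Phi)}\}\gtrsim \theta^{-i(\Phi)}$. The Hamiltonian is bounded by $\varepsilon_0|\nabla w^+|^m\lesssim\theta^{-m}$ with $m\le i(\Phi)+1$, while the principal term is $\lesssim -\theta^{-i(\Phi)-1}d^{\zeta-1}$. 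So for $\theta<\theta_0$ the principal term dominates $H$, $h$, and, since $c\le0$ and $w^+\ge u$ is the relevant side, the $c$-term. In case \hyperref[boundary_behav_2]{(ii)}, the gradient entering $\Phi$ is $\xi+b\nabla w^+$ with $|\xi|=1$ and $b|\nabla w^+|\le b(6/\theta+\mathrm{Lip}_g)$. Taking $b<\theta/6$, and $\mathrm{Lip}_g$ harmless after shrinking, keeps $|\xi+b\nabla w^+|\in[c_0,2]$. Hence $\Phi\in[\nu_0/L\cdot c_0^{s(\Phi)\vee i(\Phi)},C]$ is comparable to a constant, and the same domination by $\theta^{-1}d^{\zeta-1}$ works; this is why $s(\Phi)$ enters here.

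\emph{Step 2 (comparison).} On $B''$ we have $w^\pm=g=u$. On the inner boundary pieces $\{d=\delta\}$ and $\partial B_{r'}\cap\{d<\delta\}$, I need $w^+\ge 1\ge u$. The first holds if $\frac6\theta\psi(\delta)\ge 2$, which is achieved by taking $\theta$ small relative to $\delta$. The lateral piece $\partial B_{r'}$ is the delicate one, since there $d$ can be small. There I plan to add a localizing term $+\frac{C}{(r'-r)^2}|y'|^2$-type correction, or to use the standard trick of comparing on $\{d<\delta\}\cap B'_{r'}$ with the extra term $K|y-y_0|^2$ for each boundary point $y_0\in B''_r$. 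This keeps the barrier $\ge1$ on the lateral boundary while its Hessian contribution $2K\Lambda n$ is dominated by $\theta^{-1}d^{\zeta-1}$ near the boundary.

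Then Lemma \ref{Comp.Prin1} applies: case (b) via strict inequality, or Remark \ref{23} for $w^-$. The barrier is $C^2$ with nonvanishing gradient, so the singular case is also admissible, giving $u\le w^+$. The symmetric argument gives $u\ge w^-$, and for $d\ge\delta$ the bound \eqref{6.13'} is trivial from $\|u\|_\infty\le1$. In case \hyperref[boundary_behav_2]{(ii)} with $c=0$ I will use a strictness margin $h_1>h_2$ coming from the strict inequality of the barrier.

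\emph{Main obstacle.} I expect the main obstacle to be the lateral boundary localization together with the low regularity of $g$: the control of $D^2 g$ and of the cutoff's Hessian must be absorbed by the $d^{\zeta-1}$ blow-up uniformly in $\theta$. Choosing the order of constants ($\delta$, then $K$, then $\theta$), so that the boundary inequality $w^+\ge u$ and the interior differential inequality hold simultaneously, is where I would spend the most care.
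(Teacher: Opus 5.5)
Your overall scheme matches the paper's: a barrier of size $\theta^{-1}d/(1+d^\zeta)$, a $C^2$ localization, a strict differential inequality, then Lemma~\ref{Comp.Prin1} and Remark~\ref{23}. However, one step fails as written. This is the claim in case (ii) that $b<\theta/6$ keeps $|\xi+b\nabla w^+|\in[c_0,2]$.

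Your barrier has $\nabla w^+=\nabla G+\frac{6}{\theta}\psi'(d)\nabla d+\dots$ with $\psi'(d)\to 1$ as $d\to 0$. So $b<\theta/6$ only gives $b|\nabla w^+|<1+O(\theta)$, not $b|\nabla w^+|\le 1-c_0$. Take a boundary point where $\nabla d$ is antiparallel to $\xi$ (e.g.\ $\xi=-e_n$ near $0$) and $b$ close to $\theta/6$. There $\xi+b\nabla w^+$ becomes arbitrarily small or vanishes.

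In the degenerate regime (say $\Phi(y,t)=t^p$, $p>0$), the factor $\Phi(y,|\xi+b\nabla w^+|)$ then kills the large negative term $F(D^2w^+)$, so $w^+$ is not a strict supersolution. In the singular regime, the comparison argument of Lemma~\ref{Comp.Prin1} cannot be run at points where $\xi+b\nabla w^+=0$.

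The repair is what the paper does: build the barrier with a smaller coefficient,
$w=\frac{2}{\theta}\frac{d}{1+d^\zeta}+\frac{(|y|-r)_+^3}{(1-r)^3}$.
Then $|\nabla w|\le 3/\theta$ for $\theta<(1-r)/3$, hence $\frac12\le|\xi+b\nabla w|\le\frac32$. The constant $6/\theta$ appears only in the conclusion. The smaller coefficient also gives you the slack to absorb the extra $O(d)$ terms your conclusion picks up: $|G(y)-g(y')|\lesssim d^{1+\alpha_g}$ and $K|y-y_0|^2=Kd^2$ at the evaluation point. As written, these push your bound above $\frac{6}{\theta}\frac{d}{1+d^\zeta}$.

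A second, smaller point concerns case (i), where you say shrinking $\theta$ makes the principal term dominate $H$. With your fixed strip width $\delta$ and the critical growth $m=i(\Phi)+1$ allowed by (A3), both terms scale identically in $\theta$:
$\Phi F(D^2w^+)\approx-\theta^{-i(\Phi)-1}d^{\zeta-1}$ and $|H|\lesssim\varepsilon_0\theta^{-i(\Phi)-1}$.
So domination must come from $\delta^{\zeta-1}$ being large. That means $\delta$ must be chosen small in terms of $\mathcal{C},L,\nu_0,\lambda$, not only the curvature of $\partial\Omega$.

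The paper sidesteps this by taking the strip width equal to $\theta$. On $\{d<\theta\}$ one has $d^{\zeta-1}\ge\theta^{\zeta-1}$, so the principal term is of order $\theta^{\zeta-i(\Phi)-2}$. It then beats the curvature term, the cutoff, $H$, $c$ and $h$ simply by letting $\theta\to0$, while $w\ge\frac{2}{1+\theta^\zeta}\ge 1$ on $\{d=\theta\}$.

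Where you go beyond the paper is the case $g\not\equiv 0$, which the paper only delegates to \cite{BD2014}. Your mollified extension works, but only for $\zeta<\alpha_g$. This is harmless, because $\frac d2\le\frac{d}{1+d^\zeta}\le d$ for $d\le1$ lets you pass between values of $\zeta$. You should say this explicitly rather than "choose $\zeta$", since $\zeta$ is prescribed. It also makes $\theta_0$ depend on $[\nabla g]_{C^{0,\alpha_g}}$ and $\alpha_g$, not only on $\mathrm{Lip}_g$.

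Finally, of your two localizations only the per-point $K|y-y_0|^2$ works. A term like $C|y'|^2/(r'-r)^2$ does not vanish where you evaluate. A term like $(|y'|-r)_+^2$ is not $C^2$, which Lemma~\ref{Comp.Prin1} requires.
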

\begin{proof}
	For the sake of convenience, the $L^\infty$ norm over $B'_1$ will be denoted simply by $\|\cdot\|_\infty$ for any involved functions, including $u, c,$ and $h$.\\
	The proof is divided into two separate cases:\\
	\textbf{Case 1:} $g\equiv 0$. In this case, we have
	\begin{align*}
		|u(y',y_n)-g(y')|\le \|u\|_{\infty}\le 1,
	\end{align*}
	so we only need to consider the smaller set $\Omega_{\theta}=\{y\in\Omega:d(y)<\theta\}$. Moreover, we choose $\theta_1>0$ such that if $d(y)<\theta_1$,
	then $d$ belongs to $C^2$, hence there exists some constant $C_d>0$ such that $|D^2d|\le C_d$.\\
	The proof is based on the construction of both upper and lower barrier functions. To achieve this, we introduce a function $w \in C^2(\Omega_\theta)$ specified as follows:
	\begin{align}
		\label{eq:boundary-behavior_1}
		w(y)=\left\{
		\begin{array}{ll}
			\dfrac{2}{\theta}\dfrac{d(y)}{1+d(y)^{\zeta}}&\text{for }|y|<r,\\[10pt]
			\dfrac{2}{\theta}\dfrac{d(y)}{1+d(y)^{\zeta}}+\dfrac{1}{(1-r)^3}(|y|-r)^3&\text{for }|y|\geq r.
		\end{array}
		\right.
	\end{align}
	By following the argument in \cite[Lemma 2.2]{BD2014}, we have $w\geq u$ on $\partial (B'_1\cap \Omega_{\theta})$. We need to check that $w$ is a supersolution. A simple calculation reveals that
	\begin{align}
		\label{eq:boundary-behavior_2}
		\nabla w(y)=\left\{
		\begin{array}{ll}
			\hspace{-0.3cm}\dfrac{2}{\theta}\left[\dfrac{1+(1-\zeta)d(y)^{\zeta}}{(1+d(y)^{\zeta})^2}\right]\nabla d(y)&\hspace{-0.3cm}\text{when }|y|<r,\\[15pt]
			\hspace{-0.3cm}\dfrac{2}{\theta}\left[\dfrac{1+(1-\zeta)d(y)^{\zeta}}{(1+d(y)^{\zeta})^2}\right]\nabla d(y)+\dfrac{y}{|y|}\dfrac{3}{(1-r)^3}(|y|-r)^2&\hspace{-0.2cm}\text{for }|y|\geq r.
		\end{array}
		\right.
	\end{align}
	It is easy to check that $|\nabla w|\geq \dfrac{1}{4\theta}$ when $\theta\le \dfrac{1-r}{12}$, $|\nabla w|\le \dfrac{4}{\theta}$, and $\|w\|_{\infty}\le 3$. As a result, if we choose $\theta<\dfrac{1}{4}$, it follows that $|\nabla w|\geq 1$. A straightforward calculation shows that
	\begin{align*}
		D^2w\hspace{-0.1cm}=\hspace{-0.1cm}-\hspace{-0.1cm}\left(\dfrac{2\zeta d^{\zeta-1}}{\theta}\right)\hspace{-0.1cm}\dfrac{(1+\zeta)\hspace{-0.1cm}+\hspace{-0.1cm}(1-\zeta)d^{\zeta}}{(1+d^{\zeta})^3}\hspace{-0.1cm}\left(\nabla d\otimes\nabla d\right) \hspace{-0.1cm}+\hspace{-0.1cm}\dfrac{2}{\theta}\dfrac{1\hspace{-0.1cm}+\hspace{-0.1cm}(1-\zeta)d^{\zeta}}{(1+d^{\zeta})^2}D^2d+\mathcal{R}(y),
	\end{align*}
	where $\|\mathcal{R}(y)\|\le \dfrac{6}{(1-r)^2}+\dfrac{3(n-1)}{r(1-r)}$. Therefore, we obtain that
	\begin{align*}
		\mathcal{P}_{\lambda,\Lambda}^{+}(D^2w)&\le -2\zeta\theta ^{\zeta-2}\lambda\dfrac{1+\zeta}{(1+\theta^{\zeta})^3}+\dfrac{2}{\theta}nC_d\Lambda+\dfrac{6n\Lambda}{(1-r)^2} + C(n,r) \\
		&\le C(-\theta^{\zeta-2}+\theta^{-1}),
	\end{align*}
	for some $C>0$. As $\zeta-2<-1<0$, it is possible to choose $\theta\in (0,1)$ sufficiently small such that $\mathcal{P}_{\lambda,\Lambda}^{+}(D^2w)<0$. At this stage, we separate the remaining part of the proof into several cases:
	
	\textit{Step 1: Proof of \upshape\hyperref[boundary_behav_1]{(i)}.} Suppose that $\xi = 0$ and $b = 1$. Thanks to the assumptions {\upshape\hyperref[A2]{(A2)}} and {\upshape\hyperref[A3]{(A3)}}, we derive that
	\begin{align*}
		&\Phi(y,|\nabla w|)\mathcal{P}_{\lambda,\Lambda}^{+}(D^2w)\hspace{-0.1cm}\lesssim\hspace{-0.1cm} -\frac{\nu_0}{L}|\nabla w|^{i(\Phi)}(\theta^{\zeta-2}-\theta^{-1})\hspace{-0.1cm}\leq\hspace{-0.1cm} -\frac{\nu_0}{L}(\theta^{\zeta-i(\Phi)-2}\hspace{-0.1cm}-\hspace{-0.1cm}\theta^{-i(\Phi)-1}\hspace{-0.1cm}),\\
		&-H(y,\nabla w)\le \mathcal{C}|\nabla w|^{i(\Phi)+1}\le \mathcal{C}4^{i(\Phi)+1}\theta^{-i(\Phi)-1},\\
		&c(y)|w|^{i(\Phi)}w \leq \|c\|_\infty 3^{i(\Phi) + 1}.
	\end{align*}
	As a result, we obtain that
	\begin{align*}
		&\Phi(y,|\nabla w|)\mathcal{P}_{\lambda,\Lambda}^{+}(D^2w)-H(y,\nabla w) + c(y)|w|^{i(\Phi)}w \\
		\leq~& -\frac{\nu_0}{L}(\theta^{\zeta-i(\Phi)-2}-\theta^{-i(\Phi)-1})+\mathcal{C}4^{i(\Phi)+1}\theta^{-i(\Phi)-1}+\|c\|_{\infty}3^{i(\Phi)+1}.
	\end{align*}
	Since $\zeta-i(\Phi)-2<-i(\Phi)-1<0$, we finally choose $\theta\in (0,1)$ small enough such that
	\begin{align*}
		\Phi(y,|\nabla w|)\mathcal{P}_{\lambda,\Lambda}^{+}(D^2w)-H(y,\nabla w) + c(y)|w|^{i(\Phi)}w\le -\|h\|_{\infty}-1.
	\end{align*}
	Furthermore, we also have $F(D^2w) \leq \mathcal{P}_{\lambda,\Lambda}^{+}(D^2w)$. 
	Hence, we derive
	\begin{align*}
		\Phi(y,|\nabla w|)F(D^2w)-H(y,\nabla w) + c(y)|w|^{i(\Phi)}w\le h(y)-1.
	\end{align*}
	By comparison principle I (Lemma \ref{Comp.Prin1}), we deduce that
	\begin{align*}
		u\le w=\dfrac{2}{\theta}\dfrac{d(y)}{1+d(y)^{\zeta}} \text{ in }B'_r.
	\end{align*}
	The lower bound for $u$ can be obtained by applying the same argument to $-w$; the Hamiltonian term is controlled by its growth assumption, and the reaction term is harmless since $c\leq0$.
	
	\textit{Step 2: Proof of \upshape\hyperref[boundary_behav_2]{(ii)}.} Suppose $c = 0$. If $b=0$, there is no degeneracy with respect to $\nabla u$, then the result holds. Now, we focus on the case $b>0$. we know that if we choose $\theta<\dfrac{1-r}{3}$ then $|\nabla w|\le \dfrac{3}{\theta}$ and $\|w\|_\infty \leq 2$. Consequently, it follows that
	\begin{align*}
		\dfrac{1}{2}\le |\xi+b\nabla w|\le \dfrac{3}{2}\text{ for }0<b<\dfrac{\theta}{6}.
	\end{align*}
	Then, by recalling {\upshape\hyperref[A2]{(A2)}} and {\upshape\hyperref[A3]{(A3)}}, we arrive at
	\begin{align*}
		&\Phi(y,|\xi+b\nabla w|)\mathcal{P}_{\lambda,\Lambda}^{+}(D^2w)\le  C(-\theta^{\zeta-2}+\theta^{-1}),\\
		&-H(y,\xi+b\nabla w)\le \mathcal{C}\left(\frac{3}{2}\right)^{i(\Phi)+1},
	\end{align*}
	for some $C>0$. This estimate allows us to choose $\theta\in (0,1)$ small enough so that
	\begin{align*}
		\Phi(y,|\xi+b\nabla w|)\mathcal{P}_{\lambda,\Lambda}^+(D^2w)-H(y,\xi+b\nabla w)\le -\|h\|_{\infty}-1.
	\end{align*}
	We may now proceed as in the proof of \hyperref[boundary_behav_1]{\upshape(i)} to obtain the desired estimate.
	
	\textbf{Case 2:} $g$ is not identically zero. We use the upper and lower barriers constructed in \cite[Lemma 2.2]{BD2014}. For \hyperref[boundary_behav_1]{\upshape(i)}, the additional Hamiltonian and reaction terms are controlled by the growth assumption on $H$, the bound $\|c\|_{\infty} \leq \varepsilon_{0}$, and the fact that the barrier gradients are comparable to $\theta^{-1}$. For \hyperref[boundary_behav_2]{\upshape(ii)}, since $|\xi|=1$, $c=0$, and $0 \leq b < \theta/6$, the quantity $|\xi + b \nabla w|$ remains bounded away from zero and infinity. Hence, after decreasing $\theta$, if necessary, the barriers remain strict super- and subsolutions in both cases, and the desired estimates follow from the comparison principle.
\end{proof}
The primary result of this section is the boundary Lipschitz estimate established below. Its proof is based on Lemma \ref{boundary-behavior} and the Ishii–Jensen Lemma \cite[Theorem 3.2]{CIL1992}.
	%
For the modified equation \eqref{1.2}, the estimate is obtained in the following two normalized regimes:
\begin{enumerate}[1.]
	\item $\xi = 0$; see Lemma \ref{boundary-behavior}\hyperref[boundary_behav_1]{\upshape(i)} and \hyperref[Result1]{\upshape(R1)} in Theorem \ref{Lipschitz_estimate}.
	\item $|\xi| = 1$; see Lemma \ref{boundary-behavior}\hyperref[boundary_behav_2]{\upshape(ii)} and \hyperref[Result2]{\upshape(R2)} in Theorem \ref{Lipschitz_estimate}.
\end{enumerate}
\begin{theorem}[\bf Boundary Lipschitz estimates]\label{Lipschitz_estimate}
	Assume that $-1 < i(\Phi),$ $0 < m \leq i(\Phi)+1$. Let $g$ be a Lipschitz continuous function on $\partial \Omega$, $\xi \in \mathbb{R}^n$ be an arbitrary vector. Suppose $u$ is a viscosity solution of 
	\begin{align}
		\label{Lipschitz_estimate_main}
		\begin{array}{clc}
			\left\{\begin{array}{rccl}
				&\hspace{-0.6cm}\Phi(y,|\xi+b\nabla u|)F(D^2u)-H(y,\xi+b\nabla u)+c(y)|u|^{i(\Phi)}u&=h(y )
				&\text{in } B'_1,\\
				&\hspace{-0.6cm}u(y)&=g(y) &\text{on } B''_1,
			\end{array}\right.
		\end{array}
	\end{align}
	under the assumptions {\upshape\hyperref[A1]{(A1)}--\hyperref[A4]{(A4)}} with
	\begin{align*}
		&\|u\|_{L^\infty(B'_1)} \leq 1, \mbox{ and }\max\left\{\|c\|_{L^\infty(B'_1)}, \|h\|_{L^\infty(B'_1)},\mathcal{C}\right\} \leq \varepsilon_0.
	\end{align*}
	Then we have the following results 
	\begin{enumerate}[\upshape (R1)]
		\item \phantomsection\label{Result1} If $b=1$, $c \leq 0$ and $\xi = 0$, then for every $r\in (0,1)$, $u$ is Lipschitz continuous in $B'_r$ with the estimate
		\begin{align}
			\label{R1:holder_continuity_1}
			[u]_{C^{0,1}(B'_r)} \leq C_{\mathrm{zl}}\left(n,\lambda,\Lambda,i(\Phi),s(\Phi),L,\mathrm{Lip}_g(\partial \Omega),r,m,\varepsilon_0,\mathcal{C}\right).
		\end{align}
		\item \phantomsection\label{Result2} If $|\xi|=1$ and $c = 0$, then for every $r\in (0,1)$, there exists $b_0>0$, depending on $\lambda,\Lambda,\Omega,i(\Phi),s(\Phi),m,\varepsilon_0$, $\nu_0,L,r,\mathcal{C}$, $\mathrm{Lip}_g(\partial \Omega)$ such that for every $0\leq b<b_0$, $u$ is Lipschitz continuous in $B'_r$ with the estimate
		\begin{align}
			\label{R1:holder_continuity_2}
			[u]_{C^{0,1}(B'_r)} \leq C_{\mathrm{dl}}\left(n,\lambda,\Lambda,i(\Phi),s(\Phi),L,\mathrm{Lip}_g(\partial \Omega),r,m,\varepsilon_0,\mathcal{C}\right).
		\end{align}
	\end{enumerate}
\end{theorem}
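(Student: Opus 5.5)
The plan is the Ishii--Lions doubling-of-variables method, following \cite[Theorem 2.1]{BD2014} and \cite{BBLL2024}, with Lemma \ref{boundary-behavior} supplying the boundary control. Fix $r<r'<1$ (say $r'=(1+r)/2$) and consider
\begin{align*}
\Psi(x,y)=u(x)-u(y)-M\omega(|x-y|)-\frac{K}{2}|x-x_0|^2-\frac{K}{2}|y-x_0|^2,\qquad x,y\in\overline{B'_{r'}},
\end{align*}
where $x_0\in B'_r$ is arbitrary and $\omega(t)=t-\frac{1}{2-\gamma}t^{2-\gamma}$ (or $\omega(t)=t-\kappa t^{3/2}$) is an increasing concave function on $[0,t_0]$ with $\omega''<0$. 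The goal is to show that $\Psi\le0$ for suitable $K$ and $M$ depending only on the data; taking $x=x_0$ then gives the Lipschitz bound at $x_0$.

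The first step is to choose the constants and locate the maximum. Take $K$ large, of order $(r'-r)^{-2}$, using $\|u\|_\infty\le1$, so that a positive maximum forces $\bar x,\bar y$ to lie well inside $B_{r'}$. Boundary points are handled by Lemma \ref{boundary-behavior}: part (i) in case \hyperref[Result1]{(R1)} and part (ii) in case \hyperref[Result2]{(R2)}. That lemma gives $|u(y)-g(y')|\le \frac{6}{\theta}d(y)$. Together with the Lipschitz continuity of $g$, this yields $|u(x)-u(y)|\le C_\theta|x-y|$ whenever one of the two points lies on $B''_1$. Hence, for $M\ge 2C_\theta$, the maximum of $\Psi$ cannot be attained with $\bar x$ or $\bar y$ on $B''_{r'}$. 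Suppose, for contradiction, that $\max\Psi>0$ at an interior pair $(\bar x,\bar y)$ with $\bar x\ne\bar y$. We also have $M\omega(|\bar x-\bar y|)\le2$, so $\delta:=|\bar x-\bar y|\lesssim 1/M$ is small.

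The second step applies the Ishii--Jensen lemma \cite[Theorem 3.2]{CIL1992}. It produces $(q_x,X)\in\overline J^{2,+}u(\bar x)$ and $(q_y,Y)\in\overline J^{2,-}u(\bar y)$, where $q_x=M\omega'(\delta)\hat e+K(\bar x-x_0)$ and $q_y=M\omega'(\delta)\hat e-K(\bar y-x_0)$, with $\hat e=(\bar x-\bar y)/\delta$. The matrix inequality has the standard form: $X-Y\le 2K I$, and $X-Y$ has an eigenvalue (in direction $\hat e$) bounded above by $4M\omega''(\delta)+2K<0$, which is very negative. Consequently $\mathcal P^-_{\lambda,\Lambda}(X-Y)\ge c\,M|\omega''(\delta)|-CK$.

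For \hyperref[Result1]{(R1)}, with $M$ large the gradients satisfy $|q_x|,|q_y|\simeq M$. They are nonzero, so the viscosity inequalities apply also in the singular case. Dividing by $\Phi(\cdot,|q|)$, which \hyperref[A2]{(A2)} bounds below by $\frac{\nu_0}{L}M^{i(\Phi)}$ for $M\ge1$, gives
\begin{align*}
F(X)\ge \frac{H(\bar x,q_x)-c|u|^{i(\Phi)}u+h}{\Phi(\bar x,|q_x|)},\qquad F(Y)\le\frac{H(\bar y,q_y)-c|u|^{i(\Phi)}u+h}{\Phi(\bar y,|q_y|)}.
\end{align*}
Subtracting, the right-hand sides are $O\big(\varepsilon_0 M^{m-i(\Phi)}+\varepsilon_0 M^{-i(\Phi)}\big)=O(\varepsilon_0 M)$, because $m\le i(\Phi)+1$. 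The left-hand side yields $\mathcal P^-(X-Y)\le F(X)-F(Y)\le C\varepsilon_0M$. This contradicts $\mathcal P^-(X-Y)\ge cM|\omega''(\delta)|-CK$, since $|\omega''(\delta)|\gtrsim\delta^{-\gamma}\to\infty$ (or at least stays bounded below) once $M$ is large.

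For \hyperref[Result2]{(R2)}, choose $b_0$ so small that $bM\le 1/2$. Then $|\xi+bq|\in[\frac12,\frac32]$, so $\Phi$ is comparable to a constant and the same contradiction follows. I expect the main obstacle to be the treatment of the $x$-dependence of $\Phi$ and $H$. The factors $\Phi(\bar x,\cdot)$ and $\Phi(\bar y,\cdot)$ differ, and no modulus of continuity in $x$ is assumed. Dividing each inequality by its own $\Phi$ is what avoids comparing them: after that division $F$ is $x$-independent, and the remaining terms are only bounded, not differenced. The other delicate point is the bookkeeping in the choice of $M$ relative to $K$ and $\theta$, which must ensure both that the boundary exclusion holds and that $M|\omega''|$ dominates the lower-order terms.
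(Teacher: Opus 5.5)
Your proposal is correct and is essentially the paper's own argument. It uses the same ingredients: an Ishii--Lions functional (the paper takes $\omega(t)=t-\omega_0t^{3/2}$ and penalization $P(|x-x_0|^2+|y-x_0|^2)$), exclusion of boundary pairs via Lemma \ref{boundary-behavior}, the Crandall--Ishii--Lions lemma, and division of each viscosity inequality by its own $\Phi$ so that no $x$-regularity of $\Phi$ or $H$ is needed. In (R2) the paper likewise takes a threshold $b_0=\min\{1/(8M),\theta/6\}$ (in your notation) to keep $|\xi+bq|$ in $[3/4,5/4]$.

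There is one sign slip to fix. With your jet conventions you get $F(X)-F(Y)\ge -C\varepsilon_0M$ and $F(X)-F(Y)\le\mathcal{P}^+_{\lambda,\Lambda}(X-Y)\le -4\lambda M|\omega''(\delta)|+CK$. Your two displayed inequalities hold only with $Y-X$ in place of $X-Y$. Once corrected, the contradiction for large $M$ goes through exactly as you describe, using $\delta\lesssim 1/M$.
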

\begin{proof}
	\textbf{}
	
	\textit{Step 1: Proof of \hyperref[Result1]{\upshape(R1)}.} Suppose $\xi = 0$ and $b=1$. Let $\rho \in (r,1)$ be fixed. For $x_0 \in B'_r$, we consider 
	\begin{align}
		\label{eq:Holder_1}
		\psi(x,y):=u(x)-u(y)-Q\omega(|x-y|)-P(|x-x_0|^2+|y-x_0|^2),
	\end{align}
	where $\omega$ is defined by
	\begin{align}
		\label{eq:Holder_2}
		\omega(t)=\left\{
		\begin{array}{ll}
			t-\omega_0t^{\frac{3}{2}},&\text{if } t\le t_0: = \left(\frac{2}{3\omega_0}\right)^2,\\
			\omega(t_0),&\text{if }t\geq t_0.
		\end{array}
		\right.
	\end{align}
	We claim that $Q,P \gg 1$ sufficiently large,
	\begin{align}
		\label{eq:Holder_main}
		\psi(x,y) \leq 0 \mbox{ for all } (x,y) \in \left(B_\rho \cap \overline{\Omega}\right) \times\left(B_\rho \cap \overline{\Omega}\right).
	\end{align} 
	Observe that this inequality yields the desired Lipschitz estimate.\\
	Firstly, we assume that $y \in B''_\rho$. Then by Lemma \ref{boundary-behavior}\hyperref[boundary_behav_1]{\upshape(i)}, there exists a constant $M_0 >0$ such that
	\begin{align*}
		|u(s',s_n) - g(s')| \leq M_0d(s,\partial \Omega) \mbox{ for }s \in B'_\rho.
	\end{align*}
	It yields that
	\begin{align*}
		|u(x) - u(y)| &\leq |u(x',x_n) - u(x',\phi(x'))| + |u(x',\phi(x'))-u(y',\phi(y'))|\\
		&\leq M_0d(x,\partial \Omega) + \mathrm{Lip}_g(\partial\Omega)|x'-y'| \leq (M_0 + \mathrm{Lip}_g(\partial\Omega)) |x-y|.
	\end{align*}
	Hence, when we choose $Q \geq 3\left(M_0 + \mathrm{Lip}_g(\partial\Omega)\right)$, then
	\begin{align*}
		\psi(x,y) \leq Q\left(\frac{|x-y|}{3} - \omega(|x-y|)\right) -P(|x-x_0|^2+|y-x_0|^2) \leq 0.
	\end{align*}
	Similarly, the case where $x\in B''_\rho$ is treated in the same way. Hence, $\psi(x,y)\leq0$ whenever either $x \in B''_\rho$ or $y \in B''_\rho$. In order to get \eqref{eq:Holder_main}, we will argue by contradiction by assuming that there exist $x,y \in B_\rho \cap \overline{\Omega}$ such that $\psi(x,y)>0$. Now we define an auxiliary function $\eta:~\left(B_\rho \cap \overline{\Omega}\right) \times\left(B_\rho \cap \overline{\Omega}\right) \to \mathbb{R}$ given by
	\begin{align}
		\label{eq:Holder_3}
		\eta(x,y) := Q \omega(|x-y|) + P(|x-x_0|^2 + |y-x_0|^2).
	\end{align}
	Let $(\overline{x},\overline{y}) \in \left(B_\rho \cap \overline{\Omega}\right) \times\left(B_\rho \cap \overline{\Omega}\right)$ be a maximum point for $\psi$, i.e.,
	\begin{align*}
		\psi(\overline{x},\overline{y})=\displaystyle\sup_{(x,y) \in \left(B_\rho \cap \overline{\Omega}\right) \times\left(B_\rho \cap \overline{\Omega}\right)}\psi(x,y)>0.
	\end{align*}
	This together with $\|u\|_\infty \leq 1$ yields
	\begin{align*}
		Q \omega(|\overline{x} - \overline{y}|) \hspace{-0.1cm}+\hspace{-0.1cm} P(|\overline{x}-x_0|^2\hspace{-0.1cm}+\hspace{-0.1cm}|\overline{y}-x_0|^2) \hspace{-0.1cm}\leq\hspace{-0.1cm} u(\overline{x})\hspace{-0.1cm}-\hspace{-0.1cm}u(\overline{y})\hspace{-0.1cm}\leq \hspace{-0.1cm}2\|u\|_{L^\infty(B'_1)} \hspace{-0.1cm}\leq \hspace{-0.1cm}2.
	\end{align*}
	By choosing $P > \max\left\{8/(\rho - r)^2,1/(2(r + \rho))\right\}$, then we ensure
	\begin{align}
		\label{eq:Holder_*}
		\begin{split}
			|\overline{x} - x_0|, |\overline{y} - x_0| \leq \frac{\rho - r}{2}, |\overline{x} - \overline{y}| \leq\sqrt{2\left(|\overline{x}-x_0|^2 + |\overline{y} - x_0|^2\right)} \leq \frac{2}{\sqrt{P}} \leq 1.
		\end{split}
	\end{align}
	This implies $\overline{x}, \overline{y} \in B'_{(\rho+r)/2}$. Moreover, since $\psi(\overline{x},\overline{y}) > 0$ while $\psi(z,z) = -2P|z - x_{0}|^2 \leq 0$ for every $z$, we necessarily have $\overline{x} \neq \overline{y}$.\\
	We now invoke the Crandall-Ishii-Lions lemma; see \cite[Theorem 3.2]{CIL1992}), to assure the existence of a limiting superjet  $(\varkappa_{\overline{x}}, X_\delta)$ of $u$ at $\overline{x}$ and a limiting subjet $(\varkappa_{\overline{y}}, -Y_\delta)$ of $u$ at $\overline{y}$, such that the matrices $X_\delta,Y_\delta\in\mathbb S^{n\times n}$ satisfy the matrix inequality 
	\begin{align}
		\label{eq:Holder_4}
		\left(\begin{matrix}
			X_\delta&0\\
			0&Y_\delta
		\end{matrix}\right)\le 	Q\left(\begin{matrix}
			Z&-Z\\
			-Z&Z
		\end{matrix}\right)+(2P+\delta)\left(\begin{matrix}
			I&0\\
			0&I
		\end{matrix}\right),
	\end{align}
	where $\delta \in (0,1)$, that only depends on the norm of $Z$, which can be selected to be sufficiently small. Here,
	\begin{align}
		\label{eq:Holder_5}
		\begin{split}
			&\varkappa_{\overline{x}} := D_x\eta(\overline{x},\overline{y})=  Q\omega'(|\overline{x}-\overline{y}|)\frac{\overline{x}-\overline{y}}{|\overline{x}-\overline{y}|}+2P(\overline{x}-x_0),\\
			&\varkappa_{\overline{y}} := -D_y\eta(\overline{x},\overline{y})=Q\omega'(|\overline{x}-\overline{y}|)\frac{\overline{x} - \overline{y}}{|\overline{x} - \overline{y}|}-2P(\overline{y}-x_0),\\
			&Z:=D^2(\omega(|\cdot|))(\overline{x} - \overline{y}) \\&\hspace{0.5cm}= \dfrac{\omega'(|\overline{x}-\overline{y}|)}{|\overline{x}-\overline{y}|}I + \left(\omega''(|\overline{x}-\overline{y}|)-\dfrac{\omega'(|\overline{x}-\overline{y}|)}{|\overline{x}-\overline{y}|}\right)\frac{\left(\overline{x} - \overline{y}\right) \otimes \left(\overline{x} - \overline{y}\right)}{|\overline{x} - \overline{y}|^2}.
		\end{split}		
	\end{align}
	We first choose $\omega_0>0$ sufficiently small so that $t_0 \geq 2 > \rho + r$. Note that $s \mapsto \omega'(s)$ is decreasing on $s \in [0,t_0]$. If we choose $Q> 0$ large enough so that $2P(\rho + r) \leq Q\omega'(\rho + r)/2$, then we collect $2P|\overline{x} - x_0|,2P|\overline{y} - x_0| \leq Q\omega'(|\overline{x} - \overline{y}|)/2$. Applying triangle inequality to \eqref{eq:Holder_5} and by the choice of $P$, we obtain
	\begin{align}
		\label{eq:Holder_7}
		1 \leq \dfrac{Q}{2}\omega'(|\overline{x}-\overline{y}|)\le |\varkappa_{\overline{x}}|, |\varkappa_{\overline{y}}|\le 2Q\omega'(|\overline{x}-\overline{y}|) \leq 2Q.
	\end{align}
	On the other hand, for $X_\delta,Y_\delta \in \mathbb{S}^{n\times n}$, we will use the matrix inequality \eqref{eq:Holder_4} to vectors of the form $(\xi,\xi)$ for any $\xi\in \mathbb{R}^{n}$, we obtain
	\begin{align*}
		\langle (X_\delta+Y_\delta)\xi,\xi\rangle \leq (4P +2\delta)|\xi|^2,
	\end{align*}
	It follows that all the eigenvalues of $X_\delta +Y_\delta$ are less than or equal to $4P +2\delta$. Nevertheless, re-applying \eqref{eq:Holder_4} to the vector $\left(\frac{\overline{x} - \overline{y}}{|\overline{x} - \overline{y}|},\frac{\overline{y} - \overline{x}}{|\overline{x} - \overline{y}|}\right)$, we have
	\begin{align*}
		\left\langle (X_\delta+Y_\delta)\frac{\overline{x} -\overline{y}}{|\overline{x} - \overline{y}|},\frac{\overline{x} - \overline{y}}{|\overline{x} - \overline{y}|}\right\rangle \leq (4P + 2\delta + 4Q\omega''(|\overline{x} - \overline{y}|))\left|\frac{\overline{x} - \overline{y}}{|\overline{x} - \overline{y}|}\right|^2.
	\end{align*}
	This implies that the matrix $X_\delta + Y_\delta$ possesses at least one eigenvalue less than $4P + 2\delta + 4Q\omega''(|\overline{x} - \overline{y}|)$. By definition of $\omega$ in \eqref{eq:Holder_2} together with $|\overline{x} - \overline{y}| < 1$ in \eqref{eq:Holder_*}, we can choose $Q > \frac{4P +2}{3\omega_0}$ then
	\begin{align*}
		4P + 2\delta + 4Q\omega''(|\overline{x} - \overline{y}|) = 4P+ 2\delta -3Q\omega_0|\overline{x} - \overline{y}|^{-1/2}\hspace{-0.1cm}< 4P \hspace{-0.1cm}+\hspace{-0.1cm} 2\hspace{-0.1cm}-3Q\omega_0 <0.
	\end{align*}
	This yields that at least one eigenvalue of $X_\delta + Y_\delta$ is negative. By the definition of the extremal Pucci operator in \eqref{Pucci_operator}, we have
	\begin{align*}
		\mathcal{P}^+_{\lambda,\Lambda}(X_\delta + Y_\delta) \leq\left(\lambda + \Lambda(n-1)\right)(4P + 2\delta) -3\lambda Q\omega_0|\overline{x} - \overline{y}|^{-1/2}.
	\end{align*}
	We now employ the definition of limiting superjet and limiting subjet together with $\|u\|_\infty \leq 1$:
	\begin{align*}
		&\Phi(\overline{x},|\varkappa_{\overline{x}}|)F(X_\delta) -H(\overline{x},\varkappa_{\overline{x}}) \geq h(\overline{x}) - c(\overline{x})|u(\overline{x})|^{i(\Phi)}u(\overline{x}) \geq -\|h\|_\infty - \|c\|_\infty,\\
		&\Phi(\overline{y},|\varkappa_{\overline{y}}|)F(-Y_\delta) -H(\overline{y},\varkappa_{\overline{y}}) \leq h(\overline{y}) - c(\overline{y})|u(\overline{y})|^{i(\Phi)}u(\overline{y}) \leq \|h\|_\infty + \|c\|_\infty,
	\end{align*}
	and from \eqref{Pucci_ope.}, we obtain
	\begin{align*}
		F(X_\delta) \leq \mathcal{P}^+_{\lambda,\Lambda}(X_\delta + Y_\delta)+ F(-Y_\delta).
	\end{align*}
	Combining the last three displays, we arrive at
	\begin{align}
		\label{eq:Holder_6}
		\begin{split}
			\frac{-\hspace{-0.1cm}\|h\|_\infty \hspace{-0.1cm}-\hspace{-0.1cm} \|c\|_\infty}{\Phi(\overline{x},|\varkappa_{\overline{x}}|)} \hspace{-0.1cm}+\hspace{-0.1cm} \frac{H(\overline{x},\varkappa_{\overline{x}})}{\Phi(\overline{x},|\varkappa_{\overline{x}}|)} \hspace{-0.1cm}\leq \hspace{-0.1cm}C_1 \hspace{-0.1cm}-\hspace{-0.1cm}3\lambda Q\omega_0|\overline{x} - \overline{y}|^{-1/2} \hspace{-0.1cm}+\hspace{-0.1cm} \frac{\|h\|_\infty \hspace{-0.1cm}+\hspace{-0.1cm} \|c\|_\infty}{\Phi(\overline{y},|\varkappa_{\overline{y}}|)} \hspace{-0.1cm}+\hspace{-0.1cm} \frac{H(\overline{y},\varkappa_{\overline{y}})}{\Phi(\overline{y},\hspace{-0.1cm}|\varkappa_{\overline{y}}|)},
		\end{split}
	\end{align}
	where $C_1 := \left(\lambda + \Lambda(n-1)\right)(4P + 2\delta)$. Combining the assumptions {\upshape\hyperref[A2]{(A2)}}, {\upshape\hyperref[A3]{(A3)}} and \eqref{eq:Holder_7}, we deduce that
	\begin{align*}
		&\frac{H(\overline{y},\varkappa_{\overline{y}})}{\Phi(\overline{y},| \varkappa_{\overline{y}}|)} - \frac{H(\overline{x}, \varkappa_{\overline{x}})}{\Phi(\overline{x},| \varkappa_{\overline{x}}|)} \leq \frac{L\mathcal{C}}{\nu_0}\left(| \varkappa_{\overline{y}}|^{m-i(\Phi)}+| \varkappa_{\overline{x}}|^{m-i(\Phi)}\right),\\
		&\frac{\|h\|_\infty + \|c\|_\infty}{\Phi(\overline{y},|\varkappa_{\overline{y}}|)} + \frac{\|h\|_\infty + \|c\|_\infty}{\Phi(\overline{x},|\varkappa_{\overline{x}}|)} \leq \frac{L\left(\|h\|_{\infty} +\|c\|_{\infty}\right)}{\nu_0}\left(|\varkappa_{\overline{x}}|^{-i(\Phi)} +|\varkappa_{\overline{y}}|^{-i(\Phi)}\right).
	\end{align*}
	Substituting the aforementioned inequalities into \eqref{eq:Holder_6}, we get
	\begin{align*}
		-L\left(\|h\|_{\infty} \hspace{-0.1cm}+\hspace{-0.1cm}\|c\|_{\infty}\right)\left(|\varkappa_{\overline{x}}|^{-i(\Phi)}\hspace{-0.1cm} +\hspace{-0.1cm}|\varkappa_{\overline{y}}|^{-i(\Phi)}\right)\hspace{-0.1cm}\leq &\nu_0C_1 \hspace{-0.1cm}+\hspace{-0.1cm}L\mathcal{C}\left(| \varkappa_{\overline{y}}|^{m-i(\Phi)}\hspace{-0.1cm}+\hspace{-0.1cm}| \varkappa_{\overline{x}}|^{m-i(\Phi)}\right)\\
		&-3Q\nu_0\lambda\omega_0|\overline{x} - \overline{y}|^{-1/2},
	\end{align*}
	We now distinguish between two cases based on the sign of $i(\Phi)$.
	\begin{enumerate}[\text{Case }1.]
		\item ($i(\Phi) \geq 0$) From the estimate \eqref{eq:Holder_7}, we know that $|\varkappa_{\overline{x}}|,|\varkappa_{\overline{y}}| \geq 1$ and $|\overline{x} - \overline{y}| \leq 1$ so we conclude that
		\begin{align*}
			-2L\left(\|h\|_{\infty} +\|c\|_{\infty}\right)\leq \nu_0C_1 +Q|\overline{x} - \overline{y}|^{-1/2}\left(4L\mathcal{C}|\overline{x} - \overline{y}|^{1/2}-3\nu_0\lambda\omega_0\right).
		\end{align*}
		We now select $P \geq  \left(\frac{2^{3/2}L\mathcal{C}}{\nu_0\lambda\omega_0}\right)^{4}$ and apply $|\overline{x} - \overline{y}| < \frac{2}{\sqrt{P}}$ to derive
		\begin{align*}
			4L\mathcal{C}|\overline{x} - \overline{y}|^{1/2} \leq 4L\mathcal{C}\left(\frac{2}{\sqrt{P}}\right)^{1/2} \leq 2\nu_0\lambda\omega_0.
		\end{align*}
		It follows that
		\begin{align*}
			\nu_0Q\lambda\omega_0 \leq \nu_0C_1 + 2L\left(\|h\|_{\infty} +\|c\|_{\infty}\right).
		\end{align*}
		As a result, if we choose $Q > \frac{\nu_0C_1 + 2L\left(\|h\|_{\infty} +\|c\|_{\infty}\right)}{\nu_0\lambda\omega_0}$, we reach a contradiction.
		\item ($-1 < i(\Phi) < 0$) Recalling that $|\varkappa_{\overline{x}}|,|\varkappa_{\overline{y}}| \leq 2Q$, we deduce
		\begin{align*}
			-2L\left(\|h\|_{\infty} \hspace{-0.1cm}+\hspace{-0.1cm}\|c\|_{\infty}\right)(2Q)^{-i(\Phi)}\hspace{-0.1cm}\leq \hspace{-0.1cm}\nu_0C_1 \hspace{-0.1cm}+\hspace{-0.1cm}Q|\overline{x} - \overline{y}|^{-1/2}\hspace{-0.1cm}\left(4L\mathcal{C}|\overline{x} - \overline{y}|^{1/2}\hspace{-0.1cm}-\hspace{-0.1cm}3\nu_0\lambda\omega_0\right).
		\end{align*}
		Choosing $P \geq 4 \left(\frac{2^{3/2}L\mathcal{C}}{\nu_0\lambda\omega_0}\right)^{4}$, using \eqref{eq:Holder_*}, we obtain
		\begin{align}
			\label{eq:Holder_pre_9}
			4L\mathcal{C}|\overline{x} - \overline{y}|^{1/2} \leq4L\mathcal{C}\left(\frac{2}{\sqrt{P}}\right)^{1/2} \leq 2\nu_0\lambda\omega_0.
		\end{align}
		By means of \eqref{eq:Holder_pre_9} and $|\overline{x} - \overline{y}|^{-1/2} \geq 1$, we further arrive at
		\begin{align*}
			-4L\left(\|h\|_{\infty} + \|c\|_{\infty}\right)Q^{-i(\Phi)} \leq \nu_0C_1 - Q |\overline{x} - \overline{y}|^{-1/2}\nu_0\lambda\omega_0 \leq \nu_0C_1 - Q\nu_0\lambda\omega_0.
		\end{align*}
		Since $-i(\Phi) < 1$, this inequality does not hold for sufficiently large $Q > 0$.
	\end{enumerate}
	
	\textit{Step 2: Proof of \hyperref[Result2]{\upshape(R2)}.} Suppose $|\xi| = 1$ and $c = 0$.
	Since the proof of this case is quite similar to \hyperref[Result1]{\upshape(R1)}, here we focus on figuring out the differences.  
	\begin{enumerate}[1.] 
		\item  Since $|\xi| = 1$, if we choose $0 \leq b < \theta/6$, we can apply Lemma \ref{boundary-behavior}\hyperref[boundary_behav_2]{\upshape(ii)} to $u$. Hence, we deduce $\psi(x, y) \leq 0$ whenever either $x$ or $y$ lies on the boundary portion $B_{\rho}''$.
		\item We next follow the contradiction argument of the proof of \hyperref[Result1]{\upshape(R1)} and the difference occurs when we employ the definition of limiting superjet $(\varkappa_{\overline{x}},X)$ of $u$ at $\overline{x}$ and subjet $(\varkappa_{\overline{y}},-Y)$ of $u$ at $\overline{y}$, combining $\|u\|_\infty \leq 1$:
		\begin{align*}
			&\Phi(\overline{x},|\xi+b\varkappa_{\overline{x}}|)F(X)-H(\overline{x},\xi+b\varkappa_{\overline{x}}) \geq -\|h\|_\infty,\\
			&\Phi(\overline{y},|\xi+b\varkappa_{\overline{y}}|)F(-Y)-H(\overline{y},\xi+b\varkappa_{\overline{y}}) \leq \|h\|_\infty.
		\end{align*}
		Next, we choose $Q>0$ sufficiently large such that
		\begin{align}\label{9.1}
			3\lambda\omega_0 Q >\left(\lambda + \Lambda(n-1)\right)(4P + 2\delta)+ 2\left[\dfrac{\|h\|_{\infty}+L\mathcal{C}(5/4)^m}{c_\Phi}\right],
		\end{align}
		and $b_0=\min\left\{\dfrac{1}{8Q},\dfrac{\theta}{6}\right\}$. Note that $|\varkappa_{\overline{x}}|,|\varkappa_{\overline{y}}|\le 2Q$, if $0 \leq b < b_0$, then it follows that
		\begin{align}\label{lip1}
			|b\varkappa_{\overline{x}}|,|b\varkappa_{\overline{y}}|\le \dfrac{1}{4}	\text{ and } \dfrac{3}{4}\le |\xi+b\varkappa_{\overline{x}}|,|\xi+b\varkappa_{\overline{y}}|\le \dfrac{5}{4}.
		\end{align}
		As in the proof of \hyperref[Result1]{\upshape(R1)}, we obtain 
		\begin{align*}
			\dfrac{-\|h\|_{\infty} + H(\overline{x},\xi+b\varkappa_{\overline{x}})}{\Phi(\overline{x},|\xi+b\varkappa_{\overline{x}}|)}\le C_1 -3\lambda Q\omega_0 + \dfrac{\|h\|_{\infty}+H(\overline{y},\xi+b\varkappa_{\overline{y}})}{\Phi(\overline{y},|\xi+b\varkappa_{\overline{y}}|)},
		\end{align*}
		where $C_1:=\left(\lambda + \Lambda(n-1)\right)(4P + 2\delta)$. Let
		\begin{align*}
			c_\Phi:=\displaystyle\frac{\nu_0}{L}\min_{t\in[3/4,5/4]}\min\{t^{i(\Phi)},t^{s(\Phi)}\}>0.	
		\end{align*} 
		Combining this estimate, the assumptions {\upshape\hyperref[A2]{(A2)}}, {\upshape\hyperref[A3]{(A3)}}, and $0 < m \leq i(\Phi)+1$ yields that
		\begin{align*}
			\dfrac{-\|h\|_{\infty}-L\mathcal{C}|\xi+b\varkappa_{\overline{x}}|^m}{c_\Phi}\le C_1 -3\lambda Q\omega_0 +\dfrac{\|h\|_{\infty}+L\mathcal{C}|\xi+b\varkappa_{\overline{y}}|^m}{c_\Phi}.
		\end{align*}
		Combining this inequality with \eqref{lip1}, we imply that
		\begin{align*}
			3\lambda\omega_0 Q \leq C_1 + 2\left[\dfrac{\|h\|_{\infty}+L\mathcal{C}(5/4)^m}{c_\Phi}\right],
		\end{align*}
		which is a contradiction to \eqref{9.1}. Our proof is complete.\qedhere
	\end{enumerate}
\end{proof}
\subsection{\bf Global $C^{1,\gamma}$-regularity}
In this subsection, we suppose that $\Omega$ is a bounded $C^2$-domain. Additionally, in the present work, the global $C^{1,\gamma}$-regularity theory is established for equations without absorption terms. This restriction is due to the fact that the affine normalization used in the iteration scheme does not preserve the structure of the zero-order term $c(x)|u|^{i(\Phi)}u$.\\
The subsequent lemma is a direct consequence of \hyperref[Result1]{\upshape(R1)} and \hyperref[Result2]{\upshape(R2)} of Theorem \ref{Lipschitz_estimate} combined with the "cutting lemma". Its proof can be established by adopting the arguments presented in \cite[Theorem 4.1]{BBLL2024} and \cite[Lemma 3.2]{FDZ2021}.
\begin{lemma}[\textbf{Stability}]\label{Stability}
	Let $\{g_k\}_{k}$ be a sequence of Lipschitz continuous functions such that $g_k\to g_{\infty}$. Suppose that $\{u_k\}_{k}$ is a sequence of uniformly bounded viscosity solutions of
	\begin{align*}
		\left\{\begin{array}{cll}
			\Phi_k(y,|\xi_k+b_k\nabla u_k|)F_k(D^2u_k)-H_k(y,\xi_k+b_k\nabla u_k)&=f_k(y) &\text{in } B'_1,\\
			u_k(y)&=g_k(y) &\text{on }B''_1,
		\end{array}\right.
	\end{align*}
	where $\{\xi_k\}_{k}\subset  \mathbb{R}^n,\{f_k\}_{k}\subset C(B_1')$, and $\{F_k\}_k \subset C(\mathbb{S}^{n\times n},\mathbb{R})$ is uniformly $(\lambda,\Lambda)$-elliptic, $\{H_k\}_{k}$ is a sequence of functions continuous on $\overline{B_1'}\times\mathbb{R}^n$ and satisfy there exists a constant $m\in (0,i(\Phi)+1]$ such that
	\begin{align*}
		|H_k(y,p)|\le \mathcal{C}_k|p|^m\text{ for all }k\in\mathbb{N},
	\end{align*}  
	with the sequence $\{\mathcal{C}_k\}_{k}$ converging to 0.
	\begin{enumerate}[\upshape (i)]
		\item \phantomsection\label{Stability_1} Suppose further that $\xi_k=0$ and  $b_k=1$ for every $k\in\mathbb{N}$, $f_k\to 0$ (uniformly),  
		and $F_k\to F_{\infty}$. Then one can extract a subsequence from $\{u_k\}_{k}$ which converges to $u_{\infty}$ on $\overline{B_1'}$. Moreover, such a limit $u_{\infty}$ satisfies
		\begin{align*}
			\left\{\begin{array}{cll}
				F_\infty(D^2u_{\infty})&=0 &\text{in } B'_1,\\
				u_{\infty}(y)&=g_{\infty}(y) &\text{on } B''_1.
			\end{array}\right.
		\end{align*}
		\item \phantomsection\label{Stability_2} Suppose further that $|\xi_k|=1$ and  for every $k\in\mathbb{N}$, $b_k\to 0, f_k\to 0$ (uniformly), 
		and $F_k\to F_{\infty}$. Then one can extract a subsequence from $\{u_k\}_k$ which converges to $u_{\infty}$ on $\overline{B_1'}$. Moreover, such a limit $u_{\infty}$ satisfies 
		\begin{align*}
			\left\{\begin{array}{cll}
				F_\infty(D^2u_{\infty})&=0 &\text{in } B'_1,\\
				u_{\infty}(y)&=g_{\infty}(y) &\text{on } B''_1.
			\end{array}\right.
		\end{align*}
	\end{enumerate}
\end{lemma}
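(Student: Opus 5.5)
The plan is to obtain compactness from a uniform Lipschitz bound, then pass to the limit in the viscosity sense, with the gradient-dependent factor removed by a cutting argument.

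\textbf{Compactness.} By the smallness normalization in Remark \ref{Remark: smallness-regime}, we may assume $\|u_k\|_{L^\infty(B_1')}\le 1$. Since $\mathcal{C}_k\to0$ and $f_k\to0$, the hypotheses of Theorem \ref{Lipschitz_estimate} hold uniformly in $k$ for large $k$: (R1) applies in case (i), and (R2) applies in case (ii) because $b_k\to0$ eventually gives $b_k<b_0$. Since $g_k\to g_\infty$, their Lipschitz constants stay bounded. Hence for each $r<1$ the family $\{u_k\}$ is uniformly Lipschitz on $\overline{B'_r}$. By Arzelà--Ascoli and a diagonal argument in $r\uparrow1$, a subsequence converges locally uniformly to some $u_\infty$. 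The boundary estimate of Lemma \ref{boundary-behavior} holds with constants independent of $k$, so $u_\infty=g_\infty$ on $B_1''$ and the convergence is uniform up to the flat part.

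\textbf{Limit equation, case (i).} We check that $u_\infty$ is a viscosity solution of $F_\infty(D^2u_\infty)=0$. Take a quadratic polynomial $P$ touching $u_\infty$ strictly from below at $x_0\in B_1'$, and suppose for contradiction that $F_\infty(D^2P)>0$. We use the cutting lemma in the spirit of \cite[Lemma 6]{IS2013}. If $\nabla P(x_0)\neq0$, perturb to contact points $x_k\to x_0$ of $u_k$ with $P+c_k$. There $|\nabla P(x_k)|\ge c>0$, so $\Phi_k(x_k,|\nabla P|)$ is bounded below by (A2). Dividing the supersolution inequality by $\Phi_k$ and using $|H_k|\le\mathcal{C}_k|\nabla P|^m\to0$ and $f_k\to0$ gives $F_k(D^2P)\le o(1)$, hence $F_\infty(D^2P)\le0$, a contradiction. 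If $\nabla P(x_0)=0$, consider instead $P(x)+\langle e,x\rangle$ for small vectors $e$ and touch $u_k$ from below. For each $e$ the contact gradient $\nabla P(x_k)+e$ is either nonzero, which lets us argue as above, or zero. The latter happens for only a small set of $e$, or it forces the touching to occur on an affine region where a one-dimensional argument applies. The Imbert--Silvestre dichotomy settles this: either $D^2P$ has a nonpositive eigenvalue structure consistent with $F_\infty(D^2P)\le0$, or one can tilt to avoid zero gradient. The subsolution direction is symmetric.

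\textbf{Case (ii) and the main obstacle.} Here the gradient factor is $|\xi_k+b_k\nabla P(x_k)|$ with $|\xi_k|=1$ and $b_k\to0$, so it lies in $[1/2,3/2]$ for large $k$. Then $\Phi_k$ is uniformly bounded above and below by (A2), the term $H_k$ vanishes in the limit, and the standard stability of viscosity solutions applies directly with no cutting needed. The main obstacle is the zero-gradient case in (i). Without a cutting lemma the degenerate factor $\Phi_k(x,|\nabla P|)$ gives no information there, and one must also check that the Hamiltonian, divided by $\Phi_k$ (of order $|p|^{m-i(\Phi)}$ with $m-i(\Phi)\le1$), stays controlled near small gradients. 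I would handle this as \cite[Theorem 4.1]{BBLL2024} does: work with the tilted test functions $P+\langle e,\cdot\rangle$ and let $|e|\to0$ only after $k\to\infty$.
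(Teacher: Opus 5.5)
Your overall plan is the one the paper intends. The paper gives no proof, only a pointer to (R1)--(R2) of Theorem~\ref{Lipschitz_estimate} plus the cutting lemma, as in \cite[Theorem 4.1]{BBLL2024}. Your case (ii) is fine, since $|\xi_k+b_k\nabla P(x_k)|\in[1/2,3/2]$ keeps $\Phi_k$ nondegenerate.

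The gap is the zero-gradient case of (i), which you identify as the main obstacle but do not close. A contact gradient $p_k$ that is merely nonzero for each $k$ is not enough. You need $|p_k|\ge c>0$ independently of $k$; otherwise $\Phi_k(x_k,|p_k|)^{-1}\bigl(\|f_k\|_\infty+\mathcal{C}_k|p_k|^m\bigr)$ need not tend to $0$. With your linear tilt $P+\langle e,\cdot\rangle$, the contact gradients tend to $p_e=M(x_e-x_0)+e$, where $M=D^2P$ and $x_e$ is a minimum point of $u_\infty-P-\langle e,\cdot\rangle$. When $M$ is invertible, nothing you wrote excludes $x_e=x_0-M^{-1}e$, i.e.\ $p_e=0$, for every small $e$. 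The assertions that this happens ``for only a small set of $e$'', or reduces to ``an affine region'' handled by ``a one-dimensional argument'', are not justified.

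The missing idea is a perturbation that forces a $k$-independent lower bound on the test gradient. The standard Imbert--Silvestre device works as follows.
\begin{itemize}
\item If $F_\infty(M)>0$, then $M$ has a nontrivial positive eigenspace $E$. Touch $u_k$ from below by $P(x)+\varepsilon|P_E(x-x_0)|$, and let $x_k$ be a contact point.
\item If $P_E(x_k-x_0)\neq0$, the test function is smooth near $x_k$ with Hessian $\ge M$. The $E$-component of its gradient has norm $\ge\varepsilon$, because $M>0$ on $E$.
\item If $P_E(x_k-x_0)=0$, then $P+\varepsilon\langle e,x-x_0\rangle$, with $e\in E$ a unit vector, also touches $u_k$ at $x_k$. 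Its gradient $M(x_k-x_0)+\varepsilon e$ has norm $\ge\varepsilon$, since $M(x_k-x_0)\in E^\perp$.
\item In both cases $\Phi_k\ge c(\varepsilon)$, so ellipticity gives $F_k(M)\le o_k(1)$. Letting $k\to\infty$ with $\varepsilon$ fixed contradicts $F_\infty(M)>0$.
\end{itemize}

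Your linear tilt can also be rescued. For singular $M$, choose a small $e\notin\operatorname{range}M$, so $p_e\neq0$ automatically. For invertible $M$, having $p_e=0$ for all small $e$ would give supporting planes of $u_\infty-P$ with slope $-M(y-x_0)$ at every $y$ near $x_0$. Adding the two supporting inequalities at $y$ and $z$ gives $\langle M(z-y),z-y\rangle\le0$, forcing $M\le0$, a contradiction.

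Separately, the inference ``$g_k\to g_\infty$, so their Lipschitz constants stay bounded'' is false; for example, $g_k=k^{-1}\sin(k^2y_1)$ converges uniformly to $0$ with unbounded Lipschitz constants. The uniform bound, in fact $\|g_k\|_{C^{1,\alpha_g}}\le1$ as in Lemma~\ref{approx1}, has to be assumed. It is also what Lemma~\ref{boundary-behavior} requires.
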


We recall a well-known regularity result proved by Milakis-Silvestre in \cite[Proposition 2.2]{MS2006}, namely if $v$ is a viscosity solution of a uniformly $(\lambda,\Lambda)$-elliptic equation
\begin{align*}
	\left\{\begin{array}{cll}
		\mathcal{F}(D^2v)&=0 &\text{in } B_1\cap \{y_n>0\},\\
		v(y)&=g(y) &\text{on } B_1\cap \{y_n=0\},
	\end{array}\right.
\end{align*}
where $g\in C^{1,\alpha_g}(B_1\cap\{y_n=0\})$, then there exists $\alpha_0=\alpha_0(n,\lambda,\Lambda,\alpha_g)\in (0,1)$ such that we have $v\in C^{1,\alpha_0}(\overline{B_{1/2}\cap\{y_n>0\}})$. Now, we are ready to represent the approximate lemma in the degenerate case.
\begin{lemma}[\textbf{Approximation lemma; degenerate case}]\label{approx1}
	Suppose the assumptions {\upshape\hyperref[A1]{(A1)}--\hyperref[A4]{(A4)}} hold true with $i(\Phi)\geq 0$ and $\nu_0=\nu_1=1$. Let $\xi\in\mathbb{R}^n$ and $u\in C(B'_1(x))$ is a viscosity solution of
	\begin{align}\label{eq-C^2.2}
		\left\{\begin{array}{cll}
			\Phi(y,|\xi+\nabla u|)F(D^2u)-H(y,\xi+\nabla u)&=h(y) &\text{in } B'_1(x),\\
			u(y)&=g(y) &\text{on } B''_1(x),
		\end{array}\right.
	\end{align}
	with $\|u\|_{L^{\infty}(B'_1(x))}\le 1$ and $\|g\|_{C^{1,\alpha_g}(B''_1(x))}\le 1$. Then for any $\varpi>0$, there exists a constant $\varepsilon_0=\varepsilon_0(n,\lambda,\Lambda,i(\Phi),s(\Phi),m,L,\varpi)>0$ such that if
	\begin{align*}
		\max\{\|h\|_{L^{\infty}(B'_1(x))},\mathcal{C},\mathcal{C}|\xi|^{(m-i(\Phi))_{+}}\}\le \varepsilon_0,
	\end{align*}
	then one can find $v\in C^{1,\alpha_{0}}(B'_{3/4}(x))$, for some $\alpha_{0}\in(0,1)$, which is the viscosity solution of a uniformly $(\lambda,\Lambda)$-elliptic equation
	\begin{align}\label{eq-C^2.2'}
		\left\{\begin{array}{cll}
			\mathcal{F}(D^2v)&=0 &\text{in } B'_{3/4}(x),\\
			v(y)&=g(y) &\text{on } B''_{3/4}(x),
		\end{array}\right.
	\end{align}
	such that
	\begin{equation}\label{app}
		\|u-v\|_{L^{\infty}(B'_{1/2}(x))}\leq \varpi.
	\end{equation}
\end{lemma}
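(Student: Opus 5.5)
We argue by contradiction and compactness, following \cite[Lemma 4.2]{BBLL2024} and \cite{BD2016}. Suppose the statement fails for some $\varpi>0$. Then there are sequences $\Phi_k$ (satisfying (A2) with the same $i(\Phi),s(\Phi),L$ and $\Phi_k(y,1)=1$), uniformly elliptic $F_k$, Hamiltonians $H_k$ with constants $\mathcal{C}_k$, data $h_k,g_k$, vectors $\xi_k$, and solutions $u_k$ of \eqref{eq-C^2.2}. These satisfy $\|u_k\|_\infty\le1$, $\|g_k\|_{C^{1,\alpha_g}}\le1$ and
\[
\max\{\|h_k\|_\infty,\mathcal{C}_k,\mathcal{C}_k|\xi_k|^{(m-i(\Phi))_+}\}\le 1/k .
\]
Moreover, $\|u_k-v\|_{L^\infty(B'_{1/2})}>\varpi$ for every $v$ solving some problem \eqref{eq-C^2.2'} with data $g_k$. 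After translating we may take $x$ fixed, and after passing to subsequences we may assume $F_k\to F_\infty$ locally uniformly (by Arzel\`a--Ascoli, since the $F_k$ are $\Lambda$-Lipschitz with $F_k(0)=0$) and $g_k\to g_\infty$ in $C^1$.

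The key step is uniform equicontinuity of $\{u_k\}$ up to $B''_{3/4}$, obtained from Theorem \ref{Lipschitz_estimate}. We split into the cases $\{\xi_k\}$ bounded and $|\xi_k|\to\infty$.

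\emph{Case $\{\xi_k\}$ bounded.} The modulation is harmless: write $w_k=u_k+\langle\xi_k,y\rangle$. Then $w_k$ solves the equation with $\xi=0$ and $b=1$, its boundary data are $g_k+\langle \xi_k,y\rangle$ with uniformly bounded Lipschitz constant, and $\|w_k\|_\infty$ is bounded. After a harmless renormalization, (R1) gives uniform Lipschitz bounds for $w_k$, hence for $u_k$, on $B'_r$.

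\emph{Case $|\xi_k|\to\infty$.} Divide the equation by $\Phi_k(y,|\xi_k|)$. Setting $e_k=\xi_k/|\xi_k|$ and $b_k=1/|\xi_k|\to0$, $u_k$ solves
\[
\tilde\Phi_k(y,|e_k+b_k\nabla u_k|)F_k(D^2u_k)-\tilde H_k(y,e_k+b_k\nabla u_k)=\tilde h_k,
\]
where $\tilde\Phi_k(y,t)=\Phi_k(y,|\xi_k|t)/\Phi_k(y,|\xi_k|)$ again satisfies (A2) with $\tilde\Phi_k(y,1)=1$. Using (A2) with $i(\Phi)\ge0$, $\Phi_k(y,|\xi_k|)\ge L^{-1}|\xi_k|^{i(\Phi)}$, so the new Hamiltonian constant is at most $L\mathcal{C}_k|\xi_k|^{m-i(\Phi)}\le L\mathcal{C}_k|\xi_k|^{(m-i(\Phi))_+}\to0$, and $\|\tilde h_k\|_\infty\le L/k\to0$. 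This is exactly where the hypothesis $\mathcal{C}|\xi|^{(m-i(\Phi))_+}\le\varepsilon_0$ enters. Since $b_k\to 0$, (R2) applies for large $k$ and gives uniform Lipschitz bounds on $B'_r$.

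In both cases Arzel\`a--Ascoli yields $u_k\to u_\infty$ uniformly on $\overline{B'_{3/4}}$, and Lemma \ref{Stability} (part (i) in the bounded case applied to $w_k$, part (ii) otherwise) identifies the limit equation as $F_\infty(D^2u_\infty)=0$ in $B'_{3/4}$. In the bounded case we also pass $\xi_k\to\xi_\infty$; the linear term does not change $D^2$, so $u_\infty$ itself solves $F_\infty(D^2u_\infty)=0$ with $u_\infty=g_\infty$ on $B''_{3/4}$. Note that the stability step needs some care: in the degenerate regime one must show that $\Phi_k(\cdot)F_k(D^2\varphi)\to0$ only at points where the gradient factor vanishes. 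This is handled by the standard argument of Imbert--Silvestre \cite{IS2013}: test with $\varphi+\langle q,y\rangle$ for small $q$ to avoid zero gradients, and conclude via the cutting lemma.

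To conclude, let $v_k$ solve $F_\infty(D^2v_k)=0$ in $B'_{3/4}$ with $v_k=g_k$ on $B''_{3/4}$ and $v_k=u_\infty$ on the remaining boundary $\partial B_{3/4}\cap\{y_n>\phi(y')\}$ (adjusted continuously at the corner). By the Milakis--Silvestre result quoted above, $v_k\in C^{1,\alpha_0}(B'_{1/2})$, and by the ABP estimate for $F_\infty$ together with the comparison principle we get $\|v_k-u_\infty\|_\infty\le\|g_k-g_\infty\|_\infty\to0$. Hence $\|u_k-v_k\|_{L^\infty(B'_{1/2})}\to0$, contradicting the choice of the sequence. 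The main obstacle I expect is the uniform boundary equicontinuity in the unbounded-$\xi_k$ regime, which requires the rescaled normalization above to place us within the hypotheses of (R2), with the smallness of $b_k$ coming from $|\xi_k|\to\infty$.
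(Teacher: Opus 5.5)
Your proposal is correct and follows essentially the same route as the paper: contradiction and compactness, the split into bounded and unbounded $\{\xi_k\}$, the shift $u_k+\langle\xi_k,y\rangle$ with (R1) in the first case, division by $\Phi_k(y,|\xi_k|)$ with (R2) in the second, and then Arzel\`a--Ascoli together with Lemma~\ref{Stability}. Your renormalization before invoking (R1), and your closing construction of $v_k$ with boundary data $g_k$, are slightly more careful than the paper, which simply takes $v=u_\infty$ even though its boundary values are $g_\infty$ rather than $g_k$.
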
 
\begin{proof}
	We argue by contradiction and suppose that there exist $\varpi_0>0$ and sequences of functions $\{F_k\}_{k=1}^{\infty}$, $\{\Phi_k\}_{k=1}^{\infty},\{H_k\}_{k=1}^{\infty},\{h_k\}_{k=1}^{\infty},\hspace{-0.1cm}\{g_k\}_{k=1}^{\infty},\hspace{-0.1cm}\{u_k\}_{k=1}^{\infty}$ and a sequence of vectors $\{\xi_k\}_{k=1}^{\infty}$ such that
	\begin{enumerate}[(C1)]
		\item \phantomsection\label{(C1)}$F_k \in C(\mathbb S^{n\times n},\mathbb{R})$ is uniformly $(\lambda,\Lambda)$-elliptic;
		\item \phantomsection\label{(C2)} for $\Phi_k\in C(B_1(x)\times [0,\infty),[0,\infty))$, the map $t\mapsto\Phi_k(x,t)/t^{i(\Phi)}$ is almost non-decreasing and the map $t\mapsto\Phi_k(x,t)/t^{s(\Phi)}$ is almost non-increasing with constant $L \geq 1$, and $\Phi_k(y,1)=1$ for all $y\in B'_1(x)$;
		\item \phantomsection\label{(C3)}$H_k\in C(B_1(x)\times\mathbb{R}^n,\mathbb{R})$ satisfies there exists $\mathcal{C}_k>0$ such that 
		\begin{align}\label{11.4}
			\begin{split}
			&|H_k(y,p)|\le \mathcal{C}_k|p|^m\text{ for all }y\in B_1(x),~p\in\mathbb{R}^n,\\ 
			&\max\left\{\mathcal{C}_k,\mathcal{C}_k|\xi_k|^{(m-i(\Phi))_{+}}\right\}\le\dfrac{1}{k};
			\end{split}
		\end{align}
		\item \phantomsection\label{(C4)}$h_k\in C(B'_1(x))$ such that $\|h_k\|_{L^{\infty}(B'_1(x))}\le \dfrac{1}{k}$;
		\item \phantomsection\label{(C5)}$u_k\in C(B'_1(x))$ with $\|u_k\|_{L^{\infty}(B'_1(x))}\le 1$ is the viscosity solution of
		\begin{align}\label{eq-C^2.2.1}
			\left\{\begin{array}{cll}
				\Phi_k(y,|\xi_k+\nabla u_k|)F_k(D^2u_k)-H_k(y,\xi_k+\nabla u_k)&=h_k(y) &\text{in } B'_1(x),\\
				u_k(y)&=g_k(y) &\text{on } B''_1(x),
			\end{array}\right.
		\end{align}
		with $\|g_k\|_{C^{1,\alpha_g}(B''_1(x))}\le 1$, but 
		\begin{align}\label{10.1}
			\|u_k-v\|_{L^{\infty}(B'_{1/2}(x))}> \varpi_0, \mbox{ for any }k \in \mathbb{N},
		\end{align}
		for any $v$ satisfying \eqref{eq-C^2.2'}.
	\end{enumerate}
	Observe furthermore that $F_k$ converges locally uniformly to some $(\lambda,\Lambda)$-elliptic operator $F_{\infty}$ by condition \hyperref[(C1)]{(C1)}, and $g_k$ uniformly converges to $g_{\infty}$ by condition \hyperref[(C5)]{(C5)}. For a further discussion, we consider two cases:
	
	\textbf{Case 1:} $\{\xi_k\}_{k=1}^\infty$ is bounded. Up to a subsequence, $\xi_k$ converges to some vector $\xi_{\infty}$. 
	Let $\overline{u}_k(y):=u_k(y) +\langle y,\xi_k\rangle$ and $\overline{g}_k(y):=g_k(y)+\langle y,\xi_k\rangle$, we have $\overline{u}_k$ is the viscosity solution of
	\begin{align*}
		\left\{\begin{array}{cll}
			\Phi_k(y,|\nabla \overline{u}_k|)F_k(D^2\overline{u}_k)-H_k(y,\nabla\overline{u}_k)&=h_k(y)&\text{in } B'_{1}(x),\\
			\overline{u}_k(y)&=\overline{g}_k(y) &\text{on } B''_{1}(x).
		\end{array}\right.
	\end{align*}
	Applying \hyperref[Result1]{\upshape(R1)} of Theorem \ref{Lipschitz_estimate} for $\overline{u}_k$ and Arzela-Ascoli theorem, we conclude that $u_k\to u_{\infty}$ uniformly in $B'_r(x)$ for any $0<r<1$. Hence, by Lemma \ref{Stability}\hyperref[Stability_1]{\upshape(i)}, $u_{\infty}$ solves 
		\begin{align}\label{eq-C^2.2.2}
			\left\{\begin{array}{cll}
				F_\infty(D^2u_\infty)&=0&\text{in } B'_{3/4}(x),\\
				u_\infty(y)&=g_{\infty}(y) &\text{on } B''_{3/4}(x),
			\end{array}\right.
		\end{align}
		which contradicts \eqref{10.1} by choosing $v={u}_{\infty},~g=g_{\infty},~\mathcal{F}=F_{\infty}$.
		
		\textbf{Case 2:} $\{\xi_k\}_{k=1}^\infty$ is unbounded. Dividing \eqref{eq-C^2.2.1} by $\Phi_k(y,|\xi_k|)$ it becomes
		\begin{align}\label{eq-C^2.2.3}
			\left\{\begin{array}{cll}
				\hat{\Phi}_k(y,|e_k+b_k\nabla u_k|)F_k(D^2u_k)-\hat{H}_k(y,e_k+b_k\nabla u_k)&=\hat{h}_k(y) &\text{in } B'_{1}(x),\\
				u_k(y)&=g_k(y) &\text{on } B''_{1}(x),
			\end{array}\right.
		\end{align}
		where $e_k=\dfrac{\xi_k}{|\xi_k|},b_k=\dfrac{1}{|\xi_k|}$ and $\hat{\Phi}_k,\hat{H}_k,\hat{h}_k$ are defined by
		\begin{align*}
			\hat{\Phi}_k(y,t):=\dfrac{\Phi_k(y,|\xi_k|t)}{\Phi_k(y,|\xi_k|)},~\hat{H}_k(y,p)=\dfrac{H_k(y,|\xi_k|p)}{\Phi_k(y,|\xi_k|)},\mbox{ and }\hat{h}_k(y)=\dfrac{h_k(y)}{\Phi_k(y,|\xi_k|)}.
		\end{align*}
		Note that we have the following estimate for large enough $k$
		\begin{align*}
			&|\hat{H}_k(y,p)|=\dfrac{|H_k(y,|\xi_k|p)|}{\Phi_k(y,|\xi_k|)}\le \dfrac{\mathcal{C}_k|\xi_k|^m|p|^m}{1/L|\xi_k|^{i(\Phi)}}\le L\mathcal{C}_k|\xi_k|^{(m-i(\Phi))_{+}}|p|^m,\\
			&|\hat{h}_k(y)|=\dfrac{|h_k(y)|}{\Phi_k(y,|\xi_k|)}\le L|h_k(y)||\xi_k|^{-i(\Phi)}\le \dfrac{L}{k}|\xi_k|^{-i(\Phi)}.
		\end{align*}
		Combining the above estimates with \eqref{11.4}, it follows that $\mathcal{C}_k|\xi_k|^{(m-i(\Phi))_{+}}$ and $\hat{h}_k$ uniformly converge to 0. By \hyperref[Result2]{\upshape(R2)} of Theorem \ref{Lipschitz_estimate}, $u_k$ is uniformly Lipschitz. Therefore, applying Arzela-Ascoli theorem and Lemma \ref{Stability}\hyperref[Stability_2]{\upshape(ii)}, up to a subsequence, on $B'_r(x)$ for any $0 < r < 1$, $u_k$  converges to $u_{\infty}$ satisfying
		\begin{align}\label{eq-C^2.2.4}
			\left\{\begin{array}{cll}
				F_\infty(D^2u_\infty)&=0 &\text{in }B'_{3/4}(x),\\
				u_\infty(y)&=g_{\infty}(y) &\text{on }B''_{3/4}(x),
			\end{array}\right.
		\end{align}
		which enables us to obtain the same contradiction as in \textbf{Case 1}. 
	\end{proof}
	\begin{lemma}[\bf Pointwise boundary $C^{1,\gamma'}$-estimate; degenerate case]\label{approx2}
		Suppose the assumptions {\upshape\hyperref[A1]{(A1)}--\hyperref[A4]{(A4)}} are in force with $i(\Phi)\geq 0$. Let $\gamma'$ be chosen to satisfy 
		\begin{align}\label{gamma'}
			\gamma'\in (0,\alpha_g)\cap \left(0,\dfrac{1}{1+s(\Phi)}\right].
		\end{align}
		Assume that $u\in C(B'_1(x))$ is a viscosity solution of
		\begin{align}\label{eq-C^2.2.12}
			\left\{\begin{array}{cll}
				\Phi(y,|\nabla u|)F(D^2u)-H(y,\nabla u)&=h(y) &\text{in } B'_1(x),\\
				u(y)&=g(y) &\text{on }B''_1(x),
			\end{array}\right.
		\end{align}
		then there exist $\sigma_0\in (0,1),~\rho\in (0,1/2)$ and $\overline{c}>0$ depending on $\gamma',n,\lambda,\Lambda,\\\|D^2\phi\|_{L^{\infty}(\Omega)},\|g\|_{C^{1,\alpha_g}(\partial\Omega)},i(\Phi)$, $s(\Phi),m$ and $\mathcal{C}$ such that if 
		\begin{align*}
			\|u\|_{L^{\infty}(B'_1(x))}\le 1,\mbox{ and } \max\{\|h\|_{L^{\infty}(B'_1(x))},\mathcal{C}\}\le \sigma_0,
		\end{align*}
		then there exists an affine function $l(y)=a+\langle b, y-x\rangle$ with
		$|a|+|b|\le \overline{c}$ and such that for each $0<r\leq \rho$, we have
		\begin{equation}\label{flagness-2}
			\|u-l\|_{L^{\infty}(B'_{r}(x))}\leq Cr^{1+\gamma'},
		\end{equation}
		for some universal positive constant $C$. 
	\end{lemma}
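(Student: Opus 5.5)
The plan is the standard iterative flatness-improvement scheme, with Lemma \ref{approx1} as the engine. First comes a one-step improvement. Let $v$ be the approximating solution of \eqref{eq-C^2.2'} from Lemma \ref{approx1}. By the Milakis--Silvestre boundary estimate, $v\in C^{1,\alpha_0}(\overline{B'_{1/2}(x)})$ with a universal bound, where $\alpha_0=\alpha_0(n,\lambda,\Lambda,\alpha_g)$. We may take $\alpha_0>\gamma'$ after shrinking $\gamma'$, which the constraint $\gamma'<\alpha_g$ permits. Set $l_0(y)=v(x)+\langle\nabla v(x),y-x\rangle$, so that $|v(x)|+|\nabla v(x)|\le \overline{c}$ and $|v-l_0|\le C_0 r^{1+\alpha_0}$ on $B'_r(x)$. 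Choose $\rho\in(0,1/2)$ with $C_0\rho^{1+\alpha_0}\le \frac12\rho^{1+\gamma'}$, and then $\varpi=\frac12\rho^{1+\gamma'}$. Lemma \ref{approx1} with $\xi=0$ then gives $\sigma_0$ such that $\|u-l_0\|_{L^\infty(B'_\rho(x))}\le \rho^{1+\gamma'}$. Because of the boundary datum, the correct normalization subtracts the tangential affine part coming from $g$. I would therefore apply Lemma \ref{approx1} to $u$ with datum $g$, and keep track that the affine function agrees with $g$ to order $1+\alpha_g$ on $B''$. This is where $\gamma'<\alpha_g$ is used.

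Next comes the iteration. Inductively I construct $l_k(y)=a_k+\langle b_k,y-x\rangle$ with $\|u-l_k\|_{L^\infty(B'_{\rho^k}(x))}\le\rho^{k(1+\gamma')}$, $|a_{k+1}-a_k|\le \overline{c}\rho^{k(1+\gamma')}$ and $|b_{k+1}-b_k|\le\overline{c}\rho^{k\gamma'}$. Given $l_k$, define the rescaled function
\[
u_k(z):=\frac{u(x+\rho^k z)-l_k(x+\rho^kz)}{\rho^{k(1+\gamma')}}.
\]
It satisfies $\|u_k\|_\infty\le1$ and solves an equation of the form \eqref{eq-C^2.2} with $\xi_k=\rho^{-k\gamma'}b_k$, $\Phi_k(z,t)=\Phi(x+\rho^kz,\rho^{k\gamma'}t)/\Phi(x+\rho^kz,\rho^{k\gamma'})$ and $F_k(M)=\rho^{k(1-\gamma')}F(\rho^{k(\gamma'-1)}M)$. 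The domain is rescaled as in Remark \ref{Remark: smallness-regime}; flattening improves, since $\|D^2\hat\phi\|\le\|D^2\phi\|$. The boundary datum becomes $g_k=(g-l_k)/\rho^{k(1+\gamma')}$, whose $C^{1,\alpha_g}$ norm stays bounded by $1$ precisely because $\gamma'<\alpha_g$ and $l_k$ is tangentially adapted to $g$. The normalization $\Phi_k(\cdot,1)=1$ and (A2) give
\[
|h_k|\le L\rho^{k(1-\gamma')}\rho^{-k\gamma' i(\Phi)}\|h\|_\infty=L\rho^{k(1-\gamma'(1+i(\Phi)))}\|h\|_\infty\le L\sigma_0,
\]
using $\gamma'\le 1/(1+s(\Phi))\le 1/(1+i(\Phi))$. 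Likewise the new Hamiltonian constant is
\[
\mathcal C_k\le L\mathcal C\rho^{k(1-\gamma')}\rho^{k\gamma'(m-i(\Phi))}\rho^{-k\gamma'}\cdot\rho^{k\gamma'}\lesssim \mathcal C\rho^{k(1-\gamma'(1+i(\Phi))+\gamma' m)}\le\mathcal C.
\]
Applying the one-step improvement to $u_k$ and scaling back produces $l_{k+1}$. Summing the geometric increments gives the limit $l=a+\langle b,\cdot-x\rangle$ with $|a|+|b|\le\overline c$ (enlarged), and the usual interpolation between the radii $\rho^{k+1}<r\le\rho^k$ yields \eqref{flagness-2}.

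The main obstacle is the smallness requirement $\mathcal C_k|\xi_k|^{(m-i(\Phi))_+}\le\varepsilon_0$ in Lemma \ref{approx1}, since $|\xi_k|=\rho^{-k\gamma'}|b_k|$ blows up. I would handle it directly: $\mathcal C_k|\xi_k|^{m-i(\Phi)}$ equals, up to $L$, the quantity $\mathcal C\rho^{k(1-\gamma')}\rho^{k\gamma'(m-i(\Phi))}\rho^{-k\gamma'}\rho^{-k\gamma'(m-i(\Phi))}|b_k|^{m-i(\Phi)}$. This is bounded by $\mathcal C\,\overline c^{\,m-i(\Phi)}\rho^{k(1-2\gamma')}$, which is $\le\varepsilon_0$ for $\gamma'\le1/2$ once $\sigma_0$ is small. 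If the homogeneity bookkeeping fails here, the fallback is to perform the scaling with the factor $\Phi(\cdot,\rho^{k\gamma'}|\xi_k|)$, exactly as in Case~2 of the proof of Lemma \ref{approx1}, so that only the bounded quantity $|b_k|$ enters, and then use $m\le i(\Phi)+1$.
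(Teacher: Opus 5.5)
Your scheme matches the paper's. You use the same rescaled functions $u_k$ with $\xi_k=\rho^{-k\gamma'}b_k$, Lemma \ref{approx1} as the one-step engine, and you control $\mathcal{C}_k|\xi_k|^{(m-i(\Phi))_+}$ through the uniform bound $|b_k|\le 2\overline{c}$ coming from the geometric increments.

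As written, however, the argument does not prove the lemma on the full range \eqref{gamma'}. In the initial step you use only the $C^{1,\alpha_0}$ regularity of the limiting profile $v$, so you need $\gamma'<\alpha_0$, and you propose to get this by shrinking $\gamma'$. That is not allowed, for two reasons. First, \eqref{flagness-2} is a stronger statement for larger $\gamma'$, so proving it for a smaller exponent does not give it for the given one. Second, $\gamma'<\alpha_g$ says nothing about $\alpha_0=\alpha_0(n,\lambda,\Lambda,\alpha_g)$, which may be smaller.

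The missing ingredient is that at a boundary point the profile $v$ is more regular than in the interior. The paper uses the pointwise boundary estimate of \cite[Theorem 2.1]{AB2023}. It gives an affine $l_1$ with $|l_1(0)|+|\nabla l_1(0)|\le\overline{c}$ and $\sup_{B'_\rho}|v-l_1|\le\overline{c}\rho^{1+\alpha_g}$ for all $\rho\in(0,1/2]$. Choosing $\rho$ with $\overline{c}\rho^{\alpha_g-\gamma'}\le\frac12$ and $\varpi=\frac12\rho^{1+\gamma'}$ then closes the initial step with no reference to $\alpha_0$. The interior exponent enters only later, in Theorem \ref{Theo:Global_regularity}. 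Your restricted version would still suffice there, but it is not the lemma as stated.

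Your exponent bookkeeping is also off, and it introduces a spurious restriction $\gamma'\le\frac12$. That restriction is false, for instance, when $s(\Phi)=0$ and $\alpha_g>\frac12$. For $t=\rho^{k\gamma'}\le1$, (A2) gives only $\Phi(y,t)\ge L^{-1}t^{s(\Phi)}\Phi(y,1)$, not a lower bound by $t^{i(\Phi)}$; consider $\Phi(x,t)=t^{s(\Phi)}$ with $i(\Phi)<s(\Phi)$. This is exactly why $\gamma'\le 1/(1+s(\Phi))$ is imposed. The correct estimates are $\|h_k\|_\infty\le L\rho^{k(1-\gamma'(1+s(\Phi)))}\|h\|_\infty$ and $\mathcal{C}_k\le L\mathcal{C}\rho^{k(1-\gamma'(1+s(\Phi)-m))}$. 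For $m>i(\Phi)$ they give
\[
\mathcal{C}_k|\xi_k|^{m-i(\Phi)}\le L\mathcal{C}\rho^{k(1-\gamma'(1+s(\Phi)-i(\Phi)))}|b_k|^{m-i(\Phi)},
\]
whose exponent is at least $\gamma' i(\Phi)\ge 0$, because $\gamma'(1+s(\Phi))\le 1$ and $i(\Phi)\ge 0$. Hence the smallness hypothesis of Lemma \ref{approx1} follows from $|b_k|\le 2\overline{c}$ and $\sigma_0$ small, for every $\gamma'$ in \eqref{gamma'}. Your fallback rescaling by $\Phi(\cdot,\rho^{k\gamma'}|\xi_k|)$ is unnecessary.
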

	\begin{proof} By the smallness regime in Remark \ref{Remark: smallness-regime}, there is no loss of generality to assume that
		\begin{align*}
			\begin{split}
				&\|u\|_{L^{\infty}(B'_1(x))}\le 1,\quad\|g\|_{C^{1,\alpha_g}(B''_1(x))}\le 1, \quad \max\left\{\|h\|_{L^{\infty}(B'_1(x))}, \mathcal{C}\right\} \leq \sigma_0,
			\end{split}
		\end{align*}
		for $\sigma_0 \in (0,1)$ and $\nu_0=\nu_1=1$. As in \cite{BBLL2024,BBLL2024_1,JSRR2023,BPRT2020}, the proof is based on the induction argument: we claim that there exist universal constants $0<\rho\ll1$ and $\overline{c}>1$ and a sequence of affine functions
		\begin{align*}
			l_k(y):=a_k+\langle b_k,y-x\rangle,
		\end{align*}
		where $\{a_k\}_{k=1}^\infty \subset \mathbb{R}$ and $\{b_k\}_{k=1}^\infty \subset \mathbb{R}^n$ satisfy the following properties for every $k\in\mathbb{N}$
		\begin{enumerate}[(E1)]
			\item \phantomsection\label{(E1)}$\|u-l_k\|_{L^{\infty}(B'_{\rho^k}(x))}\leq \rho^{k(1+\gamma')}$;
			\item \phantomsection\label{(E2)}$|a_k-a_{k-1}|\le \overline{c}\rho^{(k-1)(1+\gamma')},\mbox{ and }|b_k-b_{k-1}|\le \overline{c}\rho^{(k-1)\gamma'}$.
		\end{enumerate}
		\textbf{Step 1: Initial step.} Without loss of generality, we may assume $x=0$. Let $v$ be the approximation function defined in Lemma \ref{approx1} for a constant $\varpi >0$ to be determined later. Now, thanks to the pointwise boundary estimate for uniformly elliptic fully nonlinear operators obtained in \cite[Theorem 2.1]{AB2023}, there exists an affine function $l_1$ and a universal constant $\overline{c} >0$ such that 
		\begin{equation}\label{zhengzexing}
			\sup\limits_{y\in B'_\rho}{|v(y)-l_1(y)|}\leq \overline{c}\rho^{1+\alpha_g} \mbox{ for all } \rho\in\left(0,\frac{1}{2}\right],
		\end{equation}
		and
		\begin{equation*}
			|l_1(0)|+|\nabla l_1(0)|\leq \overline{c}.
		\end{equation*}
		Combining \eqref{app} and \eqref{zhengzexing} with the triangle inequality, we obtain
		\begin{align}\label{12.1}
			\sup\limits_{y\in B'_\rho}{|u(y)-l_1(y)|}\leq \varpi+\overline{c}\rho^{1+\alpha_g}.
		\end{align}	
		Now, thanks to \eqref{gamma'}, it is possible to choose $0<\rho\ll 1$ sufficiently small such that
		\begin{align}\label{12.2}
			\overline{c}\rho^{\alpha_g-\gamma'}\leq \frac{1}{2}\text{ and }\rho^{1-\gamma'(1+s(\Phi))}\le 1.
		\end{align}
		Finally, we choose $\varpi:=\dfrac{1}{2}\rho^{1+\gamma'}$ in \eqref{12.1}, and set $a_0=b_0=0,a_1=l_1(0)$, and $b_1=\nabla l_1(0)$, which completes our proof for the initial step.
		
		\textbf{Step 2: Iterative process.} Suppose by induction that, for a value $k\geq 1$, statements \hyperref[(E1)]{(E1)} and \hyperref[(E2)]{(E2)} hold for some $l_k$. We are going to construct $l_{k+1}$ satisfying \hyperref[(E1)]{(E1)} and \hyperref[(E2)]{(E2)}. Let $r_k=\rho^k$ and we define a rescaled function
		\begin{align*}
			u_k(y):=\dfrac{u(r_ky)-l_k(r_ky)}{r_k^{1+\gamma'}}.
		\end{align*}
		Then $u_k$ solves the following problem in the viscosity sense
		\begin{align}\label{eq-C^2.2.2'}
			\left\{\begin{array}{cll}
				\Phi_k(y,|\xi_k+\nabla u_k|)F_k(D^2u_k)-H_k(y,\xi_k+\nabla u_k)&=h_k(y) &\text{in } \mathcal{B}'_k,\\
				u_k(y)&=g_k(y) &\text{on }\mathcal{B}''_k,
			\end{array}\right.
		\end{align}
		where 
		\begin{align*}
			&\Phi_k(y,t):=\dfrac{\Phi(r_ky,r_k^{\gamma'}t)}{\Phi(r_ky,r_k^{\gamma'})},~F_k(M)=r_k^{1-\gamma'}F(r_k^{\gamma'-1}M),\\[5pt]
			&H_k(y,p)=r_k^{1-\gamma'}\dfrac{H(r_ky,r_k^{\gamma'}p)}{\Phi(r_ky,r_k^{\gamma'})},h_k(y)=\dfrac{r_k^{1-\gamma'}h(r_ky)}{\Phi(r_ky,r_k^{\gamma'})},\\[5pt]
			&\mathcal{B}'_k=B_1\cap\{y_n>\phi_k(y')\},\mathcal{B}''_k=B_1\cap\{y_n=\phi_k(y')\},\\[5pt]
			&\phi_k(y')=r_k^{-1}\phi(r_ky'),~g_k(y)=\dfrac{g(r_k y)-l_k(r_ky)}{r_k^{1+\gamma'}},\mbox{ and }\xi_k=r_k^{-\gamma'}b_k.
		\end{align*}
		It can be easily checked that
		\begin{enumerate}[(a)]
			\item $F_k$ satisfies {\upshape\hyperref[A1]{(A1)}} with the same constants $(\lambda,\Lambda)$;
			\item $\Phi_k$ satisfies {\upshape\hyperref[A2]{(A2)}} with the same constants $(i(\Phi),s(\Phi))$ and $\Phi_k(y,1) \equiv 1$.
			\item $|H_k(y,p)|\le \mathcal{C}_k|p|^m$ with $\mathcal{C}_k=Lr_k^{1-\gamma' (1+s(\Phi)-m)}\mathcal{C}\le L\sigma_0$;
			\item $\|h_k\|_{L^{\infty}(\mathcal{B}'_k)}\le Lr_k^{1-\gamma'(1+s(\Phi))}\|h\|_{L^{\infty}(B'_1)}\le L\sigma_0$;
			\item $\|u_k\|_{L^{\infty}(\mathcal{B}'_k)}\le 1$ thanks to the induction hypothesis. Indeed, we have
			\begin{align*}
				\|u_k\|_{L^{\infty}(\mathcal{B}'_k)}=\dfrac{\|u-l_k\|_{L^{\infty}(B'_{r_k})}}{r_k^{1+\gamma'}}\le 1;
			\end{align*}
			\item $\|D^2\phi_k\|_{L^{\infty}(\mathcal{B}'_k)}\le r_k\|D^2\phi\|_{L^{\infty}(B_1')}\le \|D^2\phi\|_{L^{\infty}(B'_1)}$.
		\end{enumerate}
		Additionally, we can estimate $\mathcal{C}_k|\xi_k|^{(m-i(\Phi))_{+}}$. Indeed, we have
		\begin{align}\label{H_k}
			\mathcal{C}_k|\xi_k|^{(m-i(\Phi))_{+}}\le Lr_k^{1-\gamma' (1+s(\Phi)-m)-\gamma'(m-i(\Phi))_{+}}|b_k|^{(m-i(\Phi))_{+}}\mathcal{C}.
		\end{align}
		If $0<m\leq i(\Phi)$,
		using \eqref{gamma'} and \eqref{H_k}, we derive that
		\begin{align*}
			\mathcal{C}_k|\xi_k|^{(m-i(\Phi))_{+}}\le L\sigma_0.
		\end{align*}
		On the other hand, if $i(\Phi)<m\leq 1+i(\Phi)$, then it follows from \eqref{gamma'} and \eqref{H_k} that
		\begin{align}\label{11.1}
			\mathcal{C}_k|\xi_k|^{(m-i(\Phi))_{+}}\le L\sigma_0|b_k|^{(m-i(\Phi))_{+}}.
		\end{align}
		Choose $\rho > 0$ sufficiently small so that $\rho^{\gamma'} \leq \frac{1}{2}$. Thus we can deduce from the induction hypothesis that		
		\begin{align}\label{11.2}
			|b_{k}|\leq |b_{0}|+\sum_{j=1}^{k}\abs{b_{j}-b_{j-1}}\leq C\sum_{j=1}^{k}\rho^{\gamma'(j-1)}\leq \frac{C}{1-\rho^{\gamma'}}\leq 2C.
		\end{align}
		Combining \eqref{11.1} and \eqref{11.2}, we obtain
		\begin{align*}
			\mathcal{C}_k|\xi_k|^{(m-i(\Phi))_{+}}\le L\sigma_0(2C)^{m-i(\Phi)}.
		\end{align*}
		In summary, for $0<m\leq 1+i(\Phi)$, we have
		\begin{align*}
			\mathcal{C}_k|\xi_k|^{(m-i(\Phi))_{+}}\leq L\sigma_0 (2C)^{(m-i(\Phi))_{+}}.
		\end{align*}
		Finally, we estimate the upper bound in $C^{1,\alpha_g}$ for $g_k$. A straightforward calculation reveals that
		\begin{align*}
			|\nabla g_k(y)-\nabla g_k(z)|&=r_k^{-\gamma'}|\nabla g(r_ky)-\nabla g(r_kz)|\\
			&\le r_k^{-\gamma'}\|g\|_{C^{1,\alpha_g}(\partial\Omega)}|r_k(y-z)|^{\alpha_g}\\
			&\le \|g\|_{C^{1,\alpha_g}(\partial\Omega)}|y-z|^{\alpha_g}.
		\end{align*}
		By recalling the fact that $u = g$ on $\partial\Omega$, together with the induction hypothesis, we infer that
		\begin{align*}
			\|g_k\|_{C^{1,\alpha_g}(\partial\Omega)}\le \|g\|_{C^{1,\alpha_g}(\partial\Omega)}\le 1.
		\end{align*}
		Hence, we now choose $\sigma_0>0$ small enough to apply Lemma \ref{approx1} for $u_k$ and then follow the similar argument as in the initial
		step to confirm the existence of an affine function $\overline{l}(y)=\overline{a}+\langle \overline{b},y\rangle$ satisfying
		\begin{align*}
			\|u_k-\overline{l}\|_{L^{\infty}(B_{\rho}\cap \{y_n>\phi_k(y')\})}\le \rho^{1+\gamma'}\text{ and }|\overline{a}|+|\overline{b}|\le \overline{c}.
		\end{align*}
		By rescaling argument, we have
		\begin{align*}
			\|u-l_{k+1}\|_{L^{\infty}(B'_{\rho^{k+1}})}\le \rho^{(k+1)(1+\gamma')},
		\end{align*}
		where $l_{k+1}(y):=l_k(y)+\rho^{k(1+\gamma')}\overline{l}(\rho^{-k}y)$. It is straightforward to check that
		\begin{align*}
			|a_{k+1}-a_{k}|\le \overline{c}\rho^{k(1+\gamma')}, \mbox{ and }|b_{k+1}-b_{k}|\le \overline{c}\rho^{k\gamma'}.
		\end{align*}
		Hence, \hyperref[(E1)]{(E1)} and \hyperref[(E2)]{(E2)} hold for $k + 1$. Finally, it is easy to verify that $\{a_k\}_{k\in\mathbb{N}}\subset\mathbb{R}$ and $\{b_k\}_{k\in\mathbb{N}}\subset\mathbb{R}^n$ are Cauchy sequences, thus they converge. Let us denote
		\begin{align*}
			a=\lim_{k\to\infty} a_k,\mbox{ and }b=\lim_{k\to\infty} b_k.
		\end{align*}
		By enlarging $\overline{c}$, if necessary, we may assume that $|a|+|b|\le \overline{c}$. Additionally, for the affine function $l(y)=a+\langle b, y\rangle,y\in\mathbb{R}^n$, following analogous techniques in \cite[Corollary 4.1]{JSRR2023} enables us to obtain \eqref{flagness-2}. Therefore, the proof is complete.
	\end{proof}
	Finally, we establish the global $C^{1,\gamma}$-regularity of the viscosity solution of \eqref{1.2} without zero-order terms. This result extends the global regularity result in \cite[Theorem 1.2]{BBLL2024} for degenerate/singular fully nonlinear equations without Hamiltonian terms.
	\begin{theorem}[\bf Global $C^{1,\gamma}$-regularity]
		\label{Theo:Global_regularity}
		Suppose the assumptions {\upshape\hyperref[A1]{(A1)}--\hyperref[A4]{(A4)}} hold true.  Let $\gamma$ be chosen to satisfy
		\begin{equation}\label{exponent}
			\gamma\in \left\{
			\begin{array}{lcl}
				(0,\alpha_{0})\cap \left(0,\frac{1}{1+s(\Phi)}\right] \cap(0,\alpha_g)& \text{if} & i(\Phi)\geq 0, \\
				(0,\alpha_{0})\cap \left(0,\frac{1}{1+s(\Phi)-i(\Phi)}\right] \cap (0,\alpha_g)&\text{if}&  -1<i(\Phi)<0.
			\end{array}
			\right.
		\end{equation}
		If $u\in C(\overline{\Omega})$ is a viscosity solution of 
		\begin{align*}
			\left\{\begin{array}{cll}
				\Phi(x,|\nabla u|)F(D^2u)-H(x,\nabla u)&=h(x)&\text{in } \Omega,\\
				u&=g &\text{on }\partial\Omega,
			\end{array}\right.
		\end{align*}
		then $u\in C^{1,\gamma}(\overline{\Omega})$. In particular, there holds
		\begin{enumerate}[\upshape (i)]
			\item If $0<m<i(\Phi)+1$, there exists a constant $C=C(n,\lambda,\Lambda,\hspace{-0.1cm}L,i(\Phi),\hspace{-0.1cm}\gamma,\hspace{-0.1cm}m,\hspace{-0.1cm}\mathcal{C})$ such that
			\begin{align*}
				\|u\|_{C^{1,\gamma}(\overline{\Omega})}\hspace{-0.1cm}\le \hspace{-0.1cm}C\hspace{-0.1cm}\left(1\hspace{-0.1cm}+\|u\|_{L^{\infty}(\Omega)}\hspace{-0.1cm}+\|g\|_{C^{1,\alpha_g}(\partial\Omega)}\hspace{-0.1cm}+\|h/\nu_0\|_{L^{\infty}(\Omega)}^{\frac{1}{i(\Phi)+1}}\hspace{-0.1cm}+\hspace{-0.1cm}\left(\mathcal{C}/\nu_0\right)^{\frac{1}{1+i(\Phi)-m}}\right).
			\end{align*}
			\item If $m=i(\Phi)+1$, there exists a constant $C=C(n,\lambda,\Lambda,L,i(\Phi),\gamma,\mathcal{C})$ such that
			\begin{align*}
				\|u\|_{C^{1,\gamma}(\overline{\Omega})}\le C\left(1+\|u\|_{L^{\infty}(\Omega)}+\|g\|_{C^{1,\alpha_g}(\partial\Omega)}+\|h/\nu_0\|_{L^{\infty}(\Omega)}^{\frac{1}{i(\Phi)+1}}\right).
			\end{align*}
		\end{enumerate}	 
	\end{theorem}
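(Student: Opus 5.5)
We first reduce to the normalized setting and then treat the degenerate case, postponing the singular case to the end. By Remark \ref{Remark: smallness-regime} (with $c\equiv0$), after the scaling $\hat u(y)=u(x_0+ry)/K$ we may assume $\|u\|_{L^\infty}\le1$, $\|g\|_{C^{1,\alpha_g}}\le1$, $\nu_0=\nu_1=1$ and $\max\{\|h\|_\infty,\mathcal{C}\}\le\sigma_0$, where $\sigma_0$ is the constant from Lemma \ref{approx2}. The choice
\[
K\simeq 1+\|u\|_\infty+\|g\|_{C^{1,\alpha_g}}+\|h/\nu_0\|_\infty^{1/(1+i(\Phi))},
\]
together with $r$ determined by $\mathcal{C}$, produces exactly the stated constants. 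When $m<i(\Phi)+1$, the Hamiltonian coefficient scales like $r^{2+i(\Phi)-m}K^{m-1-i(\Phi)}$. We can therefore make it small either by shrinking $r$ or by enlarging $K$ by a factor $(\mathcal{C}/\nu_0)^{1/(1+i(\Phi)-m)}$. Taking the latter keeps $r$ universal and yields the extra term in (i). When $m=i(\Phi)+1$ the $K$-dependence disappears, so only $r$ can be shrunk; the constant then depends on $\mathcal{C}$, which gives (ii). In the singular case $-1<i(\Phi)<0$ we invoke Lemma \ref{transition}. The equation becomes degenerate with $\Psi=t^{-i(\Phi)}\Phi$, $i(\Psi)=0$, $s(\Psi)=s(\Phi)-i(\Phi)$, a Hamiltonian of degree $m-i(\Phi)\in(0,1]=(0,i(\Psi)+1]$, and right-hand side $|\nabla u|^{-i(\Phi)}h$. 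This explains the exponent restriction $1/(1+s(\Phi)-i(\Phi))$ in \eqref{exponent}.

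The core step is boundary pointwise regularity. For each $x\in\partial\Omega$, Lemma \ref{approx2} gives an affine $l_x$ with $|l_x(x)|+|\nabla l_x|\le\bar c$ and $\|u-l_x\|_{L^\infty(B'_r(x))}\le Cr^{1+\gamma}$ for $r\le\rho$. Alongside this we need an interior statement: for $x\in\Omega$ the same estimate holds on balls $B_r(x)\subset\Omega$. This follows from the interior version of the approximation and iteration argument. It is identical to Lemmas \ref{approx1}--\ref{approx2} with no boundary, and it uses the interior $C^{1,\alpha_0}$ estimates of Krylov--Safonov for $\mathcal{F}(D^2v)=0$; this is where the restriction $\gamma<\alpha_0$ enters.

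We then glue the two estimates by the standard argument, as in \cite[Theorem 1.2]{BBLL2024} or \cite{JSRR2023}. Take $x\in\Omega$ with $\delta=\mathrm{dist}(x,\partial\Omega)$ small, and let $x^*\in\partial\Omega$ be a nearest point. Consider
\[
w(y)=\frac{(u-l_{x^*})(x+\delta y)}{\delta^{1+\gamma}}\quad\text{on }B_1 .
\]
By the boundary estimate, $\|w\|_{L^\infty(B_1)}\le C$. Moreover, $w$ solves an equation of the same class, with shift $\xi=\delta^{-\gamma}\nabla l_{x^*}$, rescaled $\Phi$, and small data. As in Step 2 of Lemma \ref{approx2}, the quantity $\mathcal{C}|\xi|^{(m-i(\Phi))_+}$ stays controlled because $\gamma\le1/(1+s(\Phi))$. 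The interior $C^{1,\gamma}$ estimate for this shifted equation, which handles both bounded and unbounded $\xi$ via Case 1/Case 2 of Lemma \ref{approx1}, gives $[\nabla w]_{C^{0,\gamma}(B_{1/2})}\le C$. Scaling back yields $|\nabla u(y)-\nabla u(z)|\le C|y-z|^\gamma$ whenever $|y-z|\lesssim\delta$. Combining the three regimes — both points deep inside, close points near the boundary, and far-apart points through the boundary expansions at $x^*$ — gives the global $C^{1,\gamma}$ seminorm bound. The $L^\infty$ bound on $\nabla u$ follows from $|\nabla l_x|\le\bar c$. Finally we undo the normalization using a finite covering of $\partial\Omega$ by the charts $B$.

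I expect the main obstacle to be the interior step for the shifted equation with possibly large $|\xi|$. There the Lipschitz compactness must come from (R2) of Theorem \ref{Lipschitz_estimate} together with a uniform smallness of the rescaled Hamiltonian, and in the singular case this must be checked after the transformation of Lemma \ref{transition}. The delicate points are that the new right-hand side $|\nabla u|^{-i(\Phi)}h$ remains bounded on the relevant contact sets, and that the data stay in the regime of Lemma \ref{approx1}.
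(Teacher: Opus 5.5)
Your plan follows the paper's proof: the pointwise boundary expansion of Lemma \ref{approx2}, the standard interior/boundary gluing (the paper just defers to \cite[Theorem 1.1]{AB2023}), and the reduction of the singular case to the degenerate one via Lemma \ref{transition}, which produces the exponent $s(\Phi)-i(\Phi)$. The point you leave open is settled in the paper by first applying (R1) of Theorem \ref{Lipschitz_estimate} to get $\|u\|_{C^{0,1}(B'_{3/4})}\le c$. Test-function gradients at touching points are bounded by this Lipschitz constant and $-i(\Phi)>0$, so $\|\tilde h\|_{L^\infty}\le c^{-i(\Phi)}\|h\|_{L^\infty}$, and the iteration of Lemma \ref{approx2} then runs unchanged.
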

	\begin{proof} 
		The proof is divided into two cases.\\
		\textbf{Case 1:} $i(\Phi)\geq 0$. We note that, by applying Lemma \ref{approx2}, a viscosity solution $u$ can be approximated by an affine function with an error of order $r^{1+\gamma'}$ at boundary points. By following the argument in the proof of \cite[Theorem 1.1]{AB2023}, we can derive the desired global $C^{1,\gamma}$-estimate with $\gamma$ satisfying \eqref{exponent}.\\
		\textbf{Case 2:} $-1<i(\Phi)<0$. The idea of this case is inspired from \cite[Theorem 1.2]{BBLL2024}. Indeed, we claim that Lemma \ref{approx2} still holds for the singular case. Applying \hyperref[Result1]{\upshape(R1)} of Theorem \ref{Lipschitz_estimate}, it follows that 
		\begin{align*}
			\|u\|_{C^{0,1}(B_{3/4}')}\le c,
		\end{align*}
		for a universal constant $c>0$. Thanks to Lemma \ref{transition}, we see that $u$ is a viscosity solution of the following equation
		\begin{align}\label{eq-sing-dege}
			\left\{\begin{array}{cll}
				\tilde{\Phi}(y,|\nabla u|)F(D^2u)-\tilde{H}(y,\nabla u)&=\tilde{h}(y) &\text{in } B'_{3/4},\\
				u(y)&=g(y) &\text{on }B''_{3/4},
			\end{array}\right.
		\end{align}
		where
		\begin{align*}
			\tilde{\Phi}(y,t):=t^{-i(\Phi)}\Phi(y,t), ~\tilde{H}(y,p):=|p|^{-i(\Phi)}H(y,p), \tilde{h}(y):=|\nabla u|^{-i(\Phi)}h(y).
		\end{align*}
		Note that $\tilde{\Phi}$ satisfies the properties that the map $t\mapsto {\tilde{\Phi}(y,t)}$ is almost non-decreasing and the map $t\mapsto \frac{\tilde{\Phi}(y,t)}{t^{s(\Phi)-i(\Phi)}}$ is almost non-increasing with constant $L\geq 1$. Moreover, we have
		\begin{align*}
			|\tilde{H}(y,p)|\le |p|^{-i(\Phi)}|H(y,p)|\le \mathcal{C}|p|^{m-i(\Phi)},
		\end{align*} 
		where $0<m-i(\Phi)\leq 1$. Additionally, we can estimate
		\begin{align*}
			\|\tilde{h}\|_{L^{\infty}\left(\Omega\right)}\leq c^{-i(\Phi)}\|{h}\|_{L^{\infty}\left(\Omega\right)}. 
		\end{align*}
		Thus, one can repeat the argument in the proof of Lemma \ref{approx2} to obtain global $C^{1,\gamma}$-estimate. The proof is completed.
	\end{proof}
	\subsection{\bf Local estimates and comparison principle}
	\label{Subsection: local and comparison}
	Here, we also provide local Lipschitz estimates for solutions of \eqref{eq:main}. This result plays a pivotal role in the proof of the Harnack inequality in Section \ref{section:4}. The techniques used to prove the following results are an adaptation of methods in \cite{BBLL2024_1}.
	\begin{corollary}[\bf Local Lipschitz estimates]\label{local_estimate_(1.1)}
		Let $u$ be a viscosity solution of 
		\begin{align*}
			\Phi(x,|\nabla u|)F(D^2u) - H(x,\nabla u) + c(x)|u|^{i(\Phi)}u = h(x) \mbox{ in }B_1,
		\end{align*}
		under the assumptions {\upshape\hyperref[A1]{(A1)}--\hyperref[A4]{(A4)}} with $\|u\|_{L^\infty(B_1)} \leq 1$ and $\|h\|_{L^\infty(B_1)} \leq \varepsilon_0 < 1$, for some $\varepsilon_0 \in (0,1)$, and $\nu_0 = \nu_1 = 1$.
		Let $B_r \equiv B_r(z_0) \subset B_1$ be any ball. If $-1 < i(\Phi)$ and $0 < m \leq i(\Phi)+1$, then $u$ is Lipschitz continuous in $B_{r/2}$ with the estimate
		\begin{align*}
			[u]_{C^{0,1}(B_{r/2})} \leq C_{\mathrm{zl}},
		\end{align*}
		for some constant $C_{\mathrm{zl}} = C_{\mathrm{zl}}\left(n,\lambda,\Lambda,L,r,m,i(\Phi),\|c\|_{L^\infty(B_1)},\mathcal{C}\right)$.
	\end{corollary}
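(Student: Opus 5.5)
The proof will be the interior version of the doubling-variable argument in Step 1 of the proof of Theorem \ref{Lipschitz_estimate} (R1). Since the ball $B_r(z_0)$ stays away from $\partial B_1$, no boundary barrier from Lemma \ref{boundary-behavior} is needed. There is one difference from (R1): here $c$ is not assumed nonpositive. This is harmless, because $\|u\|_{L^\infty(B_1)}\le 1$ gives $|c(x)|u|^{i(\Phi)}u|\le\|c\|_{L^\infty(B_1)}$. The zeroth-order term can therefore be absorbed into the right-hand side, and we treat $h-c|u|^{i(\Phi)}u$ as a bounded source of size at most $\varepsilon_0+\|c\|_{L^\infty(B_1)}$.

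Fix $x_0\in B_{r/2}(z_0)$ and set
\[
\psi(x,y)=u(x)-u(y)-Q\omega(|x-y|)-P\bigl(|x-x_0|^2+|y-x_0|^2\bigr),\qquad (x,y)\in\overline{B_{3r/4}(z_0)}\times\overline{B_{3r/4}(z_0)},
\]
with $\omega$ as in \eqref{eq:Holder_2}. The claim is that $\psi\le 0$ for suitable $P,Q$ depending only on $n,\lambda,\Lambda,L,r,m,i(\Phi),\|c\|_{L^\infty(B_1)},\mathcal{C}$. Setting $x=x_0$ then gives $|u(x_0)-u(y)|\le Q|x_0-y|$.

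The argument runs as follows.
\begin{enumerate}[1.]
\item Choose $P\gtrsim r^{-2}$ large. Since $\|u\|_\infty\le1$, a positive maximum point $(\overline{x},\overline{y})$ must satisfy $|\overline{x}-x_0|,|\overline{y}-x_0|\le r/8$, so both points lie in the interior of $B_{3r/4}(z_0)$. We also get $\overline{x}\ne\overline{y}$ and $|\overline{x}-\overline{y}|\le 2/\sqrt P$.
\item Take $Q$ large relative to $P$. The gradients $\varkappa_{\overline{x}},\varkappa_{\overline{y}}$ from the Crandall--Ishii--Lions lemma then satisfy \eqref{eq:Holder_7}, namely $1\le Q\omega'/2\le|\varkappa|\le 2Q$. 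In particular they are nonzero, so the viscosity inequalities apply without the constant-state alternative.
\item From the matrix inequality \eqref{eq:Holder_4}, $X_\delta+Y_\delta$ has all eigenvalues at most $4P+2\delta$ and one eigenvalue at most $4P+2\delta-3Q\omega_0|\overline{x}-\overline{y}|^{-1/2}$. Hence
\[
\mathcal{P}^+_{\lambda,\Lambda}(X_\delta+Y_\delta)\le C_1-3\lambda Q\omega_0|\overline{x}-\overline{y}|^{-1/2}.
\]
\item Combine the sub- and supersolution inequalities. Divide by $\Phi(\cdot,|\varkappa|)\ge L^{-1}\min\{|\varkappa|^{i(\Phi)},|\varkappa|^{s(\Phi)}\}$, which is at least $L^{-1}|\varkappa|^{i(\Phi)}$ for $|\varkappa|\ge1$. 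Bound the Hamiltonian quotients by $L\mathcal{C}|\varkappa|^{m-i(\Phi)}\le L\mathcal{C}(2Q)^{m-i(\Phi)}$.
\item Split into the same two cases as in (R1): $i(\Phi)\ge0$ and $-1<i(\Phi)<0$.
\end{enumerate}

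The main obstacle is the Hamiltonian in the borderline case $m-i(\Phi)=1$. There its contribution is of order $Q$, the same order as the good term $3\lambda Q\omega_0|\overline{x}-\overline{y}|^{-1/2}$. I will handle it exactly as in (R1). Because $|\varkappa|\le 2Q\omega'$, the Hamiltonian term is at most $4L\mathcal{C}Q$. This is dominated by $\lambda Q\omega_0|\overline{x}-\overline{y}|^{-1/2}$ once $P\ge C(L\mathcal{C}/(\lambda\omega_0))^4$, since then $|\overline{x}-\overline{y}|^{1/2}\le(2/\sqrt P)^{1/2}$ is small; for this reason $P$ is chosen before $Q$.

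What remains is
\[
\lambda\omega_0 Q\lesssim C_1+L\bigl(\varepsilon_0+\|c\|_\infty\bigr)Q^{(-i(\Phi))_+},
\]
and $(-i(\Phi))_+<1$. Taking $Q$ large gives a contradiction. This yields the Lipschitz bound with constant $C_{\mathrm{zl}}=Q$ depending only on the listed quantities.
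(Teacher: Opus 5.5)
Your proposal is correct and follows essentially the same route as the paper. Both use the interior version of the doubling-variables argument from Step 1 of Theorem~\ref{Lipschitz_estimate}: drop the boundary lemma, take $P\ge C r^{-2}$ so the maximizing pair stays inside the ball, fix $P$ large enough to absorb the borderline Hamiltonian, and then send $Q\to\infty$. There are two cosmetic slips to fix. Setting $x=x_0$ in $\psi\le 0$ actually gives $|u(x_0)-u(y)|\le Q|x_0-y|+P|x_0-y|^2$, so the Lipschitz constant is $Q+Pr$ rather than $Q$. The Hamiltonian quotient should be bounded by $L\mathcal{C}(2Q)^{(m-i(\Phi))_+}$ so that the case $m<i(\Phi)$ is also covered.
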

	\begin{proof}
		The proof follows by a doubling variables argument similar to that used for the boundary Lipschitz estimate (Theorem \ref{Lipschitz_estimate}), with the following simplifications:
		\begin{itemize}
			\item \textbf{Domain and Boundary:} Since we are considering interior estimates in the ball $B_1$ rather than the half-ball $B_1'$, the boundary behavior analysis and the use of the boundary lemma (Lemma \ref{boundary-behavior}) are no longer required.
			\item \textbf{Exclusion of Boundary Maximums:} By choosing the parameter $P \geq 32/r^2$ in the auxiliary function $\psi(x,y)$, we ensure that any maximum point $(\overline{x}, \overline{y})$ stays within the interior of $B_r$ (specifically, $|\overline{x} - x_0|, |\overline{y} - x_0| \leq r/4$). This eliminates the need for boundary data $g$ or the analysis of maximums occurring on the boundary.
		\end{itemize}
		The remaining steps, involving the application of the Crandall-Ishii-Lions Lemma and the treatment of the Pucci extremal operators, are analogous to the proof of Result 1 in Theorem \ref{Lipschitz_estimate}.
	\end{proof}
	\begin{remark}
		More generally, we also provide regularity estimates for solutions of \eqref{eq:main} in the smallness regime (similar to Remark \ref{Remark: smallness-regime} but for interior estimates without boundary data $g$) for any arbitrary vector $\xi \in \mathbb{R}^n$. Specifically:
		\begin{itemize}
			\item When $\xi =0$, the equation reduces to the form considered in Corollary \ref{local_estimate_(1.1)}, and we recover the Lipschitz regularity result established therein.
			\item When $|\xi|$ is large, a Lipschitz estimate is obtained, provided that $i(\Phi) > 0$ and $0 < m \leq i(\Phi)$.
			\item For small values of $|\xi|$, the solution satisfies a H\"older regularity estimate $u \in C^{0,\beta}(B_{r/2})$ for some $\beta \in (0,1)$, under the conditions $i(\Phi) \geq 0$ and $0 < m \leq i(\Phi) + 1$.
		\end{itemize}
		These results ensure that the regularity of the solution is maintained across different regimes of the shift vector $\xi$. For a detailed treatment and the corresponding proofs of these regularity results, the interested reader may refer to \cite{BBLL2024_1}.
	\end{remark} 
	Next, we proceed to establish a second version of the comparison principle, which will be instrumental in establishing the principal eigentheory in Section \ref{section: 5}.\\
	\noindent Before proceeding with the proof, it is necessary to impose the additional structure condition {\upshape\hyperref[A5]{(A5)}}. This condition is required because the operator $\mathcal{G}$, defined in \eqref{mathcal_G}, depends explicitly on the spatial variable $x$ and exhibits degenerate ellipticity, making the establishment of a comparison principle for $\mathcal{G}$ and Perron’s method for the associated Dirichlet problem infeasible through standard arguments. By invoking {\hyperref[A5]{\upshape(A5)$_1$}}, we can effectively control the oscillation of the $x$-dependent components, thereby enabling the doubling-of-variables technique. Additionally, the introduction of this auxiliary assumption \hyperref[A5]{\upshape(A5)$_2$} is essential to overcome the potential blow-up of the Hamiltonian, which would otherwise invalidate doubling-variable penalization.\\
	\noindent\phantomsection\label{A5}\textbf{Assumption A5.} 
	\begin{enumerate}[1.]
		\item For every $C_0\geq1$, there exists a continuous function
		$\omega_{\Phi F}:[0,\infty)\to[0,\infty)$ with
		$\omega_{\Phi F}(0)=0$ such that
		\begin{align*}
			\Phi(x,\theta|x-y|)F(X)-\Phi(y,\theta|x-y|)F(-Y)\le \omega_{\Phi F}(\theta|x-y|^2+|x-y|),
		\end{align*}
		whenever $\theta>0,x,y\in\Omega$ with $x\neq y$, $X,Y\in \mathbb S^{n\times n}$ and
		\begin{align}
			\label{A2-3}
			-C_0 \theta\left(\begin{array}{cc}
				I &0\\
				0 &I
			\end{array}\right)\le \left(\begin{array}{cc}
				X &0\\
				0 &Y
			\end{array}\right) \leq C_0\theta\left(\begin{array}{cc}
				I &-I\\
				-I &I
			\end{array}\right).
		\end{align}
		\item there exists a continuous function $\omega_{H}: [0,\infty) \to [0,\infty)$ with $\omega_{H}(0) = 0$ such that
		\begin{align*}
			H(y,\theta(x-y)) - H(x,\theta(x-y)) \leq \omega_{H}(\theta|x-y|^2+|x-y|),
		\end{align*}
		for every $\theta>0$ and $x,y \in \overline{\Omega}$.
	\end{enumerate}
	\begin{remark}
		To illustrate the assumption {\hyperref[A5]{\upshape(A5)$_1$}}, we present a specific instance involving the pair $(\Phi, F)$. Indeed, we assume that $F$ is degenerate elliptic and that $\Phi$ is independent of $x$-variable, meaning $\Phi(x, \xi) \equiv \Phi(\xi)$. Under these conditions, for any matrices $X, Y$ satisfying the relation \eqref{A2-3}, the inequality $F(X) \leq F(-Y)$ holds. Consequently, we obtain the estimate
		\begin{align*}
			\Phi(x, \theta|x - y|)F(X) \hspace{-0.1cm}- \hspace{-0.1cm}\Phi(y, \theta|x - y|)F(-Y)\hspace{-0.1cm} = \hspace{-0.1cm}\Phi(\theta|x - y|)(F(X) - F(-Y))\hspace{-0.1cm} \leq \hspace{-0.1cm}0.
		\end{align*}
	\end{remark}
	\begin{remark}
		We can list certain functions satisfying the assumption \hyperref[A5]{\upshape(A5)$_2$} as follows
		\begin{enumerate}[1.]
			\item $H(x,p)=|p|^m$ with $m\in (0,i(\Phi)+1]$. 
			\item $H(x,p)=|\langle b(x),p\rangle|^m$ with
			$b\in C^{0,1}(\overline\Omega)$ and
			$0<m\le \min\{1,i(\Phi)+1\}$.
			\item $H(x,p)=\langle b(x),p\rangle|p|^{m-1}$ with
			$b\in C^{0,\beta}(\overline\Omega)$ and $\beta\in [m,1]$
			and $0<m\le \min\{1,i(\Phi)+1\}$.
		\end{enumerate}
	\end{remark}
	We first record an adaptation of the zero-gradient cutting lemma in \cite{BD2004,BD2007}, which will be used to treat contact points with vanishing gradient in the comparison argument.
	\begin{lemma}
		\label{Lem: cutting_zero_grad}
		Let $\Omega\subset \mathbb{R}^n$ be a bounded domain. Suppose the assumptions {\upshape\hyperref[A1]{(A1)}--\hyperref[A4]{(A4)}} are in force and $c,h_1, h_2\in C(\overline{\Omega})$. Assume that $-1 < i(\Phi)$ and fix $q > \max\left\{2, \frac{i(\Phi)+2}{i(\Phi)+1}\right\}$, let $u\in \mathrm{USC}(\overline{\Omega})$ be a viscosity subsolution (resp. $v\in \mathrm{LSC}(\overline{\Omega})$ be a viscosity supersolution) of
		\begin{align*}
			&\Phi(x,|\nabla u|) F(D^2 u) - H(x,\nabla u) + c(x)|u|^{i(\Phi)}u \geq h_1(x),\\
			&\hspace{-1.2cm}\left(\mbox{resp. }\Phi(x,|\nabla v|) F(D^2 v) - H(x,\nabla v) + c(x)|v|^{i(\Phi)}v \leq h_2(x).\right)
		\end{align*}
		Suppose that $\bar{x}$ is some point in $\Omega$ and $C>0$ such that
		\begin{align*}
			u(x) - C|x - \bar{x}|^{q} \leq u(\bar{x}),~~
			\left(\mbox{resp. }v(x) + C|x - \bar{x}|^{q} \geq v(\bar{x}),\right)
		\end{align*}
		where $\bar{x}$ is a strict local maximum (resp. strict local minimum) of the left hand side and $u$ (resp. $v$) is not locally constant around $\bar{x}$. Then,
		\begin{align*}
			c(\bar{x})|u(\bar{x})|^{i(\Phi)}u(\bar{x}) \geq h_1(\bar{x}),
			~~\left(\mbox{resp. }c(\bar{x})|v(\bar{x})|^{i(\Phi)}v(\bar{x}) \leq h_2(\bar{x}).\right)
		\end{align*}
	\end{lemma}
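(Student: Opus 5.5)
The plan follows the Birindelli--Demengel cutting argument. I will treat only the subsolution case; the supersolution statement follows by applying it to $-v$ with $\widetilde F(M)=-F(-M)$ and $\widetilde H(x,p)=-H(x,-p)$. The idea is to replace the (possibly non-admissible) test function $C|x-\bar x|^q$, whose gradient vanishes at $\bar x$, by translates $C|x-y|^q$ whose contact point $x_y$ differs from the vertex $y$. There the gradient is nonzero and the definition of viscosity subsolution applies. Then let $y\to\bar x$.

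\textbf{Step 1: localization.} Since $\bar x$ is a strict local maximum of $u(x)-C|x-\bar x|^q$, fix $r>0$ with $\overline{B_r(\bar x)}\subset\Omega$ and $u(x)-C|x-\bar x|^q<u(\bar x)$ on $\overline{B_r(\bar x)}\setminus\{\bar x\}$. For $y$ near $\bar x$, let $x_y$ be a maximum point of $u(x)-C|x-y|^q$ on $\overline{B_r(\bar x)}$; this exists by upper semicontinuity. A standard argument gives $x_y\to\bar x$ and $u(x_y)\to u(\bar x)$ as $y\to\bar x$. For this I will use
\[
u(x_y)-C|x_y-y|^q\ge u(\bar x)-C|\bar x-y|^q ,
\]
together with upper semicontinuity and the strictness of the maximum. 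In particular $x_y$ is interior for $y$ close to $\bar x$.

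\textbf{Step 2: finding $y$ with $x_y\neq y$ (the main obstacle).} I claim there are $y_k\to\bar x$ with $x_{y_k}\neq y_k$. Otherwise there is $\rho>0$ such that for every $y\in B_\rho(\bar x)$ the point $y$ itself is a maximum point, i.e. $u(x)\le u(y)+C|x-y|^q$ for all $x\in B_\rho(\bar x)$. Exchanging $x$ and $y$ gives $|u(x)-u(y)|\le C|x-y|^q$ with $q>2>1$. Hence $u$ is differentiable with zero gradient on $B_\rho(\bar x)$, so it is constant there, which contradicts the hypothesis. To get a strict maximum, as Definition \ref{Def:Viscosity_solution} requires, I will add a small term $\varepsilon|x-x_{y_k}|^4$. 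This does not change the gradient at $x_{y_k}$ and only perturbs the Hessian by $o(1)$ as $\varepsilon\to0$, so by continuity the inequality persists.

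\textbf{Step 3: passing to the limit.} Write $x_k=x_{y_k}$, $d_k=|x_k-y_k|\to0$, and $\varphi(x)=C|x-y_k|^q$. Then $\nabla\varphi(x_k)=qCd_k^{q-2}(x_k-y_k)\neq0$, and $\|D^2\varphi(x_k)\|\le C'd_k^{q-2}$ since $q>2$ makes $\varphi\in C^2$. The subsolution inequality at $x_k$ gives
\[
h_1(x_k)\le \Phi(x_k,|\nabla\varphi(x_k)|)F(D^2\varphi(x_k))-H(x_k,\nabla\varphi(x_k))+c(x_k)|u(x_k)|^{i(\Phi)}u(x_k).
\]
For small $t$, (A2) yields $\Phi(x,t)\le L\nu_1t^{i(\Phi)}$, so the first term is bounded by $C''d_k^{(q-1)i(\Phi)+q-2}$. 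Its exponent equals $(q-1)(i(\Phi)+1)-1>0$ precisely because $q>\frac{i(\Phi)+2}{i(\Phi)+1}$. The Hamiltonian term is at most $\mathcal{C}(qCd_k^{q-1})^m\to0$ by (A3). By continuity of $c,h_1$ and $u(x_k)\to u(\bar x)$, letting $k\to\infty$ yields $c(\bar x)|u(\bar x)|^{i(\Phi)}u(\bar x)\ge h_1(\bar x)$.
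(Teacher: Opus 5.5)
Your proposal is correct and follows essentially the same route as the paper. The paper defers to the Birindelli--Demengel cutting argument: translate the vertex of $C|x-\bar x|^q$ to obtain contact points with nonzero gradient, use non-local-constancy to guarantee they exist, and then check your two decay estimates, with the same exponents $q(1+i(\Phi))-(2+i(\Phi))=(q-1)(i(\Phi)+1)-1$ for the diffusion term and $m(q-1)$ for the Hamiltonian. The only cosmetic difference is that the paper writes out the supersolution case and calls the subsolution case analogous, whereas you do the reverse via $\widetilde F(M)=-F(-M)$, $\widetilde H(x,p)=-H(x,-p)$.
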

	\begin{proof}
		The continuous supersolution case was proved in \cite[Lemma 2.11]{BD2004}, while the lower semicontinuous version is given in \cite[Lemma 2]{BD2007}. The same argument applies to the present operator. Indeed, along the nonzero-gradient contact sequence constructed therein, the structural assumptions yield
		\begin{align*}
			\Phi(y_n,|p_n|)|F(X_n)|\leq C|y_n-t_n|^{q(1+i(\Phi))-(2+i(\Phi))} \to 0,
		\end{align*}
		and
		\begin{align*}
			|H(y_n,p_n)|\leq C|y_n-t_n|^{m(q-1)}\to0.
		\end{align*}
		Passing to the limit gives the supersolution assertion. The subsolution assertion follows analogously, with the inequalities reversed.
	\end{proof}
	For analogous assumptions in different contexts, the reader is referred to \cite[(i)(A5)]{BBLL2024}, \cite[Condition 2]{BD2004}, \cite[Condition 3.14]{CIL1992}, \cite[Remark 2.2]{JSRR2023}, and \cite[Assumption H5]{BD2007}. Notably, the condition \hyperref[A5]{\upshape(A5)$_2$} covers the class of Hamiltonian terms surveyed in \cite{BD2007}.
	\begin{lemma}[\bf Comparison principle II]
		\label{Comp.Prin}
		Let $\Omega\subset \mathbb{R}^n$ be a bounded domain. Suppose the assumptions {\upshape\hyperref[A1]{(A1)}--\hyperref[A3]{(A3)}} and {\upshape\hyperref[A5]{(A5)}} are in force and \\$c,h_1,h_2 \in C(\overline{\Omega})$. Suppose that $u\in \mathrm{USC}(\overline{\Omega})$ is a bounded viscosity subsolution to
		\begin{align}
			\label{gene_main_com}
			\Phi(x,|\nabla u|) F(D^2 u) - H(x,\nabla u) + c(x)|u|^{i(\Phi)}u = h_1(x) \mbox{ in }\Omega,
		\end{align} 
		and $v\in \mathrm{LSC}(\overline{\Omega})$ is a viscosity supersolution to \eqref{gene_main_com} with $h_1$ replaced by $h_2$. Furthermore, assume that $v\geq u$ on $\partial\Omega$ and one of the following holds :
		\begin{enumerate}[\upshape \quad(a)]
			\item\phantomsection\label{CompaII_1} $c<0$ in $\overline{\Omega}$ and $h_1 \geq h_2$ in $\overline{\Omega}$;
			\item\phantomsection\label{CompaII_2} $c \le 0$ in $\overline{\Omega}$ and $h_1 > h_2$ in $\overline{\Omega}$.
		\end{enumerate}
		Then $v\geq u$ in $\overline{\Omega}$.
	\end{lemma}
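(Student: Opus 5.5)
We argue by contradiction and use the standard doubling-of-variables scheme, with the zero-gradient case handled by Lemma \ref{Lem: cutting_zero_grad}. Suppose $M:=\max_{\overline{\Omega}}(u-v)>0$, attained at some point; since $v\ge u$ on $\partial\Omega$, every maximum point lies in $\Omega$. For $\theta>0$ and $q>\max\{2,\frac{i(\Phi)+2}{i(\Phi)+1}\}$ as in Lemma \ref{Lem: cutting_zero_grad}, consider
\begin{align*}
\Psi_\theta(x,y):=u(x)-v(y)-\frac{\theta}{q}|x-y|^q \quad\text{on } \overline{\Omega}\times\overline{\Omega}.
\end{align*}
The power $q$ is used instead of $2$ so that the cutting lemma applies when $x_\theta=y_\theta$. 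Let $(x_\theta,y_\theta)$ be a maximum point and $M_\theta:=\Psi_\theta(x_\theta,y_\theta)\ge M$. The usual argument, using boundedness and semicontinuity of $u$ and $v$, gives
\begin{align*}
\theta|x_\theta-y_\theta|^q\to0,\qquad M_\theta\to M,\qquad (x_\theta,y_\theta)\to(\bar x,\bar x),
\end{align*}
along a subsequence, with $\bar x$ a maximum point of $u-v$, hence $\bar x\in\Omega$. It also gives $u(x_\theta)\to u(\bar x)$ and $v(y_\theta)\to v(\bar x)$. So for large $\theta$ both points are interior.

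\textbf{Case $x_\theta=y_\theta$.} Then $u(x)-\frac{\theta}{q}|x-x_\theta|^q\le u(x_\theta)$ locally, and symmetrically $v(y)+\frac{\theta}{q}|y-x_\theta|^q\ge v(x_\theta)$. After making the maxima strict by adding small higher-order perturbations, Lemma \ref{Lem: cutting_zero_grad} applies if $u$ and $v$ are not locally constant there. It gives
\begin{align*}
c(x_\theta)|u(x_\theta)|^{i(\Phi)}u(x_\theta)\ge h_1(x_\theta),\qquad c(x_\theta)|v(x_\theta)|^{i(\Phi)}v(x_\theta)\le h_2(x_\theta).
\end{align*}
If instead $u$ or $v$ is locally constant, the first alternative in Definition \ref{Def:Viscosity_solution} gives the same inequality. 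Subtracting, and using $u(x_\theta)-v(x_\theta)=M_\theta\ge M>0$ together with the strict monotonicity of $t\mapsto|t|^{i(\Phi)}t$, we get
\begin{align*}
h_1-h_2\le c\,\big[|u|^{i(\Phi)}u-|v|^{i(\Phi)}v\big]
\end{align*}
at $x_\theta$. This contradicts (a), because the right side is negative and the left side is non-negative. It also contradicts (b), because the right side is $\le 0$ and the left side is positive.

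\textbf{Case $x_\theta\ne y_\theta$.} The common gradient $p_\theta=\theta|x_\theta-y_\theta|^{q-2}(x_\theta-y_\theta)$ is nonzero. The Crandall--Ishii--Lions lemma \cite[Theorem 3.2]{CIL1992} yields $(p_\theta,X)\in\overline{J}^{2,+}u(x_\theta)$ and $(p_\theta,-Y)\in\overline{J}^{2,-}v(y_\theta)$ satisfying a matrix inequality of the form \eqref{A2-3}. Here the effective $\theta$ is $\tilde\theta:=C\theta|x_\theta-y_\theta|^{q-2}$, since the Hessian of $\frac{\theta}{q}|z|^q$ is bounded by $(q-1)\theta|z|^{q-2}$. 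Writing the sub- and supersolution inequalities at these jets and subtracting gives
\begin{align*}
h_1(x_\theta)-h_2(y_\theta)\le{}& \big[\Phi(x_\theta,|p_\theta|)F(X)-\Phi(y_\theta,|p_\theta|)F(-Y)\big]+\big[H(y_\theta,p_\theta)-H(x_\theta,p_\theta)\big]\\
&+c(y_\theta)|v(y_\theta)|^{i(\Phi)}v(y_\theta)-c(x_\theta)|u(x_\theta)|^{i(\Phi)}u(x_\theta).
\end{align*}
Since $p_\theta=\tilde\theta(x_\theta-y_\theta)$ and $|p_\theta|=\tilde\theta|x_\theta-y_\theta|$, assumption (A5)$_1$ bounds the first bracket by $\omega_{\Phi F}(\tilde\theta|x_\theta-y_\theta|^2+|x_\theta-y_\theta|)$, and (A5)$_2$ bounds the second by $\omega_H(\cdot)$ at the same argument. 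Now $\tilde\theta|x_\theta-y_\theta|^2\sim\theta|x_\theta-y_\theta|^q\to0$ and $|x_\theta-y_\theta|\to0$. Letting $\theta\to\infty$ and using the continuity of $c$ and $h_i$ and the convergence of $u(x_\theta)$ and $v(y_\theta)$, we obtain
\begin{align*}
h_1(\bar x)-h_2(\bar x)\le c(\bar x)\big[|v(\bar x)|^{i(\Phi)}v(\bar x)-|u(\bar x)|^{i(\Phi)}u(\bar x)\big].
\end{align*}
This contradicts (a) or (b) exactly as in the first case.

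The main obstacle is the bookkeeping in the second case. The matrix bound from the Crandall--Ishii--Lions lemma with the $q$-power penalty must be cast precisely in the form \eqref{A2-3}, with a uniform $C_0$ and effective parameter $\tilde\theta$. One must also check that the arguments of $\omega_{\Phi F}$ and $\omega_H$ tend to zero. If a mismatch of constants appears, I would first try replacing $\tilde\theta$ by $C_0\tilde\theta$, since (A5) holds for every $C_0\ge1$. A secondary point is making the local maxima strict for Lemma \ref{Lem: cutting_zero_grad}; the standard remedy is to add a term such as $|x-x_\theta|^{q+1}$.
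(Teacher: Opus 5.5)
Your proposal is correct up to one sign error, which does break the off-diagonal case as written. Your diagonal case is handled more directly than in the paper.

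\textbf{The sign error.} Subtracting the supersolution inequality at $y_\theta$ from the subsolution inequality at $x_\theta$ produces the zeroth-order term $c(x_\theta)|u(x_\theta)|^{i(\Phi)}u(x_\theta)-c(y_\theta)|v(y_\theta)|^{i(\Phi)}v(y_\theta)$, not its negative. With your sign the limit reads $h_1(\bar x)-h_2(\bar x)\le c(\bar x)\big[|v|^{i(\Phi)}v-|u|^{i(\Phi)}u\big](\bar x)$. Because $u(\bar x)>v(\bar x)$ and $c\le 0$, the right side is $\ge 0$, and even $>0$ under (a). So this inequality contradicts neither (a) nor (b). With the correct sign you get $h_1(\bar x)-h_2(\bar x)\le c(\bar x)\big[|u|^{i(\Phi)}u-|v|^{i(\Phi)}v\big](\bar x)$. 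That is the inequality of your first case, and the contradiction follows as you say.

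\textbf{Bookkeeping for $\tilde\theta$.} Take $\tilde\theta:=\theta|x_\theta-y_\theta|^{q-2}$ with no extra constant, so that $p_\theta=\tilde\theta(x_\theta-y_\theta)$ exactly. Let $A$ be the Hessian of $(x,y)\mapsto\frac{\theta}{q}|x-y|^q$ at $(x_\theta,y_\theta)$, and let $Z=D^2\big(\tfrac{\theta}{q}|\cdot|^q\big)(x_\theta-y_\theta)$. Then $0\le Z\le (q-1)\tilde\theta I$ and $\|A\|=2\|Z\|$. Choosing the parameter in \cite[Theorem 3.2]{CIL1992} equal to $1/\|A\|$ gives
\[
-4(q-1)\tilde\theta\begin{pmatrix} I&0\\0&I\end{pmatrix}\le\begin{pmatrix} X&0\\0&Y\end{pmatrix}\le 2(q-1)\tilde\theta\begin{pmatrix} I&-I\\-I&I\end{pmatrix}.
\]
This is \eqref{A2-3} with $C_0=4(q-1)$, which is admissible since (A5)$_1$ holds for every $C_0\ge 1$. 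The argument of $\omega_{\Phi F}$ and $\omega_H$ is then $\theta|x_\theta-y_\theta|^q+|x_\theta-y_\theta|\to 0$.

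\textbf{Comparison with the paper.} In the off-diagonal case your argument is the paper's; the paper uses the variant \cite[Lemma 1]{BD2007} of Ishii's lemma, and either works. The diagonal case differs.
\begin{itemize}
\item The paper argues that $x_\varepsilon$ cannot be simultaneously a strict minimum of $v(y)+\frac{\varepsilon}{q}|x_\varepsilon-y|^q$ and a strict maximum of $u(x)-\frac{\varepsilon}{q}|x-x_\varepsilon|^q$. In the non-strict case it relocates to a new maximizing pair with $x_\varepsilon\neq y_\varepsilon$, so it always finishes in the off-diagonal case.
\item You observe that strictness can be manufactured. Lemma~\ref{Lem: cutting_zero_grad}, or the constant branch, then applies to $u$ and $v$ at once and gives the contradiction directly, with no relocation step.
\end{itemize}
Manufacture strictness by enlarging the constant rather than adding $|x-x_\theta|^{q+1}$. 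From $u(x)-\frac{\theta}{q}|x-x_\theta|^q\le u(x_\theta)$, the point $x_\theta$ is a strict maximum of $u(x)-\frac{2\theta}{q}|x-x_\theta|^q$. This keeps exactly the form $C|x-\bar x|^q$ that the lemma requires, and the same works for $v$.
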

	\begin{proof}
		Let us suppose, for the purpose of contradiction, that there exists $M > 0$ such that $M = \displaystyle\max_{\overline{\Omega}}(u - v)$. Let us consider $\varepsilon\in \mathbb{N}$ and $q > \max\left\{2, \frac{i(\Phi)+2}{i(\Phi)+1}\right\}$ along with the following couple function
		\begin{align*}
			\omega_\varepsilon(x,y) = u(x) - v(y) - \frac{\varepsilon}{q}|x-y|^q \mbox{ for } x,y \in \overline{\Omega}.
		\end{align*}
		For each $\varepsilon > 0$ large, we define the maximum of $\omega_\varepsilon$
		\begin{align*}
			M_\varepsilon = \max_{\overline{\Omega} \times \overline{\Omega}}\left(u(x) - v(y) - \frac{\varepsilon}{q}|x-y|^q\right) < \infty.
		\end{align*}
		Note that $M_\varepsilon \geq M$ for all $\varepsilon$. Let $(x_\varepsilon,y_\varepsilon) \in \overline{\Omega}\times\overline{\Omega}$ be a point for $\omega_\varepsilon$ where the maximum is attained. It is then standard to show that \cite[Lemma 3.1]{CIL1992}
		\begin{align}
			\label{proof_1}
			\lim\limits_{\varepsilon \to \infty}\varepsilon|x_\varepsilon - y_\varepsilon|^q = 0, \mbox{ and } \lim\limits_{\varepsilon \to \infty}M_\varepsilon = M.
		\end{align}
		Up to a subsequence, there exists $z_0 \in \Omega$ such that
		\begin{align}
			\label{proof_2}
			\lim\limits_{\varepsilon \to \infty}x_\varepsilon = \lim\limits_{\varepsilon \to \infty}y_\varepsilon = z_0,
		\end{align}
		where $u(z_0) - v(z_0) = M$. Moreover, one observes that
		\begin{align*}
			M > 0 \geq \max_{\partial \Omega}(u - v),
		\end{align*}
		i.e maximizer can not move towards the boundary, hence, $x_\varepsilon, y_\varepsilon \in \Omega_1$ for some interior domain $\Omega_1 \Subset \Omega$ and $\varepsilon > 0$ sufficiently large. 
		
		\textbf{Claim:} For $\varepsilon$ large enough, there exist $x_\varepsilon$ and $y_\varepsilon$ such that $x_\varepsilon,y_\varepsilon$ is a maximum pair for $\omega_\varepsilon$ and $x_\varepsilon \neq y_\varepsilon$. \\
		Indeed, suppose that $x_\varepsilon = y_\varepsilon$. Then one would have
		\begin{align*}
			&\omega_\varepsilon(x_\varepsilon,x_\varepsilon) = u(x_\varepsilon) - v(x_\varepsilon) \geq u(x_\varepsilon) - v(y) - \frac{\varepsilon}{q}|x_\varepsilon-y|^q;\\
			&\omega_\varepsilon(x_\varepsilon,x_\varepsilon) = u(x_\varepsilon) - v(x_\varepsilon) \geq u(x) - v(x_\varepsilon) - \frac{\varepsilon}{q}|x-x_\varepsilon|^q;
		\end{align*}
		and then $x_\varepsilon$ would be a local minimum for 
		\begin{align*}
			\Theta_1 = v(y) + \frac{\varepsilon}{q}|x_\varepsilon-y|^q,
		\end{align*}
		and similarly a local maximum for
		\begin{align*}
			\Theta_2 = u(x) - \frac{\varepsilon}{q}|x_\varepsilon-x|^q.
		\end{align*}
		Firstly, $x_\varepsilon$ cannot be simultaneously a strict local maximum and a strict local minimum. Indeed, suppose that this were the case. If $v$ is locally constant near $x_{\varepsilon}$, the supersolution inequality follows from the constant branch in the definition; otherwise, it follows from Lemma \ref{Lem: cutting_zero_grad}. The corresponding subsolution inequality for $u$ follows analogously.
		\begin{align*}
			c(x_\varepsilon)|u(x_\varepsilon)|^{i(\Phi)}u(x_\varepsilon) \geq h_1(x_\varepsilon),
		\end{align*}
		and 
		\begin{align*}
			c(x_\varepsilon)|v(x_\varepsilon)|^{i(\Phi)}v(x_\varepsilon) \leq h_2(x_\varepsilon).
		\end{align*}
		It yields that
		\begin{align*}
			h_1(x_\varepsilon) - h_2(x_\varepsilon) \leq c(x_\varepsilon)\left[ |u(x_\varepsilon)|^{i(\Phi)}u(x_\varepsilon) - |v(x_\varepsilon)|^{i(\Phi)}v(x_\varepsilon)\right],
		\end{align*}
		which leads to a contradiction to \hyperref[CompaII_1]{\upshape(a)} and \hyperref[CompaII_2]{\upshape(b)}, since $u(x_\varepsilon) > v(x_\varepsilon)$ and the map $t \mapsto |t|^{i(\Phi)}t$ is strictly increasing. Consequently, $x_\varepsilon$ cannot be both a strict minimum of $\Theta_1$ and a strict maximum of $\Theta_2$. In the former case, we can find $\delta > 0$ and $R > \delta$ such that $\overline{B_R(x_\varepsilon)} \subset \Omega$ and
		\begin{align*}
			v(x_\varepsilon) = \displaystyle \inf_{\delta \leq |x - x_\varepsilon| \leq R}\left\{ v(x) + \dfrac{\varepsilon}{q}|x - x_\varepsilon|^q\right\}.
		\end{align*}
		Letting $y_\varepsilon$ be a point where this infimum is attained, we have
		\begin{align*}
			v(x_\varepsilon) = v(y_\varepsilon) + \dfrac{\varepsilon}{q}|x_\varepsilon - y_\varepsilon|^q,
		\end{align*}
		Moreover, $(x_\varepsilon, y_\varepsilon)$ remains a maximum point for $\omega_\varepsilon$ since the following relation holds for all $(x, y) \in \Omega \times \Omega$:
		\begin{align*}
			u(x_\varepsilon) - v(y_\varepsilon) - \dfrac{\varepsilon}{q}|x_\varepsilon - y_\varepsilon|^q = u(x_\varepsilon) - v(x_\varepsilon) \geq u(x) - v(y) - \dfrac{\varepsilon}{q}|x-y|^q.
		\end{align*}
		This completes the proof of the claim. In the other case, one can choose $y_\varepsilon \neq x_\varepsilon$, sufficiently close to $x_\varepsilon$, such that $(y_\varepsilon, x_\varepsilon)$ is also a maximizing pair for $\omega_\varepsilon$.
		
		Set $\theta_\varepsilon: = \varepsilon|x_\varepsilon - y_\varepsilon|^{q-2}$ and $\eta_\varepsilon = \theta_\varepsilon(x_\varepsilon - y_\varepsilon)$. Since $x_\varepsilon\neq y_\varepsilon$, we have $\eta_\varepsilon\neq0$. At this stage, we are able to apply an extension of Ishii’s acclaimed result \cite[Lemma 1]{BD2007} with $\alpha = i(\Phi)$, there exist symmetric matrices $X_\varepsilon, Y_\varepsilon \in \mathbb{S}^{n \times n}$ such that
		\begin{align*}
			(\eta_\varepsilon, X_\varepsilon) \in \overline{J}^{2,+} u(x_\varepsilon),~(\eta_\varepsilon, -Y_\varepsilon) \in \overline{J}^{2,-} v(y_\varepsilon),
		\end{align*}
		and 
		\begin{align*}
			-4C_q\theta_\varepsilon\left(\begin{array}{lc}
				I &0\\
				0 &I
			\end{array}\right)\le \left(\begin{array}{lc}
				X_\varepsilon &0\\
				0 &Y_\varepsilon
			\end{array}\right) \leq 3C_q\theta_\varepsilon\left(\begin{array}{lc}
				I &-I\\
				-I &I
			\end{array}\right).
		\end{align*}
		where $C_q = 2^{q-3}q(q-1)$. By the definition of viscosity subsolution and supersolution for $u$ and $v$, respectively, we have
		\begin{align}
			h_1(x_\varepsilon) &\leq \Phi(x_{\varepsilon},|\eta_{\varepsilon}|)F(X_\varepsilon)- H(x_{\varepsilon},\eta_\varepsilon) + c(x_\varepsilon)|u(x_\varepsilon)|^{i(\Phi)}u(x_{\varepsilon}),\label{sub_com}\\
			h_2(y_\varepsilon) &\geq \Phi(y_{\varepsilon},|\eta_{\varepsilon}|)F(-Y_\varepsilon)- H(y_{\varepsilon},\eta_\varepsilon) + c(y_\varepsilon)|v(y_\varepsilon)|^{i(\Phi)}v(y_{\varepsilon}).\label{super_com}
		\end{align}
		Subtracting \eqref{super_com} from \eqref{sub_com} yields
		\begin{align*}
			h_1(x_\varepsilon) \leq  &h_2(y_\varepsilon) + \left[c(x_\varepsilon)|u(x_\varepsilon)|^{i(\Phi)}u(x_\varepsilon) - c(y_\varepsilon)|v(y_\varepsilon)|^{i(\Phi)}v(y_\varepsilon)\right]\\
			&+ \hspace{-0.1cm}\left[H(y_{\varepsilon},\eta_\varepsilon) - H(x_{\varepsilon},\eta_\varepsilon)\right]+
			\left[\Phi(x_{\varepsilon},|\eta_{\varepsilon}|)F(X_\varepsilon)-\Phi(y_{\varepsilon},|\eta_{\varepsilon}|)F(-Y_\varepsilon)\right].
		\end{align*}
		By using the assumption {\upshape\hyperref[A5]{(A5)}} with $C_0 = 4C_q$, we collect
		\begin{align}\label{15.3''}
			h_1(x_\varepsilon) \leq h_2(y_\varepsilon) &+ \left[c(x_\varepsilon)|u(x_\varepsilon)|^{i(\Phi)}u(x_\varepsilon) - c(y_\varepsilon)|v(y_\varepsilon)|^{i(\Phi)}v(y_\varepsilon)\right]\notag\\ &\hspace{-0.7cm}+\omega_{H}\left(\theta_\varepsilon|x_{\varepsilon}-y_{\varepsilon}|^2+|x_{\varepsilon}-y_{\varepsilon}|\right)+\omega_{\Phi F}\left(\theta_\varepsilon|x_{\varepsilon}-y_{\varepsilon}|^2+|x_{\varepsilon}-y_{\varepsilon}|\right).
		\end{align}
		Letting $\varepsilon \to \infty$ in \eqref{15.3''}, it follows from \eqref{proof_1} and \eqref{proof_2} that
		\begin{align}\label{15.2'}
			\max_{\overline{\Omega}}\left(h_2 - h_1\right) + c(z_0)\left[|u(z_0)|^{i(\Phi)}u(z_0)-|v(z_0)|^{i(\Phi)}v(z_0)\right] \geq 0.
		\end{align} 
		This yields a contradiction to \hyperref[CompaII_1]{\upshape(a)} and \hyperref[CompaII_2]{\upshape(b)} since $u(z_0)>v(z_0)$. The result then follows.
		\qedhere
	\end{proof}
	We now proceed to demonstrate that the existence of viscosity sub/\\supersolutions to \eqref{exist_main}; we refer to \cite[Lemma 2.2]{JSRR2023}, \cite[Lemma 6.4]{BBLL2024} for a similar result.
	\begin{lemma}[\bf Existence of sub/supersolutions]
		\label{Lem:exist_sub/super}
		Suppose assumptions {\upshape\hyperref[A1]{(A1)}--\hyperref[A5]{(A5)}} are in force. Assume further that $c < 0$ in $\overline{\Omega}$. there exists a viscosity subsolution $\underline{u} \in C(\overline{\Omega})$ and a viscosity supersolution $\overline{u} \in C(\overline{\Omega})$ of 
		\begin{align}
			\label{exist_main}
			\Phi(x,|\nabla u|) F(D^2 u) - H(x,\nabla u) + c(x)|u|^{i(\Phi)}u = h(x) \mbox{ in }\Omega,
		\end{align}
		with $\underline{u}=\overline{u} = g$ on $\partial\Omega$. Moreover, there exists a positive constant $d$, depending only on $n, \lambda, \Lambda, i(\Phi), \nu_0, L, r_0,\\ \diam(\Omega)$, $\mathcal{C}, \|c\|_{L^\infty(\Omega)}, \|h\|_{L^\infty(\Omega)}, \|g\|_{L^\infty(\partial\Omega)}$ such that
		\begin{align*}
			-d \leq \underline{u} \leq \overline{u} \leq d.
		\end{align*}
	\end{lemma}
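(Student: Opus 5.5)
The plan is to build $\underline{u}$ and $\overline{u}$ from the boundary datum plus the exterior-sphere barriers of Lemma \ref{Lem:barrier-construction}\hyperref[barrier_eigenvalue]{(B2)}, taking an infimum/supremum over boundary points as in \cite[Lemma 6.4]{BBLL2024}. First extend $g$ to a continuous function on $\overline{\Omega}$ (still denoted $g$, e.g.\ via Tietze, or a $C^{1,\alpha_g}$ extension if available), with $\|g\|_{L^\infty(\Omega)}\le\|g\|_{L^\infty(\partial\Omega)}$. For each $z\in\partial\Omega$ and $\varepsilon>0$, uniform continuity of $g$ gives $K_\varepsilon$ with $|g(x)-g(z)|\le\varepsilon+K_\varepsilon\chi^z_{\mathrm{ext}}(x)/K_2$, since $\chi^z_{\mathrm{ext}}$ is bounded below by a positive constant away from $z$. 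We then set
\begin{align*}
\overline{u}_{z,\varepsilon}(x):=g(z)+\varepsilon+A_\varepsilon\chi^z_{\mathrm{ext}}(x),\qquad \overline{u}:=\inf_{z\in\partial\Omega,\ \varepsilon>0}\overline{u}_{z,\varepsilon},
\end{align*}
and define $\underline{u}$ symmetrically with $-\chi^z_{\mathrm{ext}}$. Each $\overline{u}_{z,\varepsilon}\ge g$ on $\partial\Omega$, and $\overline{u}_{z,\varepsilon}(z)=g(z)+\varepsilon$, so $\overline{u}=g$ on $\partial\Omega$ and $\overline{u}$ is continuous at boundary points.

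Next, the supersolution property of each $\overline{u}_{z,\varepsilon}$ must be checked. Here $A_\varepsilon\chi^z_{\mathrm{ext}}$ has gradient of size $\gtrsim A_\varepsilon|\alpha|r_1^{\alpha-1}$, which is nonzero. Its Hessian term, via (A2), scales like $A_\varepsilon^{i(\Phi)+1}$ times a strictly negative quantity (Step 2 of Lemma \ref{Lem:barrier-construction}). This dominates $\mathcal{C}A_\varepsilon^m$ because $m\le i(\Phi)+1$ and $|\rho|\le R$, with $\alpha$ chosen very negative. The zero-order term needs care: since $c<0$, we have $c|w|^{i(\Phi)}w\le 0$ when $w\ge0$, but for $w<0$ the term is positive. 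It is bounded by $\|c\|_\infty\|w\|_\infty^{i(\Phi)+1}$, and since $w\ge g(z)$, this is at most $\|c\|_\infty\|g\|_\infty^{i(\Phi)+1}$, independent of $A_\varepsilon$. So choosing $A_\varepsilon$ large gives the strict inequality $\le -\|h\|_\infty$. The infimum of a family of supersolutions is a supersolution, after taking the lower semicontinuous envelope (standard stability). Continuity in the interior follows since $\chi^z_{\mathrm{ext}}$ is locally uniformly Lipschitz in $x$, uniformly in $z$, so the infimum over $z$ for fixed $\varepsilon$ is Lipschitz on compact subsets. The countable infimum over $\varepsilon$ needs more care: I would instead fix the modulus so that $A_\varepsilon$ is finite on each compact set and argue by local equicontinuity.

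The bound $-d\le\underline{u}\le\overline{u}\le d$ is handled in two parts. The outer bounds come from $\|g\|_\infty+1+A_1\sup\chi$ with $\varepsilon=1$, which depends only on the listed data. For the middle inequality, apply Comparison principle II (Lemma \ref{Comp.Prin}, case \hyperref[CompaII_1]{(a)} since $c<0$, using (A5)) to the subsolution $\underline{u}$ and supersolution $\overline{u}$, which agree on $\partial\Omega$.

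The main obstacle is the uniform choice of $A_\varepsilon$ across all $z$, together with continuity of the envelope, which requires the uniform exterior sphere radius $r_0$ of the $C^2$ domain. I expect the singular regime $i(\Phi)<0$ to be harmless, because the barrier gradient never vanishes.
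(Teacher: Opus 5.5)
Your construction is essentially the paper's: exterior-sphere barriers $g(z)+\varepsilon+A_\varepsilon\chi^z_{\mathrm{ext}}$, an infimum over $z$ and $\varepsilon$, stability under infima, and comparison principle II for the ordering. Your check that each member is a supersolution also matches the paper, including the bound on the zero-order term via $w\ge g(z)$.

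\textbf{The gap: interior continuity of $\overline{u}$.} This is the step you yourself flagged, and it is part of the claim $\overline{u}\in C(\overline\Omega)$. In general $A_\varepsilon\to\infty$ as $\varepsilon\to0$. So on any compact $\mathcal K\Subset\Omega$ the Lipschitz constants of $\overline{u}_{z,\varepsilon}$, which are at least $A_\varepsilon K_2|\alpha|R^{\alpha-1}$, blow up. The family is therefore \emph{not} locally equicontinuous, and choosing the modulus differently does not change this. As written, you only know that the lower semicontinuous envelope of the infimum is a supersolution.

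The missing observation is that members with large $A_\varepsilon$ are irrelevant on compacts. Let $d_{\mathcal K}=\mathrm{dist}(\mathcal K,\partial\Omega)$. On $\mathcal K$, uniformly in $z$, one has $\chi^z_{\mathrm{ext}}\ge\mu_{\mathcal K}:=K_2[r_1^\alpha-(r_1+d_{\mathcal K})^\alpha]>0$ and $|\nabla\chi^z_{\mathrm{ext}}|\le K_2|\alpha|(r_1+d_{\mathcal K})^{\alpha-1}$. Fix a member $\overline{u}_{z_0,\varepsilon_0}$ and set $C_{\mathcal K}:=\sup_{\mathcal K}\overline{u}_{z_0,\varepsilon_0}+\|g\|_{L^\infty}$. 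Any member with $A_\varepsilon\mu_{\mathcal K}\ge C_{\mathcal K}$ lies strictly above $\overline{u}_{z_0,\varepsilon_0}$ on $\mathcal K$ and can be discarded there. The remaining members have Lipschitz constant at most $(C_{\mathcal K}/\mu_{\mathcal K})K_2|\alpha|(r_1+d_{\mathcal K})^{\alpha-1}$, so $\overline{u}$ is locally Lipschitz.

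The paper implements exactly this. It replaces each member by $\min\{v_{z,\delta},V^+\}$, where $V^+=\|g\|_{L^\infty(\partial\Omega)}+1+\chi^{z_0}_{\mathrm{ext}}$ is a fixed global supersolution. It then splits into two cases: if $M_\delta\mu_{\mathcal K}\ge C_{\mathcal K}$, the member equals $V^+$ on $\mathcal K$; if $M_\delta\mu_{\mathcal K}<C_{\mathcal K}$, the member has bounded Lipschitz constant.

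\textbf{Smaller points.}
\begin{itemize}
\item Your outer bound uses $\varepsilon=1$, so $A_1$ depends on the modulus of continuity of $g$. That is not among the allowed dependences of $d$. Use instead $\varepsilon\ge2\|g\|_{L^\infty(\partial\Omega)}$, for which no continuity of $g$ is needed, or the paper's $V^+$, whose size is fixed by (B2) alone.
\item The gradient of $A_\varepsilon\chi^z_{\mathrm{ext}}$ is bounded \emph{below} by $A_\varepsilon K_2|\alpha|R^{\alpha-1}$ with $R=r_0+\mathrm{diam}(\Omega)$. The quantity $r_1^{\alpha-1}$ gives an upper bound, not a lower one. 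It is the lower bound that lets you use $\Phi(x,t)\ge\frac{\nu_0}{L}t^{i(\Phi)}$ for $t\ge1$.
\item On the positive side, your extension of $g$ is a genuine simplification. Your inequality $|g(x)-g(z)|\le\varepsilon+K_\varepsilon\chi^z_{\mathrm{ext}}(x)/K_2$ holds on all of $\overline\Omega$, uniformly in $z$ by the uniform exterior sphere condition. So you get $\underline{u}\le g\le\overline{u}$ in $\overline\Omega$ directly. Neither the boundary continuity nor the middle inequality then needs comparison principle II, whereas the paper obtains the lower boundary limit of $\overline{u}$ only through comparison.
\end{itemize}
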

	\begin{proof}
		We first construct a global supersolution to \eqref{exist_main}. Fix an arbitrary point $z_0\in\partial\Omega$, and let $\chi_{\mathrm{ext}}^{z_0}$ be the exterior barrier \hyperref[barrier_eigenvalue]{\upshape(B2)} given by Lemma \ref{Lem:barrier-construction}. Define a function 
		\begin{align*}
			V^+(x):=G+\chi_{\mathrm{ext}}^{z_0}(x).
		\end{align*}
		where $G:=\|g\|_{L^\infty(\partial\Omega)} + 1$. Since $\chi_{\mathrm{ext}}^{z_0}\geq0$, we have $V^+>g$ on $\partial\Omega$. Moreover, $V^+$ differs from $\chi_{\mathrm{ext}}^{z_0}$ only by a positive constant. Since the map $t\mapsto |t|^{i(\Phi)}t$ is increasing and $c<0$, it follows from Lemma \ref{Lem:barrier-construction} that
		\begin{align*}
			&\Phi(x,|\nabla V^+|)F(D^2V^+)-H(x,\nabla V^+) +c(x)(V^+)^{i(\Phi)+1} \\
			&\leq\Phi(x,|\nabla\chi_{\mathrm{ext}}^{z_0}|) F(D^2\chi_{\mathrm{ext}}^{z_0}) -H(x,\nabla\chi_{\mathrm{ext}}^{z_0}) +c(x)(\chi_{\mathrm{ext}}^{z_0})^{i(\Phi)+1} \leq -\|h\|_{L^\infty(\Omega)}.   
		\end{align*}
		Hence, $V^+$ is a global viscosity supersolution of \eqref{exist_main}.
		For a fixed $\delta \in (0,1)$, we define
		\begin{align*}
			v_{z,\delta}(x) := g(z) + \delta + M_\delta \chi^z_{\mathrm{ext}}(x),
		\end{align*}
		where the constant $M_\delta \geq 1$ can be chosen so that $v_{z,\delta} \geq g$ on $\partial \Omega$. This is possible because $K_2 \geq 1$ and $g$ is continuous on $\partial \Omega$. Indeed, $M_\delta$ depends only on the modulus of continuity
		of $g$, and is independent of $z$. Then repeating the computation in Lemma \ref{Lem:barrier-construction} for $v_{z,\delta}$, we collect the same estimation of \eqref{l4} with $\beta = 0$
		\begin{align}
			\label{proof:barr-1}
			\begin{split}
				&\Phi(x,|\nabla v_{z,\delta}(x)|)F(D^2v_{z,\delta}(x))-H(x,\nabla v_{z,\delta}(x))+c(x)|v_{z,\delta}(x)|^{i(\Phi)}v_{z,\delta}(x)\\
				\le &~\widetilde{K_2}^{i(\Phi)+1}(-\alpha)^{i(\Phi)+1}|\rho|^{(\alpha-1)(i(\Phi)+1)}\left[\dfrac{\nu_0}{L|\rho|}[\Lambda(n-1)+\lambda(\alpha-1)]+\mathcal{C}\right]\\
				&+ \|c\|_{L^\infty(\Omega)}\|g\|_{L^\infty(\partial \Omega)}^{i(\Phi)+1},
			\end{split}
		\end{align} 
		where $\widetilde{K_2} = M_\delta K_2$. Since $r_1<|\rho|\leq R$ for every $x\in\Omega$, the right-hand side of \eqref{proof:barr-1} can be estimated uniformly in $\Omega$. Then, choosing first $\alpha<0$ sufficiently negative and subsequently $K_2 \geq 1$ sufficiently large, we obtain
		\begin{align*}
			&\Phi(x,|\nabla v_{z,\delta}(x)|)F(D^2v_{z,\delta}(x))
			-H(x,\nabla v_{z,\delta}(x)) +c(x)|v_{z,\delta}(x)|^{i(\Phi)}v_{z,\delta}(x) \\
			&\le -\|h\|_{L^\infty(\Omega)},
		\end{align*}
		i.e., $v_{z,\delta}$ is a viscosity supersolution of \eqref{exist_main}. 
		Hence, we now set
		\begin{align}
			\label{proof:barr0}
			\tilde{v}_{z,\delta}:=\min\{v_{z,\delta},V^+\}.
		\end{align}
		Since both $v_{z,\delta}$ and $V^+$ are continuous viscosity supersolutions of \eqref{exist_main}, so is $\tilde{v}_{z,\delta}$.\\ 
		We define $W(x) = \displaystyle \inf_{\substack{z\in\partial\Omega\\\delta\in(0,1)}}
		\tilde{v}_{z,\delta}(x)$ and let
		\begin{align*}
			W_\ast(x) = \lim\limits_{r \to 0} ~\inf\left\{W(y):~y\in\overline{\Omega},|x-y| \leq r\right\},
		\end{align*}
		denote the lower semicontinuous (LSC) envelope of $W$ , ensuring that $W_\ast$ is  the largest LSC function less than or equal to $W$. Since $\tilde{v}_{z,\delta} \geq -\|g\|_{L^\infty(\partial\Omega)}$ uniformly in $z$ and $\delta$, the standard stability of viscosity supersolutions under infima implies that $W_\ast(x)$ is a viscosity supersolution of \eqref{exist_main}. We next verify the family $\{\tilde{v}_{z,\delta}:z\in\partial\Omega, \delta\in(0,1)\}$ is locally equi-Lipschitz in $\Omega$. Fix $x_0\in\Omega$ and $r>0$ such that $\overline{B_{2r}(x_0)}\Subset\Omega$, set $d_{\mathcal{K}}:=\operatorname{dist}(\mathcal{K},\partial\Omega)>0$ with $\mathcal{K}=\overline{B_{2r}(x_0)}$. Since $\overline{B_{r_1}(y_z)}\subset\Omega^c$, for every $x\in\mathcal{K}$ and $z\in\partial\Omega$, we obtain $ |x-y_z|\geq r_1+d_{\mathcal{K}}$. By the explicit formula for $\chi_{\mathrm{ext}}^z$
		\begin{align*}
			&\chi_{\mathrm{ext}}^z(x)\geq K_2\left[r_1^\alpha-(r_1+d_{\mathcal{K}})^\alpha\right]=:\mu_{\mathcal{K}}>0,\\ 
			&|\nabla\chi_{\mathrm{ext}}^z(x)|\leq K_2(-\alpha)(r_1+d_{\mathcal{K}})^{\alpha-1}=:L_{\mathcal{K}}^{\chi},
		\end{align*}
		uniformly in $z\in\partial\Omega$. Also, we put $C_{\mathcal{K}}:=\|V^+\|_{L^\infty(\mathcal{K})}+\|g\|_{L^\infty(\partial\Omega)}$. We distinguish the following two cases.\\
		\textbf{Case 1.} $M_\delta$ large. If $M_\delta\mu_{\mathcal{K}}\geq C_{\mathcal{K}}$, then
		\begin{align*}
			v_{z,\delta}\geq -\|g\|_{L^\infty(\partial\Omega)}+M_\delta\mu_{\mathcal{K}}\geq V^+ \text{ in }\mathcal{K},    
		\end{align*}
		and hence $\tilde{v}_{z,\delta}=V^+$ in $\mathcal{K}$. \\
		\textbf{Case 2.} $M_\delta$ not large. If $M_\delta\mu_{\mathcal{K}}<C_{\mathcal{K}}$, then $\mathrm{Lip}_{\mathcal{K}}(v_{z,\delta})\leq \frac{C_{\mathcal{K}}}{\mu_{\mathcal{K}}}L_{\mathcal{K}}^{\chi}$. Since the minimum of two Lipschitz functions is Lipschitz, we obtain
		\begin{align*}
			\mathrm{Lip}_{\mathcal{K}}(\tilde{v}_{z,\delta})\leq \max\left\{\operatorname{Lip}_{\mathcal{K}}(V^+),\ \frac{C_{\mathcal{K}}}{\mu_{\mathcal{K}}}L_{\mathcal{K}}^{\chi}\right\},
		\end{align*}
		uniformly in $z$ and $\delta$. Hence the family $\{\tilde{v}_{z,\delta}:z\in\partial\Omega, \delta\in(0,1)\}$ is locally equi-Lipschitz in $\Omega$. It follows that $W$ is locally Lipschitz in $\Omega$, and therefore $W=W_*\in C(\Omega)$. We define $\overline{u}:=W$.\\
		We next establish the upper boundary estimate. Fix $z \in \partial\Omega$. By the definition of $W$, we obtain $W(x) \leq \tilde{v}_{z,\delta}(x) \leq v_{z,\delta}(x)$ in $\Omega$. Since $\chi_{\mathrm{ext}}^z$ is continuous and $\chi_{\mathrm{ext}}^z(z)=0$, we have $\chi_{\mathrm{ext}}^z(x) \to 0$ as $x \to z$ and 
		\begin{align*}
			\limsup_{\Omega \ni x \to z} W(x)
			\leq g(z) + \delta.
		\end{align*}
		Letting $\delta \downarrow 0$, we obtain $\displaystyle\limsup_{\Omega \ni x \to z} \overline{u}(x) \leq g(z)$. An analogous construction yields a viscosity subsolution $\underline{u}$ satisfying
		\begin{align*}
			\liminf_{\Omega \ni x \to z} \underline{u}(x) \geq g(z)\text{ for every } z \in \partial \Omega.
		\end{align*}
		Since every member of the lower family lies below every member of the upper family on $\partial \Omega$, Lemma \ref{Comp.Prin}, followed by taking the corresponding supremum and infimum, yields $\underline{u} \leq \overline{u}$ in $\Omega$. Consequently, we obtain
		\begin{align*}
			g(z)\leq\liminf_{\Omega \ni x \to z} \underline{u}(x)\leq\liminf_{\Omega \ni x \to z} \overline{u}(x)\leq\limsup_{\Omega \ni x \to z} \overline{u}(x)\leq g(z).
		\end{align*}
		Hence $\underline{u}, \overline{u} \in C(\overline \Omega)$ and $\underline{u} = \overline{u} = g$ on $\partial\Omega$. Finally, since $\overline{u}\leq V^+$ and the analogous construction gives $V^-\leq\underline{u}$, the required uniform bounds follow.
	\end{proof}
	The last main theorem in this section concerns the existence and uniqueness of solutions to the Dirichlet problem. It follows from Perron's method together with the comparison principle. In addition, this following theorem is essential to establish the sequence of monotone iteration functions in Lemma \ref{normalize}.
	\begin{theorem}[\bf Existence and uniqueness of a viscosity solution]\label{Theo: Exist}
		Suppose assumptions {\upshape\hyperref[A1]{(A1)}--\hyperref[A5]{(A5)}} are in force and $c < 0$ in $\overline{\Omega}$. Then there exists a unique viscosity solution $u \in C(\overline{\Omega})$ of the following equation
		\begin{align*}
			\Phi(x,|\nabla u|) F(D^2 u) \hspace{-0.1cm}- \hspace{-0.1cm}H(x,\nabla u) \hspace{-0.1cm}+\hspace{-0.1cm} c(x)|u|^{i(\Phi)}u = h(x) \mbox{ in }\Omega,\text{ and }u=g\text{ on }\partial\Omega.
		\end{align*}
	\end{theorem}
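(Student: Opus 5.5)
Uniqueness comes directly from Lemma \ref{Comp.Prin}. If $u_1,u_2\in C(\overline{\Omega})$ are two viscosity solutions with $u_1=u_2=g$ on $\partial\Omega$, then $u_1$ is a bounded subsolution and $u_2$ is a supersolution of the same equation with $h_1=h_2=h$. Since $c<0$ in $\overline{\Omega}$, alternative (a) of Lemma \ref{Comp.Prin} holds, so $u_1\le u_2$. Exchanging the roles of $u_1$ and $u_2$ gives $u_1=u_2$.

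For existence I will use Perron's method, with the barriers supplied by Lemma \ref{Lem:exist_sub/super}. That lemma gives $\underline{u}\le\overline{u}$ in $C(\overline{\Omega})$ with $\underline{u}=\overline{u}=g$ on $\partial\Omega$. Consider the class
\[
\mathcal{S}:=\{w\in \mathrm{USC}(\overline{\Omega}):\ w \text{ is a viscosity subsolution of \eqref{exist_main}},\ \underline{u}\le w\le \overline{u}\},
\]
which is nonempty because it contains $\underline{u}$. Set $U(x):=\sup_{w\in\mathcal{S}}w(x)$. The argument then proceeds in three steps.
\begin{enumerate}[(i)]
\item The upper envelope $U^\ast$ is a subsolution. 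This follows from the standard stability of subsolutions under suprema. In the singular case the ``locally constant'' branch of Definition \ref{Def:Viscosity_solution} has to be checked as well; I will do this using Lemma \ref{Lem: cutting_zero_grad}, as in the comparison proof.
\item The lower envelope $U_\ast$ is a supersolution. This is the bump construction. Suppose a test function $\varphi$ touches $U_\ast$ strictly from below at $x_0$ with $\nabla\varphi(x_0)\neq0$, and the supersolution inequality fails there. Then $\varphi+\delta-\kappa|x-x_0|^2$ is a strict classical subsolution near $x_0$, by continuity of $\Phi$, $F$, $H$ and $c$ and since the gradient is nonzero. Taking the maximum of this function with $U$ produces an element of $\mathcal{S}$ that exceeds $U$ at some points, a contradiction. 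If $\nabla\varphi(x_0)=0$, I will either move to a nearby point with nonzero gradient or invoke Lemma \ref{Lem: cutting_zero_grad}, test with $\pm C|x-x_0|^q$, and use the constant-case alternative in Definition \ref{Def:Viscosity_solution}.
\item Since $\underline{u}\le U_\ast\le U^\ast\le\overline{u}$ and both barriers equal $g$ on $\partial\Omega$, we get $U_\ast=U^\ast=g$ on $\partial\Omega$. Lemma \ref{Comp.Prin}(a), applied to the subsolution $U^\ast$ and the supersolution $U_\ast$, gives $U^\ast\le U_\ast$. Hence $u:=U^\ast=U_\ast\in C(\overline{\Omega})$ is a viscosity solution with $u=g$ on $\partial\Omega$.
\end{enumerate}

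I expect the main obstacle to be the bump step (ii) at points where the test gradient vanishes, particularly in the singular regime $-1<i(\Phi)<0$. There the operator is not defined, and the definition relies on the locally-constant alternative. My plan is to reduce this case to the cutting Lemma \ref{Lem: cutting_zero_grad}, in exactly the way the zero-gradient case was handled in the proof of Lemma \ref{Comp.Prin}. Failing that, I would first prove existence for the transformed degenerate equation given by Lemma \ref{transition} and then transfer the result back.
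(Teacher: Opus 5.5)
Your proposal is correct and follows the same route as the paper. You use the barriers $\underline{u}\le\overline{u}$ from Lemma \ref{Lem:exist_sub/super}, Perron's method adapted to Definition \ref{Def:Viscosity_solution}, and Lemma \ref{Comp.Prin} (alternative (a), since $c<0$) both to identify $U^\ast=U_\ast$ and to get uniqueness; the only difference is that you sketch the Perron steps (envelope stability, the bump construction, and the zero-gradient and locally-constant cases via Lemma \ref{Lem: cutting_zero_grad}), which the paper delegates to the Crandall--Ishii--Lions Perron theorem.
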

	\begin{proof}
		By Lemma \ref{Lem:exist_sub/super}, there exists a viscosity subsolution
		$\underline{u}\in C(\overline\Omega)$ and a viscosity supersolution
		$\overline{u}\in C(\overline\Omega)$ of the Dirichlet problem such that
		$\underline{u} = \overline{u} = g$ on $\partial\Omega$ and $\underline{u} \leq \overline{u}$. Then, applying Perron's method \cite[Theorem 4.1]{CIL1992} adapted to Definition \ref{Def:Viscosity_solution}, together with the comparison principle II (Lemma \ref{Comp.Prin}), there exists a viscosity solution $u\in C(\overline\Omega)$ satisfying 
		\begin{align*}
			\underline{u}\leq u\leq \overline{u} \text{ in } \overline\Omega,
		\end{align*}
		with the boundary condition $u = g$ on $\partial\Omega$. Uniqueness follows immediately from Lemma \ref{Comp.Prin}.
	\end{proof}
	\section{\bf Harnack inequality}
	\label{section:4}
	This section is devoted to the derivation of a Harnack inequality for a class of singular or degenerate fully nonlinear elliptic equations, whose proof is based on the sliding cusp method. 
	\subsection{\bf Technical tools and auxiliary results}
	Throughout the analysis, we frequently apply rescaling techniques to subsolutions and supersolutions. Consequently, it is essential to examine how the nonlinearities transform under such scaling, particularly to verify that the rescaled operators continue to satisfy the necessary assumptions. We apply scaling arguments similar to those in Section \ref{section:2}, with $\xi=0$. In addition, the following lemma shows that the class of equations under consideration is stable under the rescaling and normalization procedures used throughout the paper.\\
	Specifically, the conclusion follows from the standard change-of-variables argument for viscosity solutions. Indeed, test functions are transferred through the affine map $T(y)=x_0+t_0y$, and the supersolution and subsolution inequalities are preserved because $V>0$, $t_0>0$, and the equation is multiplied by a positive factor. The constant branch is preserved as well, since $\hat{u}$ is locally constant if and only if $u$ is locally constant in the corresponding transformed neighborhood.
	\begin{lemma}[\bf Rescaling solutions]
		\label{Scaling}
		Let $\Omega\subset\mathbb{R}^n$ be an open set. Let $0<t_0\le1$, $\rho_0>0$, and $x_0\in\Omega$ such that $B_{t_0\rho_0}(x_0)\subset\Omega$. Let $u$ be a non-negative viscosity supersolution (respectively, subsolution) of \eqref{eq:main} in $B_{t_0\rho_0}(x_0)$. Consider the linear map $T: B_{\rho_0} \to B_{t_0\rho_0}(x_0)$ defined by $T(y) = x_0+t_0y$. For positive constant $V \leq 1$ (Note that $V = \frac{1}{K}$ of \eqref{small_regime_scaling}), we define
		\begin{align*}
			\hat{u}(y) = Vu(T(y)) =  Vu(x_0+t_0y), \mbox{ for }y\in B_{\rho_0}.
		\end{align*}
		It can be seen that $\hat{u}$ is a viscosity supersolution (respectively, subsolution) of
		\begin{align}
			\label{scaling-form}
			\hat{\Phi}\left(y,|\nabla \hat{u}|\right)\hat{F}(D^2\hat{u}) -\hat{H}(y,\nabla \hat{u}) +\hat{c}(y)\hat{u}^{i(\Phi)+1} =\hat{h}(y) \mbox{ in }B_{\rho_0},
		\end{align}
		where
		\begin{align*}
			\begin{array}{ll}
				&\hspace*{-0.5cm}\hat{\Phi}(y,t) =\displaystyle\frac{\Phi\left(x_0+t_0y,\frac{t}{Vt_0}\right)}{\Phi\left(x_0+t_0y,\frac{1}{Vt_0}\right)}, \quad \hat{F}(M) = \displaystyle Vt_0^2F\left(\frac{1}{Vt_0^2}M\right),\\[15pt]
				&\hspace*{-0.5cm}\hat{H}(y,p) = \displaystyle\frac{Vt_0^2}{\Phi\left(x_0+t_0y,\frac{1}{Vt_0}\right)}H\left(x_0+t_0y,\frac{p}{Vt_0}\right),\\[15pt]
				&\hspace*{-0.5cm}\hat{c}(y) = \displaystyle\frac{t_0^2V^{-i(\Phi)}}{\Phi\left(x_0+t_0y,\frac{1}{Vt_0}\right)}c(x_0+t_0y), \hspace*{0.3cm}\hat{h}(y) = \displaystyle\frac{Vt_0^2}{\Phi\left(x_0+t_0y,\frac{1}{Vt_0}\right)}h(x_0+t_0y).\\
				\vspace*{-0.001pt}\textbf{}
			\end{array}
		\end{align*}
		Note that $\hat{F}$ remains a uniformly $(\lambda,\Lambda)-\mbox{elliptic}$ operator, the map $t \mapsto \displaystyle\frac{\hat{\Phi}(y,t)}{t^{i(\Phi)}}$ is almost non-decreasing, and the map $t \mapsto \displaystyle\frac{\hat{\Phi}(y,t)}{t^{s(\Phi)}}$ is almost non-increasing with the same constant $L \geq 1$ as in assumption {\upshape\hyperref[A2]{(A2)}}. In addition, $\hat{\Phi}(y,1) = 1$ for all $y \in B_{\rho_0}$. 
		Combining {\upshape\hyperref[A2]{(A2)}} and {\upshape\hyperref[A3]{(A3)}} with the fact that $Vt_0 \le 1$
		and $-1<i(\Phi) \leq s(\Phi)$, we obtain
		\begin{align}\label{18.2}
			\begin{split}
				\left\{\begin{array}{l}
					\displaystyle|\hat{H}(y,p)| \leq \frac{L\mathcal{C}}{\nu_0}t_0^{2-m+i(\Phi)}|p|^m,\\[10pt]
					\displaystyle\|\hat{c}\|_{L^\infty(B_{\rho_0})} \leq \frac{L}{\nu_0}t_0^{2+i(\Phi)} \|c\|_{L^\infty(B_{t_0\rho_0}(x_0))},\\[10pt]
					\displaystyle\|\hat{h}\|_{L^\infty(B_{\rho_0})} \leq \frac{L}{\nu_0}V^{1+i(\Phi)}t_0^{2+i(\Phi)}\|h\|_{L^\infty(B_{t_0\rho_0}(x_0))}.
				\end{array}
				\right.
			\end{split}
		\end{align} 
		For later use, we set
		\begin{align*}
			H_0:=\max\left\{1,\frac{L}{\nu_0}\right\}.
		\end{align*}
		In particular, if $\|h\|_{L^\infty(B_{t_0\rho_0}(x_0))}\leq 1$ then the scaling estimate \eqref{18.2} yields $\|\hat{h}\|_{L^\infty(B_{\rho_0})}\leq H_0$, for all $0<V\leq1$ and $0<t_0\leq1$. Thus, the rescaled equation remains in the same structural class, with quantitative bounds depending only on the original structural data.
		
		In particular, if $u$ is a viscosity solution of \eqref{eq:main}, then $\hat u$ is a viscosity solution of \eqref{scaling-form}.
	\end{lemma}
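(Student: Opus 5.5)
The plan is a direct change-of-variables argument at the level of test functions, followed by an elementary check of the structural constants. First I verify the viscosity inequalities. Let $\hat\varphi\in C^2$ touch $\hat u$ from below strictly at $y_0\in B_{\rho_0}$ with $\nabla\hat\varphi(y_0)\neq0$. Then $\varphi(x):=V^{-1}\hat\varphi\big((x-x_0)/t_0\big)$ touches $u$ from below strictly at $x_0+t_0y_0$. Its derivatives are
\begin{align*}
\nabla\varphi=\frac{1}{Vt_0}\nabla\hat\varphi,\qquad D^2\varphi=\frac{1}{Vt_0^2}D^2\hat\varphi,
\end{align*}
so $\nabla\varphi\neq0$. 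I then apply the supersolution inequality for $u$ and multiply it by the positive factor $Vt_0^2/\Phi\big(x_0+t_0y,\frac{1}{Vt_0}\big)$. The terms regroup exactly as $\hat\Phi\hat F-\hat H+\hat c\,\hat u^{i(\Phi)+1}\le\hat h$. For the zero-order term I use $u^{i(\Phi)+1}=V^{-i(\Phi)-1}\hat u^{i(\Phi)+1}$, which is where the factor $V^{-i(\Phi)}$ in $\hat c$ comes from. The constant branch of Definition \ref{Def:Viscosity_solution} transfers because $\hat u$ is locally constant exactly when $u$ is locally constant on the image under $T$, and the condition $h\ge c\,u^{i(\Phi)+1}$ rescales by the same positive factor. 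The subsolution case is identical with the inequalities reversed.

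Next I check the structure. The ellipticity $\hat F(M+N)-\hat F(M)\in[\lambda\,\mathrm{tr}N,\Lambda\,\mathrm{tr}N]$ follows from (A1) applied to $F$ and $N/(Vt_0^2)$, and $\hat F(0)=0$. The almost-monotonicity of $\hat\Phi(y,t)/t^{i(\Phi)}$ and $\hat\Phi(y,t)/t^{s(\Phi)}$ holds with the same $L$. The reason is that, for fixed $y$, the function $\hat\Phi(y,t)$ is $\Phi$ evaluated at $t/(Vt_0)$ divided by a constant, so the ratios only pick up the constant factor $(Vt_0)^{-i(\Phi)}$ or $(Vt_0)^{-s(\Phi)}$. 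Finally $\hat\Phi(y,1)=1$ holds by definition.

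For the bounds \eqref{18.2}, which are the only substantive step, I set $A:=1/(Vt_0)\ge1$. Applying (A2)(i) with $t=1\le\tau=A$ gives
\begin{align*}
\Phi(x,A)\ge L^{-1}A^{i(\Phi)}\Phi(x,1)\ge \frac{\nu_0}{L}A^{i(\Phi)}.
\end{align*}
Consequently:
\begin{itemize}
\item For the Hamiltonian, (A3) gives $|\hat H|\le \frac{L\mathcal C}{\nu_0}Vt_0^2A^{m-i(\Phi)}|p|^m$. Since $0<m\le i(\Phi)+1$, the exponent $m-i(\Phi)\le1$, so $A^{m-i(\Phi)}\le A$ when $m\ge i(\Phi)$. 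This yields the factor $V^{1-(m-i(\Phi))}t_0^{2-m+i(\Phi)}\le t_0^{2-m+i(\Phi)}$, because $V\le1$ and the exponent of $V$ is nonnegative. When $m<i(\Phi)$, the exponent of $V$ is larger still, and the bound again holds using $V\le1$.
\item For the source term, $\|\hat h\|\le\frac{L}{\nu_0}Vt_0^2A^{-i(\Phi)}\|h\|=\frac{L}{\nu_0}V^{1+i(\Phi)}t_0^{2+i(\Phi)}\|h\|$.
\item For the zero-order coefficient, $\|\hat c\|\le \frac{L}{\nu_0}t_0^2V^{-i(\Phi)}A^{-i(\Phi)}\|c\|=\frac{L}{\nu_0}t_0^{2+i(\Phi)}\|c\|$.
\end{itemize}
The consequence $\|\hat h\|\le H_0$ then follows from $V,t_0\le1$ and $1+i(\Phi),\,2+i(\Phi)>0$.

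The only delicate point is the case split on the sign of $m-i(\Phi)$ in the $\hat H$ estimate. It is resolved by $V\le1$ together with $A\ge1$, using the bound $m\le i(\Phi)+1$.
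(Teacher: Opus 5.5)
Your proposal is correct and follows essentially the same route as the paper: transfer test functions (and the constant branch) through $T$, multiply by the positive factor $Vt_0^2/\Phi\big(x_0+t_0y,\frac{1}{Vt_0}\big)$, and bound $\Phi\big(x,\frac{1}{Vt_0}\big)\ge \frac{\nu_0}{L}(Vt_0)^{-i(\Phi)}$ via (A2)(i) using $Vt_0\le 1$. The case split on the sign of $m-i(\Phi)$ is unnecessary, and the intermediate bound $A^{m-i(\Phi)}\le A$ is not what produces your factor: directly, $Vt_0^2(Vt_0)^{i(\Phi)-m}=V^{1+i(\Phi)-m}t_0^{2+i(\Phi)-m}$, and $1+i(\Phi)-m\ge 0$ always holds by (A3).
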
 
	To this end, we establish a collection of auxiliary results and quantitative bounds for an admissible class defined by assumptions {\upshape\hyperref[A1]{(A1)}--\hyperref[A4]{(A4)}}. Since this class encompasses all the original equations and the localized or rescaled equations arising in the argument, the corresponding auxiliary estimates hold uniformly across the entire class; see Figure \ref{fig:Admissible_class}.
	\begin{figure}[H]
		\centering
		\tikzset{every picture/.style={line width=0.75pt}}
		\resizebox{0.92\linewidth}{!}{\begin{tikzpicture}[x=0.75pt,y=0.75pt,yscale=-1,xscale=1]
				\draw  [fill={rgb, 255:red, 255; green, 255; blue, 255 }  ,fill opacity=1 ][dash pattern={on 4.5pt off 4.5pt}] (53.5,49) .. controls (53.5,30.5) and (68.5,15.5) .. (87,15.5) -- (313.34,15.5) .. controls (331.84,15.5) and (346.84,30.5) .. (346.84,49) -- (346.84,149.5) .. controls (346.84,168) and (331.84,183) .. (313.34,183) -- (87,183) .. controls (68.5,183) and (53.5,168) .. (53.5,149.5) -- cycle ;
				\draw  [fill={rgb, 255:red, 32; green, 130; blue, 242 }  ,fill opacity=0.31 ] (225.77,101.2) .. controls (225.77,82.39) and (241.02,67.14) .. (259.84,67.14) .. controls (278.65,67.14) and (293.9,82.39) .. (293.9,101.2) .. controls (293.9,120.02) and (278.65,135.27) .. (259.84,135.27) .. controls (241.02,135.27) and (225.77,120.02) .. (225.77,101.2) -- cycle ;
				\draw  [fill={rgb, 255:red, 245; green, 166; blue, 35 }  ,fill opacity=0.19 ] (178.26,118.56) .. controls (178.26,94.95) and (197.39,75.82) .. (221,75.82) .. controls (244.61,75.82) and (263.74,94.95) .. (263.74,118.56) .. controls (263.74,142.17) and (244.61,161.3) .. (221,161.3) .. controls (197.39,161.3) and (178.26,142.17) .. (178.26,118.56) -- cycle ;
				\draw  [fill={rgb, 255:red, 32; green, 130; blue, 242 }  ,fill opacity=0.31 ] (246.6,125.07) .. controls (246.6,98.35) and (268.26,76.69) .. (294.99,76.69) .. controls (321.71,76.69) and (343.37,98.35) .. (343.37,125.07) .. controls (343.37,151.79) and (321.71,173.45) .. (294.99,173.45) .. controls (268.26,173.45) and (246.6,151.79) .. (246.6,125.07) -- cycle ;
				\draw  [fill={rgb, 255:red, 32; green, 130; blue, 242 }  ,fill opacity=0.31 ] (56.75,128.65) .. controls (56.75,103.19) and (77.4,82.54) .. (102.86,82.54) .. controls (128.32,82.54) and (148.97,103.19) .. (148.97,128.65) .. controls (148.97,154.11) and (128.32,174.76) .. (102.86,174.76) .. controls (77.4,174.76) and (56.75,154.11) .. (56.75,128.65) -- cycle ;
				\draw  [fill={rgb, 255:red, 32; green, 130; blue, 242 }  ,fill opacity=0.31 ] (143.98,118.56) .. controls (143.98,97.29) and (161.22,80.05) .. (182.49,80.05) .. controls (203.76,80.05) and (221,97.29) .. (221,118.56) .. controls (221,139.83) and (203.76,157.07) .. (182.49,157.07) .. controls (161.22,157.07) and (143.98,139.83) .. (143.98,118.56) -- cycle ;
				\draw  [fill={rgb, 255:red, 32; green, 130; blue, 242 }  ,fill opacity=0.31 ] (166.11,82.22) .. controls (166.11,68.98) and (176.84,58.24) .. (190.08,58.24) .. controls (203.32,58.24) and (214.06,68.98) .. (214.06,82.22) .. controls (214.06,95.46) and (203.32,106.19) .. (190.08,106.19) .. controls (176.84,106.19) and (166.11,95.46) .. (166.11,82.22) -- cycle ;
				\draw  [fill={rgb, 255:red, 32; green, 130; blue, 242 }  ,fill opacity=0.31 ] (209.83,73.21) .. controls (209.83,59.97) and (220.56,49.24) .. (233.8,49.24) .. controls (247.04,49.24) and (257.78,59.97) .. (257.78,73.21) .. controls (257.78,86.45) and (247.04,97.19) .. (233.8,97.19) .. controls (220.56,97.19) and (209.83,86.45) .. (209.83,73.21) -- cycle ;
				\draw  [fill={rgb, 255:red, 32; green, 130; blue, 242 }  ,fill opacity=0.31 ] (71.51,79.34) .. controls (71.51,69.49) and (79.5,61.5) .. (89.35,61.5) .. controls (99.21,61.5) and (107.2,69.49) .. (107.2,79.34) .. controls (107.2,89.2) and (99.21,97.19) .. (89.35,97.19) .. controls (79.5,97.19) and (71.51,89.2) .. (71.51,79.34) -- cycle ;
				\draw  [fill={rgb, 255:red, 32; green, 130; blue, 242 }  ,fill opacity=0.31 ] (100.04,75) .. controls (100.04,52.24) and (118.5,33.78) .. (141.26,33.78) .. controls (164.03,33.78) and (182.49,52.24) .. (182.49,75) .. controls (182.49,97.77) and (164.03,116.23) .. (141.26,116.23) .. controls (118.5,116.23) and (100.04,97.77) .. (100.04,75) -- cycle ;
				\draw  [fill={rgb, 255:red, 32; green, 130; blue, 242 }  ,fill opacity=0.31 ] (134.38,156.15) .. controls (134.38,148.21) and (140.81,141.78) .. (148.75,141.78) .. controls (156.69,141.78) and (163.12,148.21) .. (163.12,156.15) .. controls (163.12,164.09) and (156.69,170.52) .. (148.75,170.52) .. controls (140.81,170.52) and (134.38,164.09) .. (134.38,156.15) -- cycle ;
				\draw  [fill={rgb, 255:red, 245; green, 166; blue, 35 }  ,fill opacity=0.2 ] (354,40.94) -- (363.55,40.94) -- (363.55,50.84) -- (354,50.84) -- cycle ; 
				\draw  [fill={rgb, 255:red, 32; green, 130; blue, 242 }  ,fill opacity=0.36 ] (354,72.77) -- (363.55,72.77) -- (363.55,82.67) -- (354,82.67) -- cycle ;
				
				\draw (195.75,20.89) node [anchor=north west][inner sep=0.75pt]   [align=left] {\textbf{Admissible Class}};
				\draw (368.15,39.57) node [anchor=north west][inner sep=0.75pt]  [font=\scriptsize] [align=left] {Original equation};
				\draw (368.55,59.77) node [anchor=north west][inner sep=0.75pt]  [font=\scriptsize] [align=left] {\begin{minipage}[lt]{63.85pt}\setlength\topsep{0pt}
						\begin{center}
							Localized/rescaled \\equations
						\end{center}
				\end{minipage}};
		\end{tikzpicture}}	
		\caption{Schematic illustration of Admissible class}
		\label{fig:Admissible_class}
	\end{figure}
	Next, we show a corollary of Vitali's covering lemma. This measure-theoretic proposition serves as an alternative to the standard Calderón-Zygmund decomposition \cite{CC1995} in deriving the $L^\varepsilon$ estimate presented in Theorem \ref{Theo:L-estimate}. It is often referred to as the growing ink-spots lemma which was first introduced by Krylov–Safonov \cite{KS1979}. As noted in \cite{KS1981}, the suggestive name growing (or crawling) ink-spots was originally attributed to E. M. Landis. For the convenience of the reader, we will only restate here and revisit the complete proofs in \cite[Lemma 2.1]{IS2016} and related references.
	\begin{lemma}[\bf Growing ink-spots lemma]
		\label{lem: Growing-ink-spots}
		Let $E \subset F \subset B_1$ be two open sets. Suppose that there exists $\delta\in (0,1)$ such that the following two assumptions hold
		\begin{itemize}
			\item If any ball $B \subset B_1$ satisfies $|B \cap E| > (1-\delta)|B|$, then $B \subset F$.
			\item $|E| \leq (1-\delta)|B_1|$
		\end{itemize}
		Then $|E| \leq (1-\theta\delta)|F|$ for some constant $\theta$ depending only on the dimension.
	\end{lemma}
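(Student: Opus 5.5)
The plan follows the Vitali-covering argument of \cite[Lemma 2.1]{IS2016}. It runs by contradiction through a covering of the open set $F$ by balls on which $E$ has density exactly $1-\delta$.

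\textbf{Step 1: cover $F$ by critical-density balls.} For every $x\in F$ (open) we look at balls $B\subset B_1$ containing $x$.
\begin{itemize}
\item Suppose first that $x\in E$. Since $E$ is open, small balls around $x$ lie inside $E$, so their density $|B\cap E|/|B|$ equals $1$.
\item Hence we may take a maximal ball: among balls $B\subset B_1$ containing $x$ with $|B\cap E|>(1-\delta)|B|$, we enlarge continuously, keeping $x$ in the ball.
\item The density function is continuous in the center and radius. It cannot stay above $1-\delta$ all the way up to $B_1$, because $|E|\le(1-\delta)|B_1|$.
\item We therefore reach a ball $B^x\subset B_1$ with $x\in\overline{B^x}$ and $|B^x\cap E|=(1-\delta)|B^x|$. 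By the first hypothesis, applied to slightly smaller balls and then passing to the limit, we get $B^x\subset F$.
\end{itemize}
For the whole argument it suffices to cover $E$ up to a null set by such balls. By Lebesgue density, almost every point of $E$ has density $1$, so the construction applies there.

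\textbf{Step 2: Vitali selection.} The radii are bounded by $1$, so the Vitali covering lemma extracts a disjoint subfamily $\{B_j\}$ with $E\subset\bigcup_j 5B_j$ up to measure zero. Here is the only delicate point: $5B_j$ need not lie in $B_1$. Since the $B_j$ are contained in $B_1$, however, $|5B_j\cap B_1|\ge c_n|5B_j|$ is not what we need. What we actually need is a bound on $|E\cap 5B_j|$ in terms of $|B_j|$, and for that the trivial estimate suffices:
\begin{align*}
|E|\le\sum_j|E\cap 5B_j|\le \sum_j|E\cap B_j|+\sum_j |5B_j\setminus B_j|.
\end{align*}
This is too crude, so instead we argue directly on $F\setminus E$. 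Each $B_j\subset F$ satisfies $|B_j\setminus E|=\delta|B_j|$. Disjointness then gives
\begin{align*}
|F\setminus E|\ge\sum_j\delta|B_j|\ge \delta 5^{-n}\sum_j|5B_j|\ge \delta5^{-n}|E|.
\end{align*}

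\textbf{Step 3: conclusion.} From $|F|=|E|+|F\setminus E|\ge(1+5^{-n}\delta)|E|$ we obtain
\begin{align*}
|E|\le (1+5^{-n}\delta)^{-1}|F|\le (1-\theta\delta)|F|,
\end{align*}
with $\theta=5^{-n}/2$, using $\delta<1$.

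The main obstacle is Step 1: producing, for almost every $x\in E$, a ball contained in $B_1$ with density exactly $1-\delta$ that still contains (or has in its closure) $x$. I would handle this by a continuity argument along a path of balls $B_{r}(z(r))$ interpolating between a tiny ball around $x$ and $B_1$ itself, for example $z(r)=(1-r)x$, which keeps $B_r(z(r))\subset B_1$ and $x\in B_r(z(r))$. Applying the intermediate value theorem to the density along this path yields the critical ball, and applying the first hypothesis to the balls just before the critical parameter gives containment in $F$.
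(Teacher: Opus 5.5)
Your proof is correct and is essentially the Vitali-covering argument of \cite[Lemma 2.1]{IS2016}; the paper gives no proof of its own and defers to exactly that reference, and your path of balls $B_r((1-r)x)$ is a clean way to make the ``maximal ball of density exactly $1-\delta$'' rigorous. The abandoned estimates at the start of Step 2 can be deleted, since $|F\setminus E|\ge\delta\sum_j|B_j|\ge 5^{-n}\delta|E|$ is all that is needed.
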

	\begin{figure}[h]
		\centering
		\tikzset{every picture/.style={line width=0.75pt}} 
		\resizebox{0.92\linewidth}{!}{\begin{tikzpicture}[x=0.75pt,y=0.75pt,yscale=-1,xscale=1]
				\draw  [color={rgb, 255:red, 0; green, 0; blue, 0 }  ,draw opacity=1 ][fill={rgb, 255:red, 255; green, 255; blue, 255 }  ,fill opacity=1 ][line width=1.5]  (85.97,137.5) .. controls (85.97,78.58) and (134.75,30.82) .. (194.91,30.82) .. controls (255.08,30.82) and (303.85,78.58) .. (303.85,137.5) .. controls (303.85,196.41) and (255.08,244.17) .. (194.91,244.17) .. controls (134.75,244.17) and (85.97,196.41) .. (85.97,137.5) -- cycle ;
				\draw  [color={rgb, 255:red, 0; green, 0; blue, 0 }  ,draw opacity=1 ][fill={rgb, 255:red, 255; green, 255; blue, 255 }  ,fill opacity=1 ][line width=0.75]  (203.24,73.5) .. controls (203.24,61.6) and (213.18,51.96) .. (225.44,51.96) .. controls (237.7,51.96) and (247.64,61.6) .. (247.64,73.5) .. controls (247.64,85.39) and (237.7,95.03) .. (225.44,95.03) .. controls (213.18,95.03) and (203.24,85.39) .. (203.24,73.5) -- cycle ;
				\draw  [color={rgb, 255:red, 0; green, 0; blue, 0 }  ,draw opacity=1 ][fill={rgb, 255:red, 255; green, 255; blue, 255 }  ,fill opacity=1 ][line width=0.75]  (96.24,140.96) .. controls (96.24,128.86) and (106.67,119.05) .. (119.53,119.05) .. controls (132.4,119.05) and (142.83,128.86) .. (142.83,140.96) .. controls (142.83,153.06) and (132.4,162.88) .. (119.53,162.88) .. controls (106.67,162.88) and (96.24,153.06) .. (96.24,140.96) -- cycle ;
				\draw  [color={rgb, 255:red, 0; green, 0; blue, 0 }  ,draw opacity=1 ][fill={rgb, 255:red, 255; green, 255; blue, 255 }  ,fill opacity=1 ][line width=0.75]  (197.74,185.53) .. controls (197.74,166.77) and (213.13,151.55) .. (232.12,151.55) .. controls (251.11,151.55) and (266.5,166.77) .. (266.5,185.53) .. controls (266.5,204.3) and (251.11,219.51) .. (232.12,219.51) .. controls (213.13,219.51) and (197.74,204.3) .. (197.74,185.53) -- cycle ;
				\draw [draw opacity=0][fill={rgb, 255:red, 32; green, 69; blue, 115 }  ,fill opacity=1 ][line width=0.75] [line join = round][line cap = round]   (175.91,146.35) .. controls (177.48,142.82) and (179.3,134.95) .. (185.41,134.49) .. controls (188.75,134.24) and (191.66,136.77) .. (194.91,137.5) .. controls (196.66,137.89) and (199.61,136.28) .. (200.29,137.83) .. controls (202.07,141.91) and (202.78,147.02) .. (200.83,151.03) .. controls (199.63,153.5) and (195.48,153.94) .. (192.58,153.7) .. controls (186.97,153.24) and (181.04,152.06) .. (176.45,149.03) .. controls (174.74,147.9) and (176.57,145.13) .. (176.63,143.18) ;
				\draw [draw opacity=0][fill={rgb, 255:red, 34; green, 76; blue, 125 }  ,fill opacity=1 ][line width=0.75] [line join = round][line cap = round]   (133.72,176.05) .. controls (130.6,173.14) and (127.4,182.61) .. (133.18,181.56) .. controls (145.64,179.3) and (133.45,177.22) .. (134.08,176.05) ;
				\draw [draw opacity=0][fill={rgb, 255:red, 46; green, 84; blue, 127 }  ,fill opacity=1 ][line width=0.75] [line join = round][line cap = round]   (143.91,213.17) .. controls (146.75,211.88) and (159.33,204.11) .. (159.86,214.84) .. controls (160.05,218.5) and (152.15,220.54) .. (149.83,220.02) .. controls (142.58,218.38) and (144.09,218.07) .. (143.91,213.17) -- cycle ;
				\draw [draw opacity=0][fill={rgb, 255:red, 36; green, 76; blue, 122 }  ,fill opacity=1 ][line width=0.75] [line join = round][line cap = round]   (266.34,126.85) .. controls (268.27,119.9) and (269.1,121.24) .. (274.58,123) .. controls (276.82,123.73) and (278.38,118.34) .. (280.5,119.33) .. controls (285.16,121.5) and (286.5,128.95) .. (285.16,133.03) .. controls (284,136.54) and (280.11,134.64) .. (277.81,135.37) .. controls (275.83,135.99) and (274.43,138.95) .. (272.43,138.37) .. controls (269.32,137.48) and (267.14,134.54) .. (265.62,131.86) .. controls (264.66,130.17) and (268.09,127.83) .. (266.34,126.85) -- cycle ;
				\draw [draw opacity=0][fill={rgb, 255:red, 41; green, 79; blue, 122 }  ,fill opacity=1 ][line width=0.75] [line join = round][line cap = round]   (180.95,98.98) .. controls (186.05,91.25) and (199.32,91.11) .. (195.11,101.32) .. controls (193.61,104.93) and (181.34,108.62) .. (180.95,98.98) -- cycle ;
				\draw [draw opacity=0][fill={rgb, 255:red, 43; green, 87; blue, 139 }  ,fill opacity=1 ][line width=0.75] [line join = round][line cap = round]   (113.02,95.88) .. controls (109.63,93.88) and (117.88,90.5) .. (120.02,91.2) .. controls (120.96,91.14) and (121.81,88.97) .. (122.35,89.7) .. controls (125.54,94) and (122.35,98.1) .. (118.94,100.72) .. controls (117.78,101.61) and (115.88,99.76) .. (114.46,100.22) .. controls (113.54,100.52) and (112.81,103.08) .. (112.49,102.23) .. controls (111.57,99.8) and (112.06,101.21) .. (113.02,95.88) -- cycle ;
				\draw [line width=1.5]    (302.5,125) -- (395.5,125) ;
				\draw [shift={(398.5,125)}, rotate = 180] [color={rgb, 255:red, 0; green, 0; blue, 0 }  ][line width=1.5]    (14.21,-4.28) .. controls (9.04,-1.82) and (4.3,-0.39) .. (0,0) .. controls (4.3,0.39) and (9.04,1.82) .. (14.21,4.28);
				\draw  [fill={rgb, 255:red, 44; green, 81; blue, 123 }  ,fill opacity=1 ] (219,258.11) -- (230,258.11) -- (230,269.51) -- (219,269.51) -- cycle ;
				\draw  [fill={rgb, 255:red, 32; green, 130; blue, 242 }  ,fill opacity=0.36 ] (338,258.11) -- (349,258.11) -- (349,269.51) -- (338,269.51) -- cycle ;
				\draw [draw opacity=0][fill={rgb, 255:red, 38; green, 86; blue, 140 }  ,fill opacity=1 ][line width=0.75] [line join = round][line cap = round]   (153.18,50.64) .. controls (152.01,52.54) and (148.5,56.49) .. (156.22,56.49) .. controls (161.55,56.49) and (157.29,50.26) .. (153.18,50.64) -- cycle ;
				\draw [draw opacity=0][fill={rgb, 255:red, 36; green, 76; blue, 122 }  ,fill opacity=1 ][line width=0.75] [line join = round][line cap = round]   (170.86,79.31) .. controls (169.07,74.05) and (184.34,71.47) .. (186.46,72.46) .. controls (191.12,74.63) and (192.47,82.08) .. (191.12,86.16) .. controls (189.96,89.67) and (186.07,87.77) .. (183.77,88.49) .. controls (181.79,89.12) and (180.39,92.08) .. (178.39,91.5) .. controls (175.28,90.61) and (173.1,87.67) .. (171.58,84.99) .. controls (170.63,83.3) and (170.5,76.63) .. (170.86,79.31) -- cycle ;
				\draw [draw opacity=0][fill={rgb, 255:red, 34; green, 76; blue, 125 }  ,fill opacity=1 ][line width=0.75] [line join = round][line cap = round]   (226.72,124.05) .. controls (223.6,121.14) and (220.4,130.61) .. (226.18,129.56) .. controls (238.64,127.3) and (229.27,124.88) .. (226.72,124.05) -- cycle ;
				\draw [draw opacity=0][fill={rgb, 255:red, 32; green, 73; blue, 118 }  ,fill opacity=1 ][line width=0.75] [line join = round][line cap = round]   (105.33,139.84) .. controls (102.08,134.96) and (108.03,131.6) .. (111.38,129.2) .. controls (111.81,128.9) and (111.78,130.45) .. (112.3,130.48) .. controls (113.99,130.59) and (115.13,128) .. (116.15,127.37) .. controls (117.37,126.6) and (118.27,129.39) .. (118.54,129.38) .. controls (125.57,129.17) and (126.16,127.03) .. (132.47,132.14) .. controls (132.86,132.45) and (133.59,132.14) .. (133.94,132.5) .. controls (135.19,133.8) and (135.76,135.66) .. (137.06,136.9) .. controls (137.34,137.17) and (138.22,136.15) .. (138.16,136.54) .. controls (137.34,141.85) and (142,150) .. (131.19,154.14) .. controls (130.68,154.52) and (130.87,152.3) .. (130.27,152.49) .. controls (127.14,153.49) and (124.8,156.19) .. (121.84,157.63) .. controls (117.63,159.66) and (112.7,153.31) .. (110.47,151.57) .. controls (107.57,149.32) and (95.64,151.97) .. (105.88,140.02) ;
				\draw [draw opacity=0][fill={rgb, 255:red, 37; green, 77; blue, 123 }  ,fill opacity=1 ][line width=0.75] [line join = round][line cap = round]   (210.04,74.55) .. controls (209.66,62.09) and (221.16,54.31) .. (233.7,59.52) .. controls (238.81,61.64) and (236.51,66.11) .. (239.2,69.78) .. controls (241.13,72.41) and (245.09,69.95) .. (244.15,74.74) .. controls (240.42,93.82) and (207.41,95.36) .. (210.04,73.64) ;
				\draw [draw opacity=0][fill={rgb, 255:red, 33; green, 71; blue, 116 }  ,fill opacity=1 ][line width=0.75] [line join = round][line cap = round]   (206.32,173.13) .. controls (207.76,163.77) and (209,169.78) .. (216.22,161.03) .. controls (220.93,155.32) and (233.21,163.02) .. (235.4,162.58) .. controls (236.98,162.26) and (237.7,160.26) .. (239.11,159.49) .. controls (244.72,156.4) and (254.95,165.9) .. (257.37,169.7) .. controls (264.83,181.43) and (256.61,187.18) .. (259.53,196.92) .. controls (259.6,197.15) and (259.45,197.54) .. (259.22,197.54) .. controls (256.32,197.54) and (253.46,201.02) .. (250.56,203.11) .. controls (247.19,205.55) and (243.78,208.36) .. (239.73,209.3) .. controls (239.09,209.45) and (238.79,207.81) .. (238.18,208.06) .. controls (236.63,208.7) and (236.15,211.51) .. (234.47,211.47) .. controls (231.8,211.39) and (229.98,208.29) .. (227.35,207.75) .. controls (224.71,207.21) and (221.75,211.88) .. (218.69,208.68) .. controls (214.84,204.67) and (213.59,196.61) .. (210.34,193.21) .. controls (208.51,191.3) and (205.93,190.21) .. (204.15,188.26) .. controls (200.71,184.49) and (211.39,168.13) .. (206.32,173.13) -- cycle ;
				\draw [draw opacity=0][fill={rgb, 255:red, 38; green, 86; blue, 140 }  ,fill opacity=1 ][line width=0.75] [line join = round][line cap = round]   (267.18,89.64) .. controls (266.01,91.54) and (262.5,95.49) .. (270.22,95.49) .. controls (275.55,95.49) and (271.29,89.26) .. (267.18,89.64) -- cycle ;
				\draw [draw opacity=0][fill={rgb, 255:red, 33; green, 72; blue, 116 }  ,fill opacity=1 ][line width=0.75] [line join = round][line cap = round]   (224.76,138.13) .. controls (225.73,132.64) and (225.57,129.71) .. (237.98,132.35) .. controls (240.14,132.81) and (239.88,136.87) .. (239.01,138.9) .. controls (235.88,146.19) and (221.52,146.43) .. (224.76,138.13) -- cycle ;
				\draw [draw opacity=0][fill={rgb, 255:red, 42; green, 86; blue, 137 }  ,fill opacity=1 ][line width=0.75] [line join = round][line cap = round]   (135.19,93.35) .. controls (131.02,93.35) and (129.84,100.18) .. (130.71,103.15) .. controls (130.95,103.96) and (132.55,102.92) .. (133.23,103.43) .. controls (133.72,103.8) and (136.81,107.94) .. (137.99,107.35) .. controls (148.28,102.2) and (139.1,91.39) .. (135.19,93.35) -- cycle ;
				\draw  [color={rgb, 255:red, 0; green, 0; blue, 0 }  ,draw opacity=1 ][fill={rgb, 255:red, 255; green, 255; blue, 255 }  ,fill opacity=1 ][line width=1.5]  (398.97,137.5) .. controls (398.97,78.58) and (447.75,30.82) .. (507.91,30.82) .. controls (568.08,30.82) and (616.85,78.58) .. (616.85,137.5) .. controls (616.85,196.41) and (568.08,244.17) .. (507.91,244.17) .. controls (447.75,244.17) and (398.97,196.41) .. (398.97,137.5) -- cycle ;
				\draw  [color={rgb, 255:red, 0; green, 0; blue, 0 }  ,draw opacity=1 ][fill={rgb, 255:red, 255; green, 255; blue, 255 }  ,fill opacity=1 ][line width=0.75]  (516.24,73.5) .. controls (516.24,61.6) and (526.18,51.96) .. (538.44,51.96) .. controls (550.7,51.96) and (560.64,61.6) .. (560.64,73.5) .. controls (560.64,85.39) and (550.7,95.03) .. (538.44,95.03) .. controls (526.18,95.03) and (516.24,85.39) .. (516.24,73.5) -- cycle ;
				\draw  [color={rgb, 255:red, 0; green, 0; blue, 0 }  ,draw opacity=1 ][fill={rgb, 255:red, 255; green, 255; blue, 255 }  ,fill opacity=1 ][line width=0.75]  (409.24,140.96) .. controls (409.24,128.86) and (419.67,119.05) .. (432.53,119.05) .. controls (445.4,119.05) and (455.83,128.86) .. (455.83,140.96) .. controls (455.83,153.06) and (445.4,162.88) .. (432.53,162.88) .. controls (419.67,162.88) and (409.24,153.06) .. (409.24,140.96) -- cycle ;
				\draw  [color={rgb, 255:red, 0; green, 0; blue, 0 }  ,draw opacity=1 ][fill={rgb, 255:red, 255; green, 255; blue, 255 }  ,fill opacity=1 ][line width=0.75]  (510.74,185.53) .. controls (510.74,166.77) and (526.13,151.55) .. (545.12,151.55) .. controls (564.11,151.55) and (579.5,166.77) .. (579.5,185.53) .. controls (579.5,204.3) and (564.11,219.51) .. (545.12,219.51) .. controls (526.13,219.51) and (510.74,204.3) .. (510.74,185.53) -- cycle ;
				\draw [draw opacity=0][fill={rgb, 255:red, 32; green, 69; blue, 115 }  ,fill opacity=1 ][line width=0.75] [line join = round][line cap = round]   (488.91,146.35) .. controls (490.48,142.82) and (492.3,134.95) .. (498.41,134.49) .. controls (501.75,134.24) and (504.66,136.77) .. (507.91,137.5) .. controls (509.66,137.89) and (512.61,136.28) .. (513.29,137.83) .. controls (515.07,141.91) and (515.78,147.02) .. (513.83,151.03) .. controls (512.63,153.5) and (508.48,153.94) .. (505.58,153.7) .. controls (499.97,153.24) and (494.04,152.06) .. (489.45,149.03) .. controls (487.74,147.9) and (489.57,145.13) .. (489.63,143.18) ;
				\draw [draw opacity=0][fill={rgb, 255:red, 34; green, 76; blue, 125 }  ,fill opacity=1 ][line width=0.75] [line join = round][line cap = round]   (446.72,176.05) .. controls (443.6,173.14) and (440.4,182.61) .. (446.18,181.56) .. controls (458.64,179.3) and (446.45,177.22) .. (447.08,176.05) ;
				\draw [draw opacity=0][fill={rgb, 255:red, 46; green, 84; blue, 127 }  ,fill opacity=1 ][line width=0.75] [line join = round][line cap = round]   (456.91,213.17) .. controls (459.75,211.88) and (472.33,204.11) .. (472.86,214.84) .. controls (473.05,218.5) and (465.15,220.54) .. (462.83,220.02) .. controls (455.58,218.38) and (457.09,218.07) .. (456.91,213.17) -- cycle ;
				\draw [draw opacity=0][fill={rgb, 255:red, 36; green, 76; blue, 122 }  ,fill opacity=1 ][line width=0.75] [line join = round][line cap = round]   (579.34,126.85) .. controls (581.27,119.9) and (582.1,121.24) .. (587.58,123) .. controls (589.82,123.73) and (591.38,118.34) .. (593.5,119.33) .. controls (598.16,121.5) and (599.5,128.95) .. (598.16,133.03) .. controls (597,136.54) and (593.11,134.64) .. (590.81,135.37) .. controls (588.83,135.99) and (587.43,138.95) .. (585.43,138.37) .. controls (582.32,137.48) and (580.14,134.54) .. (578.62,131.86) .. controls (577.66,130.17) and (581.09,127.83) .. (579.34,126.85) -- cycle ;
				\draw [draw opacity=0][fill={rgb, 255:red, 41; green, 79; blue, 122 }  ,fill opacity=1 ][line width=0.75] [line join = round][line cap = round]   (493.95,98.98) .. controls (499.05,91.25) and (512.32,91.11) .. (508.11,101.32) .. controls (506.61,104.93) and (494.34,108.62) .. (493.95,98.98) -- cycle ;
				\draw [draw opacity=0][fill={rgb, 255:red, 43; green, 87; blue, 139 }  ,fill opacity=1 ][line width=0.75] [line join = round][line cap = round]   (426.02,95.88) .. controls (422.63,93.88) and (430.88,90.5) .. (433.02,91.2) .. controls (433.96,91.14) and (434.81,88.97) .. (435.35,89.7) .. controls (438.54,94) and (435.35,98.1) .. (431.94,100.72) .. controls (430.78,101.61) and (428.88,99.76) .. (427.46,100.22) .. controls (426.54,100.52) and (425.81,103.08) .. (425.49,102.23) .. controls (424.57,99.8) and (425.06,101.21) .. (426.02,95.88) -- cycle ;
				\draw [draw opacity=0][fill={rgb, 255:red, 38; green, 86; blue, 140 }  ,fill opacity=1 ][line width=0.75] [line join = round][line cap = round]   (466.18,50.64) .. controls (465.01,52.54) and (461.5,56.49) .. (469.22,56.49) .. controls (474.55,56.49) and (470.29,50.26) .. (466.18,50.64) -- cycle ;
				\draw [draw opacity=0][fill={rgb, 255:red, 36; green, 76; blue, 122 }  ,fill opacity=1 ][line width=0.75] [line join = round][line cap = round]   (483.86,79.31) .. controls (482.07,74.05) and (497.34,71.47) .. (499.46,72.46) .. controls (504.12,74.63) and (505.47,82.08) .. (504.12,86.16) .. controls (502.96,89.67) and (499.07,87.77) .. (496.77,88.49) .. controls (494.79,89.12) and (493.39,92.08) .. (491.39,91.5) .. controls (488.28,90.61) and (486.1,87.67) .. (484.58,84.99) .. controls (483.63,83.3) and (483.5,76.63) .. (483.86,79.31) -- cycle ;
				\draw [draw opacity=0][fill={rgb, 255:red, 34; green, 76; blue, 125 }  ,fill opacity=1 ][line width=0.75] [line join = round][line cap = round]   (539.72,124.05) .. controls (536.6,121.14) and (533.4,130.61) .. (539.18,129.56) .. controls (551.64,127.3) and (542.27,124.88) .. (539.72,124.05) -- cycle ;
				\draw [draw opacity=0][fill={rgb, 255:red, 32; green, 73; blue, 118 }  ,fill opacity=1 ][line width=0.75] [line join = round][line cap = round]   (418.33,139.84) .. controls (415.08,134.96) and (421.03,131.6) .. (424.38,129.2) .. controls (424.81,128.9) and (424.78,130.45) .. (425.3,130.48) .. controls (426.99,130.59) and (428.13,128) .. (429.15,127.37) .. controls (430.37,126.6) and (431.27,129.39) .. (431.54,129.38) .. controls (438.57,129.17) and (439.16,127.03) .. (445.47,132.14) .. controls (445.86,132.45) and (446.59,132.14) .. (446.94,132.5) .. controls (448.19,133.8) and (448.76,135.66) .. (450.06,136.9) .. controls (450.34,137.17) and (451.22,136.15) .. (451.16,136.54) .. controls (450.34,141.85) and (455,150) .. (444.19,154.14) .. controls (443.68,154.52) and (443.87,152.3) .. (443.27,152.49) .. controls (440.14,153.49) and (437.8,156.19) .. (434.84,157.63) .. controls (430.63,159.66) and (425.7,153.31) .. (423.47,151.57) .. controls (420.57,149.32) and (408.64,151.97) .. (418.88,140.02) ;
				\draw [draw opacity=0][fill={rgb, 255:red, 37; green, 77; blue, 123 }  ,fill opacity=1 ][line width=0.75] [line join = round][line cap = round]   (523.04,74.55) .. controls (522.66,62.09) and (534.16,54.31) .. (546.7,59.52) .. controls (551.81,61.64) and (549.51,66.11) .. (552.2,69.78) .. controls (554.13,72.41) and (558.09,69.95) .. (557.15,74.74) .. controls (553.42,93.82) and (520.41,95.36) .. (523.04,73.64) ;
				\draw [draw opacity=0][fill={rgb, 255:red, 33; green, 71; blue, 116 }  ,fill opacity=1 ][line width=0.75] [line join = round][line cap = round]   (519.32,173.13) .. controls (520.76,163.77) and (522,169.78) .. (529.22,161.03) .. controls (533.93,155.32) and (546.21,163.02) .. (548.4,162.58) .. controls (549.98,162.26) and (550.7,160.26) .. (552.11,159.49) .. controls (557.72,156.4) and (567.95,165.9) .. (570.37,169.7) .. controls (577.83,181.43) and (569.61,187.18) .. (572.53,196.92) .. controls (572.6,197.15) and (572.45,197.54) .. (572.22,197.54) .. controls (569.32,197.54) and (566.46,201.02) .. (563.56,203.11) .. controls (560.19,205.55) and (556.78,208.36) .. (552.73,209.3) .. controls (552.09,209.45) and (551.79,207.81) .. (551.18,208.06) .. controls (549.63,208.7) and (549.15,211.51) .. (547.47,211.47) .. controls (544.8,211.39) and (542.98,208.29) .. (540.35,207.75) .. controls (537.71,207.21) and (534.75,211.88) .. (531.69,208.68) .. controls (527.84,204.67) and (526.59,196.61) .. (523.34,193.21) .. controls (521.51,191.3) and (518.93,190.21) .. (517.15,188.26) .. controls (513.71,184.49) and (524.39,168.13) .. (519.32,173.13) -- cycle ;
				\draw [draw opacity=0][fill={rgb, 255:red, 38; green, 86; blue, 140 }  ,fill opacity=1 ][line width=0.75] [line join = round][line cap = round]   (580.18,89.64) .. controls (579.01,91.54) and (575.5,95.49) .. (583.22,95.49) .. controls (588.55,95.49) and (584.29,89.26) .. (580.18,89.64) -- cycle ;
				\draw [draw opacity=0][fill={rgb, 255:red, 33; green, 72; blue, 116 }  ,fill opacity=1 ][line width=0.75] [line join = round][line cap = round]   (537.76,138.13) .. controls (538.73,132.64) and (538.57,129.71) .. (550.98,132.35) .. controls (553.14,132.81) and (552.88,136.87) .. (552.01,138.9) .. controls (548.88,146.19) and (534.52,146.43) .. (537.76,138.13) -- cycle ;
				\draw [draw opacity=0][fill={rgb, 255:red, 42; green, 86; blue, 137 }  ,fill opacity=1 ][line width=0.75] [line join = round][line cap = round]   (448.19,93.35) .. controls (444.02,93.35) and (442.84,100.18) .. (443.71,103.15) .. controls (443.95,103.96) and (445.55,102.92) .. (446.23,103.43) .. controls (446.72,103.8) and (449.81,107.94) .. (450.99,107.35) .. controls (461.28,102.2) and (452.1,91.39) .. (448.19,93.35) -- cycle ;
				\draw [draw opacity=0][fill={rgb, 255:red, 32; green, 130; blue, 242 }  ,fill opacity=0.32 ][line width=0.75] [line join = round][line cap = round]   (433.94,82.22) .. controls (427.11,83.7) and (421.23,90.7) .. (418.07,95.82) .. controls (413.98,102.45) and (410.4,114.51) .. (408.5,126.81) .. controls (406.63,138.96) and (406.42,151.34) .. (409.01,158.96) .. controls (413.89,173.34) and (420.72,190.13) .. (440.09,187.13) .. controls (468.46,182.73) and (467.06,176.28) .. (466.32,143.42) .. controls (466.13,135.2) and (465.57,127.15) .. (464.5,119) .. controls (463.98,115.03) and (464.37,106.54) .. (462.5,103) .. controls (458.91,96.2) and (458.5,93) .. (452.5,89) ;
				\draw [draw opacity=0][fill={rgb, 255:red, 32; green, 130; blue, 242 }  ,fill opacity=0.32 ][line width=0.75] [line join = round][line cap = round]   (535.89,40.82) .. controls (528.67,31.8) and (513.78,39.5) .. (506.1,45.03) .. controls (487.06,58.74) and (470.56,75.1) .. (476.31,100.4) .. controls (476.62,101.75) and (478.89,111.81) .. (482.14,114) .. controls (499.03,125.41) and (514.87,111.12) .. (532.33,105.26) .. controls (542.24,101.93) and (560.55,99.81) .. (565.68,88.74) .. controls (570.15,79.09) and (572.8,54.42) .. (560.5,49.24) .. controls (551.9,45.63) and (543.01,42.77) .. (534.27,39.53) ;
				\draw [draw opacity=0][fill={rgb, 255:red, 32; green, 130; blue, 242 }  ,fill opacity=0.3 ][line width=0.75] [line join = round][line cap = round]   (486.93,168.28) .. controls (481.61,159.78) and (477.4,157.37) .. (478.18,148.2) .. controls (478.59,143.46) and (486.66,135.27) .. (488.22,134.28) .. controls (504.45,124.06) and (548.6,106.93) .. (565.93,106.11) .. controls (577.03,105.59) and (594.79,102.16) .. (598.64,112.59) .. controls (604.53,128.59) and (604.98,145.72) .. (594.1,159.54) .. controls (592.81,161.18) and (589.92,161.26) .. (588.92,163.1) .. controls (583.58,172.92) and (585.53,192.62) .. (581.8,201.95) .. controls (571.78,227.01) and (534.74,232.96) .. (513.15,223) .. controls (500.81,217.3) and (501.82,204.61) .. (497.94,194.18) .. controls (494.54,185.07) and (490.38,176.27) .. (486.6,167.31) ;
				
				\draw (202.62,43.51) node [anchor=north west][inner sep=0.75pt]  [font=\footnotesize] [align=left] {$\displaystyle B$};
				\draw (120.94,103.72) node [anchor=north west][inner sep=0.75pt]  [font=\footnotesize] [align=left] {$\displaystyle B$};
				\draw (241.97,137.2) node [anchor=north west][inner sep=0.75pt]  [font=\footnotesize] [align=left] {$\displaystyle B$};
				\draw (234,256.51) node [anchor=north west][inner sep=0.75pt]   [align=left] {E (ink spots)};
				\draw (353,256.51) node [anchor=north west][inner sep=0.75pt]   [align=left] {F (grown open set)};
				\draw (314,81.51) node [anchor=north west][inner sep=0.5pt]  [font=\normalsize] [align=left] {\begin{minipage}[lt]{50.06pt}\setlength\topsep{0pt}
						\begin{center}
							\textbf{Growing }\\\textbf{ink-spots}
						\end{center}
				\end{minipage}};
				\draw (515.62,43.51) node [anchor=north west][inner sep=0.75pt]  [font=\footnotesize] [align=left] {$\displaystyle B$};
				\draw (433.94,103.72) node [anchor=north west][inner sep=0.75pt]  [font=\footnotesize] [align=left] {$\displaystyle B$};
				\draw (554.97,137.2) node [anchor=north west][inner sep=0.75pt]  [font=\footnotesize] [align=left] {$\displaystyle B$};
		\end{tikzpicture}}
		\caption{Schematic illustration of the growing ink-spots lemma}
	\end{figure}
	Next, we turn to Lemma \ref{Measure_est}, which is central to the proof of the $L^\varepsilon$ estimate. To prove this lemma, we first obtain the measure estimate for semiconcave supersolutions under the slightly larger bound $\|h\|_{L^\infty(B_1)}\leq H_0$. We then remove the semiconcavity assumption via inf-convolution.
	\begin{proposition}[\bf A measure estimate for semiconcave supersolutions]
		\label{pro_2:measure-est}
		There exist a small constant $\delta>0$ and a large constant $M_1>0$ such that, if $u\in\mathrm{LSC}(B_1)$ is a non-negative semiconcave viscosity supersolution in $B_1$ of an equation of the form \eqref{eq:main}, whose coefficients satisfy assumptions {\upshape\hyperref[A1]{(A1)}--\hyperref[A4]{(A4)}}, with $c\leq0$ and $\|h\|_{L^\infty(B_1)}\leq H_0$, and
		\begin{align*}
			|\left\{u > M_1\right\} \cap B_1| > (1-\delta)|B_1|,
		\end{align*}
		then we have 
		\begin{align*}
			u > 1 \mbox{ in } B_{1/4}.
		\end{align*}
	\end{proposition}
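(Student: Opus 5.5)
The plan is the sliding-cusp argument of \cite{IS2016}, run in contrapositive form. Suppose $u(x_0)\le 1$ for some $x_0\in B_{1/4}$. We show that $\{u\le M_1\}\cap B_1$ has measure at least $\delta|B_1|$ for a universal $\delta$, which contradicts the hypothesis. For each vertex $y\in \overline{B_{1/4}}$ consider the cusp $\varphi_y(x)=-A|x-y|^{1/2}$ with $A\ge1$ large. Slide it from below: let $x=x(y)$ be a minimum point of $u(x)+A|x-y|^{1/2}$ over $\overline{B_1}$.

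\textbf{Step 1: contact points are interior, lie in $\{u\le M_1\}$, and have large gradient.} Testing the minimum at $x_0$ gives
\[
\min\le u(x_0)+A|x_0-y|^{1/2}\le 1+A\,2^{-1/2}.
\]
On $\partial B_1$ we have $|x-y|\ge 3/4$, so there $u+A|x-y|^{1/2}\ge A(3/4)^{1/2}$. Since $(3/4)^{1/2}>2^{-1/2}$, choosing $A$ large makes the minimum interior. Setting $M_1:=1+A$, every contact point satisfies $u(x)\le M_1$.

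If $x=y$, the contact point already lies in $\{u\le M_1\}$; this vertex set is harmless and is simply counted.

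If $x\neq y$, then $u$ is semiconcave and is touched from below at $x$ by the smooth function $\varphi_y+\mathrm{const}$. Hence $u$ is differentiable at $x$, with
\[
\nabla u(x)=\tfrac{A}{2}|x-y|^{-1/2}\tfrac{x-y}{|x-y|},\qquad y=x-\Big(\tfrac{A}{2}\Big)^{2}\frac{\nabla u(x)}{|\nabla u(x)|^{3}}.
\]
Since $|x-y|\le 2$, we get $|\nabla u(x)|\ge A/(2\sqrt2)\ge1$. The gradient therefore stays away from zero, so $\varphi_y$ is an admissible test function in Definition \ref{Def:Viscosity_solution}.

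\textbf{Step 2: Hessian bounds at contact points.} By Alexandrov's theorem, $u$ is twice differentiable a.e. At contact points we have $D^2u(x)\ge D^2\varphi_y(x)$. The eigenvalues of $D^2\varphi_y$ are $\tfrac A4|x-y|^{-3/2}$ in the radial direction and $-\tfrac A2|x-y|^{-3/2}$ in the tangential directions. Semiconcavity gives $D^2u\le C$.

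The supersolution inequality, together with $c\le0$, $u\ge0$, $\|h\|_\infty\le H_0$, (A3) and (A2), gives
\[
\Phi(x,|\nabla u|)F(D^2u)\le H_0+\mathcal C|\nabla u|^m .
\]
Dividing by $\Phi\ge \frac{\nu_0}{L}\min\{|\nabla u|^{i(\Phi)},|\nabla u|^{s(\Phi)}\}$, and using $|\nabla u|\ge1$ and $m\le i(\Phi)+1$, we obtain $\mathcal P^-(D^2u)\le C(1+|\nabla u|)$. Since $|\nabla u|\sim A|x-y|^{-1/2}$, this is dominated by $A|x-y|^{-3/2}$. Combined with the lower bound $D^2u\ge D^2\varphi_y$, it yields $\|D^2u(x)\|\le CA|x-y|^{-3/2}$.

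\textbf{Step 3: area formula and the main obstacle.} Differentiating the explicit map $x\mapsto y(x)$ gives
\[
Dy=I-\Big(\tfrac A2\Big)^2 D\big(\nabla u|\nabla u|^{-3}\big).
\]
Using the relation between $\nabla u$ and $|x-y|$ together with the Hessian bounds, this gives $|\det Dy|\le C$ universal on the contact set $K$ where $x\neq y$. The ingredients are that $D^2u-D^2\varphi_y\ge0$ and that it is bounded by $CA|x-y|^{-3/2}$.

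This Jacobian estimate is the main obstacle. The map is only Lipschitz on $K$ (via semiconcavity), and the factors of $A$ and $|x-y|$ must cancel exactly, so that the constant does not degenerate as $|x-y|\to0$ or as $A$ grows.

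Given the Jacobian bound, the area formula gives
\[
|B_{1/4}|\le |K_0|+C|K|,
\]
where $K_0$ is the set of vertex contacts. Since $K_0\cup K\subset\{u\le M_1\}\cap B_1$, it follows that $|\{u\le M_1\}\cap B_1|\ge \delta|B_1|$ with $\delta:=|B_{1/4}|/((1+C)|B_1|)$. This contradicts the hypothesis, so $u>1$ in $B_{1/4}$.
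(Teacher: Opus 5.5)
Your argument is essentially the paper's proof: the same sliding cusp $-A|\cdot|^{1/2}$, the same contact map with the $|x-y|^{3/2}$ versus $|x-y|^{-3/2}$ cancellation in its Jacobian, and the same area formula. The only real variation is that you take every vertex in $B_{1/4}$ and count vertex contacts separately, whereas the paper restricts vertices to $\{u>M_1\}\cap B_{1/4}$; this forces $x\neq y$ and gives the separation $|x-y|\ge\varepsilon$ used for the Lipschitz property of the contact map, so in your version that separation must instead come from the local Lipschitz bound of the semiconcave $u$ via $|\nabla u(x)|=\frac A2|x-y|^{-1/2}$. There is one small slip: with $c\le0$ and $u\ge0$ the term $-c\,u^{i(\Phi)+1}\ge0$ does not drop out of the upper bound on $\Phi F(D^2u)$, but it is bounded by $\|c\|_{L^\infty}M_1^{i(\Phi)+1}$ because $u\le M_1$ at contact points, exactly as in the paper (also, $\nabla u(x)$ points toward the vertex, so your gradient formula needs a minus sign).
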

	\begin{remark}
		Notably, the constant $M_1$ provided by the proposition above is absolute, being independent of $\lambda,\Lambda,\nu_0,L,i(\Phi),s(\Phi),\mathcal{C}$, $\|c\|_{L^\infty(B_1)}$ or dimension $n$. In contrast, the choice of the parameter $\delta$ is sensitive to these factors.
	\end{remark}
	\begin{proof}
		In order to organize the proof, we highlight the main steps in bold.
		
		We assume that there exists a semiconcave function $u$ satisfying
		\begin{align*}
			u \geq 0 \mbox{ in }B_1.
		\end{align*}
		For the sake of contradiction, we suppose further that
		\begin{align}
			\label{pro_2:measure-est-assumption}
			\exists x_0 \in B_{1/4}: u(x_0) \leq 1 \mbox{ and }|\left\{u > M_1\right\} \cap B_1| > (1-\delta)|B_1|,
		\end{align}
		We first consider the semiconcavity assumption, which will be crucial for our subsequent analysis. More precisely, we assume that $u$ satisfies the semiconcavity bound $D^2u \leq C_0I$, where $I$ denotes the identity matrix, in the sense that $u(x) - C_0|x|^2/2$ is a concave function. Consequently, for any $x_0 \in B_1$, there exists a vector $p \in \mathbb{R}^n$ in the superdifferential of $u$ at $x_0$ (with $p = \nabla u(x_0)$ whenever $u$ is differentiable at $x_0$) such that
		\begin{align}
			\label{pro_2:measure-est-eq1}
			u(x) \leq u(x_0) + \langle p, x - x_0\rangle + \frac{C_0}{2}|x - x_0|^2,
		\end{align}
		for all $x \in B_1$. Next, define $w(x):= C_0|x|^2/2 - u(x)$ then $w$ is convex. Therefore, by Aleksandrov's theorem, $w$ is twice differentiable almost everywhere in $B_1$, and hence so is $u$. This means that there exists a set $E \subset B_1$ of measure zero such that at every point $x \in B_1 \setminus E$, the function $u$ is differentiable, we have
		\begin{align*}
			u(y) = {}&u(x) + \langle y - x,\nabla u(x)\rangle\\
			&+ \frac{1}{2}\langle D^2u(x)(y - x), (y - x)\rangle + o(|x - y|^2) \mbox{ as } y \to x.
		\end{align*}
		Moreover, according to \cite{HU1983}, we also have
		\begin{align*}
			\nabla u(y) = \nabla u(x) + D^2u(x)(y - x) + o(|x - y|),
		\end{align*}
		where by $\nabla u(y)$ stands for a vector in the superdifferential of $u$ at $y$.
		
		\textbf{Step 1. Constructing lower barriers for $u$ in the form of cusps.} We consider the open set $U = \{u > M_1\} \cap B_{1/4}$. From assumption \eqref{pro_2:measure-est-assumption}, we have $|U| > |B_{1/4}| - \delta|B_1|$, which is a large measure for $\delta$ small. For instance, we can let $|U| \geq |B_{1/8}|$, which is a constant depending only on $n$. For every $x\in U$, define
		\begin{align*}
			q(x):=\inf_{z\in B_1}\left\{u(z)-\varphi(z-x)\right\},
		\end{align*}
		where $\varphi(z) = -10|z|^{1/2}$. Recalling assumption \eqref{pro_2:measure-est-assumption}, there exists a point $x_0\in B_{1/4}$ such that $u(x_0)\leq 1$. Since $x,x_0\in B_{1/4}$, we have $|x_0-x|\leq 1/2$. Hence, by the definition of $\varphi$, we obtain 
		\begin{align}
			\label{pro_2:measure-est-eq2}
			q(x) \leq u(x_0) - \varphi(x_0-x) \leq 1 - \varphi(x_0-x) \leq  1 + 5\sqrt{2}.
		\end{align}
		Next, we can choose $\rho\in(0,1)$ sufficiently close to $1$ so that $10\sqrt{\rho-\frac14}>1+5\sqrt{2}$. For every $z\in B_1\setminus B_\rho$, using $u\geq0$ in $B_1$ and $x\in B_{1/4}$, we obtain
		\begin{align*}
			u(z)-\varphi(z-x) =u(z)+10|z-x|^{1/2}\geq 10\left(|z|-|x|\right)^{1/2}>1+5\sqrt{2}.
		\end{align*}
		Combining this estimate with \eqref{pro_2:measure-est-eq2}, we see that the infimum defining $q(x)$ cannot be approached in $B_1\setminus B_\rho$. Furthermore, since $u\in\mathrm{LSC}(B_1)$ and $z\mapsto\varphi(z-x)$ is continuous, the function $z\mapsto u(z)-\varphi(z-x)$ is lower semicontinuous on $B_1$. Its restriction to the compact set $\overline{B_\rho}\Subset B_1$ therefore attains its minimum, i.e., $q(x) =\displaystyle\min_{z\in\overline{B_\rho}}\left\{u(z)-\varphi(z-x)\right\}$. Consequently, there exists $y\in\overline{B_\rho}$; since $\overline{B_\rho}\Subset B_1$, one has $y\in B_1$ such that
		\begin{align}
			\label{pro_2:measure-est-eq3}
			u(y) - \varphi(y-x) = \displaystyle\min_{z\in B_1}\left\{u(z) - \varphi(z-x)\right\} = q(x).
		\end{align}
		Equivalently, we obtain 
		\begin{align}
			\label{pro_2:measure-est-eq4}
			u(y) = \varphi(y-x) + q(x), \mbox{ and } u(z) \geq \varphi(z-x) +q(x),\text{ for all } z \in B_1.
		\end{align}
		Moreover, since $\varphi\leq0$, we obtain
		%
		\begin{align*}
			u(y) = \varphi(y-x) + q(x) \leq 1 + 5\sqrt{2}.
		\end{align*}
		Now, we choose $M_1 = 2 + 5\sqrt{2}$, leading to $u(y) \leq M_1 - 1< M_1$. Since $x\in U$ and hence $u(x)>M_1$, it follows that $y \notin U$ and $y \neq x$. Therefore, the function $z\mapsto \varphi(z-x)+q(x)$ is smooth in a neighborhood of $y$ and touches $u$ from below at $y$. In particular, as $z \to y$,
		\begin{align*}
			u(z) \geq u(y) + \langle\nabla\varphi(y-x), z-y\rangle + o(|z-y|).
		\end{align*}
		As $u$ is a semiconcave function, we obtain \eqref{pro_2:measure-est-eq1} at point $x_0=y$. Hence, we have
		\begin{align}\label{21.1}
			\langle \nabla \varphi(y-x), z-y\rangle + o(|z-y|) \leq \langle p,z-y\rangle + \frac{C_0}{2}|z - y|^2.
		\end{align}
		Putting $z = y + th$ and $z = y - th$ in \eqref{21.1}, respectively, and letting $t \to 0$, we deduce
		\begin{align*}
			\nabla u(y) = \nabla \varphi(y-x).
		\end{align*}
		A further analysis of the second derivatives of $u$ at $y$ is postponed until later in the proof.
		
		\textbf{Step 2. Definition of the contact set $\mathcal{F}$}. Let us denote $\mathcal{F}$ the collection of contact points $y\in \overline{B_\rho}\Subset B_1$ corresponding to all $x \in U$. Equivalently, for every $y \in \mathcal{F}$, there exists $x_y \in U$ satisfying \eqref{pro_2:measure-est-eq3}. 
		In view of the aforementioned bound $u(y) \leq M_1-1$ for all $y \in \mathcal{F}$, it follows that
		\begin{align*}
			\mathcal{F} \subset B_1 \cap \{u \leq M_1 -1\}.
		\end{align*}
		The geometric construction in Steps 1 and 2 is illustrated in Figure \ref{fig:sliding_cusp}
		\begin{figure}[H]
			\centering
			\begin{subfigure}{0.39\textwidth}
				\centering
				\tikzset{every picture/.style={line width=0.75pt}}
				\resizebox{0.92\linewidth}{!}{\begin{tikzpicture}[x=0.75pt,y=0.75pt,yscale=-1,xscale=1]
						\draw   (202,145.75) .. controls (202,74.64) and (259.64,17) .. (330.75,17) .. controls (401.86,17) and (459.5,74.64) .. (459.5,145.75) .. controls (459.5,216.86) and (401.86,274.5) .. (330.75,274.5) .. controls (259.64,274.5) and (202,216.86) .. (202,145.75) -- cycle ;
						\draw   (246,115.25) .. controls (246,78.66) and (275.66,49) .. (312.25,49) .. controls (348.84,49) and (378.5,78.66) .. (378.5,115.25) .. controls (378.5,151.84) and (348.84,181.5) .. (312.25,181.5) .. controls (275.66,181.5) and (246,151.84) .. (246,115.25) -- cycle ;
						\draw [color={rgb, 255:red, 32; green, 130; blue, 242 }  ,draw opacity=1 ][fill={rgb, 255:red, 32; green, 130; blue, 242 }  ,fill opacity=0.34 ][line width=0.75] [line join = round][line cap = round]   (265.41,84.02) .. controls (267.5,79) and (271.23,74.15) .. (276.34,70.7) .. controls (283.36,65.95) and (292.59,63.79) .. (300.93,63.18) .. controls (305.66,62.84) and (310.67,69.93) .. (312.89,71.04) .. controls (319.1,74.15) and (326.78,70.54) .. (333.72,70.7) .. controls (338.71,70.81) and (339.46,78.71) .. (340.9,80.95) .. controls (344.22,86.11) and (346.93,79.61) .. (351.83,81.97) .. controls (360.84,86.31) and (368.45,98.66) .. (365.83,108.96) .. controls (362.19,123.28) and (354.65,122.59) .. (344.65,128.09) .. controls (338.2,131.63) and (333.18,140.58) .. (325.87,139.7) .. controls (323.57,139.42) and (320.91,135.83) .. (318.01,137.31) .. controls (317.88,137.38) and (304.52,148.41) .. (298.88,146.53) .. controls (290.51,143.74) and (281.84,141.03) .. (274.63,135.94) .. controls (256.15,122.9) and (253.5,103) .. (265.41,84.02) -- cycle ;
						\draw [color={rgb, 255:red, 0; green, 0; blue, 0 }  ,draw opacity=1 ][fill={rgb, 255:red, 155; green, 155; blue, 155 }  ,fill opacity=0.28 ][line width=0.75] [line join = round][line cap = round]   (263.53,207.33) .. controls (268.03,205.13) and (272.67,203.07) .. (277.53,201.87) .. controls (290.43,198.69) and (303.41,207.85) .. (316.13,204.26) .. controls (323.44,202.2) and (329.62,197.29) .. (336.29,193.67) .. controls (340.4,191.44) and (345.65,194.9) .. (350.29,194.35) .. controls (366.01,192.5) and (387.65,179.07) .. (404.61,176.25) .. controls (414.3,174.63) and (427.49,186.04) .. (426.47,195.72) .. controls (425.73,202.76) and (424.77,209.9) .. (422.37,216.56) .. controls (420.97,220.42) and (412.67,221.46) .. (410.41,223.05) .. controls (403.88,227.67) and (398.45,233.73) .. (391.97,238.42) .. controls (389.55,240.17) and (386.53,238.42) .. (384.11,238.42) .. controls (380.95,238.42) and (379.13,242.33) .. (376.6,244.23) .. controls (370.66,248.68) and (363.46,246.05) .. (356.78,249.69) .. controls (352.87,251.82) and (349.59,255.09) .. (345.51,256.87) .. controls (339.45,259.51) and (332.31,257.77) .. (325.7,257.89) .. controls (322.59,257.95) and (319.55,259.71) .. (316.48,259.26) .. controls (308.85,258.13) and (305.64,251.42) .. (298.71,250.03) .. controls (284.64,247.22) and (274.27,247.4) .. (265.58,233.3) .. controls (263.85,230.5) and (259.58,211.28) .. (263.53,207.33) -- cycle ;
						\draw [color={rgb, 255:red, 32; green, 130; blue, 242 }  ,draw opacity=1 ]   (312.25,115.25) -- (371.88,207.98) ;
						\draw [shift={(373.5,210.5)}, rotate = 237.26] [fill={rgb, 255:red, 32; green, 130; blue, 242 }  ,fill opacity=1 ][line width=0.08]  [draw opacity=0] (8.93,-4.29) -- (0,0) -- (8.93,4.29) -- cycle    ;
						\draw [shift={(312.25,115.25)}, rotate = 57.26] [color={rgb, 255:red, 32; green, 130; blue, 242 }  ,draw opacity=1 ][fill={rgb, 255:red, 32; green, 130; blue, 242 }  ,fill opacity=1 ][line width=0.75]      (0, 0) circle [x radius= 3.35, y radius= 3.35]   ;
						\draw [color={rgb, 255:red, 208; green, 2; blue, 27 }  ,draw opacity=1 ]   (377.5,215.5) ;
						\draw [shift={(377.5,215.5)}, rotate = 0] [color={rgb, 255:red, 208; green, 2; blue, 27 }  ,draw opacity=1 ][fill={rgb, 255:red, 208; green, 2; blue, 27 }  ,fill opacity=1 ][line width=0.75]      (0, 0) circle [x radius= 3.35, y radius= 3.35]   ;
						\draw [shift={(377.5,215.5)}, rotate = 0] [color={rgb, 255:red, 208; green, 2; blue, 27 }  ,draw opacity=1 ][fill={rgb, 255:red, 208; green, 2; blue, 27 }  ,fill opacity=1 ][line width=0.75]      (0, 0) circle [x radius= 3.35, y radius= 3.35]   ;
						
						\draw (265.41,93.02) node [anchor=north west][inner sep=0.75pt]  [font=\fontsize{4.5pt}{1pt}\selectfont,color={rgb, 255:red, 0; green, 0; blue, 0 }  ,opacity=1 ] [align=left] {$\displaystyle U\ =\ \{u\  >\ M_{1}\} \ \cap \ B_{1/4}$};
						\draw (271.53,210.33) node [anchor=north west][inner sep=0.75pt]  [font=\scriptsize] [align=left] {\begin{minipage}[lt]{43.59pt}\setlength\topsep{0pt}
								\begin{center}
									$\displaystyle \mathcal{F}$\\[-3pt]
									{\fontsize{4.5pt}{2pt}\selectfont(Contact set)}
								\end{center}
						\end{minipage}};
						\draw (338,223) node [anchor=north west][inner sep=0.75pt]  [font=\fontsize{4.5pt}{2pt}\selectfont] [align=left] {$\displaystyle u( y) \ \leq \ M_{1} \ -1$};
						\draw (398.2,18.71) node [anchor=north west][inner sep=0.75pt]  [font=\footnotesize] [align=left] {$\displaystyle B_{1}$};
						\draw (352.2,44.71) node [anchor=north west][inner sep=0.75pt]  [font=\footnotesize] [align=left] {$\displaystyle B_{1/4}$};
						\draw (340.11,139.94) node [anchor=north west][inner sep=0.75pt]  [font=\tiny,rotate=-58.66] [align=left] {$\displaystyle y-x$};
						\draw (316.93,105) node [anchor=north west][inner sep=0.75pt]  [font=\small,color={rgb, 255:red, 32; green, 130; blue, 242 }  ,opacity=1 ] [align=left] {$\displaystyle x$};
						\draw (382,205) node [anchor=north west][inner sep=0.75pt]  [font=\small,color={rgb, 255:red, 208; green, 2; blue, 27 }  ,opacity=1 ] [align=left] {$\displaystyle y$};
				\end{tikzpicture}}
				\caption{Geometric picture}
			\end{subfigure}
			\hfill
			\begin{subfigure}{0.6\textwidth}
				\centering
				\tikzset{every picture/.style={line width=0.75pt}}
				\resizebox{0.92\linewidth}{!}{\begin{tikzpicture}[x=0.75pt,y=0.75pt,yscale=-1,xscale=1]
						\draw    (278.5,209.82) -- (558.02,209.82) ;
						\draw [shift={(561.02,209.82)}, rotate = 180] [fill={rgb, 255:red, 0; green, 0; blue, 0 }  ][line width=0.08]  [draw opacity=0] (8.93,-4.29) -- (0,0) -- (8.93,4.29) -- cycle;
						\draw [color={rgb, 255:red, 32; green, 130; blue, 242 }  ,draw opacity=1 ]   (285.07,182.37) .. controls (350.77,169.71) and (397.59,99.87) .. (434.87,90.9) .. controls (472.16,81.93) and (509.78,193.63) .. (547.88,186.6) ;
						\draw    (287.7,122.56) .. controls (320.55,116.23) and (346.89,38.23) .. (370.48,58.53) .. controls (394.08,78.82) and (416.23,95.38) .. (434.87,90.9) .. controls (453.52,86.41) and (503.21,20.53) .. (521.6,61.34) .. controls (540,102.16) and (533.43,83.86) .. (541.31,100.75) ;
						\draw [color={rgb, 255:red, 208; green, 2; blue, 27 }  ,draw opacity=1 ] [dash pattern={on 3.75pt off 3pt on 7.5pt off 3pt}]  (434.87,90.9) -- (434.87,209.82) ;
						\draw [shift={(434.87,90.9)}, rotate = 90] [color={rgb, 255:red, 208; green, 2; blue, 27 }  ,draw opacity=1 ][fill={rgb, 255:red, 208; green, 2; blue, 27 }  ,fill opacity=1 ][line width=0.75]      (0, 0) circle [x radius= 3.35, y radius= 3.35]   ;
						\draw [color={rgb, 255:red, 0; green, 0; blue, 0 }  ,draw opacity=1 ] [dash pattern={on 4.5pt off 4.5pt}]  (373.11,134.52) -- (373.11,211.93) ;
						\draw [color={rgb, 255:red, 0; green, 0; blue, 0 }  ,draw opacity=1 ]   (373.11,211.93) -- (430.02,101.12) ;
						\draw [shift={(430.93,99.34)}, rotate = 117.18] [color={rgb, 255:red, 0; green, 0; blue, 0 }  ,draw opacity=1 ][line width=0.75]    (10.93,-3.29) .. controls (6.95,-1.4) and (3.31,-0.3) .. (0,0) .. controls (3.31,0.3) and (6.95,1.4) .. (10.93,3.29)   ;
						\draw [shift={(373.11,211.93)}, rotate = 297.18] [color={rgb, 255:red, 0; green, 0; blue, 0 }  ,draw opacity=1 ][fill={rgb, 255:red, 0; green, 0; blue, 0 }  ,fill opacity=1 ][line width=0.75]      (0, 0) circle [x radius= 3.35, y radius= 3.35]   ;
						
						\draw (554.21,222.93) node [anchor=north west][inner sep=0.75pt]  [font=\small] [align=left] {$\displaystyle z$};
						\draw (368.93,222.93) node [anchor=north west][inner sep=0.75pt]  [font=\small] [align=left] {$\displaystyle x$};
						\draw (433.32,62.49) node [anchor=north west][inner sep=0.75pt]  [font=\small,color={rgb, 255:red, 208; green, 2; blue, 27 }  ,opacity=1 ] [align=left] {$\displaystyle y$};
						\draw (545.01,93.45) node [anchor=north west][inner sep=0.75pt]  [font=\small,color={rgb, 255:red, 0; green, 0; blue, 0 }  ,opacity=1 ] [align=left] {$\displaystyle u$};
						\draw (526.78,151.54) node [anchor=north west][inner sep=0.75pt]  [font=\scriptsize,color={rgb, 255:red, 32; green, 130; blue, 242 }  ,opacity=1 ] [align=left] {$\displaystyle \varphi ( z-x) \ +\ q( x)$};
				\end{tikzpicture}}
				\caption{Touching picture}
			\end{subfigure}
			\caption{Sliding-Cusp Contact Mechanism in the Measure Estimate}
			\label{fig:sliding_cusp}
		\end{figure}
		
		\textbf{Step 3. Lipschitz continuity of $\nabla u$ on $\mathcal{F}$.} 
		Since $u$ is semiconcave, it is locally Lipschitz in $B_1$. Moreover, by the boundary separation obtained in \textbf{Step 1}, all contact points remain in a compact subset of $B_1$. Hence $u$ admits a modulus of continuity on the region under consideration. Since $u(x)>M_1$ for $x\in U$ and $u(y)\leq M_1-1$ for $y\in\mathcal F$, there exists $\varepsilon>0$ depending on the modulus of continuity of $u$ such that $|y - x| > \varepsilon$. By decreasing $\varepsilon$ if necessary, we may further assume that $\varepsilon\leq 1-\rho$. Since $\mathcal F\subset\overline{B_\rho}$, it follows that $\mathrm{dist}(y,\partial B_1)\geq1-\rho\geq\varepsilon$ for every $y\in\mathcal F$. Given that $\varphi$ is singular at the origin, we only evaluate $\varphi(y - x)$ outside a neighborhood of this singularity; this is precisely the significance of the constant $\varepsilon$. In this region, $\varphi$ is of class $C^2$ and its Hessian matrix
		\begin{align*}
			D^2 \varphi(y-x) = -5|y-x|^{-3/2}I + \frac{15}{2}|y-x|^{-7/2}\left[(y-x) \otimes(y-x)\right],
		\end{align*}
		satisfies the bound $|D^2 \varphi| \leq C\varepsilon^{-3/2}$. Let $(x_1,y_1)$ and $(x_2,y_2)$ be two pairs of points for which \eqref{pro_2:measure-est-eq3} holds, and set $r := 2|y_1 - y_2|$.
		We distinguish two cases.\\
		\textbf{Case 1.} If $|y_1-y_2|\geq \varepsilon/4$, then
		\begin{align*}
			|\nabla u(y_i)|=|\nabla\varphi(y_i-x_i)|=5|y_i-x_i|^{-1/2}\le 5\varepsilon^{-1/2}, \mbox{ for }i=1,2.
		\end{align*}
		Hence
		\begin{align*}
			|\nabla u(y_1)-\nabla u(y_2)| \leq |\nabla u(y_1)|+|\nabla u(y_2)| \leq 10\varepsilon^{-1/2} \leq C\varepsilon^{-3/2}r.
		\end{align*}
		Therefore, the desired Lipschitz estimate follows in this case.\\
		\textbf{Case 2.} Assume now that $|y_1-y_2|<\varepsilon/4$. Then $r<\varepsilon/2$. Since $y_1\in\mathcal F\subset\overline{B_\rho}$ and $r<\varepsilon/2\leq(1-\rho)/2$, we have $B_r(y_1)\subset B_1$. Hence, for every $z\in B_r(y_1)$, we have
		\begin{align*}
			|z-x_1|\geq |y_1-x_1|-|z-y_1|>\varepsilon-r>\varepsilon/2.
		\end{align*}
		Hence $\varphi(\cdot-x_1)$ is of class $C^2$ in $B_r(y_1)$, and $\|D^2\varphi(\cdot-x_1)\|_{L^\infty(B_r(y_1))}\le C\varepsilon^{-3/2}$.\\
		Specifically, the second-order Taylor expansion of $\varphi(\cdot - x_1)$ around $y_1$ yields
		\begin{align*}
			\varphi(z-x_1) &= \varphi(y_1-x_1) + \langle \nabla \varphi(y_1-x_1),z-y_1\rangle\\ 
			&\qquad+ \frac{1}{2} \langle D^2\varphi(\xi)(z-y_1), (z-y_1)\rangle\\
			&\geq \varphi(y_1-x_1) + \langle \nabla \varphi(y_1-x_1),z-y_1\rangle - C\varepsilon^{-3/2}r^2,
		\end{align*}
		where $\xi$ lies on the line segment connecting $y_1 - x_1$ and $z - x_1$. Moreover, from \eqref{pro_2:measure-est-eq4}, we obtain
		\begin{align}
			\label{pro_2:measure-est-eq5}
			\begin{split}
				u(z) &\geq \varphi(z-x_1) + q(x_1)\\ &\geq \varphi(y_1-x_1) + q(x_1)+ \langle \nabla \varphi(y_1-x_1),z-y_1\rangle - C\varepsilon^{-3/2}r^2\\
				&= u(y_1) + \langle\nabla u(y_1),z-y_1\rangle - C\varepsilon^{-3/2}r^2.
			\end{split}
		\end{align}
		By choosing $z = y_2$ in the Taylor expansion and applying similar arguments as \eqref{pro_2:measure-est-eq5}
		, we obtain
		\begin{align*}
			u(y_2) \geq u(y_1) + \langle\nabla u(y_1),y_2 - y_1\rangle - C\varepsilon^{-3/2}r^2.
		\end{align*}
		A symmetric reasoning applies to $y_2$, resulting in the following inequality
		\begin{align}
			\label{pro_2:measure-est-eq6}
			u(y_1) \geq u(y_2) + \langle\nabla u(y_2),y_1 - y_2\rangle - C\varepsilon^{-3/2}r^2.
		\end{align}
		Substituting \eqref{pro_2:measure-est-eq6} into \eqref{pro_2:measure-est-eq5}, we deduce
		\begin{align}
			\label{pro_2:measure-est-eq7}
			u(z) &\geq u(y_2) + \langle \nabla u(y_2),y_1 - y_2\rangle + \langle\nabla u(y_1),z - y_1\rangle - C\varepsilon^{-3/2}r^2.
		\end{align}
		Furthermore, from \eqref{pro_2:measure-est-eq1}, we also have
		\begin{align}
			\label{pro_2:measure-est-eq8}
			u(z) \leq u(y_2) + \langle\nabla u(y_2) ,z-y_2\rangle  + Cr^2.
		\end{align}
		From \eqref{pro_2:measure-est-eq7} and \eqref{pro_2:measure-est-eq8}, we collect
		\begin{align*}
			u(y_2)  + \langle \nabla u(y_2),y_1 - y_2\rangle &+ \langle\nabla u(y_1),z - y_1\rangle - C\varepsilon^{-3/2}r^2\\ 
			&\leq u(y_2) + \langle\nabla u(y_2) ,z-y_2\rangle + Cr^2.
		\end{align*}
		As a consequence, we arrive at
		\begin{align}\label{21.2}
			\langle \nabla u(y_1)- \nabla u(y_2),z - y_1\rangle \leq C\left(\varepsilon^{-3/2} + 1\right)r^2.
		\end{align}
		Since the inequality \eqref{21.2} is valid for any $z\in B_r(y_1)$, it yields 
		\begin{align*}
			\left|\nabla u(y_1)- \nabla u(y_2)\right| \leq C\left(\varepsilon^{-3/2} + 1\right)r.
		\end{align*}
		This establishes the Lipschitz continuity of $\nabla u$ on $\mathcal{F}$. Notably, the resulting Lipschitz seminorm $[\nabla u]_{C^{0,1}(\mathcal{F})}$ is determined by $\varepsilon$, and 
		thus the modulus of continuity of $u$. It is important to remark that this constant is not universal.
		
		\textbf{Step 4. The map $m: \mathcal{F} \to U$.} Following the preceding arguments, $u$ is necessarily differentiable at each point $y \in \mathcal{F}$, where its gradient coincides with that of the test function, i.e., $\nabla u(y) = \nabla \varphi(y-x)$. Note that the relationship $\nabla \varphi(y - x) = -5|y - x|^{-3/2}(y - x)$ uniquely determines the vector difference $y - x$. Consequently, for each point $y \in \mathcal{F}$, there exists a unique $x \in U$ satisfying the contact condition \eqref{pro_2:measure-est-eq3}. Let us define $m:~\mathcal{F} \to U$ as the function that maps $y$ into $x$. The implicit relationship 
		\begin{align}
			\label{pro_2:measure-est-eq9}
			\nabla u(y) = \nabla \varphi(y-m(y)),
		\end{align} 
		allows us to derive an explicit expression for the contact map. By utilizing the injectivity of $\nabla \varphi$, we can solve for $m(y)$ as follows:
		\begin{align*}
			m(y) = y - \left(\nabla \varphi\right)^{-1}\nabla u(y),
		\end{align*}
		where $(\nabla \varphi)^{-1}$ denotes the inverse of the gradient mapping $\nabla \varphi: \mathbb{R}^n\setminus \{0\} \to \mathbb{R}^n\setminus \{0\}$. This formulation confirms that $m$ is a well-defined function on the contact set $\mathcal{F}$.\\
		Having established the Lipschitz continuity of $\nabla u$ on $\mathcal{F}$, we observe that the inverse map $(\nabla\varphi)^{-1}$, which maps $\nabla\varphi(y-x)$ to $y-x$. As previously noted, $|y-x|\geq\varepsilon$ for every contact pair, while $x\in B_{1/4}$ and $y\in B_1$ imply $|y-x|\leq 5/4$. Hence the set of vectors $\nabla\varphi(y-x)$ arising from the contact pairs is contained in a compact subset of $\mathbb{R}^n\setminus\{0\}$. Consequently, $(\nabla\varphi)^{-1}$ is smooth and Lipschitz on the relevant range, with a Lipschitz constant depending on $\varepsilon$. Together with the Lipschitz continuity of $\nabla u$ on $\mathcal{F}$, this implies that $m$ is Lipschitz on $\mathcal{F}$.\\
		%
		Moreover, the contact set $\mathcal F$ is analytic, being the projection of the Borel set of contact pairs associated with \eqref{pro_2:measure-est-eq3}; in particular, it is Lebesgue measurable. By Rademacher's theorem, the map $m$ is differentiable almost everywhere in $\mathcal{F}$. By extending $m$ to a Lipschitz map on $\mathbb{R}^n$ if necessary, the area formula gives
		\begin{align}
			\label{pro_2:measure-est-eq10}
			\int_{\mathcal{F}}Jm(y)dy = \int_{\mathbb{R}^n}\mathcal{H}^0\left(\mathcal{F}\cap m^{-1}(\{x\})\right)d\mathcal{H}^n(x),
		\end{align}
		where $Jm(y)=\left|\det Dm(y)\right|$. Since $m(\mathcal{F}) = U$, it follows that 
		\begin{align*}
			\mathcal{H}^0(\mathcal{F} \cap m^{-1}(\{x\})) \geq \mathbf{1}_U(x) \text{ for all } x \in \mathbb{R}^n. 
		\end{align*}
		Moreover, as $m$ maps between subsets of $\mathbb{R}^n$, the $n$-dimensional Hausdorff measure $\mathcal{H}^n$ coincides with the Lebesgue measure. Therefore, we deduce
		\begin{align}
			\label{pro_2:measure-est-eq11}
			\int_{\mathbb{R}^n}\mathcal{H}^0\left(\mathcal{F}\cap m^{-1}\left(\{x\}\right)\right)d\mathcal{H}^n(x) \geq \int_{\mathbb{R}^n}\mathbf{1}_U(x)d\mathcal{H}^n(x) = \mathcal{H}^n(U) = |U|.
		\end{align}
		Combining \eqref{pro_2:measure-est-eq10} and \eqref{pro_2:measure-est-eq11}, we deduce
		\begin{align}
			\label{pro_2:measure-est-eq12}
			|U|\leq\int_{\mathcal{F}}\left|\det Dm(y)\right|dy.
		\end{align}
		
		\textbf{Step 5. A structural estimate for $Dm(y)$.} While the preceding estimate for $\left|Dm(y)\right|$ was contingent upon $\varepsilon$, this was primarily a technical requirement to validate the formulation in \eqref{pro_2:measure-est-eq12}. In this step, we establish a more robust bound for $|Dm(y)|$, depending only on $n,\lambda,\Lambda,\nu_0,L,i(\Phi),s(\Phi),$ $\mathcal{C}$, and $\|c\|_{L^\infty(B_1)}$.\\
		As initially established, $u$ is pointwise twice differentiable almost everywhere, except on a set $E$ of measure zero. Consequently, for any $y \in \mathcal{F} \setminus E$, there exists a symmetric matrix $D^2u(y) \in \mathbb{S}^{n\times n}$ such that $\left(\nabla u(y), D^2u(y)\right) \in \overline{J}^{2,-}u(y)$. Hence, by the definition of viscosity supersolution 
		\begin{align}
			\label{pro_2:measure-est-eq13}
			\Phi(y,|\nabla u(y)|)F(D^2u(y)) \leq H(y,\nabla u(y)) - c(y)(u(y))^{i(\Phi)+1} + h(y).
		\end{align}
		Furthermore, from assumptions {\upshape\hyperref[A1]{(A1)}}--{\upshape\hyperref[A2]{(A2)}}, we have
		\begin{align}
			\label{pro_2:measure-est-eq14}
			\begin{split}
				&\mathcal{P}^-_{\lambda,\Lambda}(D^2u(y)) = \lambda\mathrm{tr}\left((D^2u(y))^+\right) - \Lambda\mathrm{tr}\left((D^2u(y))^-\right) \leq F(D^2u(y)),\\
				&\frac{\nu_0}{L}\min\left\{|\nabla u(y)|^{i(\Phi)},|\nabla u(y)|^{s(\Phi)}\right\} \leq \Phi(y,|\nabla u(y)|).
			\end{split}
		\end{align}
		By coupling \eqref{pro_2:measure-est-eq13} with \eqref{pro_2:measure-est-eq14}, we deduce
		\begin{align}\label{21.3}
			\begin{split}
				\lambda\mathrm{tr}\left((D^2u(y))^+\right) &- \Lambda\mathrm{tr}\left((D^2u(y))^-\right)\\&\leq \frac{L\left[|H(y,\nabla u(y))| - c(y)(u(y))^{i(\Phi)+1} + h^+(y)\right]}{\nu_0\min\left\{|\nabla u(y)|^{i(\Phi)},|\nabla u(y)|^{s(\Phi)}\right\}}.
			\end{split}
		\end{align}
		Moreover, since the cusp is defined as $\varphi = -10|z|^{1/2}$, then a straightforward calculation reveals that $|\nabla u(y)| = |\nabla \varphi(y-x)| = 5|y-x|^{-1/2} \geq 2\sqrt{5} > 1$ for all $x \in B_{1/4}$ and $y \in B_{1}$. 
		Consequently, it deduces from \eqref{21.3} and assumptions {\upshape\hyperref[A1]{(A1)}--\hyperref[A3]{(A3)}} that
		\begin{align*}
			\lambda\mathrm{tr}\left((D^2u(y))^+\right) &- \Lambda\mathrm{tr}\left((D^2u(y))^-\right)\\
			&\leq \frac{L\left[\mathcal{C}|\nabla u(y)|^{m} + c^-(y)(u(y))^{i(\Phi)+1}+h^+(y)\right]}{\nu_0|\nabla u(y)|^{i(\Phi)}}.
		\end{align*}
		Since $m \in (0,i(\Phi)+1]$, we get $|\nabla u(y)|^{m} \leq |\nabla u(y)|^{i(\Phi)+1}$. Plus, within the contact set $\mathcal{F}$, we recall from \textbf{Step 2} that $u(y) \leq M_1 - 1$. Thus, we arrive at
		\begin{align*}
			&\lambda\mathrm{tr}\left((D^2u(y))^+\right) - \Lambda\mathrm{tr}\left((D^2u(y))^-\right)\\
			&\quad\leq \frac{L}{\nu_0}\left[\mathcal{C}|\nabla u(y)| + \frac{\|c^-\|_{L^\infty(B_1)}\left(M_1-1\right)^{i(\Phi)+1}}{|\nabla u(y)|^{i(\Phi)}} + \frac{\|h\|_{L^\infty(B_1)}}{|\nabla u(y)|^{i(\Phi)}}\right].
		\end{align*}
		Since $|\nabla u(y)|=5|y-x|^{-1/2}$, it follows that 
		$|\nabla u|^{-i(\Phi)} \leq C\left(1+|y-x|^{-1/2}\right)$ where $C>0$ depends only on $n,\lambda,\Lambda,\nu_0,L,i(\Phi),s(\Phi),\mathcal{C}$, and $\|c\|_{L^\infty(B_1)}$. Consequently, we can write
		\begin{align}
			\label{pro_2:measure-est-eq16}
			\lambda\mathrm{tr}\left((D^2u(y))^+\right) - \Lambda\mathrm{tr}\left((D^2u(y))^-\right) \leq C\left(1 + |y-x|^{-1/2}\right).
		\end{align}
		Furthermore, \eqref{pro_2:measure-est-eq4} implies $D^2u(y) \geq D^2\varphi(y-x)$. Hence, we obtain
		\begin{align*}
			\mathrm{tr}\left((D^2u(y))^-\right)\leq \mathrm{tr}\left((D^2\varphi(y-x))^-\right).
		\end{align*}
		Combining this with \eqref{pro_2:measure-est-eq16}, we obtain
		\begin{align*}
			|D^2u(y)|&\leq C\left[1 + |y-x|^{-1/2}+\mathrm{tr}\left(\left(D^2\varphi(y-x)\right)^-\right)\right] \\
			&\leq C\left[1 + |y-x|^{-3/2}\right].
		\end{align*}
		Finally, at almost every point $y\in\mathcal{F}\setminus E$, differentiating \eqref{pro_2:measure-est-eq9} yields
		\begin{align*}
			D^2u(y) = D^2\varphi(y-m(y))(I-Dm(y)),
		\end{align*}
		and consequently,
		\begin{align*}
			Dm(y) = D^2\varphi(y-m(y))^{-1}\left(D^2\varphi(y-m(y)) - D^2u(y)\right).
		\end{align*}
		Writing $x := m(y)$, we infer from the previous estimates that
		\begin{align*}
			\left|Dm(y)\right| \leq \|D^2\varphi(y-x)^{-1}\| \cdot \|D^2\varphi(y-x) - D^2u(y)\| \leq C.
		\end{align*}
		Indeed, the last inequality follows from the identities $\| D^2\varphi(y-x)^{-1} \| = C|x-y|^{3/2}$ and $\| D^2\varphi(y-x) - D^2u(y) \| \leq C(1 + |x-y|^{-3/2})$. We observe a crucial cancellation of the $|x-y|$ dependence in the final estimate. This property is not universal; it would be lost with other choices of $\varphi$, such as the radial function $\varphi(x) = -|x|$.\\
		Therefore, we have obtained $\left|Dm(y)\right| \leq C$ almost everywhere in $\mathcal{F}$, for a universal constant $C$. Substituting this estimate into \eqref{pro_2:measure-est-eq12} yields
		\begin{align*}
			|U| \leq \int_{\mathcal{F}} C^ndy = C^n\left|\mathcal{F}\right|.
		\end{align*}
		This establishes a lower bound for the measure of the contact set $\mathcal{F}$. Since $|U| > |B_{1/4}| - \delta |B_1|$, it follows that
		\begin{align*}
			|\mathcal{F}| \geq C^{-n} \left( |B_{1/4}| - \delta |B_1| \right) > \delta |B_1|,
		\end{align*}
		provided $\delta > 0$ is chosen sufficiently small. Combining it with the inclusion $\mathcal{F} \subset \{u \leq M_1-1\}$ and assumption \eqref{pro_2:measure-est-assumption}, we get a contradiction, thereby concluding the proof.
	\end{proof}
	\begin{lemma}[\bf A measure estimate]
		\label{Measure_est}
		There exist a small constant $\delta>0$ and a large constant $M_1>0$, such that if $u\in\mathrm{LSC}(B_1)$ is a non-negative viscosity supersolution in $B_1$ of an equation of the form \eqref{eq:main}, whose coefficients satisfy assumptions {\upshape\hyperref[A1]{(A1)}--\hyperref[A4]{(A4)}}, with $c\leq0$ and $\|h\|_{L^\infty(B_1)}\leq1$,
		and
		\begin{align*}
			|\left\{u > M_1\right\} \cap B_1| > (1-\delta)|B_1|,
		\end{align*}
		then we have 
		\begin{align*}
			u > 1 \mbox{ in } B_{1/4}.
		\end{align*}
	\end{lemma}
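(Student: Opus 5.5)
We reduce Lemma \ref{Measure_est} to Proposition \ref{pro_2:measure-est} by regularizing $u$ with an inf-convolution. The slack between $\|h\|_{L^\infty(B_1)}\le 1$ here and the bound $H_0$ allowed there absorbs the errors the regularization creates. We take $M_1$ and $\delta$ slightly adjusted from those of the proposition; for instance we apply the proposition with threshold $M_1$ and ask for $u>1$ with a small margin. Since $u\ge0$ is only lower semicontinuous, we first replace $u$ by $\min\{u,N\}$ for a large constant $N>M_1$. This function is still a nonnegative supersolution: it is a minimum of supersolutions, because the constant $N$ satisfies the constant branch of Definition \ref{Def:Viscosity_solution}, given $c\le0$, $N>0$ and $h\ge -1$, after the usual check at $h$. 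It does not change the sets $\{u>M_1\}$ or $\{u\le 1\}$. So we may assume $u$ is bounded.

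For $\varepsilon>0$ we define
\begin{align*}
u_\varepsilon(x)=\inf_{z\in B_1}\Big\{u(z)+\frac{|x-z|^2}{2\varepsilon}\Big\}.
\end{align*}
Standard facts give the following: $u_\varepsilon\ge0$; $u_\varepsilon$ is semiconcave with $D^2u_\varepsilon\le \varepsilon^{-1}I$; $u_\varepsilon\le u$; and $u_\varepsilon\uparrow u$ pointwise. Moreover the minimizer $z^*$ satisfies $|x-z^*|\le \sqrt{2\varepsilon\|u\|_\infty}$. On the slightly smaller ball $B_{1-r_\varepsilon}$, with $r_\varepsilon\to0$, $u_\varepsilon$ is a viscosity supersolution of the equation with coefficients translated by $x-z^*$. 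Concretely, if $\psi$ touches $u_\varepsilon$ from below at $x$, then $\psi(\cdot+x-z^*)$ touches $u$ from below at $z^*$ with the same gradient and Hessian. The supersolution inequality therefore holds at $x$, but with $\Phi,H,c,h$ evaluated at $z^*$ instead of $x$, and with $u(z^*)\ge u_\varepsilon(x)$ in the zero-order term. The sign $c\le0$ makes that replacement go in the right direction.

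To write this as an equation of the form \eqref{eq:main} in $x$, we use the uniform continuity of $\Phi(\cdot,1)$-normalized data, $H$, $c$ and $h$ on compact sets. Then $u_\varepsilon$ is a supersolution of an equation in the admissible class with $\|h_\varepsilon\|_\infty\le 1+o(1)\le H_0$ for $\varepsilon$ small. \textbf{This is the step I expect to be the main obstacle}: $x$-dependence of $\Phi$ and $H$ is allowed, so the translated operator has to be recast as an operator with $x$-dependent coefficients $\tilde\Phi(x,t)=\Phi(z^*(x),t)$, etc. These may be discontinuous in $x$, so we must check that Proposition \ref{pro_2:measure-est} only uses the pointwise inequality at a.e.\ twice-differentiable contact points together with the structural bounds from (A2)--(A3). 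Its proof in Step 5 indeed uses only that, so continuity in $x$ is not needed there. The constant branch of the definition, where $u$ is locally constant, also needs attention; but at contact points the gradient is nonzero, so it is irrelevant. The domain shrinkage $B_{1-r_\varepsilon}$ is handled by rescaling via Lemma \ref{Scaling} with $t_0=1-r_\varepsilon$, which keeps $\|\hat h\|\le H_0$.

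Finally we pass to the limit. If $u(x_0)\le 1$ for some $x_0\in B_{1/4}$, then $u_\varepsilon(x_0)\le 1$ for every $\varepsilon$. Since $u_\varepsilon\uparrow u$, monotone convergence gives $|\{u_\varepsilon>M_1\}\cap B_1|\to|\{u>M_1\}\cap B_1|>(1-\delta)|B_1|$. Hence for small $\varepsilon$ the measure hypothesis holds for $u_\varepsilon$, on the slightly shrunk ball, up to an adjustment of $\delta$ absorbed by choosing $\delta$ half that of the proposition. Proposition \ref{pro_2:measure-est} then yields $u_\varepsilon>1$ in $B_{1/4}$, which contradicts $u_\varepsilon(x_0)\le1$. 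To make the rescaled ball $B_{(1-r_\varepsilon)/4}$ still contain $x_0$, we run the proposition with target ball radius $1/4$ mapped appropriately, or simply note that the proposition's proof gives $u>1$ on a ball of radius slightly larger than $1/4$, since $\rho$ has room.
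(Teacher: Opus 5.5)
Your overall route is the paper's. You truncate, inf-convolve, and transfer the viscosity inequality from $u_\varepsilon$ at $x$ to $u$ at a minimizer $z^*$, with coefficients frozen at $z^*$. You use $c\le0$ for the zero-order term, get the measure hypothesis from $u_\varepsilon\uparrow u$, and shrink and rescale via Lemma~\ref{Scaling}. Finally you rerun the argument of Proposition~\ref{pro_2:measure-est}, which only needs the supersolution inequality at nonzero-gradient contact points.

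\emph{The truncation step, however, is false as stated.} The constant $N$ does not satisfy the constant branch of Definition~\ref{Def:Viscosity_solution}, which requires $h\ge cN^{i(\Phi)+1}$. With $c\le0$ this fails whenever $h<0$ somewhere and $c$ is not very negative. For example, take $\Phi\equiv1$, $F=\mathrm{tr}$, $H\equiv0$, $c\equiv0$, $h\equiv-1$ and $u=A-|x|^2/(2n)$ with $A>N$. Then $\min\{u,N\}$ equals $N$ near the origin, and the constant branch would demand $-1\ge0$. So $\min\{u,N\}$, and hence $u_\varepsilon$, need not be a supersolution on all of $B_{1-r_\varepsilon}$.

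The repair is exactly the paper's device: it uses $v=\min\{u,2M_1\}$ and performs the transfer only on $\{v_\varepsilon<2M_1-\sigma\}$. At every point where the sliding-cusp argument uses the equation, one has $u_\varepsilon(x)\le M_1-1<N$. Hence $\min\{u,N\}(z^*)\le u_\varepsilon(x)<N$, so the truncation coincides with $u$ at $z^*$. A test function touching $\min\{u,N\}$ strictly from below at $z^*$ then also touches $u$ strictly from below there.

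Three smaller corrections.

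\begin{itemize}
\item The inequality you wrote for the zero-order term is reversed. In fact $u_\varepsilon(x)=u(z^*)+|x-z^*|^2/(2\varepsilon)\ge u(z^*)$, and it is this direction that, with $c\le0$, lets you replace $c(z^*)u(z^*)^{i(\Phi)+1}$ by $c(z^*)u_\varepsilon(x)^{i(\Phi)+1}$.
\item Drop the uniform-continuity version with $\|h_\varepsilon\|_\infty\le 1+o(1)$. Moving the coefficients from $z^*$ back to $x$ creates errors such as $[\Phi(z^*,|p|)-\Phi(x,|p|)]F(X)$, which are not bounded uniformly in $(p,X)$. There also need be no slack, since $H_0=\max\{1,L/\nu_0\}$ may equal $1$. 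The frozen-coefficient version you then adopt creates no error in $h$; $H_0$ is needed only to absorb the factor $L/\nu_0$ from rescaling $B_{1-r_\varepsilon}$ onto $B_1$.
\item Containing $x_0$ is immediate: since $|x_0|<1/4$, take $\varepsilon$ so small that $|x_0|<(1-r_\varepsilon)/4$. No change to the proposition is needed.
\end{itemize}

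Halving $\delta$ instead of using the paper's margin $\mu$ is fine.
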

	\begin{figure}[h]
		\centering
		\tikzset{every picture/.style={line width=0.75pt}} 
		\resizebox{0.92\linewidth}{!}{\begin{tikzpicture}[x=0.75pt,y=0.75pt,yscale=-1,xscale=1]
				\draw  [color={rgb, 255:red, 0; green, 0; blue, 0 }  ,draw opacity=1 ][fill={rgb, 255:red, 255; green, 255; blue, 255 }  ,fill opacity=1 ] (160.63,158.6) .. controls (160.63,107.05) and (204.16,65.26) .. (257.85,65.26) .. controls (311.54,65.26) and (355.07,107.05) .. (355.07,158.6) .. controls (355.07,210.14) and (311.54,251.93) .. (257.85,251.93) .. controls (204.16,251.93) and (160.63,210.14) .. (160.63,158.6) -- cycle ;
				\draw [line width=1.5]    (355.07,159.6) -- (434.56,159.66) ;
				\draw [shift={(437.56,159.66)}, rotate = 180.05] [color={rgb, 255:red, 0; green, 0; blue, 0 }  ][line width=1.5]    (14.21,-4.28) .. controls (9.04,-1.82) and (4.3,-0.39) .. (0,0) .. controls (4.3,0.39) and (9.04,1.82) .. (14.21,4.28)   ;
				\draw [draw opacity=0][fill={rgb, 255:red, 32; green, 130; blue, 242 }  ,fill opacity=0.53 ][line width=1.5] [line join = round][line cap = round]   (179.7,126.71) .. controls (182.66,120.4) and (187.27,114.93) .. (191.93,109.74) .. controls (197.58,103.45) and (209.45,106.54) .. (216.79,100.67) .. controls (222.75,95.9) and (232.25,82.96) .. (240.86,81.73) .. controls (249.21,80.53) and (257.73,79.23) .. (266.12,80.15) .. controls (272.48,80.84) and (278.12,84.66) .. (284.27,86.46) .. controls (287.91,87.53) and (292.09,87.22) .. (295.32,89.22) .. controls (308.99,97.73) and (309.59,114.36) .. (315.84,127.89) .. controls (319.08,134.93) and (329.72,132.15) .. (333.99,140.52) .. controls (345.09,162.3) and (330.02,180.1) .. (322.94,199.32) .. controls (320.56,205.78) and (321.2,213.24) .. (318.2,219.44) .. controls (313.31,229.58) and (303.52,231.48) .. (292.16,236.41) .. controls (275.52,243.64) and (264.91,239) .. (249.15,234.44) .. controls (246.86,233.77) and (244.25,234.54) .. (242.04,233.65) .. controls (238.22,232.09) and (235.4,228.34) .. (231.39,227.33) .. controls (226.17,226.03) and (204.77,222.5) .. (201.8,219.44) .. controls (195.69,213.18) and (188.67,203.41) .. (184.04,195.77) .. controls (178.59,186.78) and (171.26,166.39) .. (172.59,155.52) .. controls (173.51,148.01) and (182.25,133.1) .. (179.7,126.71) -- cycle ;
				\draw  [color={rgb, 255:red, 0; green, 0; blue, 0 }  ,draw opacity=1 ][dash pattern={on 4.5pt off 4.5pt}] (235.92,162.6) .. controls (235.92,152.1) and (244.39,143.58) .. (254.85,143.58) .. controls (265.3,143.58) and (273.78,152.1) .. (273.78,162.6) .. controls (273.78,173.1) and (265.3,181.61) .. (254.85,181.61) .. controls (244.39,181.61) and (235.92,173.1) .. (235.92,162.6) -- cycle ;
				\draw [draw opacity=0][fill={rgb, 255:red, 255; green, 255; blue, 255 }  ,fill opacity=1 ][line width=0.75] [line join = round][line cap = round]   (269.57,102.7) .. controls (255.16,97.58) and (249.69,115.33) .. (262.48,118.35) .. controls (267.26,119.48) and (272.47,119.14) .. (277.2,117.81) .. controls (279.33,117.21) and (282.83,114.87) .. (281.72,112.95) .. controls (278.82,107.97) and (272.69,105.83) .. (268.17,102.27) ;
				\draw [draw opacity=0][fill={rgb, 255:red, 255; green, 255; blue, 255 }  ,fill opacity=1 ][line width=0.75] [line join = round][line cap = round]   (298.69,148.36) .. controls (300.23,145.95) and (301.05,142.87) .. (303.31,141.13) .. controls (310.2,135.85) and (320.39,137.2) .. (319.11,147.82) .. controls (318.68,151.35) and (315.97,154.27) .. (315.24,157.75) .. controls (314.63,160.67) and (317.54,164.95) .. (315.13,166.71) .. controls (305.65,173.66) and (289.41,169.7) .. (292.78,155.59) .. controls (293.56,152.33) and (296.94,150.34) .. (299.01,147.71) ;
				\draw [draw opacity=0][fill={rgb, 255:red, 255; green, 255; blue, 255 }  ,fill opacity=1 ][line width=0.75] [line join = round][line cap = round]   (263.88,207.6) .. controls (256.72,205.68) and (266.66,197.27) .. (271.18,201.01) .. controls (275.45,204.55) and (271.12,206.49) .. (272.04,210.4) .. controls (272.74,213.36) and (275.6,215.59) .. (276.02,218.61) .. controls (276.21,219.96) and (274.91,222.48) .. (273.66,221.95) .. controls (270.13,220.46) and (267.41,217.26) .. (265.27,214.07) .. controls (263.96,212.1) and (265.44,208.59) .. (263.55,207.17) ;
				\draw [draw opacity=0][fill={rgb, 255:red, 255; green, 255; blue, 255 }  ,fill opacity=1 ][line width=0.75] [line join = round][line cap = round]   (292.16,181.89) .. controls (286.49,178.09) and (281.22,187.23) .. (280.82,191.73) .. controls (280.27,197.91) and (290.91,193.77) .. (291.58,199.26) .. controls (292.06,203.15) and (287.06,210.14) .. (295.43,209.3) .. controls (304.77,208.36) and (311.51,192.53) .. (303.5,186.14) .. controls (299.92,183.27) and (294.92,182.92) .. (290.62,181.31) ;
				\draw [draw opacity=0][fill={rgb, 255:red, 255; green, 255; blue, 255 }  ,fill opacity=1 ][line width=0.75] [line join = round][line cap = round]   (227.72,104.79) .. controls (229.01,96.41) and (216,107.88) .. (215.24,109.95) .. controls (212.97,116.18) and (214.52,123.72) .. (211.13,129.41) .. controls (210.95,129.72) and (186.79,142.04) .. (200.42,148.72) .. controls (203.15,150.06) and (206.54,149.93) .. (209.52,149.31) .. controls (228.78,145.35) and (224.18,123.19) .. (226.11,109.07) .. controls (226.37,107.15) and (228.1,105.32) .. (227.58,103.46) ;
				\draw  [draw opacity=0][fill={rgb, 255:red, 255; green, 255; blue, 255 }  ,fill opacity=1 ] (238.98,201.04) .. controls (259.01,203.31) and (239.47,221.79) .. (226.1,218.36) .. controls (212.72,214.93) and (175.82,167.81) .. (206.52,166.55) .. controls (237.22,165.3) and (218.95,198.76) .. (238.98,201.04) -- cycle ;
				\draw [fill={rgb, 255:red, 74; green, 144; blue, 226 }  ,fill opacity=0.49 ][line width=1.5]    (168.65,255.27) .. controls (181.63,222.29) and (189.08,205.77) .. (213.6,192.34) ;
				\draw [shift={(216.75,190.67)}, rotate = 153.07] [fill={rgb, 255:red, 0; green, 0; blue, 0 }  ][line width=0.08]  [draw opacity=0] (11.61,-5.58) -- (0,0) -- (11.61,5.58) -- cycle    ; 
				\draw [fill={rgb, 255:red, 74; green, 144; blue, 226 }  ,fill opacity=0.49 ][line width=1.5]    (209.5,49.5) .. controls (224.62,59.9) and (242.42,80.11) .. (242.64,102.56) ;
				\draw [shift={(242.5,106.5)}, rotate = 274.76] [fill={rgb, 255:red, 0; green, 0; blue, 0 }  ][line width=0.08]  [draw opacity=0] (11.61,-5.58) -- (0,0) -- (11.61,5.58) -- cycle    ;
				\draw  [color={rgb, 255:red, 0; green, 0; blue, 0 }  ,draw opacity=1 ][fill={rgb, 255:red, 155; green, 155; blue, 155 }  ,fill opacity=0.05 ] (437.56,159.66) .. controls (437.56,108.11) and (481.09,66.33) .. (534.78,66.33) .. controls (588.47,66.33) and (632,108.11) .. (632,159.66) .. controls (632,211.21) and (588.47,253) .. (534.78,253) .. controls (481.09,253) and (437.56,211.21) .. (437.56,159.66) -- cycle ;
				\draw  [color={rgb, 255:red, 0; green, 0; blue, 0 }  ,draw opacity=1 ][fill={rgb, 255:red, 184; green, 233; blue, 134 }  ,fill opacity=1 ] (508.67,159.66) .. controls (508.67,145.82) and (520.36,134.6) .. (534.78,134.6) .. controls (549.2,134.6) and (560.89,145.82) .. (560.89,159.66) .. controls (560.89,173.51) and (549.2,184.73) .. (534.78,184.73) .. controls (520.36,184.73) and (508.67,173.51) .. (508.67,159.66) -- cycle ;
				
				\draw (262.37,132) node [anchor=north west][inner sep=0.75pt]  [font=\footnotesize,color={rgb, 255:red, 0; green, 0; blue, 0 }  ,opacity=1 ] [align=left] {$\displaystyle B_{1/4}$};
				\draw (318.95,72.24) node [anchor=north west][inner sep=0.75pt]  [font=\footnotesize] [align=left] {$\displaystyle B_{1}$};
				\draw (120.22,32.4) node [anchor=north west][inner sep=0.75pt]  [font=\footnotesize,color={rgb, 255:red, 0; green, 0; blue, 0 }  ,opacity=1 ] [align=left] {$\displaystyle E\ =\ \{u\  >\ M_{1}\} \ \cap \ B_{1}$};
				\draw (367,111) node [anchor=north west][inner sep=0.75pt]  [font=\normalsize] [align=left] {\begin{minipage}[lt]{42.52pt}\setlength\topsep{0pt}
						\begin{center}
							{\small \textbf{Measure }}\\{\small \textbf{estimate}}
						\end{center}
				\end{minipage}};
				\draw (135.69,259.26) node [anchor=north west][inner sep=0.75pt]  [font=\footnotesize,color={rgb, 255:red, 0; green, 0; blue, 0 }  ,opacity=1 ] [align=left] {$\displaystyle \{u\ \leq \ M_{1}\}$};
				\draw (557.79,133.62) node [anchor=north west][inner sep=0.75pt]  [font=\footnotesize,color={rgb, 255:red, 0; green, 0; blue, 0 }  ,opacity=1 ] [align=left] {$\displaystyle B_{1/4}$};
				\draw (578.95,60.46) node [anchor=north west][inner sep=0.75pt]  [font=\footnotesize] [align=left] {$\displaystyle B_{1}$};
				\draw (515.42,152.33) node [anchor=north west][inner sep=0.75pt]  [font=\footnotesize,color={rgb, 255:red, 0; green, 0; blue, 0 }  ,opacity=1 ] [align=left] {$\displaystyle u\  >\ 1$};		
		\end{tikzpicture}}			
		\caption{Schematic illustration of the measure estimate}
	\end{figure}
	\begin{proof}
		Let $u$ be a merely lower semicontinuous supersolution in $B_1$. Set $v := \min(u, 2M_1)$ where $M_1$ is given by Proposition \ref{pro_2:measure-est}. By construction, $v$ is lower semicontinuous in $B_1$, satisfies $0\le v\le 2M_1$, and $\{v>M_1\}=\{u>M_1\}$. We consider the inf-convolution of $v$ with parameter $\varepsilon > 0$:
		\begin{align}
			\label{pro_1:measure-est-eq0}
			v_\varepsilon(x) = \displaystyle\inf_{y \in B_1} \left(v(y) + \frac{|y - x|^2}{2\varepsilon}\right).
		\end{align}
		For $x\in B_{1-\eta}$ with $\eta>0$, we can show that the minimizing point $y_x$ actually belongs to $B_1$. Indeed, there is a sequence $\{y_j\}_{j \geq 1}\subset B_1$ such that
		\begin{align*}
			v_{\varepsilon}(x)+\dfrac{1}{j}\geq v(y_j)+\dfrac{|y_j-x|^2}{2\varepsilon}\text{ for all }j\geq 1.
		\end{align*}
		This implies that
		\begin{align*}
			\frac{\left|y_j - x\right|^2}{2\varepsilon} \leq v_\varepsilon(x) - v\left(y_j\right) + \dfrac{1}{j}\leq v(x) + \dfrac{1}{j}.
		\end{align*}
		Since $v(x)\le 2M_1$ it follows that $\left|y_j - x\right| \leq \sqrt{2\varepsilon(2M_1+1)}$. Consequently, for any fixed $\eta>0$, if $\varepsilon>0$ is sufficiently small so that $\sqrt{2\varepsilon(2M_1+1)}<\eta$, then the sequence $\{y_j\}_{j \geq 1}$ lies in a compact subset of $B_1$. Since $v$ is lower semicontinuous, the infimum is attained at some point $y_x\in B_1$, yielding
		\begin{align*}
			v_\varepsilon(x) = v\left(y_x\right) + \frac{\left|y_x - x\right|^2}{2\varepsilon}.
		\end{align*}
		Fix $\sigma\in(0,M_1)$ and define
		\begin{align*}
			\Omega_{\varepsilon,\eta,\sigma}:=\{x\in B_{1-\eta}: v_\varepsilon(x)<2M_1-\sigma\}.
		\end{align*}
		We assert that the semiconcave measure argument can be applied to $v_\varepsilon$ in $\Omega_{\varepsilon,\eta,\sigma}$. More precisely, at every contact point with nonzero gradient, the viscosity supersolution inequality for $u$ can be transmitted to $v_\varepsilon$ via the minimizing point $y_x$. All the coefficients appearing in the resulting relation satisfy the same uniform structural bounds as the original coefficients. Indeed, let $x\in \Omega_{\varepsilon,\eta,\sigma}$ and $y_x$ be a minimizer associated with $v_\varepsilon(x)$. Since
		\begin{align*}
			v(y_x)\leq v_\varepsilon(x)<2M_1-\sigma<2M_1,
		\end{align*}
		we have $v(y_x)=u(y_x)$. 
		
		We first verify the constant branch. Suppose that $v_\varepsilon\equiv a$ in some ball $B_\rho(x_0)\Subset\Omega_{\varepsilon,\eta,\sigma}$. For each $x\in B_\rho(x_0)$. Set
		\begin{align*}
			Q_x(z):=v(y_x)+\frac{|y_x-z|^2}{2\varepsilon}.
		\end{align*}
		Then $v_\varepsilon\leq Q_x$ and $Q_x(x)=v_\varepsilon(x)=a$. Since $v_\varepsilon\equiv a$ in $B_{\rho}(x_0)$, the test function $Q_x$ attains a local minimum at $x$. Hence $0=\nabla Q_x(x)=(x-y_x)/\varepsilon$, or equivalently $y_x=x$. This yields $a=v_\varepsilon(x)=v(x)$. Since $x\in\Omega_{\varepsilon,\eta,\sigma}$, we have $a<2M_1-\sigma<2M_1$, and ensures $v(x)=u(x)$, thereby implying $u\equiv a$ in $B_\rho(x_0)$. By the constant branch in the definition of viscosity supersolutions for $u$, we obtain
		\begin{align*}
			h(x)\geq c(x)|a|^{i(\Phi)}a\text{ for every }x\in B_\rho(x_0).
		\end{align*}
		Because $y_x=x$, the perturbed coefficients coincide with the
		original ones in the constant branch. Therefore, the constant-branch
		condition for $v_\varepsilon$ follows.
		
		Next, we consider the nonzero-gradient branch. Let $\phi\in C^2(B_1)$ be such that $v_\varepsilon\succ_{x_0}\phi$ for some $x_0\in \Omega_{\varepsilon,\eta,\sigma}$ and $\nabla\phi(x_0)\neq0$. Let $y_{x_0}$ be a minimizing point in the definition of $v_\varepsilon(x_0)$. Define
		\begin{align*}
			\psi(y):=\phi(x_0+y-y_{x_0})-\frac{|y_{x_0}-x_0|^2}{2\varepsilon}.
		\end{align*}
		We claim that $v\succ_{y_{x_0}}\psi$. Indeed, for $y$ sufficiently close to $y_{x_0}$, set $\xi:=x_0+y-y_{x_0}$. Since $v_\varepsilon\succ_{x_0}\phi$, we have $\phi(\xi)\leq v_\varepsilon(\xi)$. Combining this with the definition of $v_\varepsilon$ in \eqref{pro_1:measure-est-eq0} leads to
		\begin{align*}
			\phi(x_0+y-y_{x_0})  \leq v(y)+\dfrac{|y_{x_0}-x_0|^2}{2\varepsilon}.
		\end{align*}
		Thus $\psi(y)\leq v(y)$. Moreover, equality holds at $y=y_{x_0}$, and hence $v\succ_{y_{x_0}}\psi$. Furthermore, $\nabla\psi(y_{x_0})=\nabla\phi(x_0)\neq0$, and $D^2\psi(y_{x_0})=D^2\phi(x_0)$. Since
		\begin{align*}
			v(y_{x_0})\leq v_\varepsilon(x_0)<2M_1-\sigma<2M_1,
		\end{align*} 
		we have $v(y_{x_0})=u(y_{x_0})$. Since $v\leq u$ in $B_1$, it follows that $\psi\leq v\leq u$ near $y_{x_0}$ and $\psi(y_{x_0})=v(y_{x_0})=u(y_{x_0})$, which establishes $u\succ_{y_{x_0}}\psi$. As $u$ is a viscosity supersolution, the corresponding supersolution inequality holds at $y_{x_0}$:
		\begin{align*}
			&\Phi\left(y_{x_0},|\nabla\phi(x_0)|\right) F\left(D^2\phi(x_0)\right)-H\left(y_{x_0},\nabla\phi(x_0)\right)+c(y_{x_0})u(y_{x_0})^{i(\Phi)+1}\\
			&\leq h(y_{x_0}).
		\end{align*}
		Taking into account that, $u(y_{x_0})=v(y_{x_0}) \leq v_\varepsilon(x_0)$, and since $c\leq0,~0 \leq i(\Phi)+1$, we have
		\begin{align*}
			c(y_{x_0})v_\varepsilon(x_0)^{i(\Phi)+1} \leq c(y_{x_0})u(y_{x_0})^{i(\Phi)+1}.
		\end{align*}
		Therefore,
		\begin{align*}
			&\Phi\left(y_{x_0},|\nabla\phi(x_0)|\right)F\left(D^2\phi(x_0)\right)-H\left(y_{x_0},\nabla\phi(x_0)\right)+c(y_{x_0})v_\varepsilon(x_0)^{i(\Phi)+1}\\
			&\leq h(y_{x_0}).
		\end{align*}
		Thus, at every nonzero-gradient contact point $x_0$, the required supersolution inequality for $v_\varepsilon$ is available with the coefficients evaluated at a corresponding minimizing point $y_{x_0}$. Although the minimizing point need not be unique, the uniformity of the assumptions $\Phi$, $H$, $c$, and $h$ ensures that the structural estimates used in the proof of Proposition \ref{pro_2:measure-est} remain valid uniformly for $v_\varepsilon$.
		
		Recall that $v_\varepsilon$ is semiconcave and $D^2v_\varepsilon \leq \varepsilon^{-1}I$. Since $v$ is lower semicontinuous and bounded, $v_\varepsilon$ increases pointwise to $v$ as $\varepsilon \to 0$. In particular,
		\begin{align*}
			\{u>M_1\}=\{v>M_1\}=\bigcup_{\varepsilon>0}\{v_\varepsilon>M_1\},
		\end{align*}
		Note that as $\varepsilon \to 0$, the sets $\{v_\varepsilon > M_1\}$ form an increasing nested collection, therefore
		\begin{align}
			\label{pro_1:measure-est-eq1}
			|\{u>M_1\}\cap B_1| = |\{v>M_1\}\cap B_1| = \lim_{\varepsilon\to0}|\{v_\varepsilon>M_1\}\cap B_1|.
		\end{align}
		Let $\delta>0$ be the constant in Proposition \ref{pro_2:measure-est} and $\mu>0$ such that
		\begin{align*}
			|\{u>M_1\}\cap B_1|=(1-\delta)|B_1|+\mu.
		\end{align*}
		We first choose $\eta\in(0,1/2)$ sufficiently small so that  $\delta|B_{1-\eta}| > \delta |B_1| -\mu /2$. Having fixed such an $\eta$, by \eqref{pro_1:measure-est-eq1}, we choose $\varepsilon>0$ sufficiently small so that $	\sqrt{2\varepsilon(2M_1+1)}<\eta$ and
		\begin{align*}
			|\{v_\varepsilon>M_1\}\cap B_1|>(1-\delta)|B_1|+\frac{\mu}{2} \iff |\{v_\varepsilon\leq M_1\}\cap B_1|<\delta |B_1|-\frac{\mu}{2}.
		\end{align*}
		It follows that
		\begin{align*}
			|\{v_\varepsilon\le M_1\}\cap B_{1-\eta}| \leq |\{v_\varepsilon\leq M_1\}\cap B_1| < \delta |B_1|-\displaystyle\frac{\mu}{2} < \delta |B_{1-\eta}|.
		\end{align*}
		Hence, we arrive at
		\begin{align*}
			|\left\{v_\varepsilon > M_1\right\} \cap B_{1-\eta}| > (1-\delta)|B_{1-\eta}|.
		\end{align*}
		
		We now rescale $B_{1-\eta}$ onto $B_1$ by setting $\tilde{v}_\varepsilon(z):=v_\varepsilon((1-\eta)z)$. The measure condition is preserved under this change of variables, namely,
		\begin{align*}
			|\{\tilde{v}_\varepsilon>M_1\}\cap B_1|=(1-\eta)^{-n}|\{v_\varepsilon>M_1\}\cap B_{1-\eta}|>(1-\delta)|B_1|.
		\end{align*}
		Moreover, every contact point $\tilde{z}$ on $\mathcal{F}$ arising in the proof of Proposition \ref{pro_2:measure-est} satisfies $\tilde{v}_\varepsilon(\tilde{z})\leq M_1-1$. Hence, at the corresponding point $z=(1-\eta)\tilde{z}$, one has
		\begin{align*}
			v_\varepsilon(z)\leq M_1-1<2M_1-\sigma,
		\end{align*}
		so that $z\in\Omega_{\varepsilon,\eta,\sigma}$. Therefore, the transferred supersolution inequality established above is available at every contact point used in the rescaled argument.\\
		Since $\eta>0$ is chosen sufficiently small, we may assume that $1-\eta\in [1/2,1)$. Moreover, at every relevant contact point, the right-hand side appearing in the rescaled transferred supersolution inequality is bounded by
		\begin{align*}
			\dfrac{L}{\nu_0}(1-\eta)^{2+i(\Phi)}\|h\|_{L^\infty(B_1)}\leq H_0.
		\end{align*}
		Therefore, the argument of Proposition \ref{pro_2:measure-est}  applies uniformly to $\tilde{v}_\varepsilon$. We conclude that $\tilde{v}_\varepsilon>1$ in $B_{1/4}$, or equivalently, $v_\varepsilon>1$ in $B_{(1-\eta)/4}$. Since $u\geq v\geq v_\varepsilon$ and since $\eta>0$ can be chosen arbitrarily small, we conclude that $u>1$ in $B_{1/4}$. This completes the proof.		
	\end{proof}
	\subsection{\bf The $L^\varepsilon$ estimate and Harnack inequality}
	By employing barrier function \hyperref[barrier_harnack]{\upshape(B1)} in Lemma \ref{Lem:barrier-construction}, we establish the following doubling property for the lower bounds of supersolutions.
	\begin{lemma}[\bf Doubling property for supersolutions]
		\label{lem:Doubling}
		If $u \in C(\overline{B_2})$ is a non-negative viscosity supersolution in $B_2$ of an equation of the form \eqref{eq:main}, whose coefficients satisfy assumptions {\upshape\hyperref[A1]{(A1)}--\hyperref[A4]{(A4)}}, with $c\leq0$ and $\|h\|_{L^\infty(B_2)}\leq H_0$,
		and $u > M_2$ in $B_{1/4}$ for some large constant $M_2$, depending only on $n,\lambda,\Lambda,\nu_0,L,i(\Phi),s(\Phi)$, $\mathcal{C}$, and $\|c\|_{L^\infty(B_2)}$, then $u > 1$ in $B_1$.
	\end{lemma}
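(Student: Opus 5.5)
The plan is to compare $u$ from below with a rescaled copy of the interior barrier \hyperref[barrier_harnack]{(B1)} of Lemma \ref{Lem:barrier-construction} on the annulus $A:=B_2\setminus\overline{B_{1/4}}$. Fix $p>0$ and $K_\ast$ as given there, with the data $\|h\|_{L^\infty(B_2)}\le H_0$, $\|c\|_{L^\infty(B_2)}$ and $\mathcal{C}$. Choose $K_1\ge K_\ast$ so large that $\chi_{\mathrm{int}}(x)=K_1(|x|^{-p}-2^{-p})$ satisfies $\chi_{\mathrm{int}}\ge 1$ on $\overline{B_1}\setminus B_{1/4}$. It suffices that $K_1(1-2^{-p})>1$. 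Then put
\begin{align*}
M_2:=K_1\left(4^{p}-2^{-p}\right)=\max_{\partial B_{1/4}}\chi_{\mathrm{int}},
\end{align*}
which depends only on $n,\lambda,\Lambda,\nu_0,L,i(\Phi),s(\Phi),\mathcal{C}$ and $\|c\|_{L^\infty(B_2)}$, as required.

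Next I check the boundary ordering on $\partial A$. On $\partial B_2$ we have $\chi_{\mathrm{int}}=0\le u$. On $\partial B_{1/4}$, continuity of $u$ and $u>M_2$ in $B_{1/4}$ give $u\ge M_2=\chi_{\mathrm{int}}$. I enlarge $K_1$ slightly, or replace $M_2$ by $M_2+1$, so that this ordering is strict wherever needed.

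The interior comparison uses Lemma \ref{Comp.Prin1} in the form of Remark \ref{23}. Here $\chi_{\mathrm{int}}\in C^2(\overline A)$ is the classical subsolution, and $u\in C(\overline A)$ is the viscosity supersolution. The proof of (B1) actually shows the strict inequality
\begin{align*}
\Phi(x,|\nabla\chi_{\mathrm{int}}|)F(D^2\chi_{\mathrm{int}})-H(x,\nabla\chi_{\mathrm{int}})+c(x)\chi_{\mathrm{int}}^{i(\Phi)+1}\ge \|h\|_{L^\infty(B_2)}+1 .
\end{align*}
So $\chi_{\mathrm{int}}$ is a subsolution with right-hand side $h_1:=\|h\|_\infty+1$, which exceeds $h_2:=h$, and case (b) applies with $c\le0$. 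The singular case additionally requires $|\nabla\chi_{\mathrm{int}}|=K_1p|x|^{-p-1}>0$ in $A$, which holds.

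The one point to watch is the constant branch for $u$. The comparison argument touches $u$ from below by $\chi_{\mathrm{int}}+M$, with $M=\max_{\overline A}(\chi_{\mathrm{int}}-u)>0$, at an interior point $\bar x$. Since the test function has nonzero gradient, $u$ cannot be locally constant near $\bar x$. Indeed, if $u$ were constant there, the smooth function $\chi_{\mathrm{int}}+M$ with nonvanishing gradient would cross that constant, contradicting $\chi_{\mathrm{int}}+M\le u$ with equality at $\bar x$. Hence the nonzero-gradient branch of Definition \ref{Def:Viscosity_solution} is used at $\bar x$. Subtracting the two inequalities then gives
\begin{align*}
h_1(\bar x)-h(\bar x)\le c(\bar x)\left[\cdots\right]\le 0 ,
\end{align*}
because $c\le0$ and $t\mapsto|t|^{i(\Phi)}t$ is increasing. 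This contradicts $h_1-h\ge 1$.

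Therefore $u\ge\chi_{\mathrm{int}}$ in $A$. In particular $u\ge\chi_{\mathrm{int}}\ge 1$ on $\overline{B_1}\setminus B_{1/4}$, and a strict version $u>1$ follows by having chosen $K_1(1-2^{-p})>1$ with strict inequality. Together with $u>M_2>1$ in $B_{1/4}$, this gives $u>1$ in $B_1$.

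The main obstacle I expect is minor: verifying that Lemma \ref{Comp.Prin1} applies with $u$ merely a continuous supersolution, including the handling of the constant branch and of the singular case. The argument above addresses both.
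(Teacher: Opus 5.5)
Your proposal is correct and follows essentially the paper's proof. Both compare $u$ with the interior barrier (B1) on $B_2\setminus\overline{B_{1/4}}$ via the reversed comparison principle of Remark \ref{23}; the only cosmetic difference is that you fix $K_1$ and set $M_2:=K_1(4^p-2^{-p})$, whereas the paper fixes $M_2$ and takes $K_B=M_2/(2\cdot 4^p)$ to get a factor-two margin on $\partial B_{1/4}$. Two small slips, neither affecting the argument: with $M=\max_{\overline{A}}(\chi_{\mathrm{int}}-u)$, the function touching $u$ from below at $\bar x$ is $\chi_{\mathrm{int}}-M$, not $\chi_{\mathrm{int}}+M$; and no strict boundary ordering is needed, since Lemma \ref{Comp.Prin1} only requires $u\ge\chi_{\mathrm{int}}$ on $\partial A$ (enlarging $K_1$ would in any case push the barrier the wrong way).
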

	\begin{proof}
		We compare the function $u$ with 
		\begin{align*}
			B(x):=K_B\left(|x|^{-p}-2^{-p}\right) \mbox{ with }K_B := \frac{M_2}{2\cdot 4^p}.
		\end{align*}
		Fix $p$ and $K_\ast$ as in the interior barrier \hyperref[barrier_harnack]{\upshape(B1)} in Lemma \ref{Lem:barrier-construction}. Since the proof of the interior barrier depends only on the structural assumptions and the corresponding quantitative bounds, the constants $p$ and $K_\ast$ can be chosen uniformly for the admissible class under consideration. Then choose $M_2 \geq 1$ sufficiently large that 
		\begin{align*}
			K_B \geq K_\ast, \mbox{ and } B > 1 \mbox{ in } B_1\setminus\overline{B_{1/4}}.
		\end{align*}
		By Lemma \ref{Lem:barrier-construction}, the function $B$ is a viscosity subsolution of the equation under consideration in $B_2\setminus\overline{B_{1/4}}$. Since $u \in C(\overline{B_2})$ and $u > M_2$ in $B_{1/4}$, then $u \geq M_2$ on $\partial B_{1/4}$. Moreover, we have
		\begin{align*}
			B =0 \leq u\mbox{ on }\partial B_2,\mbox{ and }B = \frac{M_2}{2}\left(1-2^{-3p}\right)< M_2 \mbox{ on }\partial B_{1/4}.
		\end{align*}
		Therefore, $u \geq B$ on $\partial\left(B_2\setminus \overline{B_{1/4}}\right)$. By the comparison principle in Remark \ref{23}, we obtain $B \leq u$ in the ring $B_2\setminus \overline{B_{1/4}}$. Therefore, $u\geq B>1$ in $B_1\setminus \overline{B_{1/4}}$. On the other hand, $u>M_2\geq1$ in $B_{1/4}$. Hence $u>1$ in $B_1$, which finishes the proof.
	\end{proof}
	Combining Lemmas \ref{Measure_est} and \ref{lem:Doubling}, we obtain
	\begin{corollary}
		\label{cor:measure-doubling}
		There exist a small constant $\delta>0$ and a large constant $M>0$, such that if $u\in C(\overline{B_2})$ is a non-negative viscosity supersolution in $B_2$ of an equation of the form \eqref{eq:main}, whose coefficients satisfy assumptions {\upshape\hyperref[A1]{(A1)}--\hyperref[A4]{(A4)}}, with $c\leq0$ and $\|h\|_{L^\infty(B_2)} \leq H_0$, and
		\begin{align*}
			|\{u > M\} \cap B_1| &> (1 - \delta)|B_1|,
		\end{align*}
		we deduce $u > 1$ in $B_1$.
	\end{corollary}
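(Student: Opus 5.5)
The plan is to chain the two preceding results: Lemma \ref{Measure_est} turns the measure condition into a positive lower bound on a small ball, and Lemma \ref{lem:Doubling} spreads that bound from $B_{1/4}$ to $B_1$. The only subtlety is that Lemma \ref{Measure_est} gives the lower bound $1$, while the doubling lemma needs the larger bound $M_2$. We fix this by rescaling amplitudes with Lemma \ref{Scaling}, with $t_0=1$ and $V=1/M_2\le 1$.

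Concretely, let $\delta$ and $M_1$ be the constants of Lemma \ref{Measure_est}, let $M_2\ge 1$ be the constant of Lemma \ref{lem:Doubling}, and set $M:=M_1M_2$. Given $u$ as in the statement, define $\hat u:=u/M_2$. By Lemma \ref{Scaling} (with $x_0=0$, $t_0=1$, $\rho_0=2$), $\hat u$ is a non-negative viscosity supersolution in $B_2$ of an equation of the same form. Its $\hat\Phi$ has $\hat\Phi(y,1)=1$ and the same $L,i(\Phi),s(\Phi)$, and $\hat F$ has the same ellipticity constants. Moreover $\hat c=V^{-i(\Phi)}c/\Phi(\cdot,M_2)\le 0$, and $\|\hat h\|\le \frac{L}{\nu_0}V^{1+i(\Phi)}\|h\|\le \frac{L}{\nu_0}\|h\|$. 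The hypothesis becomes $|\{\hat u>M_1\}\cap B_1|>(1-\delta)|B_1|$.

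To apply Lemma \ref{Measure_est} to $\hat u$ we need $\|\hat h\|_{L^\infty(B_1)}\le 1$, whereas we only know $\|h\|\le H_0$. Two fixes are available:
\begin{itemize}
\item Note that Proposition \ref{pro_2:measure-est}, whose argument Lemma \ref{Measure_est} reduces to, already allows $\|h\|\le H_0$ and in fact any fixed structural bound. This is because $\|h\|$ only enters the constant $C$ in \eqref{pro_2:measure-est-eq16}, hence only $\delta$.
\item Alternatively, first replace $u$ by $u/K$ for a structural $K\ge 1$, so that $\|\hat h\|\le 1$, and enlarge $M$ by the factor $K$.
\end{itemize}
The same remark controls $\hat{\mathcal C}$ and $\|\hat c\|$. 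They stay bounded by structural constants, since $t_0=1$ and $V\le1$, so $\delta$, $M_2$ and the barrier constants remain uniform over the admissible class.

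Lemma \ref{Measure_est} then yields $\hat u>1$ in $B_{1/4}$, i.e.\ $u>M_2$ in $B_{1/4}$. Since $u\in C(\overline{B_2})$ is a non-negative supersolution with $c\le0$ and $\|h\|\le H_0$, Lemma \ref{lem:Doubling} applies directly to $u$ and gives $u>1$ in $B_1$.

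The main obstacle is the bookkeeping of the data bounds under the amplitude rescaling: we must check that the structural bounds on $\hat h$, $\hat c$ and $\hat{\mathcal C}$ stay within the classes for which $\delta$, $M_1$ and $M_2$ were fixed, so that the constants do not depend on $u$.
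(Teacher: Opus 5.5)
Your proposal is correct and follows the paper's proof: set $M=M_1M_2$, apply Lemma \ref{Measure_est} to $u/M_2$ via Lemma \ref{Scaling} (with $t_0=1$, $V=1/M_2$), and then apply Lemma \ref{lem:Doubling} to $u$ itself. The only difference is in how the bound on $\hat h$ is obtained. You discard the factor $V^{1+i(\Phi)}=M_2^{-(1+i(\Phi))}$, whereas the paper keeps it and enlarges $M_2$ so that $\frac{L}{\nu_0}M_2^{-(1+i(\Phi))}H_0\le 1$; this is your second fix with the factor $K$ absorbed into $M_2$.
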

	\begin{proof}
		Let $M_1$ and $M_2$ be the constants from Lemma \ref{Measure_est} and Lemma \ref{lem:Doubling}. By enlarging $M_2$ if necessary, we may additionally assume that
		\begin{align*}
			\displaystyle \frac{L}{\nu_0}M_2^{-(1+i(\Phi))}H_0\leq1.
		\end{align*}
		Set $v:=u/M_2$. By the scaling result in Lemma \ref{Scaling}, $v$ is a supersolution of a rescaled equation of the form \eqref{scaling-form} with spatial scale $t_0 = 1$, radius $\rho_0=2$, center $x_0 = 0$, 
		$V=1/M_2$, and the rescaled right-hand side satisfies
		\begin{align*}
			\displaystyle\|h_v\|_{L^\infty(B_2)}\leq\frac{L}{\nu_0}M_2^{-(1+i(\Phi))}\|h\|_{L^\infty(B_2)}\leq1.
		\end{align*}
		Moreover, we also have
		\begin{align*}
			\left|\left\{v > M_1\right\}\cap B_1\right| = \left|\left\{u > M_1M_2\right\}\cap B_1\right| > (1-\delta)|B_1|,
		\end{align*}
		when choosing $M=M_1M_2$. Consequently, $v$ satisfies the assumptions of Lemma \ref{Measure_est}, we conclude that $v > 1$ in $B_{1/4}$, which means that $u > M_2$ in $B_{1/4}$. Next, we can apply Lemma \ref{lem:Doubling} to get $u > 1$ in $B_1$.
	\end{proof}
	The following corollary is just a scaled version of the above result.
	\begin{corollary}
		\label{cor:measure-doubling-scale}
		Let $x_0\in\mathbb{R}^n$. There exist a small constant $\delta > 0$ and a large constant $M > 1$ such that if $u\in C(\overline{B_R(x_0)})$ is a non-negative viscosity supersolution in $B_R(x_0)$ of an equation of the form \eqref{eq:main} for some $0 <R \leq 2$ and $\kappa \geq 1$ , whose coefficients satisfy assumptions {\upshape\hyperref[A1]{(A1)}--\hyperref[A4]{(A4)}}, with $c\leq0$, $\|h\|_{L^\infty(B_R(x_0))} \leq 1$, satisfying
		\begin{align*}
			|\{u > \kappa M\} \cap B_{R/2}(x_0)| > (1 - \delta)|B_{R/2}|,
		\end{align*}
		then $u > \kappa$ in $B_{R/2}(x_0)$.
	\end{corollary}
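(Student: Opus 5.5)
The plan is to reduce Corollary \ref{cor:measure-doubling-scale} to Corollary \ref{cor:measure-doubling} through the rescaling of Lemma \ref{Scaling}. We fix $\delta$ and $M$ to be the constants of Corollary \ref{cor:measure-doubling}, possibly enlarging $M$ if the normalization requires it. We then define
\begin{align*}
\hat u(y):=\frac{1}{\kappa}\,u\!\left(x_0+\tfrac{R}{2}y\right),\qquad y\in B_2 .
\end{align*}
This is exactly Lemma \ref{Scaling} with $t_0=R/2\le 1$, $\rho_0=2$, $V=1/\kappa\le 1$, and with $B_{t_0\rho_0}(x_0)=B_R(x_0)$. Lemma \ref{Scaling} then shows that $\hat u$ is a non-negative viscosity supersolution in $B_2$ of an equation of the form \eqref{scaling-form}, and the continuity $u\in C(\overline{B_R(x_0)})$ gives $\hat u\in C(\overline{B_2})$.

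The next step is to check that the rescaled coefficients satisfy the hypotheses of Corollary \ref{cor:measure-doubling}.
\begin{itemize}
\item $\hat F$ is $(\lambda,\Lambda)$-elliptic.
\item $\hat\Phi$ satisfies \hyperref[A2]{(A2)} with the same $i(\Phi)$, $s(\Phi)$ and $L$, and with $\hat\Phi(y,1)=1$; this means $\nu_0=\nu_1=1$ after scaling.
\item $\hat H$ satisfies \hyperref[A3]{(A3)} with constant $\frac{L\mathcal C}{\nu_0}t_0^{2-m+i(\Phi)}\le \frac{L\mathcal C}{\nu_0}$, because $2+i(\Phi)-m>0$ and $t_0\le1$.
\item $\hat c\le 0$ with $\|\hat c\|_\infty\le \frac{L}{\nu_0}\|c\|_\infty$.
\item By \eqref{18.2}, $\|\hat h\|_{L^\infty(B_2)}\le \frac{L}{\nu_0}\kappa^{-(1+i(\Phi))}t_0^{2+i(\Phi)}\|h\|_\infty\le H_0$, using $\kappa\ge1$, $t_0\le1$ and $\|h\|_\infty\le 1$.
\end{itemize}
All the structural constants therefore stay in a fixed bounded admissible class. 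This class is independent of $R$ and $\kappa$, so the $\delta$ and $M$ produced by Corollary \ref{cor:measure-doubling}, which depend only on the structural data through Lemma \ref{Measure_est} and Lemma \ref{lem:Doubling}, can be taken uniform over it.

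We then transfer the measure condition. Under $y\mapsto x_0+\frac R2 y$ we have $\{\hat u>M\}\cap B_1=\{u>\kappa M\}\cap B_{R/2}(x_0)$, rescaled by the factor $(R/2)^{-n}$ in measure. The hypothesis therefore gives $|\{\hat u>M\}\cap B_1|>(1-\delta)|B_1|$. Corollary \ref{cor:measure-doubling} now yields $\hat u>1$ in $B_1$, which is exactly $u>\kappa$ in $B_{R/2}(x_0)$.

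The main obstacle is the uniformity of the constants. We must confirm that Corollary \ref{cor:measure-doubling} (in particular the choice of $M_2$, $p$ and $K_\ast$ in the barrier (B1)) depends only on bounds for $L\mathcal C/\nu_0$, $\|c\|_\infty L/\nu_0$ and $H_0$, and not on the particular $\Phi$, $H$ and $c$. That way the rescaled problem, which has $\nu_0=1$ and coefficient bounds inflated by at most $L/\nu_0$, is covered by a single choice of $\delta$ and $M$. If necessary we will re-run that corollary with these enlarged bounds and take the resulting $M>1$.
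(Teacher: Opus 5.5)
Your proposal is correct and follows essentially the same route as the paper. You use the rescaling $u(x_0+Ry/2)/\kappa$ via Lemma \ref{Scaling} with $t_0=R/2$, $\rho_0=2$, $V=1/\kappa$, obtain the same bound $\|\hat h\|_{L^\infty(B_2)}\le H_0$, transfer the measure condition by the same change of variables, and then apply Corollary \ref{cor:measure-doubling}. Your explicit check that $\delta$ and $M$ are uniform over the rescaled admissible class, independently of $R$ and $\kappa$, is a point the paper leaves implicit.
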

	\begin{proof}
		We define $u_R(y):=u(x_0+Ry/2)/\kappa$, for all $y\in B_2$. Then $u_R\geq 0$ in $B_2$. By scaling arguments as in Lemma \ref{Scaling}, the function $u_R$ is a viscosity supersolution of the corresponding rescaled equation in $B_2$ with spatial scale $t_0 = R/2$, radius $\rho_0=2$, center $x_0$, $V=1/\kappa$, and the rescaled right-hand side satisfies
		\begin{align*}
			\displaystyle\|h_R\|_{L^\infty(B_2)}\leq\frac{L}{\nu_0}\kappa^{-(1+i(\Phi))}\left(\dfrac{R}{2}\right)^{2+i(\Phi)}\|h\|_{L^\infty(B_R(x_0))}\leq H_0.
		\end{align*}
		Moreover,
		\begin{align*}
			\{u_R>M\}\cap B_1 =\left\{y\in B_1:u(x_0+Ry/2)>\kappa M\right\},
		\end{align*}
		and therefore, by the change of variables $z=x_0+Ry/2$,
		\begin{align*}
			|\{u_R>M\}\cap B_1|
			=\left(\frac{2}{R}\right)^n |\{u>\kappa M\}\cap B_{R/2}(x_0)|
			>(1-\delta)|B_1|.
		\end{align*}
		Therefore, $u_R$ satisfies the assumptions of Corollary \ref{cor:measure-doubling} for the corresponding rescaled equation. We infer that $u_R>1$ in $B_1$. Scaling back, we conclude that $u>\kappa$ in $B_{R/2}(x_0)$.
	\end{proof}
	Combining Corollary \ref{cor:measure-doubling} with Lemma \ref{lem: Growing-ink-spots}, we obtain the $L^\varepsilon$ estimate.
	\begin{theorem}[\bf $L^\varepsilon$ estimate]
		\label{Theo:L-estimate}
		There exist constants $\varepsilon>0$ and $C>0$, such that if $u\in C(\overline{B_2})$ is a non-negative viscosity supersolution in $B_2$ of an equation of the form \eqref{eq:main}, whose coefficients satisfy assumptions {\upshape\hyperref[A1]{(A1)}--\hyperref[A4]{(A4)}}, with $c\leq0$, $\|h\|_{L^\infty(B_2)} \leq 1$,
		and $\inf\limits_{B_1} u < 1$. Then the following estimate holds
		\begin{align}\label{L-ep}
			|\{u > t\} \cap B_1| \leq Ct^{-\varepsilon}\mbox{ for all } t > 0.
		\end{align}
	\end{theorem}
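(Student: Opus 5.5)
The plan is the usual iteration on the superlevel sets of $u$, driven by Corollary \ref{cor:measure-doubling-scale} and the growing ink-spots lemma (Lemma \ref{lem: Growing-ink-spots}). Let $\delta$ and $M$ be the constants of Corollary \ref{cor:measure-doubling-scale}. It suffices to prove
\begin{align*}
|\{u>M^k\}\cap B_1|\le (1-\theta\delta)^k|B_1| \quad\text{for all } k\ge 1,
\end{align*}
with $\theta=\theta(n)$ from Lemma \ref{lem: Growing-ink-spots}. For a general $t>0$, pick $k$ with $M^k\le t<M^{k+1}$. This gives \eqref{L-ep} with $\varepsilon=-\log(1-\theta\delta)/\log M$ and a constant $C$ that absorbs the range $t\le M$.

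The proof of the claim is by induction on $k$, applying Lemma \ref{lem: Growing-ink-spots} with
\begin{align*}
E:=\{u>M^{k}\}\cap B_1,\qquad F:=\{u>M^{k-1}\}\cap B_1 .
\end{align*}
Both sets are open because $u$ is continuous, and $E\subset F$ since $M>1$.

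The second hypothesis of the lemma, $|E|\le(1-\delta)|B_1|$, follows from $\inf_{B_1}u<1$. Otherwise $|\{u>M^k\}\cap B_1|>(1-\delta)|B_1|$. Corollary \ref{cor:measure-doubling} applies, after rescaling by $M^{k-1}\ge1$ via Lemma \ref{Scaling} (equivalently, Corollary \ref{cor:measure-doubling-scale} with $R=2$, $x_0=0$, $\kappa=M^{k-1}$), and yields $u>M^{k-1}\ge1$ in $B_1$, a contradiction.

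For the first hypothesis, let $B=B_r(z)\subset B_1$ with $|B\cap E|>(1-\delta)|B|$. Apply Corollary \ref{cor:measure-doubling-scale} with $x_0=z$, $R=2r\le2$ and $\kappa=M^{k-1}$. This needs $u\ge0$ and the supersolution property in $B_{2r}(z)$. Since $B_{2r}(z)\subset B_3$ and not in general $\subset B_2$, I would work with balls $B\subset B_1$ and use that $u$ is a supersolution in $B_2$. The corollary is only needed on $B_{R}(x_0)\cap B_2$; alternatively one applies the ink-spots lemma in a slightly smaller ball, or notes that $B_{2r}(z)\subset B_2$ whenever $r\le 1/2$. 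For the remaining case $r>1/2$, the condition $|B\cap E|>(1-\delta)|B|$ forces $|E|$ to be a fixed fraction of $|B_1|$, and that case is handled by the same contradiction argument as above after possibly shrinking $\delta$. The scaling in Corollary \ref{cor:measure-doubling-scale} keeps $\|h\|\le1$ because $\kappa\ge1$ and $R\le2$. It then gives $u>M^{k-1}$ in $B$, so $B\subset F$.

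Lemma \ref{lem: Growing-ink-spots} now gives $|E|\le(1-\theta\delta)|F|$, which closes the induction. The base case $k=1$ uses $|F|\le|B_1|$.

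The main obstacle is this bookkeeping between balls $B\subset B_1$ and the doubled balls $B_{2r}(z)$ on which Corollary \ref{cor:measure-doubling-scale} requires the supersolution property. It must be checked that these doubled balls stay inside the region where $u$ is a nonnegative supersolution. My first attempt is to run the whole argument with $B_1$ replaced by $B_{1/2}$, where every admissible ball has its double inside $B_2$, and then recover $B_1$ by a covering argument combined with the scaling of Lemma \ref{Scaling}.
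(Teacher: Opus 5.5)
Your overall scheme matches the paper's proof: superlevel sets $\{u>M^k\}\cap B_1$, Corollary \ref{cor:measure-doubling-scale} for the first hypothesis of the ink-spots lemma, $\inf_{B_1}u<1$ together with Corollary \ref{cor:measure-doubling} for the second, and then turning geometric decay into a power law. However, the step you single out as the main obstacle rests on a false premise, and the patch you propose for it does not work. If $B_r(z)\subset B_1$, then $|z|+r\le 1$ and $r\le 1$, so $|z|+2r\le 1+r\le 2$. Hence $B_{2r}(z)\subset B_2$ for \emph{every} ball $B_r(z)\subset B_1$. Corollary \ref{cor:measure-doubling-scale} therefore applies directly with $x_0=z$, $R=2r\le 2$, $\kappa=M^{k-1}$, on a ball where $u$ is continuous, non-negative, a supersolution, and $\|h\|\le 1$. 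This one-line containment is exactly what the paper uses. No case split, no shrinking to $B_{1/2}$, and no covering are needed.

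As written, your handling of the case $r>1/2$ has a genuine gap. From $|B_r(z)\cap E|>(1-\delta)|B_r(z)|$ with $r>1/2$ you only get $|E|>(1-\delta)2^{-n}|B_1|$. That does not trigger Corollary \ref{cor:measure-doubling}, which needs $|E|>(1-\delta)|B_1|$, and shrinking $\delta$ cannot make up the factor $2^{-n}$.

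Your other two workarounds are also unsupported:
\begin{itemize}
\item The claim that the corollary is only needed on $B_R(x_0)\cap B_2$ is unjustified. Its proof runs the barrier comparison of Lemma \ref{lem:Doubling} on the entire rescaled annulus, so it needs the supersolution property on all of $B_R(x_0)$.
\item Working in $B_{1/2}$ and covering $B_1$ would require an extra argument. The hypothesis $\inf_{B_1}u<1$ gives a small value at one location only, and you would have to transport it to every covering ball.
\end{itemize}

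Replace all of this with the containment $B_{2r}(z)\subset B_2$ and the argument is complete. Your use of Corollary \ref{cor:measure-doubling-scale} with $R=2$, $\kappa=M^{k-1}$ for the second hypothesis is a valid alternative to the paper's monotonicity argument $A_k\subset A_1$.
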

	\begin{remark}
		We refer to \eqref{L-ep} as the $L^\varepsilon$ estimate, since after possibly reducing $\varepsilon>0$, it yields the inequality $\displaystyle\int_{B_1}u^\varepsilon(x)dx\leq C$, where $C$ depends only on the structural data.
	\end{remark}
	\begin{proof}
		In order to establish our desired result, it is sufficient to demonstrate the following inequality
		\begin{align}\label{26.1}
			|\{u > M^k\} \cap B_1| \leq \tilde{C}M^{-\varepsilon k},
		\end{align}
		where $M$ is the constant given in Corollary \ref{cor:measure-doubling-scale} and $\varepsilon > 0$ is a suitably chosen parameter.\\
		Let us define open sets $A_k := \{u > M^k\} \cap B_1$. Since $\inf\limits_{B_1} u < 1$, from Corollary \ref{cor:measure-doubling}, we deduce 
		\begin{align}
			\label{L-estimate-eq1}
			|A_1| \leq (1-\delta)|B_1|.
		\end{align}
		Because $A_k \subset A_1$ for all $k >1$, we also have $|A_k| \leq (1-\delta)|B_1|$ for all $k$. 
		Moreover, let $B_\rho(x_0)\subset B_1$ for all $0 < \rho \leq 1$, then $|x_0| + \rho \leq 1$, implying $|x_0| + 2\rho  \leq 1 + \rho \leq 2$. Therefore, we collect $B_{2\rho}(x_0)\subset B_2$. Assume that $|B_\rho(x_0)\cap A_{k+1}|>(1-\delta)|B_\rho(x_0)|$. On the other hand,
		\begin{align*}
			A_{k+1}=\{u>M^{k+1}\}\cap B_1=\{u>M\cdot M^k\}\cap B_1,
		\end{align*}
		we can apply Corollary \ref{cor:measure-doubling-scale} to the restriction of $u$ on $B_{2\rho}(x_0)$, 
		with $R=2\rho$ and $\kappa=M^k$. Hence, we obtain $u>M^k$ in $B_\rho(x_0)$. Therefore, $B_\rho(x_0)\subset A_k$.
		Using Lemma \ref{lem: Growing-ink-spots} implies that
		\begin{align*}
			|A_{k+1}|\leq (1-\theta\delta)|A_k|.
		\end{align*}
		Hence, by induction, $|A_k|\leq (1-\theta\delta)^{k-1}|A_1|$. Thus from \eqref{L-estimate-eq1} we obtain 
		\begin{align}\label{26.2}
			|A_k| \leq (1-\theta\delta)^{k-1}(1-\delta)|B_1|.
		\end{align}
		We rewrite this geometric decay in the form of a power law. Specifically, we define  $\varepsilon=-\frac{\log(1-\theta\delta)}{\log M}$. Then the inequality \eqref{26.2} can be rewritten as
		\begin{align*}
			|A_k| \leq (1-\theta\delta)^{-1}(1-\delta)|B_1| M^{-\varepsilon k} =: \tilde{C} M^{-\varepsilon k}.
		\end{align*}
		This proves the desired estimate \eqref{26.1}, which is the discrete form of the $L^\varepsilon$ estimate.\\
		Finally, let $t>0$. If $0<t<M$, then the estimate \eqref{L-ep} follows by choosing $C$ sufficiently large, because $|\{u>t\}\cap B_1|\leq |B_1|$. If $t \geq M$, choose $k \geq 1$ such that $M^k \leq t< M^{k+1}$. Then
		\begin{align*}
			\{u>t\}\cap B_1\subset \{u>M^k\}\cap B_1,
		\end{align*}
		and hence
		\begin{align*}
			|\{u>t\}\cap B_1|
			\le \tilde{C} M^{-\varepsilon k}
			\le \tilde{C} M^\varepsilon t^{-\varepsilon}.
		\end{align*}
		After replacing $C$ by a larger constant, the desired estimate follows for all $t>0$.
	\end{proof}
	The following interior version follows by repeating the proof of Theorem \ref{Theo:L-estimate}, with the growing ink-spots iteration localized to $B_{3/4}$. At each propagation step, we use only balls whose fixed enlargement required in Corollary \ref{cor:measure-doubling-scale} remains contained in $B_1$. Since the remaining argument is unchanged, we omit the details.
	\begin{corollary}[\bf Interior $L^\varepsilon$ estimate]
		\label{remark:interior_form}
		There exist constants $\varepsilon>0$ and $C>0$ such that if $u\in C(\overline{B_1})$ is a non-negative viscosity supersolution in $B_1$ of an equation of the form \eqref{eq:main}, whose coefficients satisfy assumptions {\upshape\hyperref[A1]{(A1)}--\hyperref[A4]{(A4)}}, with $c\leq0$, $\|h\|_{L^\infty(B_1)} \leq 1$,
		and $\inf\limits_{B_{1/2}} u < 1$. Then the following estimate holds
		\begin{align*}
			|\{u>t\}\cap B_{3/4}|\le C t^{-\varepsilon} \text{ for all }t>0.
		\end{align*}
	\end{corollary}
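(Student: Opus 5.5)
The plan is to rerun the proof of Theorem \ref{Theo:L-estimate} on $B_{3/4}$ rather than $B_1$, arranging that every application of the measure-to-pointwise step happens on balls whose enlargement stays inside $B_1$, where $u$ is a supersolution. Set $A_k:=\{u>M^k\}\cap B_{3/4}$. The role of $B_1$ in the growing ink-spots lemma (Lemma \ref{lem: Growing-ink-spots}) will be played by $B_{3/4}$; after the dilation $y\mapsto \tfrac43 y$, which maps $B_{3/4}$ onto $B_1$, the lemma applies verbatim with the same dimensional constant $\theta$.

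The first step is the initial bound $|A_1|\le(1-\delta')|B_{3/4}|$. To get it, I would invoke Corollary \ref{cor:measure-doubling-scale} on balls centered at the origin. Since $\inf_{B_{1/2}}u<1$, pick $x_1\in B_{1/2}$ with $u(x_1)<1$. Apply the corollary with $R=1$, $x_0=0$ and $\kappa=1$; this is allowed because $\|h\|_{L^\infty(B_1)}\le 1$. If $|\{u>M\}\cap B_{1/2}|>(1-\delta)|B_{1/2}|$, the corollary gives $u>1$ on $B_{1/2}$, which contradicts the choice of $x_1$. Hence $|\{u\le M\}\cap B_{1/2}|\ge\delta|B_{1/2}|$. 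Since $B_{1/2}\subset B_{3/4}$, this yields $|A_1|\le |B_{3/4}|-\delta|B_{1/2}|=(1-\delta(2/3)^n)|B_{3/4}|$. I then set $\delta':=\min\{\delta,\delta(2/3)^n\}$, so the same bound holds for every $A_k\subset A_1$.

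The second step checks the ink-spot hypothesis, and this is the main obstacle. Let $B_\rho(z)\subset B_{3/4}$ satisfy $|B_\rho(z)\cap A_{k+1}|>(1-\delta')|B_\rho(z)|$. We want to apply Corollary \ref{cor:measure-doubling-scale} with $R=2\rho$, but this requires $B_{2\rho}(z)\subset B_1$. That inclusion holds when $\rho\le 1/4$, because $|z|+2\rho\le 3/4+\rho\le 1$. For larger balls, $\rho>1/4$, the cleanest fix is to compare against $A_1$ directly. Such a ball has $|B_\rho|>(1/3)^n|B_{3/4}|$, and if additionally $\delta'\le \delta(2/3)^n\cdot 3^{-n}$, then the initial estimate prevents $A_{k+1}\subset A_1$ from filling a $(1-\delta')$-fraction of it. So the hypothesis is satisfied vacuously for large balls. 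I would therefore shrink $\delta'$ once more, to $\delta'\le\delta 2^n 9^{-n}$ or a similar explicit value, which keeps it depending only on structural data.

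For $\rho\le1/4$, the hypothesis also holds with $\delta$ in place of $\delta'$, since $\delta'\le\delta$. Corollary \ref{cor:measure-doubling-scale} with $\kappa=M^k$ then gives $u>M^k$ on $B_\rho(z)$, that is, $B_\rho(z)\subset A_k$. The ink-spots lemma now yields $|A_{k+1}|\le(1-\theta\delta')|A_k|$, and iterating gives $|A_k|\le \tilde C M^{-\varepsilon k}$ with $\varepsilon=-\log(1-\theta\delta')/\log M$. The last step passes from the discrete levels $M^k$ to all $t>0$ exactly as at the end of the proof of Theorem \ref{Theo:L-estimate}.
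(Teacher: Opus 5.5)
Your overall strategy (growing ink-spots on $B_{3/4}$, the initial density bound from Corollary~\ref{cor:measure-doubling-scale} with $R=1$, and propagation via the same corollary with $R=2\rho$ when $\rho\le 1/4$) matches the paper's sketch, and those steps are correct. The gap is in the case of balls $B_\rho(z)\subset B_{3/4}$ with $\rho>1/4$. There you claim the ink-spot hypothesis is vacuous, and this does not follow.

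Your initial estimate, $|\{u\le M\}\cap B_{1/2}|\ge\delta|B_{1/2}|$, bounds the size of the complement of $A_1$ but says nothing about where it lies. That set may be entirely disjoint from $B_\rho(z)$. For example, it could sit in $\{y\in B_{1/2}: y\cdot e_1<0\}$, while $B_{3/10}(\tfrac{9}{20}e_1)\subset B_{3/4}\cap\{y\cdot e_1>\tfrac{3}{20}\}$. Nothing in your measure bookkeeping then prevents $A_{k+1}$ from filling this ball. Comparing $\delta'|B_\rho|$ with $|B_{3/4}\setminus A_1|$ compares the wrong quantities. To rule out $|B_\rho(z)\setminus A_{k+1}|<\delta'|B_\rho(z)|$ you would need a lower bound on $|B_\rho(z)\setminus A_1|$. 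You only get one when $\rho$ is so close to $3/4$ that $|B_{3/4}\setminus B_\rho(z)|<\delta(2/3)^n|B_{3/4}|$.

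The repair is short and uses the smallness of $\delta'$ you already imposed. Suppose $\rho>1/4$ and $|B_\rho(z)\setminus A_{k+1}|<\delta'|B_\rho(z)|$. Take any ball $B'=B_{1/4}(z')\subset B_\rho(z)$. Then
\[
|B'\setminus A_{k+1}|\le |B_\rho(z)\setminus A_{k+1}|<\delta'(4\rho)^n|B'|\le 3^n\delta'|B'| .
\]
So $\delta'\le 3^{-n}\delta$ gives $|B'\cap A_{k+1}|>(1-\delta)|B'|$; your choice $\delta'\le\delta(2/9)^n$ satisfies this. Since $|z'|+1/4\le 3/4$, the enlargement satisfies $\overline{B_{1/2}(z')}\subset\overline{B_1}$. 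Corollary~\ref{cor:measure-doubling-scale} with $R=1/2$ and $\kappa=M^k$ then yields $u>M^k$ on $B'$.

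Every point $w\in B_\rho(z)$ lies in such a sub-ball. If $|w-z|\le\rho-\tfrac14$, take $z'=w$; otherwise take $z'=z+(\rho-\tfrac14)\tfrac{w-z}{|w-z|}$. Hence $B_\rho(z)\subset A_k$. The ink-spot hypothesis therefore holds non-vacuously for large balls, and the rest of your argument goes through unchanged.
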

	Finally, we will establish and prove the following Harnack inequality.
	\begin{theorem}[\bf Harnack inequality I]
		\label{Theo:Harnack}
		Assume that $c \leq 0$ and $u\in C(\overline{B_1})$ is a non-negative viscosity solution of \eqref{eq:main} in $B_1$, under the assumptions {\upshape\hyperref[A1]{(A1)}--\hyperref[A4]{(A4)}}. Then there exists a constant $C_H\geq1$, depending only on $n,\lambda,\Lambda,\nu_0,L,i(\Phi),s(\Phi),\mathcal{C}$, and $\|c\|_{L^\infty(B_1)}$ such that 
		\begin{align}
			\label{eq:Harnack_2}
			\displaystyle \sup_{B_{1/2}} u \leq C_H\left(\displaystyle \inf_{B_{1/2}}u +\|h\|_{L^\infty(B_1)}^{\frac{1}{i(\Phi)+1}} + 1\right).
		\end{align}
	\end{theorem}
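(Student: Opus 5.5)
The approach is a normalization followed by the two one-sided estimates already in hand: the interior $L^\varepsilon$ estimate (Corollary \ref{remark:interior_form}) bounds $u$ from below in measure, and the local Lipschitz estimate (Corollary \ref{local_estimate_(1.1)}) bounds the oscillation from above. The additive $+1$ and $\|h\|^{1/(i(\Phi)+1)}$ terms absorb the lack of amplitude invariance.

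\textbf{Step 1 (normalization).} Set
\begin{align*}
K:=\inf_{B_{1/2}}u+\Big(\tfrac{L}{\nu_0}\|h\|_{L^\infty(B_1)}\Big)^{\frac{1}{i(\Phi)+1}}+1\geq 1,\qquad \hat u:=u/K .
\end{align*}
By Lemma \ref{Scaling} with $t_0=1$ and $V=1/K\le1$, the function $\hat u$ is a nonnegative solution of an equation in the same admissible class. The new data satisfy $\hat c\le 0$, $\|\hat h\|_\infty\le 1$ (by \eqref{18.2} and the choice of $K$), $\|\hat c\|_\infty\le \frac{L}{\nu_0}\|c\|_\infty$, and a Hamiltonian constant $\le L\mathcal C/\nu_0$. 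Moreover $\inf_{B_{1/2}}\hat u\le 1-\tfrac{1}{K}\cdot 1<1$, with equality excluded. It therefore suffices to show $\sup_{B_{1/2}}\hat u\le C$ with $C$ depending only on the structural data. This gives \eqref{eq:Harnack_2} with $C_H$ absorbing the factor $(L/\nu_0)^{1/(i(\Phi)+1)}$.

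\textbf{Step 2 (measure bound).} Apply Corollary \ref{remark:interior_form} to $\hat u$ as a supersolution. This gives $|\{\hat u>t\}\cap B_{3/4}|\le Ct^{-\varepsilon}$ for all $t>0$.

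\textbf{Step 3 (sup bound).} Suppose $\hat u(x_0)=t$ is large at some $x_0\in B_{1/2}$. We argue by the standard Caffarelli--Cabré iteration, but use Lipschitz control in place of the subsolution measure lemma. Since $\hat u$ is a solution, Corollary \ref{local_estimate_(1.1)} after a further amplitude normalization gives the following. On a ball $B_r(x_0)\subset B_{3/4}$, consider $w:=\hat u/\hat M$ with $\hat M:=\max(1,\sup_{B_r(x_0)}\hat u)$. Then $w$ has $\|w\|_\infty\le1$ and $\|h_w\|\le 1$, so $w$ is Lipschitz on $B_{r/2}(x_0)$ with constant $C(r)$. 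Hence $\hat u\ge t/2$ on a ball $B_{\rho}(x_0)$ with $\rho\sim t/(\hat M C(r))$.

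To close this, I would first try the cleaner route. Fix $r=1/8$ and first bound $\hat M$ on $B_{5/8}$ via a contradiction and iteration argument in which the points $x_k$ satisfy $\hat u(x_k)\ge \nu^k t$ and $|x_{k+1}-x_k|\lesssim$ a geometric series. One then compares $|B_\rho|\gtrsim (t/\hat M)^n$ with the measure bound $Ct^{-\varepsilon}$ applied at level $t/2$. Concretely, the condition $|B_\rho(x_0)|\le C(t/2)^{-\varepsilon}$ forces either $t$ bounded or the existence of a point in $B_r(x_0)$ where $\hat u$ exceeds $\nu t$ for a fixed $\nu>1$. Iterating produces a sequence escaping to infinity inside $B_{3/4}$, which contradicts the continuity of $\hat u$ on $\overline{B_1}$. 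Hence $\sup_{B_{1/2}}\hat u\le C$.

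The \textbf{main obstacle} is Step 3. The Lipschitz constant of Corollary \ref{local_estimate_(1.1)} is only available after normalizing by the local supremum, and its dependence on the radius $r$ is not explicit. The radii in the iteration shrink, so I must arrange that the Lipschitz constant scales like $C/r$ under the rescaling of Lemma \ref{Scaling}. That rescaling keeps the data bounded since $t_0\le1$, so a fixed-radius Lipschitz bound transfers to $C/r$. With this, the geometric series $\sum r_k$ with $r_k\sim t_k^{-\varepsilon/n}$ converges, and the iteration stays inside $B_{3/4}$.
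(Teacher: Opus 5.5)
Your argument is correct. It uses the same three ingredients as the paper: the normalization by $V=K^{-1}$, the interior $L^\varepsilon$ estimate (Corollary~\ref{remark:interior_form}), and the unit-scale Lipschitz estimate (Corollary~\ref{local_estimate_(1.1)}) applied after the rescaling of Lemma~\ref{Scaling}. What differs is how you find a ball on which the local supremum is comparable to the value at its center.

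\textbf{The paper's route.} It takes the minimal $t$ with $\hat u\le g_t=t(3/4-|y|)^{-\beta}$ on $B_{3/4}$. At the touching point $z_\ast$ the barrier itself gives $\hat u\le 2\hat u(z_\ast)$ on $B_{\mu r_\ast}(z_\ast)$, so the Lipschitz estimate is used only once. The choice $\beta\ge n/\varepsilon$ makes the comparison of $(\mu\theta r_\ast)^n$ with $Ct^{-\varepsilon}(2r_\ast)^{\beta\varepsilon}$ uniform in $r_\ast$, which bounds $t$ directly.

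\textbf{Your route.} You get that local control by a dichotomy and iterate, Krylov--Safonov/Caffarelli--Cabr\'e style. Your fixed-$r=1/8$ version does not close, since steps of size $1/8$ need not stay inside $B_{3/4}$. The version in your last paragraph does close:
\begin{itemize}
\item Set $t_k=\hat u(x_k)$ and $r_k=K_0t_k^{-\varepsilon/n}$.
\item If $\hat M_k:=\sup_{B_{r_k}(x_k)}\hat u\le\nu t_k$, the Lipschitz bound for $\hat u(x_k+r_k\,\cdot)/\hat M_k$ on $B_{1/2}$ gives $\hat u\ge t_k-\nu t_kC_{zl}|y|$. Hence $\hat u>t_k/2$ on $B_{\rho_k}(x_k)$ with $\rho_k=r_k\min\{\tfrac{1}{2},\tfrac{1}{2\nu C_{zl}}\}$.
\item For $K_0$ large this contradicts $|\{\hat u>t_k/2\}\cap B_{3/4}|\le C2^{\varepsilon}t_k^{-\varepsilon}$.
\item So some $x_{k+1}\in B_{r_k}(x_k)$ has $t_{k+1}>\nu t_k$.
\item Since $\sum_k r_k\le K_0t_0^{-\varepsilon/n}/(1-\nu^{-\varepsilon/n})<1/8$ for $t_0$ large, the chain stays in $B_{5/8}$, and the growth $t_k\to\infty$ contradicts boundedness of $\hat u$.
\end{itemize}
The $1/r$ scaling you worried about is automatic. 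The corollary is always applied at unit scale, and Lemma~\ref{Scaling} with $V=\hat M_k^{-1}\le 1$, $t_0=r_k\le 1$ keeps the data in the admissible class.

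\textbf{One correction.} The corollary as stated needs $\|h_w\|_{L^\infty}\le\varepsilon_0$, not just $\le 1$ as you wrote. This follows from the factor $L\,r_k^{2+i(\Phi)}t_k^{-(1+i(\Phi))}$ once $t_k$ is large.

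\textbf{Trade-offs.} Your route needs no barrier or touching point, and any fixed $\nu>1$ works, because the Lipschitz estimate supplies the lower bound near $x_k$. The paper's route is a one-shot argument with no radius bookkeeping and an explicit constant $4^\beta C_\ast$.
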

	\begin{proof}
		Define a function $\hat{u}(y):=\displaystyle Vu(y)$. By Lemma \ref{Scaling}, with $x_0=0$, $\rho_0 = 1$, $t_0=1$, the function $\hat{u}$ is a viscosity solution of the rescaled equation associated with \eqref{eq:main}, namely 
		\begin{align*}
			\hat{\Phi}\left(y,|\nabla \hat{u}|\right)\hat{F}(D^2\hat{u}) -\hat{H}(y,\nabla \hat{u}) +\hat{c}(y)\hat{u}^{i(\Phi)+1} =\hat{h}(y) \mbox{ in }B_1,
		\end{align*}
		where
		\begin{align*}
			\begin{array}{ll}
				&\hat{\Phi}(y,t) =\displaystyle\frac{\Phi\left(y,\frac{t}{V}\right)}{\Phi\left(y,\frac{1}{V}\right)},~ \hat{F}(M) = \displaystyle VF\left(\frac{1}{V}M\right),\hat{H}(y,p) = \displaystyle\frac{V}{\Phi\left(y,\frac{1}{V}\right)}H\left(y,\frac{p}{V}\right),\\[15pt]
				&\hat{c}(y) = \displaystyle\frac{V^{-i(\Phi)}}{\Phi\left(y,\frac{1}{V}\right)}c(y),\text{ and } \hat{h}(y) = \displaystyle\frac{V}{\Phi\left(y,\frac{1}{V}\right)}h(y).\\
				&\textbf{}
			\end{array}
		\end{align*}
		Next, if one chooses
		\begin{align*}
			V = \left(\displaystyle\inf_{B_{1/2}}u +\left(\frac{L}{\nu_0}\|h\|_{L^\infty(B_1)} \right)^{\frac{1}{i(\Phi)+1}}+1\right)^{-1},
		\end{align*}
		then $0 < V \leq 1$ and we have
		\begin{align*}
			\inf_{B_{1/2}}\hat{u} = \frac{\displaystyle\inf_{B_{1/2}}u}{\displaystyle\inf_{B_{1/2}}u  +\left(\frac{L}{\nu_0}\|h\|_{L^\infty(B_1)}\right)^{\frac{1}{i(\Phi)+1}}+1}< 1.
		\end{align*}
		Additionally, the scaling estimate \eqref{18.2} from Lemma \ref{Scaling} gives us $\|\hat{h}\|_{L^\infty(B_1)} \leq 1$ and it ensures $\hat{\mathcal{C}}$ and $\|\hat{c}\|_{L^\infty(B_1)}$ are bounded in terms of the original structural data. Let $\beta > 0$ and define the function 
		\begin{align*}
			g_t(y) = t(3/4 - |y|)^{-\beta} \mbox{ in } B_{3/4}.
		\end{align*}
		We consider the minimum value of $t$ such that $g_t \geq \hat{u}$ in $B_{3/4}$. The proof aims to demonstrate that this minimum value cannot be arbitrarily large. Since the case $t \leq 1$ 
		is trivial, we shall focus on the case $t \geq 1$.\\
		Given that $t$ is the minimum value satisfying $g_t \geq \hat{u}$, there exists a point $z_\ast \in B_{3/4}$ where $g_t(z_\ast) = \hat{u}(z_\ast)$. Let $r_\ast = \left(3/4 - \left|z_\ast\right|\right)/2$, meaning that $2r_\ast$ corresponds to the distance from $z_\ast$ to $\partial B_{3/4}$. We then define
		\begin{align*}
			G_0 := g_t(z_\ast) = t(2r_\ast)^{-\beta} \geq 1.
		\end{align*}
		We intend to estimate the measure of the set $\{\hat{u} \geq G_0/2\} \cap B_{r_\ast}(z_\ast)$ in two different ways. A contradiction will arise if the minimum value $t$ is too large.
		We begin by establishing an upper bound for the measure. By 
		Corollary \ref{remark:interior_form}, applied to $\hat u$, since $B_{1/2}\Subset B_{3/4}\Subset B_1$, $\displaystyle\inf_{B_{1/2}}\hat{u}<1$ and $\|\hat{h}\|_{L^\infty(B_1)} \leq 1$, we obtain
		\begin{align}
			\label{Harnack:eq_1}
			\left| \left\{\hat{u} > G_0/2\right\} \cap B_{r_\ast}(z_\ast)\right| \leq \left| \left\{\hat{u} > G_0/2\right\} \cap B_{3/4} \right| \leq CG_0^{-\varepsilon} = Ct^{-\varepsilon}(2r_\ast)^{\beta \varepsilon}.
		\end{align}
		Next, we establish a lower bound. Fix a sufficiently large structural constant $\beta > 0$ and choose $\mu\in (0,1)$ sufficiently small structural constant satisfying
		\begin{align*}
			C\mu^{2+i(\Phi)}\leq\varepsilon_0,\mbox{ } A:=\left(\frac{2-\mu}{2}\right)^{-\beta}\leq 2, \mbox{ and }\beta \geq \frac{n}{\varepsilon},
		\end{align*}
		where $\varepsilon$ and $\varepsilon_0$ are the constants from Theorem \ref{Theo:L-estimate} and Corollary \ref{local_estimate_(1.1)}, respectively. For every $x\in B_1$, we have
		\begin{align*}
			\left|z_\ast+\mu r_\ast x\right|\leq |z_\ast|+\mu r_\ast = \frac{3}{4}-\left(2-\mu\right)r_\ast.
		\end{align*}
		As a result, $B_{\mu r_{*}}(z_{*})\subset B_{3/4}$. Since $\hat{u}\leq g_t$ in $B_{3/4}$, it follows that
		\begin{align*}
			\hat{u}(z_\ast+\mu r_\ast x)\leq t(2r_\ast-\mu r_\ast)^{-\beta}=A G_0 \mbox{ for all }x\in B_1.
		\end{align*}
		In fact, the maximum of $\hat{u}$ in the ball $B_{\mu r_\ast}(z_\ast)$ is bounded above by the maximum of $g_t$. Let us define
		\begin{align*}
			w(x):=\frac{\hat{u}(z_\ast+\mu r_\ast x)}{AG_0}~ \mbox{ in }B_1.
		\end{align*}
		Then $0\leq w\leq 1$ in $B_1$, and $w(0) = 1/A \geq 1/2$. 
		By Lemma \ref{Scaling}, applied with $x_0=z_\ast$, $\rho_0=1$, $t_0=\mu r_\ast$, $V=(AG_0)^{-1}\leq 1$, the function $w$ is a viscosity solution of a rescaled equation in $B_1$. Moreover, its rescaled coefficients satisfy
		\begin{align*}
			\mathcal C_w\le \frac{L}{\nu_0}(\mu r_\ast)^{2-m+i(\Phi)}\widehat{\mathcal{C}}, \mbox{ and } \|c_w\|_{L^\infty(B_1)}\le \frac{L}{\nu_0}(\mu r_\ast)^{2+i(\Phi)}\|\hat{c}\|_{L^\infty(B_1)}.
		\end{align*}
		Since $2-m+i(\Phi)\geq1$, $2+i(\Phi)>1$, and $\mu r_\ast\le1$, both $\mathcal C_w$ and $\|c_w\|_{L^\infty(B_1)}$ are bounded uniformly in terms of the original structural data. Furthermore, the rescaled right-hand side $h_w$ satisfies
		\begin{align*}
			\|h_w\|_{L^\infty(B_1)}\leq \frac{L}{\nu_0}(AG_0)^{-(1+i(\Phi))} (\mu r_\ast)^{2+i(\Phi)}\|\hat h\|_{L^\infty(B_1)}\leq C\mu^{2+i(\Phi)}.
		\end{align*}
		Hence, by the choice of $\mu$, we have $\|h_w\|_{L^\infty(B_1)} \leq \varepsilon_0$ with $0 < \varepsilon_0 <1$ and Corollary \ref{local_estimate_(1.1)} applies to $w$.
		Therefore, there exists $C_{zl}>0$ such that
		\begin{align*}
			[w]_{C^{0,1}(B_{1/2})}\leq C_{zl}.
		\end{align*}
		Choose $\theta:=\displaystyle\frac{1}{2}\min\left\{\frac{1}{2},\frac{1}{4C_{zl}}\right\}$. Then, for every $x\in B_\theta$,
		\begin{align*}
			w(x)\geq w(0)-C_{zl}|x| > \frac{1}{A}-\frac{1}{4} \geq \frac{1}{2A}. 
		\end{align*}
		It yields that
		\begin{align*}
			\hat{u}(z_\ast+\mu r_\ast x) = AG_0w(x)> \frac{G_0}{2} \text{ for all }x\in B_\theta,
		\end{align*}
		which leads to
		\begin{align*}
			\left|\{\hat{u}> G_0/2\}\cap B_{r_\ast}(z_\ast)\right| \geq |B_{\mu\theta r_\ast}(z_\ast)| =(\mu\theta)^n|B_{r_\ast}|.
		\end{align*}
		Combining this estimate with \eqref{Harnack:eq_1}, we have
		\begin{align*}
			(\mu\theta)^n|B_1|
			\leq C2^{\beta\varepsilon}t^{-\varepsilon}r_\ast^{\beta\varepsilon-n}.
		\end{align*}
		Since $\beta\geq n/\varepsilon$ and $0<r_\ast \leq 3/8 < 1$, we obtain $(\mu\theta)^n|B_1|
		\le C2^{\beta\varepsilon}t^{-\varepsilon}$. Hence $t\le C_\ast$, where $C_\ast>0$ depends only on the structural data so
		\begin{align*}
			\displaystyle \sup_{B_{1/2}}\hat{u} \leq t\left(\frac{3}{4} - \frac{1}{2}\right)^{-\beta} \leq 4^\beta C_\ast.
		\end{align*}
		Therefore, we arrive at
		\begin{align*}
			\displaystyle \sup_{B_{1/2}}u = V^{-1}\displaystyle \sup_{B_{1/2}}\hat{u} \leq 4^\beta C_\ast \left[\displaystyle\inf_{B_{1/2}}u +\left(\frac{L}{\nu_0}\|h\|_{L^\infty(B_1)}\right)^{\frac{1}{i(\Phi)+1}}+1\right].
		\end{align*}
		This completes the proof.
	\end{proof}
	\begin{remark}
		It is worth noting that the Harnack inequality established in Theorem \ref{Theo:Harnack} differs slightly from the classical Harnack inequality due to the appearance of an additional constant term on the right-hand side. However, we conclude by pointing out that Theorem \ref{Theo:Harnack}, as stated, is sharp in the sense that this additive constant cannot be removed under the assumptions {\upshape\hyperref[A1]{(A1)}--\hyperref[A4]{(A4)}}. To verify this assertion, let us consider the following explicit choices for $\Phi, F,$ and $H$:
		\begin{align*}
			\Phi(x, t) = 1,~F(M) = \mathrm{tr}(M), \mbox{ and }H(x, p) = |p|^{1/2},
		\end{align*}
		for all $x\in\mathbb{R}^n,~t\geq 0,~M\in\mathbb S^{n\times n}, ~p\in\mathbb{R}^n$. In this case, it is straightforward to check that these functions satisfy  assumptions {\upshape\hyperref[A1]{(A1)}--\hyperref[A3]{(A3)}}. 
		By setting $c = 0$ and $h = 0$, the equation \eqref{eq:main} can be rewritten as
		\begin{align}
			\label{eq:Couterexample_1}
			\Delta u - |\nabla u|^{1/2} = 0.
		\end{align}
		Let $u(x) = \frac{1}{12}(x_1)_+^3$. It is directly verified that $u \in C^2(\mathbb{R}^n)$ is a non-negative viscosity solution of \eqref{eq:Couterexample_1}. 
		Since $\displaystyle\inf_{B_{1/2}} u = 0$ and $\displaystyle\sup_{B_{1/2}} u > 0$, the estimate \eqref{eq:Harnack_2} is violated if there is not the additive constant term.
	\end{remark}
	Motivated by this sharpness, we observe that establishing a standard Harnack inequality, i.e., one without the additive constant term, requires stricter structural conditions. Specifically, by imposing the homogeneity requirements stipulated in assumption {\upshape\hyperref[A6]{(A6)}}, we can indeed eliminate this constant. Indeed, similar homogeneity assumptions have been employed in earlier literature to derive such standard Harnack-type estimates; see \cite{DFQ2010,BD2010-1}. This stronger condition, however, comes at the expense of generality, as the analysis is then restricted to the prototype case $\Phi(x, \xi) = |\xi|^p$; see Example \ref{Example-Phi}, and its natural generalizations of the form $b(x)|\xi|^p$, provided that the weight function $b$ satisfies appropriate regularity assumptions.\\
	\noindent\phantomsection\label{A6}\textbf{Assumption A6.} 
	\begin{enumerate}
		\item $F$ is positively homogeneous, i.e., $F(tM)=tF(M),\text{ for every }t>0,~M\in \mathbb{S}^{n\times n}$.
		\item $\Phi(x,t\xi)=t^{i(\Phi)}\Phi(x,\xi)$, for all $t,\xi>0,~x\in\mathbb{R}^n$.
		\item $H$ is positively $(i(\Phi)+1)$-homogeneous in $p$, that is, $H(x,tp) = t^{i(\Phi)+1} H(x,p)$ for all $t > 0$.
	\end{enumerate}	
	\begin{corollary}[\bf Harnack inequality II]
		\label{Theo:HarnackII}
		Assume that $c \leq 0$ and $u\in C(\overline{B_1})$ is a non-negative viscosity solution of \eqref{eq:main} in $B_1$, under the assumptions {\upshape\hyperref[A1]{(A1)}--\hyperref[A4]{(A4)}} and {\upshape\hyperref[A6]{(A6)}}. Then there exists a constant $C_H\geq1$, depending only on $n,\lambda,\Lambda,\nu_0,L,i(\Phi),s(\Phi),\mathcal{C}$, and $\|c\|_{L^\infty(B_1)}$ such that
		\begin{align}
			\label{eq:Harnack_II_2}
			\displaystyle \sup_{B_{1/2}} u \leq C_H\left(\displaystyle \inf_{B_{1/2}}u +\|h\|_{L^\infty(B_1)}^{\frac{1}{i(\Phi)+1}}\right).
		\end{align}
	\end{corollary}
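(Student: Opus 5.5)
The plan is to obtain Corollary \ref{Theo:HarnackII} from Theorem \ref{Theo:Harnack} by a scaling argument that uses the homogeneity in {\upshape\hyperref[A6]{(A6)}}. The additive constant $1$ in \eqref{eq:Harnack_2} reflects that the general equation is not invariant under $u\mapsto Vu$. Under (A6) it is, so we apply Theorem \ref{Theo:Harnack} to $Vu$ and let $V\to\infty$.

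Set $K:=\inf_{B_{1/2}}u+\|h\|_{L^\infty(B_1)}^{1/(i(\Phi)+1)}$. If $K>0$, fix $N\geq 1$ and define $\hat u:=N u/K$. By (A6), $\Phi(x,|\nabla\hat u|)=(N/K)^{i(\Phi)}\Phi(x,|\nabla u|)$, $F(D^2\hat u)=(N/K)F(D^2u)$ and $H(x,\nabla\hat u)=(N/K)^{i(\Phi)+1}H(x,\nabla u)$. The zero-order term also scales as $(N/K)^{i(\Phi)+1}$. Hence $\hat u$ is a nonnegative viscosity solution of the same equation, with the same $\Phi,F,H,c$, and with right-hand side $\hat h=(N/K)^{i(\Phi)+1}h$. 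The test functions transfer linearly, and the constant branch of Definition \ref{Def:Viscosity_solution} is preserved because both sides scale by the same factor. This is the unnormalized version of Lemma \ref{Scaling}, in which the division by $\Phi(\cdot,1/V)$ is replaced by homogeneity, so $\mathcal{C}$, $c$ and the constants in (A2) stay unchanged.

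Applying Theorem \ref{Theo:Harnack} to $\hat u$ gives
\[
\frac{N}{K}\sup_{B_{1/2}}u\le C_H\Big(\frac{N}{K}\inf_{B_{1/2}}u+\frac{N}{K}\|h\|_{L^\infty(B_1)}^{\frac{1}{i(\Phi)+1}}+1\Big),
\]
with $C_H$ independent of $N$. Multiplying by $K/N$ yields
\[
\sup_{B_{1/2}}u\le C_H\big(K+K/N\big).
\]
Letting $N\to\infty$ gives \eqref{eq:Harnack_II_2}. (Taking $N=1$ already gives the bound with constant $2C_H$.)

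If $K=0$, then $h\equiv0$ and $\inf_{B_{1/2}}u=0$. The same scaling with an arbitrary factor $V>0$ shows $V\sup_{B_{1/2}}u\le C_H$ for every $V$. Hence $\sup_{B_{1/2}}u=0$, and the inequality holds.

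The main point to check carefully is that $C_H$ in Theorem \ref{Theo:Harnack} depends only on $n,\lambda,\Lambda,\nu_0,L,i(\Phi),s(\Phi),\mathcal{C},\|c\|_{L^\infty}$, and not on the amplitude or on $h$. This requires that the rescaled equation satisfy (A1)--(A4) with identical constants. Under (A6) this is exact, since no quantities like $\Phi(x,1/V)$ enter, which is precisely what fails without homogeneity. One small detail is that (A3) demands $m\le i(\Phi)+1$; here $m=i(\Phi)+1$ is allowed, so the theorem applies.
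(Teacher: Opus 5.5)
Your proposal is correct and follows essentially the same route as the paper: use (A6) to see that a positive multiple of $u$ solves the same equation with $h$ multiplied by the corresponding power of the factor. You then apply Theorem \ref{Theo:Harnack}, divide back, and let the factor tend to infinity so the additive term vanishes. Normalizing by $\inf_{B_{1/2}}u+\|h\|_{L^\infty(B_1)}^{1/(i(\Phi)+1)}$, and the case split this forces, is harmless but unnecessary; the paper scales by an arbitrary constant, obtains $\sup_{B_{1/2}}u\le C_H\bigl(\inf_{B_{1/2}}u+\|h\|_{L^\infty(B_1)}^{1/(i(\Phi)+1)}\bigr)+C_H/K$, and handles all cases at once.
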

	\begin{proof}
		Define $v(x):=Ku(x)$ for any $x \in B_1$ and $K > 0$. 
		Thanks to the assumption {\upshape\hyperref[A6]{(A6)}}, we arrive at
		\begin{align}
			\label{eq:Harnack_II_1}
			\begin{split}
				&\Phi(x,|\nabla v|)F(D^2v) - H(x,\nabla v) + c(x)v^{i(\Phi)+1}\\
				&= K^{i(\Phi)+1} \left(\Phi(x,|\nabla u|)F(D^2u) - H(x,\nabla u) + c(x)u^{i(\Phi)+1}\right)\\
				&=\tilde{h}(x),
			\end{split}
		\end{align}
		where $\tilde{h}(x) = K^{i(\Phi)+1}h(x)$. It means that if $u$ is a non-negative viscosity solution of \eqref{eq:main} then $v$ is a non-negative viscosity solution of \eqref{eq:Harnack_II_1} for all $K>0$. Applying Harnack inequality I - Theorem \ref{Theo:Harnack} for $v$, we deduce
		\begin{align*}
			\sup_{B_{1/2}}{v}\le C_H\left(\inf_{B_{1/2}}v+\|\tilde{h}\|_{L^\infty(B_1)}^{\frac{1}{i(\Phi)+1}} + 1\right).
		\end{align*}
		It yields that 
		\begin{align}\label{29.1}
			\displaystyle \sup_{B_{1/2}} u \leq C_H\inf_{B_{1/2}}u +C_H\|h\|_{L^\infty(B_1)}^{\frac{1}{i(\Phi)+1}} + \dfrac{C_H}{K}.
		\end{align}
		Letting $K \to \infty$ in \eqref{29.1}, then we conclude that
		\begin{align*}
			\displaystyle \sup_{B_{1/2}} u \leq C_H\left(\inf_{B_{1/2}}u + \|h\|_{L^\infty(B_1)}^{\frac{1}{i(\Phi)+1}}\right).
		\end{align*}
		This completes the proof.
	\end{proof}
	Finally, we state a scaled version of Corollary \ref{Theo:HarnackII}.
	\begin{lemma}[\bf Scaled Harnack inequality]
		\label{cor:scaled-Harnack}
		Let $c \leq 0$, $x_0\in\mathbb{R}^n$, and $0 < R \leq 1$. Suppose that $u\in C(\overline{B_{R}(x_0)})$ is a non-negative viscosity solution of \eqref{eq:main} in $B_R(x_0)$, under the assumptions {\upshape\hyperref[A1]{(A1)}--\hyperref[A4]{(A4)}} and {\upshape\hyperref[A6]{(A6)}}. There exists a constant $C_H\geq1$, depending only on $n,\lambda,\Lambda,\nu_0,L,i(\Phi),s(\Phi),\mathcal{C}$, and $\|c\|_{L^\infty(B_{R}(x_0))}$ such that
		\begin{align*} 
			\sup_{B_{R/2}(x_0)}u\leq C_H\left(\inf_{B_{R/2}(x_0)}u+ R^{\frac{2+i(\Phi)}{i(\Phi)+1}} \|h\|_{L^\infty(B_{R}(x_0))}^{\frac{1}{i(\Phi)+1}}\right).
		\end{align*}
	\end{lemma}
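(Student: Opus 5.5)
The plan is to reduce to Corollary \ref{Theo:HarnackII} by the affine change of variables $T(y)=x_0+\tfrac{R}{1}y$ mapping $B_1$ onto $B_R(x_0)$. Set $\tilde u(y):=u(x_0+Ry)$ for $y\in B_1$. Then $\tilde u\in C(\overline{B_1})$ is non-negative, and the substitution should show that it is a viscosity solution in $B_1$ of
\begin{align*}
\tilde\Phi(y,|\nabla\tilde u|)\tilde F(D^2\tilde u)-\tilde H(y,\nabla\tilde u)+\tilde c(y)\tilde u^{i(\Phi)+1}=\tilde h(y),
\end{align*}
with $\tilde\Phi(y,t):=\Phi(x_0+Ry,t)$, $\tilde F:=F$, $\tilde H(y,p):=H(x_0+Ry,p)$, $\tilde c(y):=R^{2+i(\Phi)}c(x_0+Ry)$ and $\tilde h(y):=R^{2+i(\Phi)}h(x_0+Ry)$.

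To derive these coefficients, test functions are transferred through $T$. The relations $\nabla\tilde u=R\nabla u\circ T$ and $D^2\tilde u=R^2D^2u\circ T$ hold in the sense of test functions. Applying (A6), the term $\Phi(x,|\nabla u|)F(D^2u)$ equals $R^{-i(\Phi)-2}\Phi(x,|\nabla\tilde u|)F(D^2\tilde u)$, and $H(x,\nabla u)$ equals $R^{-i(\Phi)-1}H(x,\nabla\tilde u)$. The Hamiltonian term therefore carries a different power of $R$ from the principal term, so I do not absorb it exactly. Instead, multiplying the equation by $R^{2+i(\Phi)}$ gives $\tilde H(y,p)=R\,H(x_0+Ry,p)$. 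This still satisfies (A3) and (A6)$_3$, with constant $R\mathcal{C}\le\mathcal{C}$ since $R\le1$.

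Next I check that $\tilde c\le0$ and $\|\tilde c\|_\infty\le\|c\|_{L^\infty(B_R(x_0))}$. The structural constants $\lambda,\Lambda,\nu_0,L,i(\Phi),s(\Phi)$ are unchanged, and the constant branch of Definition \ref{Def:Viscosity_solution} is preserved since $\tilde u$ is locally constant exactly where $u$ is. Because the constant $C_H$ of Corollary \ref{Theo:HarnackII} is monotone in the data $\mathcal{C}$ and $\|c\|_\infty$, or can be taken uniform over the admissible class with bounds $\le\mathcal{C}$ and $\le\|c\|_\infty$, the same $C_H$ applies. This yields
\begin{align*}
\sup_{B_{1/2}}\tilde u\le C_H\left(\inf_{B_{1/2}}\tilde u+\|\tilde h\|_{L^\infty(B_1)}^{\frac{1}{i(\Phi)+1}}\right).
\end{align*}

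To conclude, I use that $\sup_{B_{1/2}}\tilde u=\sup_{B_{R/2}(x_0)}u$, that the same holds for the infimum, and that $\|\tilde h\|^{1/(i(\Phi)+1)}=R^{\frac{2+i(\Phi)}{i(\Phi)+1}}\|h\|_{L^\infty(B_R(x_0))}^{\frac{1}{i(\Phi)+1}}$. These three identities give the claimed estimate.

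The main obstacle is the mismatch in scaling between the Hamiltonian and the principal part, together with the uniformity of $C_H$ over the rescaled class. I plan to handle it by the observation above that $R\le1$ only shrinks the Hamiltonian constant. I will also note that the proof of Corollary \ref{Theo:HarnackII}, and of Theorem \ref{Theo:Harnack} on which it rests, uses only upper bounds on $\mathcal{C}$ and $\|c\|_\infty$, so its constant is uniform for data bounded by the original ones.
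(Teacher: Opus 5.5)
Your proposal is correct and matches the paper's proof. The paper uses the same rescaling $u(x_0+Ry)$ with the same coefficients, namely $\Phi(x_0+Ry,t)$, $F$, $R\,H(x_0+Ry,p)$, $R^{2+i(\Phi)}c$ and $R^{2+i(\Phi)}h$, then applies Corollary \ref{Theo:HarnackII} and scales back; your added remark that $R\le 1$ only shrinks $\mathcal{C}$ and $\|c\|_{L^\infty}$, so $C_H$ stays uniform, makes explicit a point the paper leaves implicit.
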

	\begin{proof}
		Define $v(y):=u(x_0+Ry)$ for any $y \in B_1$. 
		By assumption {\upshape\hyperref[A6]{(A6)}} and an analogous scaling argument in Lemma \ref{Scaling}, we also obtain that $v$ is a non-negative viscosity solution of
		\begin{align*}
			\Phi_s(y,|\nabla v|)F_s(D^2v) - H_s(y,\nabla v) + c_s(y)v^{i(\Phi)+1}= h_s(y)\mbox{ in }B_1,
		\end{align*}
		where
		\begin{align*}
			&\Phi_s(y,t) = \Phi(x_0+Ry,t),~F_s(M) = F(M),~H_s(y,p) = RH(x_0+Ry, p),\\
			&c_s(y) = R^{i(\Phi)+2}c(x_0 + Ry), \mbox{ and }h_s(y) = R^{i(\Phi)+2}h(x_0+Ry).
		\end{align*}
		Therefore, we may apply Corollary \ref{Theo:HarnackII} to $v$ and obtain
		\begin{align*}
			\sup_{B_{1/2}}v\leq C_H\left(\inf_{B_{1/2}}v+\|h_s\|_{L^\infty(B_1)}^{\frac{1}{1+i(\Phi)}}\right).
		\end{align*}
		By scaling back, we conclude that
		\begin{align*}
			\sup_{B_{R/2}(x_0)}u\leq C_H\left(\inf_{B_{R/2}(x_0)}u+R^{\frac{2+i(\Phi)}{i(\Phi)+1}}\|h\|_{L^\infty(B_R(x_0))}^{\frac{1}{1+i(\Phi)}}\right),
		\end{align*}
		as claimed.
	\end{proof}
	\section{\bf Dirichlet principal eigenvalue problem}\label{section: 5}
	This section is devoted to the Dirichlet principal eigenvalue problem for the homogeneous operator, under the assumption that $\Omega$ is a bounded $C^2$ domain.
	
	To the best of our knowledge, the standard qualitative properties of generalized principal eigenvalues are not available at this level of generality without a suitable structural homogeneity assumption; we refer the reader to \cite{BD2007} and the references therein for further details. Consequently, we impose an additional assumption on $\Phi, F,$ and $H$ to ensure the homogeneity of the operator $\mathcal{G}$, defined in \eqref{mathcal_G}; see assumption {\upshape\hyperref[A6]{(A6)}}. 
	\begin{definition}\label{pre}
		We define the generalized principal eigenvalues of the operator $\mathcal{G}$ as follows
		\begin{align*}
			\beta^+=\sup\{\beta\in\mathbb{R}:\exists\psi\in C(\overline{\Omega}),&\mathcal{G}(x,\psi,\nabla \psi,D^2\psi)+\beta\psi^{i(\Phi)+1}\le 0\text{ in }\Omega,\\
			&\text{ and } \psi>0\text{ in }\overline{\Omega}\},
		\end{align*}
		\begin{align*}
			\beta^-=\sup\{\beta\in\mathbb{R}:\exists\psi\in C(\overline{\Omega}),&\mathcal{G}(x,\psi,\nabla \psi,D^2\psi)+\beta|\psi|^{i(\Phi)}\psi\geq 0\text{ in }\Omega,\\
			&\text{ and }\psi<0\text{ in }\overline{\Omega}\}.
		\end{align*}
	\end{definition}
	\begin{remark}
		When both $F$ and $H$ are odd functions, one can follow arguments similar to those in \cite{AB2008} to show that $\beta^+=\beta^-$. In general, however, these two quantities may differ; see \cite[Section 2]{AB2008} for examples in this direction. Additionally, Definition \ref{pre} is similar to the one considered in \cite{BD2006}.
	\end{remark}
	
	\begin{proposition} Suppose the assumptions {\upshape\hyperref[A1]{(A1)}--\hyperref[A6]{(A6)}} hold. Then we have
		\begin{enumerate}[\upshape(i)]
			\item \phantomsection\label{eigen_result_1}  $\beta^+>-\infty$. That means there exists $\beta\in\mathbb{R}$ such that the equation
			\begin{align}\label{17.1}
				\Phi(x,|\nabla u|)F(D^2u)-H(x,\nabla u)+(c(x)+\beta)u^{i(\Phi)+1}=0\text{ in }\Omega
			\end{align}
			admits a positive supersolution.
			\item \phantomsection\label{eigen_result_2}  $\beta^->-\infty$. That means there exists $\beta\in\mathbb{R}$ such that the equation
			\begin{align}\label{17.2}
				\Phi(x,|\nabla u|)F(D^2u)-H(x,\nabla u)+(c(x)+\beta)|u|^{i(\Phi)}u=0\text{ in }\Omega
			\end{align}
			admits a negative subsolution.
		\end{enumerate}
	\end{proposition}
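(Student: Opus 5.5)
To show $\beta^+>-\infty$ it suffices to exhibit a single $\beta\in\mathbb{R}$ and a function $\psi\in C(\overline{\Omega})$ with $\psi>0$ in $\overline{\Omega}$ such that $\psi$ is a viscosity supersolution of \eqref{17.1}. The plan is to use an explicit positive smooth function with nonvanishing gradient and then absorb every term into a very negative $\beta$. Concretely, I fix a point $x^\ast\in\mathbb{R}^n$ with $\mathrm{dist}(x^\ast,\overline{\Omega})\ge 1$ and set
\begin{align*}
\psi(x):=A-|x-x^\ast|^2 \quad\text{or, more conveniently,}\quad \psi(x):=1+e^{-\kappa\langle e,x\rangle}\text{-type profiles},
\end{align*}
but the simplest choice is $\psi(x)=K_2\bigl(r_1^\alpha-|x-y_z|^\alpha\bigr)+1$, namely the exterior-sphere barrier \hyperref[barrier_eigenvalue]{(B2)} of Lemma \ref{Lem:barrier-construction} shifted up by a positive constant. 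This $\psi$ is $C^2$ in a neighborhood of $\overline{\Omega}$ (since $y_z\notin\overline{\Omega}$), satisfies $\psi\ge 1$ on $\overline{\Omega}$, and has $|\nabla\psi|=K_2|\alpha||x-y_z|^{\alpha-1}$, which is bounded above and below by positive constants on $\overline{\Omega}$. Because the gradient never vanishes, the constant branch of Definition \ref{Def:Viscosity_solution} is never triggered, and the supersolution property can be checked classically, pointwise.

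Next I bound the operator. On $\overline{\Omega}$ the quantities $\Phi(x,|\nabla\psi|)F(D^2\psi)$ and $H(x,\nabla\psi)$ are bounded by (A1)--(A3), using $\nu_0\le\Phi(x,1)\le\nu_1$ and the almost-monotonicity in (A2) together with the two-sided bound on $|\nabla\psi|$. The term $c\,\psi^{i(\Phi)+1}$ is bounded since $c\in L^\infty$ and $\psi$ is bounded. Hence
\begin{align*}
\Phi(x,|\nabla\psi|)F(D^2\psi)-H(x,\nabla\psi)+c(x)\psi^{i(\Phi)+1}\le C_\ast \quad\text{in }\Omega .
\end{align*}
Since $\psi^{i(\Phi)+1}\ge 1$, choosing $\beta:=-C_\ast$ gives $\mathcal{G}(x,\psi,\nabla\psi,D^2\psi)+\beta\psi^{i(\Phi)+1}\le 0$, so $\beta$ lies in the admissible set and $\beta^+\ge -C_\ast>-\infty$. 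Alternatively, (B2) already provides the sign $F(D^2\psi)<0$, but that refinement is not needed here.

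For (ii) I take $\psi^-:=-\psi$. Negative subsolutions of \eqref{17.2} correspond, via $\widetilde F(M):=-F(-M)$ and $\widetilde H(x,p):=-H(x,-p)$ as in Step 2 of Theorem \ref{theo:ABP_estimate}, to positive supersolutions of the transformed problem. The transformed data still satisfy (A1)--(A3) and (A6), and $|\psi|^{i(\Phi)}\psi$ is odd, so the same bound gives a $\beta$ for which $\psi^-$ is a negative subsolution. The only point needing care, and the step I expect to be the main obstacle, is the uniform control of $\Phi(x,|\nabla\psi|)$ from above in the singular range $i(\Phi)<0$. This control holds because $|\nabla\psi|$ is bounded away from $0$ on the compact set $\overline{\Omega}$, but it must be stated explicitly.
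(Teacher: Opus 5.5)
Your argument is correct, but it takes a heavier route than the paper. The paper takes $\psi\equiv 1$ (resp.\ $\psi\equiv -1$) with $\beta=-\|c\|_{L^\infty(\Omega)}$. Because $\psi$ is locally constant, only the constant branch of Definition \ref{Def:Viscosity_solution} has to be checked, and that branch reduces to $c(x)+\beta\le 0$. So $\Phi$, $F$, $H$ and the geometry of $\Omega$ never enter, and one gets the explicit bound $\beta^\pm\ge -\|c\|_{L^\infty(\Omega)}$.

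You instead choose a $C^2$ function with nonvanishing gradient, so the constant branch is never invoked and the check goes through the nonzero-gradient branch classically. That choice is what creates the ``obstacle'' you flag: you need two-sided control of $|\nabla\psi|$ to bound $\Phi(x,|\nabla\psi|)$ in the singular range, plus the growth bounds from (A2)--(A3). It also yields only a non-explicit, domain-dependent lower bound $-C_\ast$, and you should take $C_\ast\ge 0$ so that $-C_\ast\psi^{i(\Phi)+1}\le -C_\ast$.

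What your route buys is independence from the special convention for locally constant functions. It would therefore survive under a definition of viscosity solution that treats zero-gradient points differently.

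Even within your approach, the exterior-sphere barrier (B2) is unnecessary. An affine function $\psi(x)=A+\langle e,x\rangle$ with $|e|=1$ and $A>\max_{\overline{\Omega}}|x|$ is positive on $\overline{\Omega}$, has nonvanishing gradient, and satisfies $F(D^2\psi)=F(0)=0$. The bound on $\mathcal{G}$ is then immediate, with no regularity of $\partial\Omega$ needed.
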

	\begin{proof}
		\textbf{}
		
		\textit{Step 1: Proof of \upshape\hyperref[eigen_result_1]{(i)}.} It is clear that $\beta^+\geq -\|c\|_{L^{\infty}(\Omega)}$, since $\psi\equiv 1$ is a positive supersolution to the equation \eqref{17.1} with $\beta=-\|c\|_{L^{\infty}(\Omega)}$.
		
		\textit{Step 2: Proof of \upshape\hyperref[eigen_result_2]{(ii)}.} Similarly, it is easy to check that $\beta^-\geq  -\|c\|_{L^{\infty}(\Omega)}$, with $\psi\equiv -1$ being the negative subsolution to the equation \eqref{17.2} for $\beta=-\|c\|_{L^{\infty}(\Omega)}$.
	\end{proof}
	\begin{lemma}\label{dist}
		Assume {\upshape\hyperref[A1]{(A1)}--\hyperref[A6]{(A6)}} hold. Let $u\in C(\overline\Omega)$ be a positive viscosity solution of
		\begin{align*}
			\Phi(x,|\nabla u|)F(D^2u)-H(x,\nabla u)+c(x)u^{i(\Phi)+1}=h(x)\text{ in }\Omega,
		\end{align*}
		and $u=0$ on $\partial\Omega$, we have the following estimate for all $x\in\Omega$ 
		\begin{align*}
			u(x)\le C\mathrm{dist}(x,\partial\Omega),
		\end{align*}
		for some constant $C$ depending on $\|u\|_{L^\infty(\Omega)},n,\lambda,\Lambda,L,\nu_0,i(\Phi),\mathcal{C},\|c\|_{L^{\infty}(\Omega)},$ and $\|h\|_{L^{\infty}(\Omega)}$.
	\end{lemma}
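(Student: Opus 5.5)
The plan is to compare $u$, at each boundary point, with a translate of the exterior-sphere barrier \hyperref[barrier_eigenvalue]{(B2)} of Lemma \ref{Lem:barrier-construction}. Since $\Omega$ is a bounded $C^2$ domain, it satisfies a uniform exterior sphere condition with some radius $r_0>0$. Fix $r_1\in(0,r_0\wedge1)$. For $x\in\Omega$, let $z\in\partial\Omega$ be a nearest boundary point. Let $y_z$ be the center of the exterior ball $\overline{B_{r_1}(y_z)}$, which touches $\overline\Omega$ only at $z$. Then $y_z$ lies on the line through $x$ and $z$, and $|x-y_z|=r_1+\mathrm{dist}(x,\partial\Omega)$.

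Consider $W:=\chi^z_{\mathrm{ext}}=K_2\bigl(r_1^\alpha-|\cdot-y_z|^\alpha\bigr)$. By the computation in Substep 2.1, once $\alpha<0$ is very negative and $K_2$ is large, $W$ satisfies
\[
\Phi F(D^2W)-H(x,\nabla W)+c\,W^{i(\Phi)+1}\le-\|h\|_{L^\infty}-1 .
\]
To dominate $u$ on all of $\Omega$, $K_2$ must be enlarged further. On the set where $\mathrm{dist}(x,\partial\Omega)\ge\delta_0$, we need $W\ge\|u\|_{L^\infty(\Omega)}$. On the thin layer near $\partial\Omega$, the comparison argument does the work.

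Two issues arise, and I will handle them as follows.

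\textbf{Term $c\,W^{i(\Phi)+1}$.} The barrier estimate in Substep 2.1 absorbs $\|c\|_\infty K_2^{i(\Phi)+1}(r_1^\alpha-R^\alpha)^{i(\Phi)+1}$ into the leading negative term only with a constant that is uniform in $K_2$. This works because every term scales like $K_2^{i(\Phi)+1}$. Under (A6) the operator is exactly $(i(\Phi)+1)$-homogeneous, so $K_2W_1$ is a strict supersolution with the right-hand side multiplied by $K_2^{i(\Phi)+1}$. Here $W_1$ is the barrier with $K_2=1$, and the negative leading term dominates once $\alpha$ is negative enough, depending on $\mathcal C$ and $\|c\|_\infty$.

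\textbf{Comparison with $c$ of arbitrary sign.} To compare $u$ with $W$, I will use Comparison principle I, Lemma \ref{Comp.Prin1}, with the roles arranged so that the smooth function is $W$. This is legitimate because $W\in C^2$ and $|\nabla W|>0$. However, that lemma requires $c\le0$, whereas here $c$ is merely bounded. I will move $c^+u^{i(\Phi)+1}$ to the right-hand side. Since $0<u\le\|u\|_\infty$, $u$ is then a subsolution of the equation with $c$ replaced by $-c^-$ and $h$ replaced by
\[
\tilde h:=h-c^+u^{i(\Phi)+1},\qquad \|\tilde h\|_\infty\le\|h\|_\infty+\|c\|_\infty\|u\|_\infty^{i(\Phi)+1}.
\]
The barrier is built for $-c^-\le0$ and this $\tilde h$, giving a strict inequality, so case (b) of Lemma \ref{Comp.Prin1} applies.

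The comparison is carried out on the layer $\Omega_{\delta}=\{x\in\Omega:|x-y_z|<r_1+\delta\}$. On its boundary we have $u=0\le W$ on $\partial\Omega$, and on the sphere $|x-y_z|=r_1+\delta$ we have $W\ge K_2(r_1^\alpha-(r_1+\delta)^\alpha)\ge\|u\|_\infty$ after enlarging $K_2$. Alternatively, comparing on all of $\Omega$ avoids the layer. The conclusion is $u\le W$ in $\Omega$.

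\textbf{Conclusion.} Evaluating at $x$ with $d=\mathrm{dist}(x,\partial\Omega)$, convexity of $t\mapsto t^\alpha$ gives
\[
u(x)\le K_2\bigl(r_1^\alpha-(r_1+d)^\alpha\bigr)\le K_2|\alpha|r_1^{\alpha-1}d .
\]
The constants $r_1,\alpha,K_2$ are uniform in $z$, which proves the estimate.

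The main obstacle is making sure the strictness and sign conditions of Lemma \ref{Comp.Prin1} genuinely hold. If strictness fails, I would add $1$ to the right-hand side, as the construction in Substep 2.1 already provides.
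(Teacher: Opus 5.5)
Your proof is correct and uses the same mechanism as the paper. The ingredients are the exterior-sphere barrier $\chi^z_{\mathrm{ext}}$ of (B2), together with the bound $0<u\le\|u\|_{L^\infty(\Omega)}$, which turns the reaction term into a bounded source so that the zeroth-order coefficient becomes nonpositive.

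The paper implements this as follows:
\begin{itemize}
\item it writes $(c-k\|c\|_{L^\infty(\Omega)})u^{i(\Phi)+1}\ge h-k^{i(\Phi)+2}\|c\|_{L^\infty(\Omega)}$ with $k>\max\{1,\|u\|_{L^\infty(\Omega)}\}$;
\item it works in the boundary layer $\{r_1<|x-y_z|<r_1+\delta\}$ of Substep 2.2;
\item it enlarges $K_2$ so that $\chi^z_{\mathrm{ext}}\ge k$ on the inner sphere;
\item it invokes Comparison principle II (Lemma \ref{Comp.Prin}), which relies on (A5).
\end{itemize}

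Your variant replaces $c$ by $-c^-$ and $h$ by $\tilde h=h-c^+u^{i(\Phi)+1}$. It then builds the global barrier of Substep 2.1 with right-hand side $-\|\tilde h\|_{L^\infty}-1$ and compares on all of $\Omega$ through case (b) of Lemma \ref{Comp.Prin1}. This route is slightly leaner:
\begin{itemize}
\item it needs neither (A5) nor the layer, since $W\ge 0=u$ on $\partial\Omega$ is all that is required;
\item the $C^2$ barrier with $|\nabla W|>0$ is exactly what Lemma \ref{Comp.Prin1} asks for;
\item the extra $-1$ secures strictness even when $c\equiv0$, a case in which the paper's barrier inequality is not strict.
\end{itemize}
Your nearest-point evaluation $u(x)\le K_2\bigl(r_1^\alpha-(r_1+d)^\alpha\bigr)\le K_2|\alpha|r_1^{\alpha-1}d$ also makes explicit the step the paper compresses into ``since $z$ is arbitrary.''

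One caution: delete the paragraph on the term $c\,W^{i(\Phi)+1}$. After your substitution $c\mapsto -c^-$ that term is nonpositive and needs no absorption. Moreover, the claim made there is false for a positive coefficient. At a point with $|x-y_z|=\varrho$, the leading negative term is of order $K_2^{i(\Phi)+1}|\alpha|^{i(\Phi)+2}\varrho^{(\alpha-1)(i(\Phi)+1)-1}$. A positive reaction term can be as large as $\|c\|_{L^\infty}K_2^{i(\Phi)+1}\bigl(r_1^\alpha-\varrho^\alpha\bigr)^{i(\Phi)+1}\approx\|c\|_{L^\infty}K_2^{i(\Phi)+1}r_1^{\alpha(i(\Phi)+1)}$. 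So for $\varrho$ bounded away from $r_1$, making $\alpha$ more negative worsens the ratio by a factor of order $(\varrho/r_1)^{|\alpha|(i(\Phi)+1)}$. For a large positive constant $c$, no such global barrier exists at all, by the principal-eigenvalue obstruction.

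Likewise, enlarging $K_2$ so that $W\ge\|u\|_{L^\infty(\Omega)}$ away from $\partial\Omega$ is superfluous once you compare on the whole domain.
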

	\begin{proof}
		Choose $k>\max\{1,\|u\|_{L^\infty(\Omega)}\}$. Then we readily obtain that
		\begin{align*}
			&\Phi(x,|\nabla u|)F(D^2u)-H(x,\nabla u)+(c(x)-k\|c\|_{L^{\infty}(\Omega)})u^{i(\Phi)+1}\\
			&\geq h-k^{i(\Phi)+2}\|c\|_{L^{\infty}(\Omega)}.
		\end{align*}
		Fix $z\in\partial\Omega$ and let $B_{r_1}(y_z)$ be the ball that touches $\Omega$ from outside at the point $z$. Recalling Lemma \ref{Lem:barrier-construction} for barrier function \hyperref[barrier_eigenvalue]{\upshape(B2)}, we have
		\begin{align*}
			&\Phi(x,|\nabla \chi^z_{\mathrm{ext}}|)F(D^2\chi^z_{\mathrm{ext}})-H(x,\nabla \chi^z_{\mathrm{ext}})+(c(x)-k\|c\|_{L^{\infty}(\Omega)})(\chi^z_{\mathrm{ext}})^{i(\Phi)+1}\\
			&\leq h-k^{i(\Phi)+2}\|c\|_{L^{\infty}(\Omega)}.
		\end{align*}
		Now $K_2$ can be chosen large enough so that $\chi_{\mathrm{ext}}^z(x)\geq k$ on $\{|x-y_z|=r_1+\delta\}$.  
		Then by comparison principle II (Lemma \ref{Comp.Prin}), it follows that $u\le \chi_{\mathrm{ext}}^z$ in $\Omega\cap (B_{r_1+\delta}(y_z)\setminus B_{r_1}(y_z))$. Since $z$ is arbitrary, we conclude the result for $u$. 
	\end{proof}
	\begin{theorem}[\bf Maximum principle]
		\label{max.prin}
		Suppose the assumptions {\upshape\hyperref[A1]{(A1)}--\hyperref[A6]{(A6)}} are in force. The following properties are satisfied
		\begin{enumerate}[\upshape(i)]
			\item \phantomsection\label{Max_prin_1} Let $\beta<\beta^+$. Then for any subsolution $u\in\mathrm{USC}(\overline{\Omega})$ of the equation
			\begin{align*}
				\Phi(x,|\nabla u|)F(D^2u)-H(x,\nabla u)+(c(x)+\beta)u_{+}^{i(\Phi)+1}=0\text{ in }\Omega,
			\end{align*}
			with $u\le 0$ on $\partial\Omega$, we must have $u\le 0$ in $\Omega$.  
			\item \phantomsection\label{Max_prin_2} Let $\beta<\beta^-$. For any supersolution $u\in\mathrm{LSC}(\overline{\Omega})$ of the problem
			\begin{align*}
				\Phi(x,|\nabla u|)F(D^2u)-H(x,\nabla u)+(c(x)+\beta)|u|^{i(\Phi)}u=0\text{ in }\Omega,
			\end{align*}
			with $u\geq 0$ on $\partial\Omega$, then $u\geq 0$ in $\Omega$.  
		\end{enumerate}
	\end{theorem}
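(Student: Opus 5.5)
We follow the Berestycki--Nirenberg--Varadhan/Birindelli--Demengel strategy: compare $u$ with a rescaled positive test function. We prove (i); (ii) follows by applying (i) to $-u$ with $\widetilde F(M)=-F(-M)$ and $\widetilde H(x,p)=-H(x,-p)$. These still satisfy (A1)--(A6), and the definition of $\beta^-$ maps exactly onto that of $\beta^+$ for the transformed operator.

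First, since $\beta<\beta^+$, we choose $\beta'\in(\beta,\beta^+)$ and a function $\psi\in C(\overline\Omega)$ with $\psi>0$ on $\overline\Omega$ and
\begin{align*}
\Phi(x,|\nabla\psi|)F(D^2\psi)-H(x,\nabla\psi)+(c+\beta')\psi^{i(\Phi)+1}\le 0 .
\end{align*}
Because $\psi\ge\min_{\overline\Omega}\psi>0$, this gives
\begin{align*}
\Phi F(D^2\psi)-H+(c+\beta)\psi^{i(\Phi)+1}\le -(\beta'-\beta)\psi^{i(\Phi)+1}<0 .
\end{align*}
So $\psi$ is a strict supersolution. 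By (A6) the operator is positively $(i(\Phi)+1)$-homogeneous, hence $t\psi$ is again a strict supersolution for every $t>0$, with right-hand side $-(\beta'-\beta)t^{i(\Phi)+1}\psi^{i(\Phi)+1}$.

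Suppose, for contradiction, that $\sup_\Omega u>0$. Set $\tau^*=\inf\{t>0:\ u\le t\psi \text{ in }\overline\Omega\}$. This is finite and positive because $u$ is bounded above (upper semicontinuity on a compact set) and $\psi$ is bounded below. Then $u\le\tau^*\psi$ on $\overline\Omega$, and for every $t<\tau^*$ the function $u-t\psi$ has a positive maximum.

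The main step is to contradict this touching using the doubling-variables argument of Lemma \ref{Comp.Prin}, not the $C^2$ version in Lemma \ref{Comp.Prin1}, since $\psi$ is only continuous. Fix $t<\tau^*$ close to $\tau^*$ and let $z_t$ be a maximum point of $u-t\psi$. Because $u\le0<t\psi$ on $\partial\Omega$, $z_t$ lies in the interior; moreover $u(z_t)>t\psi(z_t)>0$, so $u_+=u$ near $z_t$. We double variables with $\frac{\varepsilon}{q}|x-y|^q$ and treat the coincident-points case with Lemma \ref{Lem: cutting_zero_grad}. We then apply the Ishii-type lemma together with (A5) to the subsolution inequality for $u$ and the strict supersolution inequality for $t\psi$. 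Passing to the limit, exactly as in the derivation of \eqref{15.2'}, yields
\begin{align*}
(c+\beta)(z)\big[u(z)^{i(\Phi)+1}-(t\psi(z))^{i(\Phi)+1}\big]\ \ge\ (\beta'-\beta)t^{i(\Phi)+1}\psi(z)^{i(\Phi)+1}>0 .
\end{align*}

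The obstacle is that the sign of $c+\beta$ is not controlled, so Lemma \ref{Comp.Prin} cannot be quoted directly. To get around this, note that as $t\uparrow\tau^*$ the bracket tends to $0$ uniformly: $0\le u(z_t)-t\psi(z_t)\le(\tau^*-t)\max\psi$, and $u$ is bounded. The right-hand side instead stays bounded below by $(\beta'-\beta)(\tau^*/2)^{i(\Phi)+1}(\min\psi)^{i(\Phi)+1}>0$. Choosing $t$ close enough to $\tau^*$ therefore gives a contradiction. We conclude $\sup_\Omega u\le0$.

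In the singular case $i(\Phi)<0$, continuity of $s\mapsto s^{i(\Phi)+1}$ still makes the bracket small, because both arguments stay away from $0$ near $z$, being at least $t\min\psi$.
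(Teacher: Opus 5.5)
Your proposal is correct and is essentially the paper's proof: you take the positive $\beta'$-supersolution from the definition of $\beta^+$, rescale it by (A6) to the critical multiple touching $u$ from above, and rerun the doubling-variables/cutting-lemma argument of Lemma \ref{Comp.Prin} to force $\beta'\le\beta$. You rerun the argument rather than quote the lemma because $c+\beta$ has no sign, and you obtain (ii) from the symmetry $\widetilde F(M)=-F(-M)$, $\widetilde H(x,p)=-H(x,-p)$. The only difference is that the paper works directly at $\kappa=\max_{\Omega}u^+/v$ (your $\tau^*$), where $\max(u-\kappa v)=0$ is attained only at interior points because $u\le 0<\kappa v$ on $\partial\Omega$; there the bracket $u(z_0)^{i(\Phi)+1}-(\kappa v(z_0))^{i(\Phi)+1}$ vanishes identically, so your extra limit $t\uparrow\tau^*$ is harmless but unnecessary.
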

	\begin{proof}
		\textbf{}
		
		\textit{Step 1: Proof of \upshape\hyperref[Max_prin_1]{(i)}.} By definition we can find $\beta_1\in (\beta,\beta^+)$ and $v\in C(\overline\Omega)$, $v>0$ in $\overline\Omega$,
		\begin{align}\label{33.1'}
			\Phi(x,|\nabla v|)F(D^2v)-H(x,\nabla v)+(c(x)+\beta_1)v^{i(\Phi)+1}\le 0\text{ in }\Omega.
		\end{align}
		Suppose $u^+\neq 0$ in $\Omega$ and set $\kappa=\max\limits_{\Omega}{\dfrac{u^+}{v}}>0$. 
		Since $v > 0$ in $\overline{\Omega}$ and $u \leq 0$ on $\partial \Omega$, we have $u \leq \kappa v$ in $\overline{\Omega}$, with equality at some point $z \in \Omega$ such that $u(z) = \kappa v(z) > 0$. Moreover, thanks to the assumption {\upshape\hyperref[A6]{(A6)}}, it follows that
		\begin{align}\label{33.2}
			\begin{split}
				&\Phi(x,\kappa|\nabla v|)F(\kappa D^2v)-H(x,\kappa\nabla v)+(c(x)+\beta_1)\kappa^{i(\Phi)+1}v^{i(\Phi)+1}\\
				= &~\kappa^{i(\Phi)+1}[\Phi(x,|\nabla v|)F(D^2v)-H(x,\nabla v)+(c(x)+\beta_1)v^{i(\Phi)+1}].
			\end{split}
		\end{align}
		Combining \eqref{33.1'} and \eqref{33.2}, we obtain that
		\begin{align*}
			\Phi(x,|\nabla (\kappa v)|)F(D^2(\kappa v))-H(x,\nabla (\kappa v))+(c(x)+\beta_1)(\kappa v)^{i(\Phi)+1}\le 0\text{ in }\Omega.
		\end{align*}
		Applying the doubling-variable and zero-gradient cutting argument (Lemma \ref{Lem: cutting_zero_grad}) from the proof of Lemma \ref{Comp.Prin} to the subsolution $u$ and the supersolution $\kappa v$, we obtain, after passing to the limit, for some $z_0\in\Omega$ satisfying $u(z_0) = \kappa v(z_0) >0$
		\begin{align*}
			(c(z_0)+\beta)u_{+}^{i(\Phi)+1}(z_0)\geq (c(z_0)+\beta_1)(\kappa v(z_0))^{i(\Phi)+1},
		\end{align*}
		leading to $\beta_1\le \beta$. This is a contradiction, thus the result follows.
		
		\textit{Step 2: Proof of \upshape\hyperref[Max_prin_2]{(ii)}.} The proof follows directly from that of \hyperref[Max_prin_1]{\upshape (i)} and will therefore not be repeated.
	\end{proof}
	Now, we show the quantity $\beta^+$ is finite. A similar result holds for the principal eigenvalue $\beta^-$. By the fact $\Omega$ contains a ball $B_R$ and the domain monotonicity of the generalized principal eigenvalue, it is sufficient to prove $\beta^+$ is finite over balls.
	\begin{theorem}
		Let $\beta_R^+$ be the principal eigenvalue in $B_R$, which is defined in Definition \ref{pre}. Then it holds that $\beta_R^+<\infty$.
	\end{theorem}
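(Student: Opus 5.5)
To show $\beta_R^+<\infty$, I would argue by contradiction using an explicit positive test function that vanishes on the boundary of a slightly smaller ball, combined with the maximum principle (Theorem \ref{max.prin}). Suppose $\beta_R^+=+\infty$. Then for every $\beta\in\mathbb{R}$, Theorem \ref{max.prin}\hyperref[Max_prin_1]{(i)} applies: any $u\in\mathrm{USC}(\overline{B_R})$ with $u\le0$ on $\partial B_R$ that is a subsolution of
\[
\Phi(x,|\nabla u|)F(D^2u)-H(x,\nabla u)+(c(x)+\beta)u_+^{i(\Phi)+1}=0
\]
must satisfy $u\le 0$. So it suffices to exhibit, for some large $\beta$, a function $u\ge0$, $u\not\equiv 0$, vanishing on $\partial B_R$ (or compactly supported in $B_R$), which is a viscosity subsolution of this equation.

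\textbf{Candidate subsolution.} I would take a radial, compactly supported profile such as
\[
u(x)=\left(R^2/4-|x|^2\right)_+^{q},
\]
supported in $B_{R/2}$, with $q>2$ chosen so that $u\in C^2$ and the zero-gradient set is harmless. By (A6), amplitude scaling is free, so normalization is irrelevant. The verification splits into three regions.

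\begin{enumerate}
\item \emph{Outside the support.} There $u\equiv0$ locally, so the constant branch of Definition \ref{Def:Viscosity_solution} requires $0\le 0$, which holds.
\item \emph{Points where $\nabla u\neq0$.} Write $u=w^q$ with $w=R^2/4-|x|^2$. Then
\[
|\nabla u|\sim q\,w^{q-1}|x|,\qquad D^2u=q(q-1)w^{q-2}\nabla w\otimes\nabla w+q\,w^{q-1}D^2w .
\]
Using (A6)(2) together with (A2)(ii), we get $\Phi(x,|\nabla u|)\ge \nu_0|\nabla u|^{i(\Phi)}$ up to constants. By (A3), $|H|\le \mathcal C|\nabla u|^{i(\Phi)+1}$. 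Every term in $\Phi F(D^2u)-H$ is then bounded below by $-C\,w^{(q-1)(i(\Phi)+1)-1}$ times structural constants; the worst term is $-\Lambda\,\Phi\, q w^{q-1}\cdot 2n$. Against this, the reaction term $(c+\beta)u^{i(\Phi)+1}=(c+\beta)w^{q(i(\Phi)+1)}$ must dominate. The exponent comparison $q(i(\Phi)+1)\le (q-1)(i(\Phi)+1)+\ldots$ fails near $w=0$, so the reaction term alone cannot win near the edge of the support. I would instead use positivity of the Hessian term there. Near $\partial B_{R/2}$ we have $|\nabla w|\approx R$ while $w\to0$, so the term $q(q-1)w^{q-2}|\nabla w|^2$ dominates $q w^{q-1}|D^2w|$. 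Hence
\[
F(D^2u)\ge \lambda q(q-1)w^{q-2}|\nabla w|^2-C\Lambda q w^{q-1}>0
\]
in an annulus $\{w<\eta\}$. Multiplied by $\Phi\sim|\nabla u|^{i(\Phi)}$, this gives a positive contribution of order $w^{(q-1)i(\Phi)+q-2}$, which beats $\mathcal C|\nabla u|^{i(\Phi)+1}\sim w^{(q-1)(i(\Phi)+1)}$ since $q-2<q-1$. In the inner region $\{w\ge\eta\}$ away from $x=0$, $u$ is bounded below by $\eta^q$, and choosing $\beta$ large absorbs all bounded negative terms.
\item \emph{The point $x=0$, where $\nabla u=0$.} Here $u$ is not locally constant. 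Because $u$ has a strict local maximum at the origin, I need to check the test-function condition in Definition \ref{Def:Viscosity_solution}: only test functions $\varphi$ with $\nabla\varphi(0)\neq0$ are required, and since $u$ is $C^2$ with $\nabla u(0)=0$, no $C^2$ function touching $u$ from above at $0$ can have nonzero gradient there. So the condition is vacuous.
\end{enumerate}

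With $\beta$ large, $u$ is then a nontrivial nonnegative subsolution vanishing on $\partial B_R$, contradicting Theorem \ref{max.prin}\hyperref[Max_prin_1]{(i)}. Hence $\beta_R^+<\infty$, and domain monotonicity gives $\beta^+\le\beta_R^+<\infty$.

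\textbf{Main obstacle.} The delicate step is the boundary layer of the support, where the reaction term vanishes faster than the Hamiltonian error. The plan handles it through the positive radial second-derivative term, and it requires checking that the exponents match there, which is where $q>2$ and $m=i(\Phi)+1$ from (A6) enter. If this balance fails for negative $i(\Phi)$, my fallback is a profile $e^{-1/w}$ rather than a power of $w$, for which the Hessian term dominates every polynomial error near the edge of the support.
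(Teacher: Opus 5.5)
There is a genuine gap in the singular regime $-1<i(\Phi)<0$. It sits at the interior maximum of your test function, not at the edge of its support.

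Your profile $u=(R^2/4-|x|^2)_+^q$ has a nondegenerate maximum at the origin. For small $x\neq0$ you have $|\nabla u(x)|=2q\,w^{q-1}|x|\approx c_1|x|$ and $D^2u(x)\approx-2q(R^2/4)^{q-1}I$. Hence $F(D^2u(x))\le\mathcal{P}^+_{\lambda,\Lambda}(D^2u(x))\le-c_2<0$, while (A6) and (A2)(ii) give $\Phi(x,|\nabla u|)\ge\nu_0|\nabla u|^{i(\Phi)}$. Since $i(\Phi)<0$, this yields
\[
\Phi(x,|\nabla u(x)|)\,F(D^2u(x))\le-c_3|x|^{i(\Phi)}\longrightarrow-\infty\qquad(x\to0),
\]
whereas $H(x,\nabla u)\to0$ and $(c+\beta)u^{i(\Phi)+1}$ stays bounded for each fixed $\beta$. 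At these points $u$ is $C^2$ with $\nabla u\neq0$, so testing with $u+|\cdot-x|^4$ shows the subsolution inequality is the pointwise one. It therefore fails on a punctured neighbourhood of $0$ for every $\beta$.

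Your item 3 only treats the single point $x=0$, where the condition is indeed vacuous. Your claim that all negative terms are bounded in $\{w\ge\eta\}$ away from $x=0$ is false near the origin when $i(\Phi)<0$. The fallback $e^{-1/w}$ changes only the edge profile and leaves the maximum nondegenerate, so it does not help.

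The missing idea is to flatten the maximum. This is exactly the role of the exponent in the paper's test function $v=K(|x|^\alpha-R^\alpha)^2$ with $\alpha>\max\{2,\frac{i(\Phi)+2}{i(\Phi)+1}\}$. Near $0$ one has $|\nabla v|\sim|x|^{\alpha-1}$ and $|D^2v|\lesssim|x|^{\alpha-2}$, so $\Phi\,|F(D^2v)|\lesssim|x|^{(\alpha-1)(i(\Phi)+1)-1}$, which stays bounded once $\alpha\ge\frac{i(\Phi)+2}{i(\Phi)+1}$.

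If you replace your $w$ by $R_1^\alpha-|x|^\alpha$ with such an $\alpha$, the rest of your plan works:
\begin{itemize}
\item Your boundary-layer estimate is correct for every $i(\Phi)>-1$: the term $q(q-1)w^{q-2}|\nabla w|^2$ beats $\mathcal{C}|\nabla u|^{i(\Phi)+1}$ by a factor $w^{-1}$.
\item The zero-gradient points impose no condition. These are the origin and the sphere $|x|=R_1$; you did not list the sphere, but the same vacuity argument covers it.
\item The constant branch handles the exterior of the support.
\item Theorem \ref{max.prin}(i) then gives $\beta_R^+\le\beta$.
\end{itemize}

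For $i(\Phi)\ge0$ your argument is fine as written. In that case it follows the paper's strategy (an explicit positive subsolution vanishing on $\partial B_R$ plus the maximum principle), with a compactly supported bump in place of a function positive on all of $B_R$.
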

	\begin{proof}
		Let us consider the function $v(x)=K(|x|^{\alpha}-R^{\alpha})^2, x\in B_R$. We demonstrate that for some $K>0$ small enough and $\alpha>\max\left\{2,\frac{i(\Phi)+2}{i(\Phi)+1}\right\}$, $\beta>0$ large enough, we obtain
		\begin{align*}
			\Phi(x,|\nabla v|)F(D^2v)-H(x,\nabla v)+(c(x)+\beta)v^{i(\Phi)+1}\geq 0\text{ in }B_R.
		\end{align*}
		A straightforward calculation shows that
		\begin{align*}
			&\nabla v(x)=2K\alpha(|x|^{\alpha}-R^{\alpha})|x|^{\alpha-2}x,\\
			&D^2v(x)=2K\alpha\left[((2\alpha-1)|x|^{2\alpha-2}-(\alpha-1)R^{\alpha}|x|^{\alpha-2})\dfrac{x\otimes x}{|x|^2}\right.\\
			&\hspace*{4cm}\left.+(|x|^{2\alpha-2}-R^{\alpha}|x|^{\alpha-2})\left(I-\dfrac{x\otimes x}{|x|^2}\right)\right].
		\end{align*}
		Let $\rho=\left(\dfrac{\alpha-1}{2\alpha-1}\right)^{\frac{1}{\alpha}}R$. It is easy to check that $\rho<R$ for $\alpha>1$. Let us distinguish two cases:
		
		\textbf{Case 1:} 
		Now if $|x|< \rho$, a simple calculation reveals that
		\begin{align*}
			\mathcal{P}_{\lambda,\Lambda}^+(D^2v(x))=2\lambda K\alpha|x|^{\alpha-2}D, \mbox{ and }\mathcal{P}_{\lambda,\Lambda}^-(D^2v(x))=2\Lambda K\alpha|x|^{\alpha-2}D,
		\end{align*}
		where $D = (2\alpha+n-2)|x|^{\alpha}+(2-\alpha-n)R^{\alpha}$. Moreover, from \eqref{Pucci_ope.}, we have 
		\begin{align*}
			2\Lambda K\alpha|x|^{\alpha-2}D \leq F(D^2v(x)) \leq 2\lambda K\alpha|x|^{\alpha-2}D.
		\end{align*} 
		Consequently, we deduce that $F(D^2v(x))<0$. Additionally, for small $K$ and applying the assumptions {\upshape\hyperref[A2]{(A2)}} and {\upshape\hyperref[A6]{(A6)}}, we arrive at 
		\begin{align*}
			&|\nabla v(x)|=2K\alpha(R^{\alpha}-|x|^{\alpha})|x|^{\alpha-1}\le 2K\alpha R^{\alpha}\rho^{\alpha-1}\le 1,\\ &\Phi(x,|\nabla v|)\le \nu_1|\nabla v|^{i(\Phi)}.
		\end{align*}
		It yields that 
		\begin{align*}
			&\Phi(x,|\nabla v|)F(D^2v)-H(x,\nabla v)+(c(x)+\beta)v^{i(\Phi)+1}\\
			&\geq (R^{\alpha}-|x|^{\alpha})^{i(\Phi)}[\nu_1\Lambda(2K\alpha)^{i(\Phi)+1}R^{(\alpha-1)i(\Phi)+\alpha-2}(2-\alpha-n)R^{\alpha}\\
			&\quad-\mathcal{C}(2K\alpha)^{i(\Phi)+1}R^{\alpha}R^{(\alpha-1)(i(\Phi)+1)}\\ &\quad+(\beta-\|c\|_{L^{\infty}(B_R)})K^{i(\Phi)+1}(R^{\alpha}-\rho^{\alpha})^{i(\Phi)+2}].
		\end{align*}
		Choosing $\alpha>\max\left\{2,\frac{i(\Phi)+2}{i(\Phi)+1}\right\}$ and $\beta>\|c\|_{\infty}$ large enough, we conclude
		\begin{align}\label{5.1}
			\Phi(x,|\nabla v|)F(D^2v)-H(x,\nabla v)+(c(x)+\beta)v^{i(\Phi)+1}\geq 0\text{ in }B_\rho.
		\end{align}
		\textbf{Case 2:} $\rho\leq |x|<R$, we calculate the value of Pucci extremal operators as follows
		\begin{align*}
			&\mathcal{P}_{\lambda,\Lambda}^+(D^2v(x))=\mathcal{D}[(\Lambda(2\alpha-1)+\lambda(n-1))|x|^{\alpha}-(\Lambda(\alpha-1)+\lambda(n-1))R^{\alpha}],\\
			&\mathcal{P}_{\lambda,\Lambda}^-(D^2v(x))=\mathcal{D}[(\lambda(2\alpha-1)+\Lambda(n-1))|x|^{\alpha}-(\lambda(\alpha-1)+\Lambda(n-1))R^{\alpha}],
		\end{align*}
		where $\mathcal{D} = 2K\alpha|x|^{\alpha-2}$. We can easily check that $\mathcal{P}_{\lambda,\Lambda}^+(D^2v(x))\le 0$ for every $x$ with $|x| = \rho$. Therefore, we can follow similar arguments as in \textbf{Case 1} to choose $K$ small enough and $\beta>0$ large enough such that
		\begin{align*}
			\Phi(x,|\nabla v|)F(D^2v)-H(x,\nabla v)+(c(x)+\beta)v^{i(\Phi)+1}>0\text{ for }|x|=\rho.
		\end{align*}
		Due to continuity, there exists $\rho_0\in (\rho,R)$ such that
		\begin{align}\label{5.2}
			\Phi(x,|\nabla v|)F(D^2v)-H(x,\nabla v)+(c(x)+\beta)v^{i(\Phi)+1}>0\text{ in }\rho\le |x|<\rho_0.
		\end{align}
		Finally, we assert that we can choose $\beta>0$ large enough for $\rho_0\le |x|<R$, we have
		\begin{align}\label{5.3}
			\Phi(x,|\nabla v|)F(D^2v)-H(x,\nabla v)+(c(x)+\beta)v^{i(\Phi)+1}>0.
		\end{align}
		If $\mathcal{P}_{\lambda,\Lambda}^-(D^2v(x))\le 0$ for every $x$ with $|x| = \rho_0$, we can apply similar arguments as in \textbf{Case 1} to obtain the conclusion. Now, we consider the case  $\mathcal{P}_{\lambda,\Lambda}^-(D^2v(x))>0$ for $|x|=\rho_0$. Thanks to the assumption {\upshape\hyperref[A6]{(A6)}}, we have $\Phi(x,|\nabla v|)\geq \nu_0|\nabla v|^{i(\Phi)}$, thus this case is similar to \textbf{Case 1}.
		
		From \eqref{5.1}, \eqref{5.2}, and \eqref{5.3}, it follows that
		\begin{align*}
			\Phi(x,|\nabla v|)F(D^2v)-H(x,\nabla v)+(c(x)+\beta)v^{i(\Phi)+1}\geq 0\text{ in } B_R.
		\end{align*}
		By Theorem \ref{max.prin}, we must have $\beta_R^+\le \beta$, which allows us to obtain the conclusion.
	\end{proof}
	Next, we prove strong maximum principle and a Hopf estimate.
	\begin{theorem}[\bf Strong maximum principle]\label{strong-com}
		Under the assumptions {\upshape\hyperref[A1]{(A1)}--\hyperref[A6]{(A6)}}, if $u\in\mathrm{LSC}(\overline{\Omega})$ is a non-negative viscosity supersolution of
		\begin{align*}
			\Phi(x,|\nabla u|)F(D^2u)-H(x,\nabla u)+c(x)u^{i(\Phi)+1}=0\mbox{ in }\Omega,
		\end{align*}
		then either $u\equiv 0$ or $u>0$ in $\Omega$. Additionally, suppose that $u(x)>u(x_0)=0$ for all $x\in\Omega$ and some $x_0\in\partial\Omega$. Then there exists a positive constant $a$ such that for all $x\in B_{\delta}(y_0)$ we have
		\begin{align}\label{Hopf}
			u(x)\geq a(\delta-|x-y_0|),
		\end{align}
		where $B_{\delta}(y_0)\subset\Omega$ is a ball touching the point $x_0$.
	\end{theorem}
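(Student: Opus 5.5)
For the first assertion (either $u\equiv0$ or $u>0$), I would argue by a radial barrier combined with Comparison principle I (Lemma \ref{Comp.Prin1}, in the form of Remark \ref{23}), in the style of the proof of Lemma \ref{lem:Doubling}. Suppose $u\not\equiv0$ and that the zero set $Z=\{u=0\}$ is nonempty. Since $u\in\mathrm{LSC}$ and $u\ge0$, the set $Z$ is closed in $\Omega$, and $\Omega\setminus Z$ is open and nonempty. I would pick a point $\bar x\in\Omega\setminus Z$ closer to $Z$ than to $\partial\Omega$ and take the largest ball $B_r(\bar x)\subset\Omega\setminus Z$. Its boundary then touches $Z$ at some point $x_1$, and $\overline{B_r(\bar x)}\Subset\Omega$.

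Next I would work in the annulus $A=B_r(\bar x)\setminus\overline{B_{r/2}(\bar x)}$. Lower semicontinuity and compactness give $m:=\min_{\partial B_{r/2}(\bar x)}u>0$. In $A$ I compare $u$ with
\[
w(x)=\varepsilon\left(e^{-\gamma|x-\bar x|^2}-e^{-\gamma r^2}\right).
\]
This function is smooth and its gradient does not vanish in $A$, so it is admissible in the singular case as required in Remark \ref{23}. For $\gamma$ large, $\mathcal P^-_{\lambda,\Lambda}(D^2w)\ge c_0\varepsilon\gamma^2$ on $A$, and then
\[
\Phi(x,|\nabla w|)F(D^2w)\ge \tfrac{\nu_0}{L}|\nabla w|^{i(\Phi)}\mathcal P^-_{\lambda,\Lambda}(D^2w),
\]
where the homogeneity (A6) makes every term scale as $\varepsilon^{i(\Phi)+1}$. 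The Hamiltonian is bounded by $\mathcal C|\nabla w|^{i(\Phi)+1}$, which is of lower order in $\gamma$ since it carries $\gamma^{i(\Phi)+1}$ against $\gamma^{i(\Phi)+2}$. The term $c\,w^{i(\Phi)+1}$ is bounded by $\|c\|_\infty\varepsilon^{i(\Phi)+1}$. So for $\gamma$ fixed large, $w$ is a strict subsolution, with right-hand side $h_1=\eta\varepsilon^{i(\Phi)+1}>0=h_2$. I then choose $\varepsilon$ small so that $w\le m$ on $\partial B_{r/2}(\bar x)$; also $w=0\le u$ on $\partial B_r(\bar x)$.

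If $c\le0$, case (b) of Lemma \ref{Comp.Prin1} applies directly. If $c$ changes sign, I would first reduce to $c\le0$: since $u\ge0$, the function $u$ is also a supersolution of the equation with $c$ replaced by $-c^-$ after moving $c^+u^{i(\Phi)+1}\ge0$ to the other side. Comparison then gives $u\ge w$ in $A$.

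The main obstacle is extracting the contradiction at $x_1$, because $u$ is only LSC, so I cannot differentiate $u$ there. Instead I would use the viscosity definition directly. The function $u-w$ attains a local minimum $0$ at $x_1$ relative to $\overline A$. Near $x_1$ outside $B_r(\bar x)$ I extend $w$ smoothly with negative values, which keeps $u\ge w$ because $u\ge0$ there. This makes $w$, perturbed to a strict touching function by subtracting $|x-x_1|^4$, a test function from below at $x_1$ with $\nabla w(x_1)\neq0$. The supersolution inequality at $x_1$, where $u(x_1)=0$, then contradicts the strict subsolution property of $w$. The constant branch of the definition cannot occur, since $u$ is not locally constant near $x_1$, as $u>0$ inside the ball. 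Hence $Z=\emptyset$ or $u\equiv0$.

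For the Hopf estimate \eqref{Hopf}, I would repeat the barrier argument in the annulus $B_\delta(y_0)\setminus\overline{B_{\delta/2}(y_0)}$, which is interior to $\Omega$ except at $x_0$. There $u>0$ on the compact inner sphere, so the comparison gives $u\ge w$ on the annulus. Since
\[
e^{-\gamma s^2}-e^{-\gamma\delta^2}\ge c(\delta-s)\quad\text{for } s\in[\delta/2,\delta],
\]
we get $u\ge a(\delta-|x-y_0|)$ on the annulus. On $B_{\delta/2}(y_0)$ the bound follows by shrinking $a$, using $\min_{\overline{B_{\delta/2}}}u>0$.
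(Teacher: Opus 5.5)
Your argument is correct and is essentially the paper's proof. You take an interior ball in $\{u>0\}$ touching the zero set, build a radial exponential strict subsolution on the annulus $B_r\setminus\overline{B_{r/2}}$, use comparison to get $u\ge w$ there, and contradict the supersolution inequality at the touching point; the Hopf bound comes from the same barrier and a linear lower bound on the exponential difference. The Gaussian versus $e^{-\alpha|x|}$ barrier and the reduction to $-c^-$ instead of $c<0$ are cosmetic.

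The one real variation is the comparison step. You use the smooth-barrier Comparison principle I (via Remark \ref{23}) instead of Comparison principle II, which avoids (A5) and the doubling of variables. Remark \ref{23} is stated for continuous supersolutions, so say explicitly that its proof applies unchanged to your $u\in\mathrm{LSC}$. The reasons are that $w-u$ is upper semicontinuous, so its positive maximum is attained at an interior point, and $\nabla w\neq 0$ there rules out the constant branch of the definition.
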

	\begin{proof}
		Without loss of generality, we may suppose that $c<0$ in $\overline{\Omega}$. Now, we prove that $u>0$ in $\Omega$ in the case $u\gneq 0$ in $\Omega$. For the sake of contradiction, assume that there exist
		$x_0\in\Omega$ and $r>0$ such that $B_{2r}(x_0)\subset\Omega$, $u>0$ in $B_r(x_0)$ and $u(z)=0$ for some $z\in\partial B_r(x_0)$.  For the sake of simplicity, we also assume that $x_0=0$. Let $v(x)=e^{-\alpha|x|}-e^{-\alpha r}$. Then $v>0$ in $B_r$. Let us choose $\alpha>0$ large enough so that for $r/2\le |x|\le r$, we have  $\mathcal{P}_{\lambda,\Lambda}^-(D^2v(x))>0$.
		Therefore, we obtain that
		\begin{align}\label{33.1}
			\begin{split}
				&\Phi(x,|\nabla v|)F(D^2v)-H(x,\nabla v)+c(x)v^{i(\Phi)+1}\\
				&\geq e^{-\alpha|x|(i(\Phi)+1)}\left[\nu_0 \alpha^{i(\Phi)}\left(\lambda\alpha^2-\dfrac{\Lambda\alpha(n-1)}{|x|}\right)-C\alpha^{i(\Phi)+1}\right.\\
				&\hspace*{4cm}\left.-\|c\|_{L^{\infty}(B_{2r})}(1-e^{-\alpha(r-|x|)})^{i(\Phi)+1}\right]>0,
			\end{split}
		\end{align}
		provided that $\alpha>0$ is chosen sufficiently large. It is possible to choose $\kappa$ small enough satisfying $\kappa v\le u$ on $\partial B_{r/2}$ because $u>0$ on $\partial B_{r/2}$. By comparison principle II (Lemma \ref{Comp.Prin}), we obtain that $u\geq \kappa v$ for $r/2\le |x|\le r$.  On the other hand, $\kappa v\le 0\le u$ on $B_{2r}\setminus B_r$. As a result, we conclude that $\kappa v$ touches $u$ from below at $z$ and $u(z)=0$. Applying the definition of viscosity supersolution, we deduce that
		\begin{align*}
			\Phi(z,\kappa|\nabla v|)F(\kappa D^2v(z))-H(z,\kappa \nabla v(z))+c(z)(\kappa v(z))^{i(\Phi)+1}\le 0,
		\end{align*}
		which contradicts \eqref{33.1}. This proves the first part of this theorem.
		
		In order to verify the estimate \eqref{Hopf}, we  follow analogous techniques as the proof of the previous strong maximum principle and apply the elementary inequality $e^{-k\rho}-e^{-kr}\geq ke^{-kr}(r-\rho)$, for all $k>0$ and $0<\rho\le r$. This completes our proof.
	\end{proof}
	\begin{lemma}\label{normalize} 
		Suppose the assumptions {\upshape\hyperref[A1]{(A1)}--\hyperref[A6]{(A6)}} are satisfied. Then the following results hold
		\begin{enumerate}[\upshape(i)]
			\item \phantomsection\label{Normal_eigen_1} Let $\beta<\beta^+$. Then there exists a positive Lipschitz continuous viscosity solution $u$ of
			\begin{align}
				\label{eigen_normalize_1}
				\left\{\begin{array}{clll}
					\Phi(x,|\nabla u|)F(D^2u)-H(x,\nabla u)+(c(x)+\beta)u^{i(\Phi)+1}&=-1&\text{in }\Omega,\\
					u&=~~0 &\mbox{on }\partial\Omega.
				\end{array}\right.
			\end{align}
			\item \phantomsection\label{Normal_eigen_2} Let $\beta<\beta^{-}$. Then there exists a negative Lipschitz continuous viscosity solution $u$ of
			\begin{align*}
				\left\{\begin{array}{clcl}
					\Phi(x,|\nabla u|)F(D^2u)-H(x,\nabla u)+(c(x)+\beta)|u|^{i(\Phi)}u&=1&\text{in }\Omega,\\
					u&=0 &\mbox{on }\partial\Omega.
				\end{array}\right.
			\end{align*}
		\end{enumerate}			
	\end{lemma}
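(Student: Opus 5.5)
We would prove (i) by a monotone iteration in the spirit of \cite{BD2006}, built on Theorem \ref{Theo: Exist} and the maximum principle Theorem \ref{max.prin}; (ii) follows by the symmetric argument with $\widetilde F(M)=-F(-M)$, $\widetilde H(x,p)=-H(x,-p)$.

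\textbf{Step 1: the iteration.} Fix $\beta<\beta^+$ and choose $\gamma>0$ with $c-\gamma<0$ in $\overline\Omega$. Set $u_0\equiv0$. Given $u_k\ge0$, let $u_{k+1}$ be the unique solution from Theorem \ref{Theo: Exist} (zeroth-order coefficient $c-\gamma<0$) of
\begin{align*}
\Phi(x,|\nabla u|)F(D^2u)-H(x,\nabla u)+(c(x)-\gamma)|u|^{i(\Phi)}u=-1-(\beta+\gamma)u_k^{i(\Phi)+1}\ \text{in }\Omega,\qquad u=0\ \text{on }\partial\Omega.
\end{align*}
Here we assume $\beta+\gamma\ge0$, enlarging $\gamma$ if needed. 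By Lemma \ref{Comp.Prin}\hyperref[CompaII_1]{(a)} and induction, $0\le u_1\le u_2\le\cdots$. Indeed, $0$ is a subsolution of the first problem since the right-hand side is $\le-1<0$. Moreover, if $u_k\le u_{k+1}$, the right-hand side for $u_{k+2}$ is smaller, so $u_{k+1}$ is a subsolution of the $(k+2)$-nd problem.

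\textbf{Step 2: uniform bound.} This is the main obstacle. We argue by contradiction. Suppose $\|u_k\|_\infty\to\infty$ and set $w_k=u_k/\|u_k\|_\infty$. By (A6), $w_k$ solves the same equation with right-hand side $-\|u_k\|_\infty^{-(i(\Phi)+1)}-(\beta+\gamma)(u_{k-1}/\|u_k\|_\infty)^{i(\Phi)+1}$. By Lemma \ref{dist} and the interior Lipschitz estimate (Corollary \ref{local_estimate_(1.1)}), the $w_k$ are equicontinuous up to the boundary. We then need $u_{k-1}/\|u_k\|_\infty$ to have a limit. If $\|u_{k-1}\|/\|u_k\|\to\tau\le1$, a subsequence gives $w_k\to w$ and $u_{k-1}/\|u_k\|_\infty\to\tau' w'$.

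The cleanest route is Berestycki--Nirenberg--Varadhan style. Let $\psi>0$ in $\overline\Omega$ be a strict supersolution for some $\beta_1\in(\beta,\beta^+)$. Rather than passing to the limit directly, we compare $u_k$ with $t\psi$. Let $t_k=\max u_k/\psi$, which increases. Using the iteration inequality at the touching point through the doubling argument of Lemma \ref{Comp.Prin} (as in Theorem \ref{max.prin}), we obtain
\begin{align*}
-1-(\beta+\gamma)u_{k-1}^{i(\Phi)+1}\le-(\beta_1+\gamma)t_k^{i(\Phi)+1}\psi^{i(\Phi)+1}+\text{(error)}.
\end{align*}
Together with $u_{k-1}\le t_{k-1}\psi\le t_k\psi$, this gives $(\beta_1-\beta)t_k^{i(\Phi)+1}\min\psi^{i(\Phi)+1}\le C$, so $t_k$ is bounded. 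The delicate point is running the doubling-variables argument with the $x$-dependent right-hand side $u_{k-1}$, whose continuity is known from Step 3 applied at level $k-1$. We would invoke (A5) and the cutting Lemma \ref{Lem: cutting_zero_grad} exactly as in Lemma \ref{Comp.Prin}.

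\textbf{Step 3: regularity and passage to the limit.} With $\|u_k\|_\infty\le C$, the right-hand sides are uniformly bounded. Lemma \ref{dist} gives $u_k\le C\,\mathrm{dist}(x,\partial\Omega)$ uniformly, and the global Lipschitz estimate (Theorem \ref{Lipschitz_estimate}, or the global $C^{1,\gamma}$ estimate of Theorem \ref{Theo:Global_regularity} after absorbing the zeroth-order term into $h$) gives uniform Lipschitz bounds. The monotone limit $u$ is therefore Lipschitz, $u=0$ on $\partial\Omega$, and by stability of viscosity solutions under uniform convergence it solves \eqref{eigen_normalize_1}.

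Finally, $u\ge u_1\ge0$ and $u\not\equiv0$ because the right-hand side is $-1$. Theorem \ref{strong-com}, applied to $u$ as a nonnegative supersolution with $c+\beta$ replaced suitably, gives $u>0$ in $\Omega$.
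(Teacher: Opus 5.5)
Your proposal is correct, and its skeleton matches the paper's proof: a shifted monotone iteration with zeroth-order coefficient $c-\gamma<0$, Lemma \ref{Comp.Prin} for monotonicity, equicontinuity plus Arzel\`a--Ascoli, and the strong maximum principle for positivity. The one real difference is in the step you call the main obstacle, the uniform bound.

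\textbf{How the paper gets the bound.} The paper never argues by contradiction or at a contact point. Let $v>0$ on $\overline\Omega$ be the supersolution attached to some $\beta_1\in(\beta,\beta^+)$, and take $\gamma$ so large that $(\beta_1-\beta)\gamma^{i(\Phi)+1}\min_{\overline\Omega}v^{i(\Phi)+1}\ge 1$. By (A6),
\begin{align*}
\Phi(x,|\nabla(\gamma v)|)F(D^2(\gamma v))-H(x,\nabla(\gamma v))+(c(x)-\mu)(\gamma v)^{i(\Phi)+1}\le -1-(\beta+\mu)(\gamma v)^{i(\Phi)+1}.
\end{align*}
The right-hand side is at most $-1-(\beta+\mu)u_k^{i(\Phi)+1}$ as soon as $u_k\le\gamma v$, using $\beta+\mu>0$. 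So $\gamma v$ is a supersolution of every iterated problem. Lemma \ref{Comp.Prin}(a) then gives $0\le u_k\le u_{k+1}\le\gamma v$ inside the same induction that gives monotonicity.

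\textbf{How your argument compares.} Your $t_k=\max u_k/\psi$ argument yields exactly the same threshold $\bigl((\beta_1-\beta)\min\psi^{i(\Phi)+1}\bigr)^{-1/(i(\Phi)+1)}$. However, it gets there by re-running the doubling-variables proof of the comparison principle at a point where $\max(u_k-t_k\psi)=0$ rather than $>0$. This can be made to work, since the maximizers stay interior because $u_k=0<t_k\psi$ on $\partial\Omega$. Still, it is the comparison lemma in disguise and buys nothing over the direct barrier.

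\textbf{Smaller points.}
\begin{itemize}
\item The ``delicate point'' you flag is not one. Continuity of $u_{k-1}$ on $\overline\Omega$ comes directly from Theorem \ref{Theo: Exist}, not from Step 3, and Lemma \ref{Comp.Prin} only requires $h_1,h_2\in C(\overline\Omega)$.
\item The preliminary blow-up attempt with $w_k=u_k/\|u_k\|_\infty$ is left incomplete and should be dropped.
\item In Step 3 you invoke global Lipschitz/$C^{1,\gamma}$ estimates after absorbing the zeroth-order term into $h$. The paper instead combines the interior Lipschitz estimate with Lemma \ref{dist}. Either route gives the needed equicontinuity, and yours makes the Lipschitz regularity asserted in the statement more explicit.
\end{itemize}
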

	\begin{proof}
		\textbf{}
		
		\textit{Step 1: Proof of \upshape\hyperref[Normal_eigen_1]{(i)}.} Remark that $0$ is a subsolution. Thanks to Definition \ref{pre}, for any $\beta_1\in (\beta,\beta^+)$, there exists $v\in C(\overline{\Omega})$, $v>0$ in $\overline\Omega$, and satisfies
		\begin{align*}
			\Phi(x,|\nabla v|)F(D^2v)-H(x,\nabla v)+(c(x)+\beta_1)v^{i(\Phi)+1}\le 0\text{ in }\Omega.
		\end{align*}
		Therefore, for large $\gamma>1$, we have
		\begin{align*}
			\Phi(x,|\gamma\nabla v|)F(D^2(\gamma v))-H(x,\gamma \nabla v)+c(x)(\gamma v)^{i(\Phi)+1}&\le -\beta_1(\gamma v)^{i(\Phi)+1}\\
			&\le -\beta (\gamma v)^{i(\Phi)+1}-1.
		\end{align*}
		It yields that $\gamma v$ is a supersolution of \eqref{eigen_normalize_1}. Next, choosing a constant $\mu > \max\{\|c\|_{L^\infty(\Omega)}, -\beta\}$, we construct a sequence of functions $(u_k)_{k \in \mathbb{N}}$ inductively as follows: set $u_0 = 0$, and for each $k \in \mathbb{N}$, let $u_{k+1}$ be the viscosity solution of the problem
		\begin{align*}
			\left\{\begin{array}{clll}
				\Phi(x,|\nabla u_{k+1}|)F(D^2u_{k+1})-H(x,\nabla u_{k+1})+(c(x)-\mu)u_{k+1}^{i(\Phi)+1}&\hspace*{-0.29cm}=\mathcal{S}_k&\hspace*{-0.22cm}\text{in }\Omega,\\
				u_{k+1}&\hspace*{-0.29cm}=0 &\hspace*{-0.22cm}\mbox{on }\partial\Omega.
			\end{array}\right.
		\end{align*}
		where $\mathcal{S}_k = -1 - (\beta + \mu)u_k^{i(\Phi)+1}$. The sequence $(u_k)_{k\in\mathbb{N}}$ constructed in this manner is well-defined. Indeed, we observe that
		\begin{align*}
			&\Phi(x,|\gamma\nabla v|)F(D^2(\gamma v))-H(x,\gamma \nabla v)+(c(x)-\mu)(\gamma v)^{i(\Phi)+1}\\
			&\leq -(\beta+\mu) (\gamma v)^{i(\Phi)+1}-1.
		\end{align*}
		Consequently, the function $\gamma v$ is a viscosity supersolution of the problem
		\begin{align}
			\label{eigen_normalize_2}
			\left\{\begin{array}{clll}
				\Phi(x,|\nabla z|)F(D^2z)-H(x,\nabla z)+(c(x)-\mu)z^{i(\Phi)+1}&=\mathcal{S}_0 &\text{in }\Omega,\\
				z&=0 &\mbox{on }\partial\Omega.
			\end{array}\right.
		\end{align}
		Furthermore, by Theorem \ref{Theo: Exist}, problem \eqref{eigen_normalize_2} admits a viscosity solution $u_1$, satisfying $0 \leq u_1 \leq \gamma v$ in $\overline{\Omega}$. Proceeding inductively and using the comparison principle II (Lemma \ref{Comp.Prin}), together with the fact that $\beta + \mu >0$, we can establish the existence of a function $u_{k+1}$, given $u_k$, such that
		\begin{align*}
			0 \le u_k \le u_{k+1} \le \gamma v \mbox{ in }\overline{\Omega},
		\end{align*}
		for every $k \in \mathbb{N}$. Thus, the sequence $(u_k)$ is monotone increasing and uniformly bounded by $\|\gamma v\|_{L^\infty(\overline{\Omega})}$, which implies its pointwise convergence on $\overline{\Omega}$. Furthermore, Corollary \ref{local_estimate_(1.1)} and Lemma \ref{dist} ensure that the sequence $(u_k)$ is equicontinuous. By the Arzelà-Ascoli theorem, $(u_k)$ admits a subsequence that converges uniformly to some function $u$. It is straightforward to verify that $u$ is a viscosity solution of problem \eqref{eigen_normalize_1}. Finally, the strong maximum principle (Theorem \ref{strong-com}) yields $u > 0$ in $\Omega$.
		
		\textit{Step 2: Proof of \upshape\hyperref[Normal_eigen_2]{(ii)}.} The proof is strictly analogous to that of part \hyperref[Normal_eigen_1]{\upshape(i)}. 
		Additionally, we notice a slight difference in proving the strict negativity of the solution $u$, which is the limit of the sequence of monotone iteration. Let us define $\tilde{F}(M)=-F(-M)$ and $\tilde{H}(x,p)=-H(x,-p)$. We can easily check that $w=-u$ is a nonnegative viscosity supersolution of the equation
		\begin{align*}
			\Phi(x,|\nabla w|)\tilde{F}(D^2w)-\tilde{H}(x,\nabla w)+(c(x)+\beta)w^{i(\Phi)+1}=0\text{ in }\Omega,
		\end{align*}
		where $\tilde{F}$ and $\tilde{H}$ satisfy {\upshape\hyperref[A1]{(A1)}}, {\upshape\hyperref[A3]{(A3)}}, {\upshape\hyperref[A5]{(A5)}}, and {\upshape\hyperref[A6]{(A6)}}. Therefore, we can apply strong maximum principle (Theorem \ref{strong-com}) for $w$ and then obtain our desired result. 
	\end{proof}
	\begin{theorem}\label{eigenfuction}
		Suppose the assumptions {\upshape\hyperref[A1]{(A1)}--\hyperref[A6]{(A6)}} hold. Then, we have the following results
		\begin{enumerate}[\upshape(i)]
			\item \phantomsection\label{eigenfunction_1} There exists a positive solution $\varphi^+$ of 
			\begin{align*}
				\left\{\begin{array}{cll}
					\Phi(x,|\nabla \varphi^+|)F(D^2\varphi^+)-H(x,\nabla \varphi^+)+(c(x)+\beta^+)(\varphi^+)^{i(\Phi)+1}&\hspace*{-0.2cm}=0\text{ in }\Omega,\\
					\varphi^+&\hspace*{-0.2cm}=0 \mbox{ on }\partial\Omega.
				\end{array}\right.
			\end{align*}
			Moreover, $\varphi^+$ is Lipschitz continuous in $\overline{\Omega}$.
			\item \phantomsection\label{eigenfunction_2} There exists a negative solution $\varphi^-$ of 
			\begin{align*}
				\left\{\begin{array}{clr}
					\Phi(x,|\nabla \varphi^-|)F(D^2\varphi^-)-H(x,\nabla \varphi^-)+(c(x)+\beta^-)|\varphi^-|^{i(\Phi)}\varphi^-&\hspace*{-0.2cm}=0\text{ in }\Omega,\\
					\varphi^-&\hspace*{-0.2cm}=0 \mbox{ on }\partial\Omega.
				\end{array}\right.
			\end{align*}
			Moreover, $\varphi^-$ is Lipschitz continuous in $\overline{\Omega}$.
		\end{enumerate}
	\end{theorem}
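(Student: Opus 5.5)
We will follow the classical Berestycki--Nirenberg--Varadhan / Birindelli--Demengel strategy: approximate $\beta^+$ from below, normalize the solutions provided by Lemma~\ref{normalize}, and pass to the limit. We give the argument for \hyperref[eigenfunction_1]{(i)}. Part \hyperref[eigenfunction_2]{(ii)} then follows by applying (i) to $w=-u$ with $\tilde F(M)=-F(-M)$ and $\tilde H(x,p)=-H(x,-p)$, exactly as at the end of the proof of Lemma~\ref{normalize}.

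\textit{Step 1 (approximating problems).} Take $\beta_k\uparrow\beta^+$ with $\beta_k<\beta^+$. By Lemma~\ref{normalize}\hyperref[Normal_eigen_1]{(i)} there is a positive Lipschitz solution $u_k$ of \eqref{eigen_normalize_1} with $\beta=\beta_k$.

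\textit{Step 2 (blow-up of the norms).} We claim that $\|u_k\|_{L^\infty(\Omega)}\to\infty$. Suppose instead that, along a subsequence, $\|u_k\|_\infty\le C$. Then Corollary~\ref{local_estimate_(1.1)} (after the scaling of Remark~\ref{Remark: smallness-regime}) and Lemma~\ref{dist} give uniform Lipschitz bounds on $\overline\Omega$. Arzel\`a--Ascoli and stability yield a limit $u\ge0$ solving the equation with $\beta^+$ and right-hand side $-1$, with $u=0$ on $\partial\Omega$. By Theorem~\ref{strong-com}, $u>0$ in $\Omega$.

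Next we build a strict positive supersolution on $\overline\Omega$ for some $\beta^+ +\epsilon$. Use Theorem~\ref{Theo: Exist} on a slightly larger smooth domain $\Omega'\supset\overline\Omega$ (or perturb $u$ by $\eta$), taking a small $\eta>0$. Since $u$ absorbs the extra term $-1$, homogeneity (A6) gives
\[
\mathcal G(x,u,\nabla u,D^2u)+(\beta^++\epsilon)u^{i(\Phi)+1}\le -1+\epsilon\|u\|_\infty^{i(\Phi)+1}<0 .
\]
This produces a function $\psi>0$ on $\overline\Omega$ admissible in Definition~\ref{pre} with $\beta=\beta^++\epsilon$, which contradicts the definition of $\beta^+$.

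The point that $\psi$ must be positive on the closed set $\overline\Omega$ is the delicate part. We handle it by solving on $\Omega_\eta\supset\overline\Omega$ and using continuity of the eigenvalue under domain perturbation, or alternatively by arguing directly with the maximum principle, Theorem~\ref{max.prin}.

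\textit{Step 3 (normalization and limit).} Set $v_k=u_k/\|u_k\|_\infty$. By (A6), $v_k$ solves
\[
\Phi F-H+(c+\beta_k)v_k^{i(\Phi)+1}=-\|u_k\|_\infty^{-(i(\Phi)+1)}\to0,\qquad v_k=0 \text{ on } \partial\Omega,\qquad \|v_k\|_\infty=1 .
\]
Lemma~\ref{dist} gives $v_k\le C\,\mathrm{dist}(x,\partial\Omega)$ uniformly, since all data are bounded. Together with the interior Lipschitz estimate of Corollary~\ref{local_estimate_(1.1)}, this yields equi-Lipschitz bounds on $\overline\Omega$. We then extract a subsequence with $v_k\to\varphi^+$ uniformly. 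By stability of viscosity solutions (the constant branch and nonzero-gradient tests pass to the limit, as in Lemma~\ref{Stability}), $\varphi^+$ solves the eigenvalue equation with $\beta^+$. Moreover $\varphi^+=0$ on $\partial\Omega$, $\varphi^+\ge0$, $\|\varphi^+\|_\infty=1$, and $\varphi^+$ is Lipschitz. Finally, Theorem~\ref{strong-com} applied to the nontrivial nonnegative supersolution $\varphi^+$ gives $\varphi^+>0$ in $\Omega$.

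The main obstacle is Step 2, namely showing that bounded $u_k$ would contradict the maximality of $\beta^+$, given the strict positivity on $\overline\Omega$ required in Definition~\ref{pre}. A secondary difficulty is making the uniform boundary Lipschitz bound of Lemma~\ref{dist} independent of $k$.
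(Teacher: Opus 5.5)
Your scheme is the paper's own proof: take $\beta_k\uparrow\beta^+$, use Lemma~\ref{normalize} to get solutions with right-hand side $-1$, rule out bounded norms by contradiction, normalize by $\|u_k\|_\infty$ using (A6), pass to the limit with the Lipschitz bounds and Lemma~\ref{dist}, and conclude positivity by Theorem~\ref{strong-com}. Your reduction of (ii) to (i) through $\tilde F(M)=-F(-M)$, $\tilde H(x,p)=-H(x,-p)$ also works, because $\beta^-$ for $(F,H)$ equals $\beta^+$ for $(\tilde F,\tilde H)$.

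\emph{Where the write-up stops short.} The step you call the main obstacle is left open. The inequality you display is for $u$ itself. Since $u$ vanishes on $\partial\Omega$, it is not admissible in Definition~\ref{pre}. The remedies you then propose are not needed and are not available here: continuity of $\beta^+$ under domain perturbation is not proved in the paper, and the appeal to Theorem~\ref{max.prin} is left unspecified.

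\emph{The fix.} It is the shift you mention only in parentheses, and it is exactly what the paper does: take $\psi=u+\eta$.
\begin{itemize}
\item Test functions for $\psi$ are test functions for $u$ shifted by $\eta$, and $\psi$ is locally constant exactly where $u$ is. So in both branches of Definition~\ref{Def:Viscosity_solution} only the zeroth-order term changes.
\item Writing $\|\cdot\|_\infty$ for the $L^\infty(\Omega)$ norm and taking $\eta\le 1$, you get in the viscosity sense (in the constant branch, with the first two terms of $\mathcal G$ absent)
\[
\mathcal G(x,\psi,\nabla\psi,D^2\psi)+(\beta^++\epsilon)\psi^{i(\Phi)+1}\le -1+\|c+\beta^+\|_\infty\bigl\|(u+\eta)^{i(\Phi)+1}-u^{i(\Phi)+1}\bigr\|_\infty+\epsilon(1+\|u\|_\infty)^{i(\Phi)+1}.
\]
\item Choose $\epsilon$ with $\epsilon(1+\|u\|_\infty)^{i(\Phi)+1}\le\frac14$. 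Then take $\eta$ small, using uniform continuity of $t\mapsto t^{i(\Phi)+1}$ on $[0,\|u\|_\infty+1]$. The right-hand side becomes $\le-\frac12<0$.
\item Since $u\ge0$, you have $\psi\ge\eta>0$ on $\overline\Omega$. So $\psi$ is admissible with $\beta=\beta^++\epsilon$, contradicting the definition of $\beta^+$.
\end{itemize}
Homogeneity (A6) plays no role in this step; it enters only in the normalization of your Step~3.

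Your ``secondary difficulty'' is also harmless. For $v_k$, the constant in Lemma~\ref{dist} depends only on $\|v_k\|_\infty=1$, on $\|c+\beta_k\|_\infty$, and on $\|u_k\|_\infty^{-(i(\Phi)+1)}$, which is eventually at most $1$. All of these are uniform in $k$.
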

	\begin{proof}
		\textbf{}
		
		\textit{Step 1: Proof of \upshape\hyperref[eigenfunction_1]{(i)}.} Let $(\psi_n,\beta_n)$ be a sequence of solutions from Lemma \ref{normalize}\hyperref[Normal_eigen_1]{\upshape(i)} and $\beta_n\to\beta^+$.  We verify that $\{\psi_n\}_{n\in\mathbb{N}}$ is unbounded in $L^\infty(\Omega)$. If not, thanks to Theorem \ref{Lipschitz_estimate} and Lemma \ref{dist}, there exists a subsequence of $\{\psi_n\}$, converging to $\varphi$ with $\|\varphi\|_{L^{\infty}(\Omega)}>0$, and
		\begin{align*}
			\left\{\begin{array}{clr}
				\Phi(x,|\nabla \varphi|)F(D^2\varphi)-H(x,\nabla \varphi)+(c(x)+\beta^+)\varphi^{i(\Phi)+1}&\hspace*{-0.2cm}=-1\text{ in }\Omega,\\
				\varphi&\hspace*{0.1cm}=0 \mbox{ on }\partial\Omega.
			\end{array}\right.
		\end{align*}
		By the strong maximum principle (Theorem \ref{strong-com}), we obtain $\varphi>0$ in $\Omega$. For operator $\mathcal{G}$ defined in \eqref{mathcal_G} and $\varphi_{\varepsilon}=\varphi+\varepsilon$, we derive the following estimate
		\begin{align*}
			&\mathcal{G}(x,\varphi_{\varepsilon}, \nabla \varphi_{\varepsilon}, D^2\varphi_{\varepsilon})\le -1-\beta^+\varphi^{i(\Phi)+1}+c(x)(\varphi_{\varepsilon}^{i(\Phi)+1}-\varphi^{i(\Phi)+1}).
		\end{align*}
		Choose $\delta>0$ such that $\delta(1+\|\varphi\|_{L^{\infty}(\Omega)})^{i(\Phi)+1}\le\frac{1}{4}$ and $\mu=\beta^++\delta$, we obtain
		\begin{align*}
			&\mathcal{G}(x,\varphi_{\varepsilon}, \nabla \varphi_{\varepsilon}, D^2\varphi_{\varepsilon})+\mu(\varphi_{\varepsilon})^{i(\Phi)+1}\\
			&\le -1+\delta(\varphi_{\varepsilon})^{i(\Phi)+1}+(c(x)+\beta^+)(\varphi_{\varepsilon}^{i(\Phi)+1}-\varphi^{i(\Phi)+1})\\
			&\le -1+\delta(1+\|\varphi\|_{L^{\infty}(\Omega)})^{i(\Phi)+1}+\|c+\beta^+\|_{L^{\infty}(\Omega)}o(1)\\
			&<\dfrac{-1}{2}<0,
		\end{align*}
		provided that $\varepsilon>0$ is small enough. This contradicts the definition of $\beta^+$, which confirms the claim. Now define $\varphi_n=[\|\psi_n\|_{L^{\infty}(\Omega)}]^{-1}\psi_n$. Then we obtain a principal eigenfunction by 
		using Theorem \ref{Lipschitz_estimate} and Lemma \ref{dist} and passing to the limit. Global Lipschitz regularity of the principal eigenfunction is obtained by Theorem \ref{Lipschitz_estimate} and Corollary \ref{local_estimate_(1.1)}.
		
		\textit{Step 2: Proof of \upshape\hyperref[eigenfunction_2]{(ii)}.} The proof is analogous to that of part \hyperref[eigenfunction_1]{\upshape(i)}, with Lemma \ref{normalize}\hyperref[Normal_eigen_2]{\upshape(ii)} replacing Lemma \ref{normalize}\hyperref[Normal_eigen_1]{\upshape(i)}.
	\end{proof}		
	\section{\bf Liouville properties}\label{section: 6}
	In the last section, we prove Liouville-type results as direct consequences of Harnack’s inequality - Lemma \ref{cor:scaled-Harnack}. 
	\begin{theorem}[\bf Liouville-type result I] 
		Under the assumptions {\upshape\hyperref[A1]{(A1)}--\hyperref[A3]{(A3)}} and {\upshape\hyperref[A6]{(A6)}}, let $u\in C(\mathbb{R}^n)$ be a viscosity solution of the equation
		\begin{align}\label{38}
			\Phi(x,|\nabla u|)F(D^2u)-H(x,\nabla u)=0\text{ in } \mathbb{R}^n.
		\end{align}
		If $\displaystyle\inf_{\mathbb{R}^n} u>-\infty$ and there exists $x_0\in\mathbb{R}^n$ such that $u(x_0)=\displaystyle\inf_{\mathbb{R}^n}u$, then $u$ is necessarily constant.
	\end{theorem}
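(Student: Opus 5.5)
The plan is to reduce to a nonnegative solution vanishing at $x_0$ and then apply the scaled Harnack inequality (Lemma \ref{cor:scaled-Harnack}) on balls centered at $x_0$. Set $m:=\inf_{\mathbb{R}^n}u=u(x_0)$ and $v:=u-m\geq 0$. Equation \eqref{38} involves only $\nabla u$ and $D^2u$, with $c\equiv 0$ and $h\equiv 0$, so it is invariant under adding constants. Test functions for $v$ are test functions for $u$ shifted by $m$, and the constant branch of Definition \ref{Def:Viscosity_solution} reads $0\le 0$ and $0\geq 0$ in both directions. Hence $v\in C(\mathbb{R}^n)$ is a nonnegative viscosity solution of \eqref{38} with $v(x_0)=0$.

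Next, fix $R\in(0,1]$ and apply Lemma \ref{cor:scaled-Harnack} to $v$ on $B_R(x_0)$, with $c=0$ (so $c\le 0$ holds) and $h=0$. The constant $C_H$ depends only on structural data, because $\|c\|_{L^\infty}=0$. This gives
\[
\sup_{B_{R/2}(x_0)} v \le C_H \inf_{B_{R/2}(x_0)} v \le C_H\, v(x_0)=0 ,
\]
so $v\equiv 0$ on $B_{1/2}(x_0)$. The same argument applies at any point $y$ with $v(y)=0$. Therefore the zero set $Z:=\{v=0\}$ is open. It is also closed by continuity and nonempty since $x_0\in Z$, and connectedness of $\mathbb{R}^n$ gives $Z=\mathbb{R}^n$. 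Thus $u\equiv m$.

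The main obstacle is checking that Lemma \ref{cor:scaled-Harnack} really applies in this setting. It requires assumption (A4) and $u\in C(\overline{B_R(x_0)})$. (A4) holds trivially for $c=h=0$, and the closure condition follows from continuity on $\mathbb{R}^n$. Because $R\le 1$ is required, the argument must propagate through the open–closed connectivity step rather than through one large ball; that step gives exactly what is needed. One could instead use $R$ arbitrary via (A6) scaling invariance, but this is unnecessary.
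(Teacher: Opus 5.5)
Your proposal is correct and follows the paper's proof essentially verbatim. You subtract the infimum and apply the scaled Harnack inequality (Lemma \ref{cor:scaled-Harnack}) with $c=h=0$ on unit balls centered at zeros of $w=u-m$, then conclude by the open--closed argument on the zero set; your additional checks (the constant branch of Definition \ref{Def:Viscosity_solution}, the hypotheses $c\le 0$ and (A4), and continuity up to $\overline{B_R(x_0)}$) are points the paper leaves implicit, and you handle them correctly.
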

	\begin{proof}
		Let $m=\displaystyle\inf_{\mathbb{R}^n}u\in\mathbb{R}$ and $w=u-m$. We have $w(x_0)=0$ and $w\geq 0$ in $\mathbb{R}^n$.
		Since equation \eqref{38} is invariant under addition of constants, the function $w$ is still a viscosity solution of
		\begin{align*}
			\Phi(x,|\nabla w|)F(D^2w)-H(x,\nabla w)=0 \text{ in }\mathbb{R}^n.
		\end{align*}
		By scaled Harnack inequality - Lemma \ref{cor:scaled-Harnack}, applied to $w$, it follows that
		\begin{align*}
			\sup_{B_{1/2}(x_0)}w
			\leq C_H\inf_{B_{1/2}(x_0)}w.
		\end{align*}
		Since $\inf\limits_{B_{1/2}(x_0)}w=0$ then $w=0$ in $B_{1/2}(x_0)$.
		Next, we consider the following set
		\begin{align*}
			Z=\{x\in\mathbb{R}^n:w(x)=0\}.
		\end{align*}
		We readily obtain that $x_0\in Z$. Since $w$ is a continuous function then $Z$ is a closed set. Analogous arguments enable us to follow that
		\begin{align*}
			B_{1/2}(z)\subset Z,\text{ for all }z\in Z.
		\end{align*}
		Therefore, $Z$ is an open set. As a result, we conclude that $Z=\mathbb{R}^n$, that means $w=0$ in $\mathbb{R}^n$ and $u$ is constant in $\mathbb{R}^n$.
	\end{proof}
	\begin{remark}
		We cannot replace the assumption that the viscosity solution $u$ of \eqref{38} is only bounded below. For example, take functions $\Phi,F,$ and $H$ defined as follows
		\begin{align*}
			\Phi(x,t)=t^{\gamma},~ F(M)=\mathrm{tr}(M), \mbox{ and }H(x,p)= \dfrac{-2x_1}{1+x_1^2}|p|^{\gamma}p_1,
		\end{align*}  
		for all $x\in\mathbb{R}^n,~t\geq 0,~\gamma\geq 0,~M\in\mathbb S^{n\times n},~p\in\mathbb{R}^n$. It is straightforward to check that $\Phi,F,$ and $H$ satisfy the assumptions {\upshape\hyperref[A1]{(A1)}--\hyperref[A3]{(A3)}} and {\upshape\hyperref[A6]{(A6)}}. Let $u:\mathbb{R}^n\to\mathbb{R}$ defined by $u(x)=\mathrm{arctan}x_1$. A direct calculation reveals that 
		\begin{align*}
			&\Phi(x,|\nabla u|)F(D^2u)-H(x,\nabla u)\\
			&=|\nabla u|^{\gamma}\Delta u+\dfrac{2x_1}{1+x_1^2}|\nabla u|^{\gamma}\langle\nabla u,e_1\rangle\\
			&=\left(\dfrac{1}{1+x_1^2}\right)^{\gamma}\cdot\left[-\dfrac{2x_1}{\left(1+x_1^2\right)^2}\right]+\dfrac{2x_1}{1+x_1^2}\cdot \left(\dfrac{1}{1+x_1^2}\right)^{\gamma}\dfrac{1}{1+x_1^2}=0.
		\end{align*}
		Therefore, $u$ is a viscosity solution of \eqref{38}. However, $\displaystyle\inf_{\mathbb{R}^n}u=-\dfrac{\pi}{2}$ and there does not exist $x_0\in\mathbb{R}^n$ such that $u(x_0)=\dfrac{-\pi}{2}$. 
	\end{remark}
	\begin{theorem}[\bf Liouville-type result II]
		Given $c\leq0$ and $c\in C(\mathbb{R}^n) \cap L^\infty(\mathbb{R}^n)$. Suppose the assumptions {\upshape\hyperref[A1]{(A1)}--\hyperref[A3]{(A3)}} and {\upshape\hyperref[A6]{(A6)}} hold in $\mathbb{R}^n$. Let $u\in C(\mathbb{R}^n)$ be a non-negative viscosity solution of
		\begin{align*}
			\Phi(x,|\nabla u|)F(D^2u)-H(x,\nabla u) +c(x)u^{i(\Phi)+1}=0\text{ in }\mathbb{R}^n.
		\end{align*}
		Then there exists a constant $\kappa_0> 0$, depending on $n,\lambda,\Lambda,i(\Phi),s(\Phi),L,\nu_0,\mathcal{C}$, $\|c\|_{L^{\infty}(\mathbb{R}^n)}$, such that if the following condition holds
		\begin{align}\label{growth-con}
			\limsup_{|x|\to\infty} {\dfrac{u(x)}{e^{-\alpha |x|^\beta}}}<\infty,
		\end{align}
		with $\alpha,\beta$ satisfying one of the following assumptions:
		\begin{enumerate}[\upshape(i)]
			\item $\beta>1$ and $\alpha>0$;
			\item $\beta=1$ and $\alpha>\kappa_0$,
		\end{enumerate}
		then $u\equiv0$ in $\mathbb{R}^n$.
	\end{theorem}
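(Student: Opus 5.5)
The plan is to use the scaled Harnack inequality (Lemma \ref{cor:scaled-Harnack}) with $h\equiv0$ on a chain of unit balls. This turns the exponential decay in \eqref{growth-con} into a contradiction with the at most exponential rate at which a positive solution can decay.

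\emph{Step 1: dichotomy.} By the strong maximum principle (Theorem \ref{strong-com}, applied on balls $B_R$ with $R\to\infty$), either $u\equiv0$ or $u>0$ in $\mathbb{R}^n$. We therefore assume $u>0$ everywhere and aim for a contradiction.

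\emph{Step 2: chain lower bound.} Fix $x\in\mathbb{R}^n$ with $|x|$ large. Join $0$ to $x$ by the segment and place points $x_0=0,x_1,\dots,x_N=x$ on it with $|x_{j+1}-x_j|\le 1/4$ and $N\le 4|x|+1$. Apply Lemma \ref{cor:scaled-Harnack} with $R=1$, centered at $x_j$, with $h=0$; note that $\|c\|_{L^\infty(B_1(x_j))}\le\|c\|_{L^\infty(\mathbb{R}^n)}$, so $C_H$ is uniform in $j$. Since $x_{j+1}\in B_{1/2}(x_j)$, this gives $u(x_j)\le C_H u(x_{j+1})$. Iterating,
\begin{align*}
u(x)\ge C_H^{-N}u(0)\ge C_H^{-1}u(0)\,e^{-4\log(C_H)|x|}.
\end{align*}
Set $\kappa_0:=4\log C_H$, which depends only on the structural data. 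This chaining step is the main point: the Harnack constant must be independent of the center. That independence holds because (A6) makes the scaled lemma free of the additive constant and the data bounds are global.

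\emph{Step 3: contradiction.} By \eqref{growth-con}, $u(x)\le K e^{-\alpha|x|^\beta}$ for $|x|$ large. Combined with Step 2, this yields $C_H^{-1}u(0)e^{-\kappa_0|x|}\le Ke^{-\alpha|x|^\beta}$, that is,
\begin{align*}
\alpha|x|^\beta-\kappa_0|x|\le \log\bigl(KC_H/u(0)\bigr).
\end{align*}
In case (i), $\beta>1$ and $\alpha>0$, so the left side tends to $+\infty$. In case (ii), $\beta=1$ and $\alpha>\kappa_0$, so the left side equals $(\alpha-\kappa_0)|x|\to\infty$. Either way we get a contradiction, hence $u(0)=0$ and, by Step 1, $u\equiv0$.

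One technical check remains. Strong maximum principle in Step 1 is stated under (A5) on bounded domains. If (A5) is unavailable, we instead use Lemma \ref{cor:scaled-Harnack} directly, exactly as in the proof of Liouville-type result I: the zero set of $u$ is both open and closed. This gives the dichotomy $u\equiv0$ or $u>0$ without (A5).
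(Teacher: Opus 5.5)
Your proof is correct and follows the paper's argument: a chain of unit balls along a segment, with Lemma \ref{cor:scaled-Harnack} applied at each step ($h\equiv 0$, $C_H$ uniform), gives $u(y)\ge Ae^{-\kappa_0|y|}$, and this contradicts \eqref{growth-con} in both cases. The only differences are that the paper uses steps of length $1/2$ with $\kappa_0=2\log C_H$ (you use $1/4$ and $4\log C_H$), and that your Step~1 dichotomy is unnecessary, since starting the chain at any point where $u>0$, as the paper does, gives the lower bound directly and avoids the (A5) issue altogether.
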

	\begin{proof}
		Suppose by contradiction that $u\not\equiv0$. Since $u\geq0$, there exists $x_0\in\mathbb{R}^n$ such that $u(x_0)>0$. Let $y\in\mathbb{R}^n$ be arbitrary. We connect $x_0$ to $y$ by a finite Harnack chain. More precisely, let us define $N=[2|x_0-y|]+1$ and 
		\begin{align*}
			x_k=x_0+\dfrac{k}{N}(y-x_0), \mbox{ for } k=\overline{1,N}.
		\end{align*}
		It is simple to verify that
		\begin{align}\label{1}
			|x_k-x_{k-1}|\leq \frac{1}{2}, \mbox{ and } N\leq 2(1+|x_0-y|).
		\end{align}
		For each $k$, we choose a unit ball whose half ball contains both
		$x_{k-1}$ and $x_k$. Applying the scaled Harnack inequality - Lemma \ref{cor:scaled-Harnack} successively, we obtain
		\begin{align*}
			u(x_{k-1})\leq C_H u(x_k)\text{ for all }k\in\mathbb{N},
		\end{align*}
		where $C_H=C_H\left(i(\Phi),s(\Phi),n,\nu_0,L,\lambda,\Lambda,\mathcal{C},\|c\|_{L^{\infty}(\mathbb{R}^n)}\right)>1$. As a consequence, we arrive at
		\begin{align}\label{2}
			u(y)\geq C_H^{-N}u(x_0).
		\end{align}
		Combining \eqref{1} and \eqref{2}, we deduce
		\begin{align*}
			u(y)\geq A e^{-\kappa_0 |y|}\text{ for all }y\in\mathbb{R}^n,
		\end{align*}
		for $A=u(x_0)C_H^{-2(1+|x_0|)}>0$ and $\kappa_0=2\log C_H$. On the other hand, by assumption \eqref{growth-con}, there exists $R>0$ large enough such that for all $y\in\mathbb{R}^n$ with $|y|>R$, we have
		\begin{align*}
			u(y)\leq C e^{-\alpha |y|^\beta},
		\end{align*}
		for some $C>0$. Combining the two estimates gives
		\begin{align}\label{L2}
			A e^{-\kappa_0 |y|} \leq C e^{-\alpha |y|^\beta} \iff A\leq C e^{\kappa_0 |y|-\alpha |y|^\beta}.
		\end{align}
		We now distinguish between the following two cases based on the range of parameters $\beta,
		\alpha$.
		\begin{enumerate}[\upshape(i)]
			\item If $\beta>1$ and $\alpha > 0$, then
			\begin{align*}
				\kappa_0 |y|-\alpha |y|^\beta\to-\infty\text{ as } |y|\to\infty,
			\end{align*}
			which implies that the right-hand side of \eqref{L2} converges to $0$ when $|y|\to\infty$, yielding a contradiction.
			\item If $\beta=1$ and $\alpha>\kappa_0$, then
			\begin{align*}
				\kappa_0 |y|-\alpha |y|=-(\alpha-\kappa_0)|y|\to-\infty\text{ as } |y|\to\infty,
			\end{align*}
			again enabling us to obtain a contradiction.
		\end{enumerate}
		Thus $u\equiv0$ in $\mathbb{R}^n$.
	\end{proof}
	\renewcommand{\refname}{
\end{document}